\documentclass[leqno]{article}
\usepackage{palatino} % This appears to work best with the Euler math fonts
\usepackage{verbatim,amsmath,amsfonts,amssymb,array,theorem, subfigure}
\usepackage[mathbf,mathcal]{euler}
\usepackage{pifont,fancyheadings_ssv,wrapfig,graphicx}
\usepackage{epstopdf}
\usepackage[usenames,dvipsnames]{xcolor}
\usepackage{tikz, tikz-cd}
\usepackage{extarrows}
\usepackage{enumitem}
\usepackage{mathtools}
\usepackage[top=1in, bottom=1in, outer=1in, inner=1in, heightrounded, marginparwidth=1.5cm, marginparsep=1cm]{geometry}
 \usepackage[savepos]{zref}% http://ctan.org/pkg/zref
\usetikzlibrary{matrix, calc, positioning}
\usepackage{thmtools}
\usepackage{thm-restate}
\usepackage{pgfplots}
\usepackage{hyperref}
\usepackage{bm}
\usepackage{dsfont}
\usepackage{comment}

\definecolor{amaranth}{rgb}{0.9, 0.17, 0.31}

%%		PREAMBLE
%%
%%            PAGE LAYOUT
%%
%\setlength{\oddsidemargin}{31 pt}
%\setlength{\evensidemargin}{90.755 pt}
%\setlength{\topmargin}{28 pt}
%\setlength{\topmargin}{-5 pt}
%\setlength{\headheight}{12 pt}
%\setlength{\headsep}{20 pt}
%\setlength{\textheight}{522 pt}          % For 12 pt
%\setlength{\textheight}{620 pt}           % For 10 pt
%\setlength{\textheight}{48\baselineskip} % For 10 pt
%\setlength{\textwidth}{426 pt}
%\setlength{\footskip}{36 pt}
%\setlength{\parindent}{0.5 in}
%
%		PAGE STYLE
%
%\setlength{\oddsidemargin}{0.5 in}
%\setlength{\evensidemargin}{0.5 in}
%%\setlength{\oddsidemargin}{0.25 in}
%%\setlength{\evensidemargin}{0.25 in}
%%\setlength{\marginparsep}{0.125 in}
%%\setlength{\marginparwidth}{1.25in}
%\setlength{\marginparwidth}{71 pt}
%\setlength{\marginparsep}{7 pt}
%\setlength{\marginparpush}{5pt}
%%\normalsize
%\setlength{\textheight}{50\baselineskip}
%%\setlength{\textheight}{52\baselineskip}
%\setlength{\textwidth}{5.5 in}
%%\setlength{\textwidth}{6in}
%\setlength{\topmargin}{0 in}
%%\setlength{\topmargin}{-0.5 in}
%\setlength{\headheight}{12 pt}
%\setlength{\headsep}{25 pt}
%\setlength{\footskip}{36 pt}
%\setlength{\parindent}{20 pt}
%%\setlength{\topskip}{0.5 in}
%%\setlength{\footheight}{0.25 in}
%%\setlength{\baselineskip}{12pt}
%
%		HEADER AND FOOTER
%
%\pagestyle{headings}
%\pagestyle{fancyplain}
%\addtolength{\headwidth}{\marginparsep}
%\addtolength{\headwidth}{\marginparwidth}
%  Remember section number and title

%  Remember subsection number and title

\setlength{\headrulewidth}{0.4pt}
\setlength{\plainheadrulewidth}{0pt}
\setlength{\plainfootrulewidth}{0.8pt}
%             Left Head---even & odd pages, plain & normal variations
\lhead%
  [\fancyplain{}{\bfseries\thepage}]{\fancyplain{}{\bfseries\rightmark}}
%             Centre Head---even & odd pages, plain & normal variations
\chead[\fancyplain{}{}]{\fancyplain{}{}}
%             Right Head---even & odd pages, plain & normal variations
\rhead%
  [\fancyplain{}{\bfseries\leftmark}]{\fancyplain{}{\bfseries\thepage}}
%             Centre Foot
\cfoot{}
%\cfoot{\fancyplain{\emph{SK,SSV, 06.08.2007}}{}}
%
\pagenumbering{arabic}
%
%		FOOTER
%
\setlength{\footnotesep}{12pt}
%
%%
%%            NEWCOMMAND DEFINITIONS---AMS LATEX FORMAT & EULER FONTS NEEDED
%%

  % same as \mathbf{1}
 % same as \mathbf{0}

  % prevents \it in theorems
%\let\defn=\triangleq

\renewcommand{\emptyset}{\mathop{\varnothing}} % Redefines \emptyset

\newcommand{\qed}{\mbox{\hfill$\square$}}

                              % \eqref defined in mathref.sty

  % prevents \it in thms

%\newcommand{\setcomplement}[1]{{#1}^{\text{c}}}

  % prevents \it in thms
 
  % prevents \it in thms

\newcommand{\bigslant}[2]{{\raisebox{.2em}{$#1$}\left/\raisebox{-.2em}{$#2$}\right.}}

\newcommand{\im}{\textnormal{im}}
\newcommand{\ZZ}{\mathbb{Z}}

\newcommand{\RR}{\mathbb{R}}

\newcommand{\CC}{\mathbb{C}}
\newcommand{\NN}{\mathbb{N}}

\newcommand{\dd}{\partial}

\newcommand{\la}{\langle}
\newcommand{\ra}{\rangle}
\newcommand{\cg}[1]{{\cal #1}}
\newcommand{\Int}{\textnormal{Int}}

  % prevents \it in theorems

%%
%%		          NEWENVIRONMENT DEFINITIONS
%%
%                                                 %	Acknowledgement
  
%                                                 %	Claim (unnumbered)

%                                                 %	Claim (numbered)
\newcounter{acount}
%                                                 % Epigraph

% Old ssv_artmac example and examples environments ----------
%
% New Examples environment; linked to the example theorem enviroment and counter
% Requires a \firstexample\item followed by \nextexamples\item commands
%                                                  % Examples
%\newcounter{Examples}
%\newcommand{\firstexample}{\renewcommand{\Lbl}{\textsc{Examples: }%
%  \renewcommand{\theexample}{\arabic{example}}
%  \setcounter{Examples}{\theexample}%
%  \addtocounter{Examples}{1}}}
%\newcommand{\nextexample}{\renewcommand{\Lbl}{}}
%\newenvironment{examples}%
%  {\begin{list}{\Lbl\arabic{Examples}) }{\usecounter{Examples}%
%    \setlength{\labelsep}{0pt}\setlength{\leftmargin}{0pt}%
%    \setlength{\labelwidth}{0pt}\setlength{\listparindent}{0.5in}}}%
%  {\setcounter{example}{\theExamples}%
%  \renewcommand{\theexample}{\arabic{example})}\end{list}}   
  % Links Examples counter back to the example counter
% Old ssv_artmac example and examples environments ----------
%
% example and examples environments from ssv_bk -------------
%                                                  % Newest Example Environment
\newcounter{Example}
%-----------------------------------------------
%% Commented out because of problems with LaTeX
%\numberwithin{Example}{section}
%% End commented out because of problems with LaTeX
%-----------------------------------------------
\newenvironment{example}%
    {\renewcommand{\theExample}{\arabic{Example}}\refstepcounter{Example}%
     \begin{list}{}%
        {\usecounter{Examples}%
         \setlength{\labelsep}{0pt}\setlength{\leftmargin}{0pt}%
         \setlength{\labelwidth}{0pt}\setlength{\listparindent}{0.5in}%
        }%
     \item[\normalfont\textsc{Example} \arabic{Example}) ]%
     \renewcommand{\theExample}{\arabic{Example}}%
    }
    {\end{list}}
%                                                  % Newest Examples Environment
\newcounter{Examples}
\newcommand{\firstexample}%
    {\renewcommand{\Lbl}{\textsc{Examples: }}%
     \setcounter{Examples}{\theExample}%
    }

    {\renewcommand{\theExample}{\arabic{Example}}%
     \setcounter{Examples}{\theExample}%
     \refstepcounter{Examples}%
     \begin{list}{\Lbl\arabic{Examples}) }%
        {\usecounter{Examples}%
         \setlength{\labelsep}{0pt}\setlength{\leftmargin}{0pt}%
         \setlength{\labelwidth}{0pt}\setlength{\listparindent}{0.5in}%
        }%
    }%
    {\setcounter{Example}{\theExamples}%
     \renewcommand{\theExample}{\arabic{Example}}\end{list}%
    }   
  % Links Examples counter back to the Example counter
% End of example and examples environments from ssv_bk -------------
%
\newcommand{\Lbl}{}                            % Label identifies exercise type
%                                                 %	Figure captions

%                                                 %	Footnote mylist

%                                                 % mylist environment

%                                                 %	Proof
\newenvironment{proof}{%        
    \medskip\noindent\textsc{Proof}:
  }{
    \hfill\qed\medskip
  }
%                                                 %	Proof of Prop. 2
  
%                                                 %	Proof of Prop. 1
\newenvironment{proofprop1}{%        
    \medskip\noindent\textsc{Proof of Proposition 1}:
  }{
    \hfill\qed\medskip
  }  
%                                                 %	Proof of Theorem 1
\newenvironment{proofthm1}{%        
    \medskip\noindent\textsc{Proof of Theorem 1}:
  }{
    \hfill\qed\medskip
  }  
  %                                                 %	Proof of Theorem 5
  
  %                                                 %	Proof of Theorem 8
\newenvironment{proofthm8}{%        
    \medskip\noindent\textsc{Proof of Theorem 8}:
  }{
    \hfill\qed\medskip
  }  
  %                                                 %	Sketch of Proof
  
    %                                                 %	Proof of Prop. 5
\newenvironment{proofprop5}{%        
    \medskip\noindent\textsc{Proof of Proposition 5}:
  }{
    \hfill\qed\medskip
  }  
    %                                                 %	Proof of Prop. 4
\newenvironment{proofprop4}{%        
    \medskip\noindent\textsc{Proof of Proposition 4}:
  }{
    \hfill\qed\medskip
  }  
  %							%    Proof of Prop. 6
\newenvironment{proofprop6}{%        
    \medskip\noindent\textsc{Proof of Proposition 6}:
  }{
    \hfill\qed\medskip
  }  
%                                                 %	References
\newcounter{bbean}
      %                                                 %	Proof of Lemma 8

      %                                                 %	Proof of Lemma 7

          %                                                 %	Closed-string ms

%  
  %%%%%%%%
  \newcounter{Remarks}
  \newenvironment{remark}%
    {\renewcommand{\theRemark}{\arabic{Remark}}\refstepcounter{Remark}%
     \begin{list}{}%
        {\usecounter{Remarks}%
         \setlength{\labelsep}{0pt}\setlength{\leftmargin}{0pt}%
         \setlength{\labelwidth}{0pt}\setlength{\listparindent}{0.5in}%
        }%
     \item[\normalfont\textsc{Remark} \arabic{Remark}) ]%
     \renewcommand{\theRemark}{\arabic{Remark}}%
    }
    {\end{list}}
  \newcounter{Remark}
\newcommand{\firstremark}%
    {\renewcommand{\Lbl}{\textsc{Remarks: }}%
     \setcounter{Remarks}{\theRemark}%
    }

    {\renewcommand{\theRemark}{\arabic{Remark}}%
     \setcounter{Remarks}{\theRemark}%
     \refstepcounter{Remarks}%
     \begin{list}{\Lbl\arabic{Remarks}) }%
        {\usecounter{Remarks}%
         \setlength{\labelsep}{0pt}\setlength{\leftmargin}{0pt}%
         \setlength{\labelwidth}{0pt}\setlength{\listparindent}{0.5in}%
        }%
    }%
    {\setcounter{Remark}{\theRemarks}%
     \renewcommand{\theRemark}{\arabic{Remark}}\end{list}%
    }   
\theoremstyle{plain}
              %	Assertion
\newtheorem{conjecture}{Conjecture}            %	Conjecture
\newtheorem{corollary}{Corollary}              %	Corollary
{\theorembodyfont{\rmfamily}
}
{\theorembodyfont{\rmfamily} }
                        %	Fact
\newtheorem{lemma}{Lemma}                      %	Lemma
\newtheorem{proposition}{Proposition}          %	Proposition
                    %    Slogan
\newtheorem{theorem}{Theorem}                  %	Theorem

%  Unnumbered Versions of Newtheorem Environments
                 %   Should be preceded by a \renewcommand{\Lbl}{... theorem name ...} command
              %	Assertion
            %	Conjecture
            % Convention
              %	Corollary
{\theorembodyfont{\rmfamily} }
{\theorembodyfont{\rmfamily} }
{\theorembodyfont{\rmfamily} }
                        %	Fact
         % Key Property       
                      %	Lemma
          %	Proposition
                    %   Slogan
                  %	Theorem\

\theoremheaderfont{\scshape}
%
%		END PREAMBLE
%

 \hypersetup{
    colorlinks,
    citecolor=blue,
    filecolor=blue,
    linkcolor=blue,
    urlcolor=blue
}

\tikzcdset{arrow style=tikz}

\title{Periodic leaf-wise intersection points from Lagrangians}
\date{}
\author{Sara Venkatesh}

\begin{document}
\maketitle
\abstract{We investigate leaf-wise intersection points on hypersurfaces of contact type in monotone symplectic manifolds.  We show that monotone Floer-essential Lagrangians detect periodic leaf-wise intersection points in hypersurfaces of contact type whose Reeb flow is Zoll.  Examples include the prequantization bundles appearing in monotone toric negative line bundles.  Generalizing, we prove the existence of leaf-wise intersection points for certain annulus subbundles in weak\textsuperscript{+}-monotone negative line bundles, not necessarily toric.  The proofs combine reduced symplectic cohomology with the original methods employed by Albers-Frauenfelder to prove global existence results of this kind.}
\tableofcontents

\section{Introduction}

The concept of a {\it leaf-wise intersection point} was first introduced by Moser in his 1978 paper \cite{moser}, where he studied certain types of fixed points that occur when one perturbs a dynamical system.  Imagine a particle moving along a trajectory, as prescribed by some dynamical system, when suddenly a perturbation disturbs the system: the particle is thrown off course.  After some finite time, the perturbation ceases and the dynamical system returns to its original form.  Is there any chance that, miraculously, the particle now finds itself back on its original trajectory?  In the rare case that this phenomenon occurs, it is characterized by two points: the point at which the particle first leaves the unperturbed trajectory, and the point at which the particle returns to it.  In other words, the two points are intersection points between a trajectory of the original dynamical system and a trajectory of the system described by the finite-time perturbation.  In the case where two such intersection points exist, we say that a leaf-wise intersection point exists.

In symplectic geometry, leaf-wise intersection points naturally occur between symplectic flows.  Moser himself worked in an exact symplectic framework, studying hypersurfaces of contact type in Liouville domains equipped with a dynamical system induced by choice of contact form.  He showed that, for a sufficiently small perturbation given by an exact symplectomorphism, such a hypersurface always contains a leaf-wise intersection point.

Moser's framework has been extended in many directions.  In \cite{albers-f}, Albers-Frauenfelder studied leaf-wise intersection points of hypersurfaces $\Sigma$ of restricted contact type in Liouville domains.  They showed that the number of leaf-wise intersection points of a sufficiently-small Hamiltonian occurring on $\Sigma$ is bounded below by the sum of the Betti numbers of $\Sigma$.  More generally, they showed that the existence of a leaf-wise intersection point of a fixed Hamiltonian, regardless of size, is guaranteed if an algebraic invariant called {\it Rabinowitz Floer homology} is non-vanishing on $\Sigma$.

This paper states the first results for leaf-wise intersection points on hypersurfaces of contact type within non-exact symplectic manifolds.  Following Albers-Frauenfelder, we probe for leaf-wise intersection points using various fixed-point Floer theories.  The only examples of such manifolds for which fixed-point Floer theory has been carefully examined are {\it negative line bundles} \cite{albers-kang}\cite{ritter-gromov}.  The first calculations are due to Ritter, who calculated another Floer invariant, called \emph{symplectic cohomology}, for these line bundles in \cite{ritter-gromov}.  It is for these manifolds, through the lens of symplectic cohomology, that we prove our first result.  

A negative line bundle $(E, \Omega)$ with negativity constant $k$ over a symplectic manifold $(M, \omega)$ is a complex line bundle 
\[
\mathfrak{p}:E\rightarrow M
\] 
whose Chern class satisfies
\[
c_1^E = -k[\omega]
\]
for some $k > 0$.  A choice of Hermitian metric picks out a family of contact hypersurfaces $\{\Sigma_r\}_{r>0}$, where $\Sigma_r$ is the circle subbundle appearing at radius $r$.  Recall that a symplectic manifold $(M, \omega)$ is monotone of monotonicity $c$ if
\[
c_1^{TM} = c[\omega]
\]
for some $c> 0$.
\begin{theorem}
\label{thm:linebundle}
Let $E \rightarrow M$ be a negative line bundle with negativity constant $k$ over a monotone symplectic manifold with monotonicity constant $c> k$.  If the symplectic cohomology of $E$ is non-zero, then for any compactly-supported Hamiltonian, the radius-$\frac{1}{\sqrt{k(c-k)\pi}}$ circle subbundle  contains a leaf-wise intersection point.
\end{theorem}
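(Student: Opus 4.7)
The plan is to transport the Albers-Frauenfelder variational framework for leaf-wise intersection points into the monotone, non-exact setting, replacing Rabinowitz Floer homology by reduced symplectic cohomology as the source of obstructions.

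The first step is to set up a perturbed Rabinowitz-type action functional $\mathcal{A}^H$ for the given compactly-supported Hamiltonian $H$ on $E$, adapted to the hypersurface $\Sigma_{r_0}$ at the distinguished radius $r_0 = 1/\sqrt{k(c-k)\pi}$. Critical points of $\mathcal{A}^H$ are pairs $(x,\tau)$, where $\tau$ plays the role of a Reeb period and $x$ solves a mixed Hamiltonian-Reeb equation; they are in bijection with leaf-wise intersection points on $\Sigma_{r_0}$ together with their travel time. Because $\Sigma_{r_0}$ is the unit circle bundle of a Hermitian line bundle, the Reeb flow is Zoll and the unperturbed critical locus is a copy of $M$ at each Reeb period.

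The second step is an action-index computation that singles out $r_0$. The symplectic area of a Reeb orbit at $\Sigma_r$ scales as $\pi r^2$, while a sphere class $A$ in $M$ contributes $\langle c_1^{TE}, A \rangle = (c-k)\langle [\omega], A \rangle$ to the first Chern class of the total space. The specific radius $r_0$ is forced by the balance $k \pi r_0^2 \cdot (c-k) = 1$: at this radius, the action cost of winding around the fiber is commensurate with the Chern shift from monotonicity minus negativity, so the various cappings of the relevant Reeb orbits collapse into a single Floer-theoretic degree and the hypothesized non-vanishing class in $SH^*(E)$ can be concentrated in a bounded action window around $\Sigma_{r_0}$.

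The third step is the obstruction-by-contradiction core. Assume no leaf-wise intersection point of $H$ exists on $\Sigma_{r_0}$. Then $\mathcal{A}^H$ has no critical points in a neighborhood of the slab over $\Sigma_{r_0}$, and a continuation from the unperturbed functional, combined with a neck-stretching degeneration along $\Sigma_{r_0}$, exhibits the distinguished class in $SH^*(E)$ as simultaneously a Floer boundary from below and cohomologous to zero from above, in mutually incompatible fashions. This contradicts $SH^*(E) \neq 0$ and yields the theorem.

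The hardest part will be Floer-theoretic compactness. In the non-exact monotone setting sphere bubbling is a genuine concern, and one must exploit $c > k$ to guarantee that every bubble costs a strictly positive quantum of energy, thereby bounding the relevant moduli in a finite action window. This is where reduced symplectic cohomology rather than ordinary $SH^*$ does the real work: it is designed to remain stable under the a priori energy estimates required for continuation and neck-stretching even when bubbling is not excluded but merely controlled. Verifying that the controlled bubbling window is quantitatively compatible with the distinguished radius is the technical heart of the proof, and is precisely the reason the monotonicity constants $c$ and $k$ appear in the specific form $r_0 = 1/\sqrt{k(c-k)\pi}$.
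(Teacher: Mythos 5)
Your overall frame --- Albers-Frauenfelder-style detection of leaf-wise intersections via a non-vanishing Floer invariant, with reduced symplectic cohomology supplying the obstruction and monotonicity pinning the radius --- is the right one, and you correctly identified that the constant $\frac{1}{\sqrt{k(c-k)\pi}}$ arises from balancing fiber winding against the Chern shift. However, the mechanism you describe diverges from the paper's argument in ways that would not compile into a proof.

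First, the paper never sets up a Rabinowitz action functional $\mathcal{A}^H$ with Lagrange multiplier $\tau$, and it performs no neck-stretching. Instead, it uses the unmixing-in-time trick: Hamiltonians $H_n^F = \rho(t)\,h_n(k\pi r^2) + F$ whose Reeb-like part runs on $t\in[0,1/2]$ and whose perturbation $F$ runs on $t\in[1/2,1]$, so that ordinary Hamiltonian Floer theory for $H_n^F$ already sees leaf-wise intersection points. The heart of the proof is then a string of five quasi-isomorphisms and quasi-inclusions of action-filtered complexes (Propositions~\ref{prop:qi(1)}--\ref{prop:qi(5)}), built from two families $\{H_n\}$ (increasing slopes, direct limit) and $\{K_m\}$ (decreasing slopes, inverse limit), together with v-shaped interpolating families $\check{K}_{m,n}$ and $\check{H}_{m,n}$. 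Taken together these give $\widehat{SH_*}(K)\simeq SH^*(H)$; there is no ``class that is simultaneously a boundary from below and cohomologous to zero from above.''

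Second, you have mislocated what reduced symplectic cohomology does. In this proof it is not a device for controlling bubbling; bubbling is ruled out in the standard way by monotonicity, and $C^0$ control comes from the integrated maximum principle proved in the Appendix. The completed theory $\widehat{SH_*}(K)$ is instead the invariant that \emph{vanishes} (Proposition~\ref{prop:shvanishes}, which adapts the Albers-Kang filtration-by-Conley-Zehnder-index argument on the base), and the contradiction is simply $0 = \widehat{SH_*}(K)\simeq SH^*(H)\neq 0$. The action-index estimate that picks out the radius is Lemma~\ref{lem:actionest}: the inequality $R_n - \frac{1}{\kappa} > 0$ (or $P_m - \frac{1}{\kappa} < 0$) is what lets one push the action to $\pm\infty$ in fixed degree, and $\frac{1}{\kappa}$ translates back to the radius $\frac{1}{\sqrt{k(c-k)\pi}}$ under the identification $r\mapsto k\pi r^2$.

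Third, you argue directly on $\Sigma_{r_0}$. The paper instead first shows a leaf-wise intersection point exists on \emph{some} circle subbundle in a shrinking annulus around $\Sigma_{r_0}$, then passes to the limit using Proposition~\ref{prop:nested}, which is exactly where the Zoll hypothesis on the Reeb flow enters via an Arzel\`a-Ascoli argument. Without making this two-stage structure explicit, the proof cannot conclude at the sharp radius.
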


\begin{remark}
\label{rmk:chern}
Ritter showed in \cite{ritter-gromov} that the symplectic cohomology of $E$ is non-zero precisely when the action of $\mathfrak{p}^*c_1^E$ by quantum cup product on the quantum cohomology $QH^*(E)$ is not nilpotent.  In particular, this action is non-nilpotent when the base $M$ is a toric symplectic manifold \cite{ritter-fano}.
\end{remark}

\begin{corollary}
\label{cor:toric}
Let $E \rightarrow M$ be a negative line bundle with negativity constant $k$ over a monotone toric symplectic manifold with monotonicity constant $c> k$.  For any compactly-supported Hamiltonian, the radius-$\frac{1}{\sqrt{k(c-k)\pi}}$ circle subbundle  contains a leaf-wise intersection point.
\end{corollary}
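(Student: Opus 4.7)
The plan is to deduce this corollary immediately from Theorem~\ref{thm:linebundle} by verifying, under the toric hypothesis on $M$, the single remaining input to that theorem: non-vanishing of the symplectic cohomology $SH^*(E)$. Since everything else — the negativity constant $k$, the monotonicity constant $c$, the inequality $c>k$, and the conclusion about the circle subbundle at radius $1/\sqrt{k(c-k)\pi}$ — is already in the hypotheses of Theorem~\ref{thm:linebundle}, there is nothing to reprove on the Floer-theoretic side.

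Concretely, I would invoke Remark~\ref{rmk:chern}, which packages two results of Ritter. First, \cite{ritter-gromov} shows that $SH^*(E)\neq 0$ is equivalent to the quantum cup-product action of $\mathfrak{p}^* c_1^E$ on $QH^*(E)$ being non-nilpotent. Second, \cite{ritter-fano} shows that this non-nilpotency is automatic when the base $M$ is toric (the toric structure on $M$ and the bundle structure make $E$ itself a toric symplectic manifold, and for toric Fano-type targets the relevant quantum multiplication by a first Chern class admits an invertible eigenvalue). Combining these, $SH^*(E)\neq 0$ whenever $M$ is toric monotone with $c>k$, so the hypothesis of Theorem~\ref{thm:linebundle} is satisfied.

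There is no genuine obstacle here: the corollary is a direct substitution. The only thing to be careful about is making explicit that the toric hypothesis on $M$ (together with $E\to M$ being a negative line bundle of the specified type) does indeed place us in the setting of the cited Ritter results — i.e., that $E$ is the kind of (possibly non-compact) toric target to which the non-nilpotency statement of \cite{ritter-fano} applies. Once that is noted in a single sentence, the conclusion is simply: apply Theorem~\ref{thm:linebundle} to any compactly-supported Hamiltonian to obtain a leaf-wise intersection point on the circle subbundle at radius $1/\sqrt{k(c-k)\pi}$.
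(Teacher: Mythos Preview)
Your proposal is correct and matches the paper's approach exactly: the corollary is deduced from Theorem~\ref{thm:linebundle} by invoking Ritter's results (as summarized in Remark~\ref{rmk:chern}) to guarantee $SH^*(E)\neq 0$ when the base is toric. The paper states this in one sentence just before restating the corollary in Section~\ref{sec:thm1}, and gives no further argument.
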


\begin{example}
The line bundle $\mathcal{O}(-n)\rightarrow\mathbb{C}P^m$ satisfies the conditions of Corollary \ref{cor:toric} whenever $0 < n \leq m$.
\end{example}

The proof is inspired by the methods of Albers-Frauenfelder; it uses the non-vanishing of a Rabinowitz-Floer-esque invariant on $\Sigma_{\frac{1}{\sqrt{k(c-k)\pi}}}$.  This invariant vanishes on all other circle subbundles, and we conjecture that Theorem \ref{thm:linebundle} does not hold at any other radius: for any other radius $r$ there exist Hamiltonians with no leaf-wise intersection points on $\Sigma_r$.

The radius-$\frac{1}{\sqrt{k(c-k)\pi)}}$ circle bundle has appeared in other contexts as a ``special'' circle bundle.  In \cite{ritter-smith}, Ritter-Smith showed that, if the base $M$ is toric, the wrapped Fukaya category of $E$ is split-generated by a single Lagrangian torus inside $\Sigma_{\frac{1}{\sqrt{k(c-k)\pi}}}$, equipped with particular choices of local systems.  The presence of this Lagrangian implies that the Rabinowitz Floer invariant is non-zero \cite{venkatesh-rab}, which, as we will see in the proof of Theorem \ref{thm:linebundle}, in turn implies the existence of a leaf-wise intersection point.  Thus, Floer-essential Lagrangians detect leaf-wise intersection points.  A technical requirement for these arguments is that the Reeb flow of a circle subbundle is periodic of constant period, that is, that each circle subbundle is {\it Zoll}.

\begin{theorem}
\label{thm:lagrangian}
Let $E$ be a monotone symplectic manifold that is either compact or convex at infinity.  Let $\Sigma\subset E$ be a compact hypersurface of contact type with Zoll Reeb flow, containing a compact monotone Lagrangian $L$.  If there exists a local system $\gamma$ such that the Lagrangian Floer homology $HF^*(L, \gamma)$ is non-vanishing, then for any compactly-supported Hamiltonian, $\Sigma$ contains a leaf-wise intersection point.
\end{theorem}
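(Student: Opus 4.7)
The plan is to reduce the existence of a leaf-wise intersection point on $\Sigma$ to the non-vanishing of a Rabinowitz-Floer-type invariant localized at $\Sigma$, and to use the hypothesis $HF^*(L,\gamma)\neq 0$ to certify that non-vanishing. This parallels the scheme sketched before Theorem \ref{thm:linebundle}: a Floer-essential Lagrangian inside $\Sigma$ forces a Rabinowitz-style invariant to be non-zero, and non-vanishing of that invariant feeds into the Albers-Frauenfelder machinery to produce a leaf-wise intersection point for any compactly-supported Hamiltonian.

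Step 1 (Setup via the Zoll condition). Use the Zoll hypothesis to fix a standard tubular neighborhood $U\cong\Sigma\times(-\epsilon,\epsilon)$ of $\Sigma$ with contact coordinate $r$ and contact form $\alpha$, so that every nearby hypersurface $\Sigma_r$ has constant-period Reeb flow of period $T$. Define a reduced / $V$-shaped symplectic cohomology $SH^*_{\Sigma,\mathrm{red}}$ built from Hamiltonians of small slope straddling the common Reeb action $T$, whose Floer complex sees constant-speed Reeb orbits on $\Sigma$ and, after a small compactly-supported perturbation by a Hamiltonian $H$, leaf-wise intersection points on $\Sigma$. Because the Reeb orbits all have the same action and $E$ is monotone, the resulting complex splits over a fixed action window and is well-defined in the monotone setting (compact or convex-at-infinity).

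Step 2 (Non-vanishing from $L$). Construct an open-closed-type map
\[
\mathcal{OC}\colon HF^*(L,\gamma)\longrightarrow SH^*_{\Sigma,\mathrm{red}}
\]
and argue it sends the unit to a non-zero class. Because $L$ lies inside $\Sigma$, a PSS-style moduli space of half-cylinders with Lagrangian boundary on $L$ (twisted by the local system $\gamma$) on one end and a Floer cylinder for the reduced Hamiltonian on the other is geometrically well-placed: monotonicity of $L$ and of $E$ controls disk and sphere bubbling by index, while the Zoll condition pins down the action on the closed-orbit side. Non-vanishing of $HF^*(L,\gamma)$ then implies $SH^*_{\Sigma,\mathrm{red}}\neq 0$.

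Step 3 (Albers-Frauenfelder continuation). Given a compactly-supported Hamiltonian $H$ on $E$, insert $H$ into the Rabinowitz-type action functional defining $SH^*_{\Sigma,\mathrm{red}}$. If $\Sigma$ contained no leaf-wise intersection point for $H$, one could interpolate the perturbed functional to the unperturbed one through functionals with no critical points in the relevant action window, identifying the perturbed invariant with a trivial one and contradicting $SH^*_{\Sigma,\mathrm{red}}\neq 0$. This contradiction is exactly the existence statement.

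The main obstacle is Step 2: constructing the open-closed map and showing it carries the unit to a non-zero class in the non-exact, monotone setting with a local system. One must simultaneously control disk bubbling off $L$, sphere bubbling in $E$, and the action filtration on $SH^*_{\Sigma,\mathrm{red}}$ so that the unit lands in the correct window and survives. The Zoll hypothesis is crucial here, as it collapses the spectrum of Reeb actions on $\Sigma$ to a single value and makes the target action window canonical; this is what ultimately allows the image of the unit to be recognized as non-zero.
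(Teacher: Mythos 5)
Your overall architecture --- construct a reduced symplectic cohomology localized at $\Sigma$ from V-shaped Hamiltonians, use $L$ to force it to be non-zero, and derive leaf-wise intersection points by contradiction --- is essentially the paper's strategy. But Step~2 has the key map going the wrong way. You posit an \emph{open-closed} map $HF^*(L,\gamma)\to SH^*_{\Sigma,\mathrm{red}}$ and assert that it sends the unit of $HF^*(L,\gamma)$ to a non-zero class, but there is no a priori reason for that; you would first have to identify what the unit hits and then separately argue that class survives, which is circular. The paper instead builds a \emph{closed-open} map $\widehat{SH^*}(U_i)\to HF^*(L,\gamma)$ and fits it into a commuting triangle with the map $QH^*(E)\to HF^*(L,\gamma)$; the latter is standard Lagrangian quantum module structure and always carries the unit of $QH^*(E)$ to the unit of $HF^*(L,\gamma)$, which is non-zero because $HF^*(L,\gamma)\neq 0$. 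From that the intermediate term $\widehat{SH^*}(U_i)$ is forced to be non-zero. With your direction, the unit argument simply does not close.

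You also misplace the role of the Zoll condition and mischaracterize the vanishing step. You claim Zoll ``collapses the spectrum of Reeb actions to a single value and makes the target action window canonical,'' but that is false: Reeb orbits of different covering number and at different radii in a Weinstein collar still have distinct actions. In fact the construction of $\widehat{SH^*}(U_i)$ and its vanishing in the absence of leaf-wise intersections require no Zoll hypothesis at all --- this is precisely why Corollary~\ref{cor:lagrangian}, which drops Zoll, still produces a leaf-wise intersection on a \emph{nearby} hypersurface. The Zoll condition enters only at the very end, in Proposition~\ref{prop:nested}, to pass to the limit: one first gets a leaf-wise intersection on some $\{r_i\}\times\Sigma\subset U_i$ for each $i$, and Zoll supplies the uniform period needed for equicontinuity so that a subsequence converges to a leaf-wise intersection on $\Sigma$ itself. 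Your Step~3 framing as ``interpolate the perturbed functional to the unperturbed one through functionals with no critical points'' is also not the right picture, since the unperturbed V-shaped Hamiltonian has plenty of critical points near $\Sigma$; the actual mechanism is a direct action estimate, namely that if $U_i$ has no leaf-wise intersections then every periodic orbit of $H_n^i$ is one of the finitely many orbits of $F$ outside $U_i$, and these have $H_n^i$-action growing like $n\cdot\epsilon_i$, so $\lim_{n}HF^\ell_a(H_n^i)=0$ for every fixed $a,\ell$ and hence $\widehat{SH^*}(U_i)=0$.
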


\begin{example}
A silly example of this phenomenon comes from the Clifford torus $L$ in $\mathbb{C}P^1$.  This is a monotone, Floer essential Lagrangian, and so it cannot be displaced from itself by a Hamiltonian flow $\phi_H^1$ corresponding to a given Hamiltonian $H$.  $L$ is also a contact hypersurface, with periodic Reeb flow.  The intersection points of $\phi_H^1(L)$ with $L$ are precisely the leaf-wise intersection points of $H$.  
\end{example} 
A corollary to the proof of Theorem \ref{thm:lagrangian} removes the Zoll condition, at the expense of a precise statement.

\begin{corollary}
\label{cor:lagrangian}
Let $\Sigma\subset E$ be a compact hypersurface of contact type in a monotone symplectic manifold that is either compact or convex at infinity.  If $\Sigma$ contains a compact monotone Lagrangian $L$ such that $HF^*(L, \gamma)\neq 0$ for some local system $\gamma$, then for any compactly-supported Hamiltonian, there exists a hypersurface of contact type arbitrarily close to $\Sigma$ with a leaf-wise intersection point.
\end{corollary}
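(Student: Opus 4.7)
The approach is to mimic the proof of Theorem \ref{thm:lagrangian}, isolating where the Zoll hypothesis enters and relaxing the conclusion accordingly. Because $\Sigma$ is of contact type, there is a Liouville vector field $X$ defined on a neighborhood of $\Sigma$; flowing along $X$ gives a tubular collar $U \cong \Sigma \times (-\epsilon, \epsilon)$ with $\omega = d(e^r \alpha)$, where $\alpha$ is the induced contact form. The level sets $\Sigma_s := \Sigma \times \{s\}$ are all hypersurfaces of contact type, and their Reeb flows are constant rescalings of the Reeb flow of $\Sigma$. We are therefore looking for a leaf-wise intersection point on some $\Sigma_s$ with $|s|$ small.

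Reviewing the proof of Theorem \ref{thm:lagrangian}, I expect the Zoll hypothesis to enter only at the final identification step: critical points of a Rabinowitz-style action functional on pairs $(\gamma, \eta)$ -- a loop $\gamma$ together with a Lagrange multiplier $\eta$ -- are shown to give leaf-wise intersection points on $\Sigma$ itself precisely because the Reeb period spectrum consists of integer multiples of a single common period, anchoring $\eta$ to $\Sigma$. Without Zoll, the same critical points still arise from the non-vanishing of the Rabinowitz-Floer-esque invariant, but they correspond to leaf-wise intersection points on $\Sigma_s$ for some $s$ close to $0$ in the collar, rather than on $\Sigma$. By shrinking $\epsilon$ (for instance by rescaling a defining function of $\Sigma$ and keeping track of supports), the hypersurface $\Sigma_s$ produced this way can be taken arbitrarily $C^\infty$-close to $\Sigma$.

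The non-vanishing input $HF^*(L, \gamma) \neq 0$ feeds into the argument exactly as in Theorem \ref{thm:lagrangian}: it forces the Rabinowitz-style invariant to be non-zero, which in turn produces a critical point of the action functional. The fact that $L$ sits inside $\Sigma$ rather than inside $\Sigma_s$ is not an obstruction, because $L$ enters the argument only as a tool to certify non-vanishing of the Floer invariant and not as a submanifold on which the intersection point is required to lie. The main obstacle I anticipate is verifying that the Rabinowitz-Floer machinery, in particular transversality and compactness of the moduli spaces, remains well-defined without the Zoll hypothesis; in the Zoll setting the discrete period spectrum yields clean action filtrations and simplifies bubbling analysis, and re-running the argument in the non-Zoll regime requires care. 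I expect this to be largely a matter of re-examining each step of the proof of Theorem \ref{thm:lagrangian} and confirming a robust generalization, with the single-hypersurface identification replaced by an open range of admissible values of $\eta$.
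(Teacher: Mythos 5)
Your high-level read is essentially right, and correctly locates where Zoll matters: the paper in fact proves Theorem~\ref{thm:lagrangian} and Corollary~\ref{cor:lagrangian} in one shot as Theorem~\ref{thm:lagrangians}, whose proof never invokes the Zoll hypothesis until after a leaf-wise intersection point has already been produced in each collar $U_i \simeq (-\epsilon_i,\epsilon_i)\times\Sigma$; Zoll is used only in the very last sentence, via Proposition~\ref{prop:nested}, to upgrade a sequence of leaf-wise intersection points on nested $\Sigma_{s_i}$ to one on $\Sigma$ itself. Dropping Zoll simply truncates the argument at the ``arbitrarily close'' conclusion, which is exactly your Corollary~\ref{cor:lagrangian}.

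Two points where your anticipated mechanism does not quite match the paper. First, the paper does not set up a Rabinowitz action functional with a Lagrange multiplier $\eta$; the invariant $\widehat{SH^*}(U_i)$ is a \emph{reduced} symplectic cohomology built from a directed system of v-shaped Hamiltonians $H^i_n$ truncated by action, with $F$ appended on the second half of the $S^1$-direction. The closed--open map to $HF^*(L,\gamma)$ (Lemmas from \cite{venkatesh-rab}) certifies $\widehat{SH^*}(U_i)\neq 0$, and the existence statement is then obtained by contradiction: if $F$ had no leaf-wise intersection point in $U_i$, the generators of $\widehat{SC^*}(U_i)$ would be a fixed finite set of cappings of periodic orbits of $F$ lying outside $U_i$, and the monotonicity of $E$ pins the Novikov exponent to the grading, which together with $\cg{A}_{H_n^i}(\tilde{x}_j) \gtrsim n\epsilon_i$ forces each action-truncated graded piece to vanish for large $n$, hence $\widehat{SH^*}(U_i)=0$. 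Second, your worry about ``re-verifying transversality and compactness of the moduli spaces without Zoll'' is misplaced: none of the Floer-theoretic analysis in the proof of Theorem~\ref{thm:lagrangians} uses periodicity of the Reeb flow. Zoll is purely an ODE compactness hypothesis used in Proposition~\ref{prop:nested} to bound the limiting Reeb segment and extract a $\mathcal{C}^0$-convergent subsequence of intersection points; in the non-Zoll setting that step is skipped and nothing else needs to be reopened.

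Put differently, Corollary~\ref{cor:lagrangian} is obtained by reading Theorem~\ref{thm:lagrangian}'s proof backwards and stopping before Proposition~\ref{prop:nested} is invoked, rather than by re-examining every step. Your proposal gets the shape of this right; the concrete invariant and the contradiction-via-action-and-degree estimate are the pieces you would need to fill in to turn the sketch into a proof.
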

We restate Theorem \ref{thm:lagrangian} and Corollary \ref{cor:lagrangian} together in Section \ref{sec:lagrangian} as Theorem \ref{thm:lagrangians}.

The proofs of Theorems \ref{thm:linebundle} and \ref{thm:lagrangian} require monotonicity of the underlying manifold in different ways.  In Theorem \ref{thm:lagrangian}, monotonicity is required to define a closed-open map from the Rabinowitz Floer invariant to the {\it Lagrangian quantum cohomology of $L$}.  The latter has only been rigorously defined for monotone Lagrangians.  Using the machinery of \cite{fooo}, it should be possible to amend Theorem \ref{thm:lagrangian} to more general classes of symplectic manifolds and Floer-essential Lagrangians.  The proof of Theorem \ref{thm:linebundle} uses monotonicity as a technical assumption that allows us to precisely specify which hypersurface of contact-type contains a leaf-wise intersection point (see Lemma \ref{lem:actionest}).  

The constant $\frac{1}{\sqrt{k(c-k)\pi}}$ has an interesting alternative characterization.  As mentioned in Remark \ref{rmk:chern}, the  class $\mathfrak{p}^*c_1^E$ acts on the quantum cohomology $QH^*(E)$ via quantum cup product.  The largest eigenvalue $\lambda$ of this action has a valuation 
\[
ev(\lambda) = \frac{1}{{(c-k)}}.
\]  
Relaxing the monotonicity condition in Theorem \ref{thm:linebundle} yields an imprecise result in terms of this eigenvalue.

\begin{theorem}
\label{thm:weakplus}
Let $E$ be a weak\textsuperscript{+}-monotone negative line bundle.  Let $\lambda$ be a largest non-zero eigenvalue of the action of $\mathfrak{p}^*c_1^E$ by quantum cup product on $QH^*(E)$.  For any compactly-supported Hamiltonian, there exists a circle subbundle $\Sigma_r$ with $r \in \left(\sqrt{\frac{ev(\lambda)}{k\pi}}, \infty\right)$ such that $\Sigma_r$ has a leaf-wise intersection point. 
\end{theorem}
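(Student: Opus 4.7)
The plan is to follow the strategy of Theorem \ref{thm:linebundle} but replace the sharp action identity that pinned down a single circle subbundle with an action inequality that only bounds the radius from below. In the monotone case, Ritter's eigenvalue calculation combined with the action estimate (Lemma \ref{lem:actionest}) identified the unique radius $r = \tfrac{1}{\sqrt{k(c-k)\pi}}$ at which the relevant localized Rabinowitz-Floer-type invariant survives. In the weak\textsuperscript{+}-monotone setting, we no longer have a monotonicity constant to feed into this identity, but we still have the valuation $ev(\lambda) = \tfrac{1}{c-k}$ available as an intrinsic Floer-theoretic quantity on $QH^*(E)$, and this valuation is what controls the action filtration of the class surviving into symplectic cohomology.

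First, I would set up the action-filtered reduced symplectic cohomology $SH^*_{(a,\infty)}(E)$ and recall (via the PSS map from $QH^*(E)$) that a non-zero eigenvalue $\lambda$ of $\mathfrak{p}^*c_1^E \cup \,\cdot\,$ produces an idempotent summand whose non-trivial part persists in the action window above $ev(\lambda)$. The key input from Ritter's computation, and its extension to the weak\textsuperscript{+} setting, is that the Reeb action on the circle subbundle $\Sigma_r$ grows like $k\pi r^2$, so the threshold $ev(\lambda)$ corresponds precisely to the radius $r_0 = \sqrt{ev(\lambda)/(k\pi)}$; above this radius the truncated invariant is non-zero, below it the contribution vanishes.

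Next, I would define a Rabinowitz-Floer-esque invariant localized to a ``thick shell'' $\{r_0 < r < R\}$, mimicking the construction used in Theorem \ref{thm:linebundle}. Non-vanishing of the filtered symplectic cohomology provides, via the tautological long exact sequence relating shell invariants to $SH^*_{(a,\infty)}$, non-vanishing of this shell invariant for any $R$ large enough to contain the support of the chosen compactly-supported Hamiltonian $H$. Then the Albers--Frauenfelder continuation argument applies verbatim: the non-vanishing forces the existence of a perturbed Rabinowitz--Floer trajectory in the shell for $H$, and its asymptotes project to a leaf-wise intersection point on some $\Sigma_r$ with $r\in (r_0, R)$. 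Letting $R$ be arbitrary yields the stated interval.

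The main obstacle is the second step: without the monotonicity constant, we do not have a Morse--Bott model pinning the surviving Floer classes to a single critical radius, and there is no \emph{a priori} reason the shell invariant localizes onto a thin neighborhood of $r_0$. I expect the technical heart of the proof to be verifying that the PSS image of the $\lambda$-eigenclass, together with the action estimate relating valuations to Reeb action, forces any representative cycle to have action strictly above $ev(\lambda)$, and hence must be carried by Reeb-type orbits at radii strictly greater than $r_0$. Once this filtration statement is established, the translation from a surviving Floer class to an honest leaf-wise intersection point is the same Albers--Frauenfelder argument used throughout the paper.
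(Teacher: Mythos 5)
Your proposal takes a genuinely different route from the paper, and the obstacle you flag at the end is a real gap that your argument does not close. You propose to localize a Rabinowitz-Floer-type invariant to a ``thick shell'' $\{r_0 < r < R\}$ and then push it through the Albers--Frauenfelder continuation argument, mirroring the proof of Theorem~\ref{thm:linebundle}. But in Theorem~\ref{thm:linebundle} the localization at a unique radius depends essentially on the action/index identity (equation~(\ref{eq:indexbound}) inside Lemma~\ref{lem:actionest}) that is available only because the monotonicity constant $\kappa$ gives a linear relation between grading, winding number, and Novikov valuation. You correctly observe that without monotonicity there is no Morse--Bott model pinning surviving classes to a radius; where you say you ``expect the technical heart of the proof'' to be estimating the action of the PSS image of the eigenclass, that is precisely the part you have not supplied, and it does not currently follow from anything in the paper.

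The actual proof avoids this issue entirely and is much shorter. It does not attempt to localize at all. Instead it works with the completed theory $\widehat{SH^*}$ defined via non-Archimedean completion of $\ker(\partial)$ and closure of $\mathrm{im}(\partial)$, and uses the dichotomy from \cite{venkatesh-neg}: for $R < \min_i\sqrt{ev(\lambda_i)/(k\pi)}$ one has $\widehat{SH^*}(H) = 0$, while Ritter gives $SH^*(H)\neq 0$. The key step, which replaces your missing localization argument, is the observation that if there were \emph{no} leaf-wise intersection point on any circle subbundle in the entire unbounded interval $\left(\sqrt{ev(\lambda)/(k\pi)},\infty\right)$, then the orbit set $\mathcal{P}(H^F)$ would be finite, so $SC^*(H^F)$ would be a finitely generated module; hence $\ker(\partial)$ is already complete, $\mathrm{im}(\partial)$ is already closed, and $\widehat{SH^*}(H^F) \cong SH^*(H^F)$ tautologically. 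Combined with the isomorphisms $SH^*(H^F)\simeq SH^*(H)$ and $\widehat{SH^*}(H^F)\simeq\widehat{SH^*}(H)$ (the latter argued via monotone homotopies as in Proposition~\ref{prop:qi(2)}), this yields $0\neq SH^*(H)\simeq\widehat{SH^*}(H)=0$, a contradiction. So the paper buys simplicity and generality precisely by not trying to say where the leaf-wise intersection lives; the finiteness-collapses-completion trick is what your proposal is missing.
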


\subsection{Outline of paper}
In Section \ref{sec:floer} we recall the definitions of Hamiltonian Floer theory and symplectic cohomology and set notation.  We discuss capturing leaf-wise intersection points through Floer theory.  In Section \ref{sec:sketch} we introduce auxiliary Hamiltonians and sketch the proof of Theorem \ref{thm:linebundle}.  In Section \ref{sec:thm1} we prove Theorem \ref{thm:linebundle}.  In Section \ref{sec:thm2} we discuss a modified Floer theory and prove Theorem \ref{thm:weakplus}.  In Section \ref{sec:lagrangian} we introduce Lagrangian quantum cohomology and prove Theorem \ref{thm:lagrangian} and Corollary \ref{cor:lagrangian}, repackaged as Theorem \ref{thm:lagrangians}.

\subsection{Acknowledgements}
This work is supported by the National Science Foundation under Award No. 1902679.

\section{Floer theory and symplectic cohomology}
\label{sec:floer}

Let $\Sigma$ be a contact manifold with a fixed contact form $\alpha$.  The symplectization of $\Sigma$ is the manifold $(0, \infty)\times\Sigma$, equipped with the symplectic form $d(r\alpha)$, where $r$ is the coordinate on $(0, \infty)$.  Let $(E, \Omega)$ be a symplectic manifold, and assume that $E$ is either closed or there is a subset of the symplectization $[C, \infty)\times\Sigma$ such that $E$ is symplectomorphic to $[C, \infty)\times\Sigma$ outside of a compact set.  We define three Floer theories on $E$ over the {\it Novikov field over $\ZZ/2\ZZ$}, denoted by $\Lambda$ and defined by
\[
\Lambda = \left\{ \sum_{i=0}^{\infty}c_iT^{\alpha_i}\hspace{.2cm}\bigg|\hspace{.2cm} c_i\in\ZZ/2\ZZ, \RR\ni\alpha_i\rightarrow\infty\right\}.
\]
$T$ is a formal variable of degree equal to two times the minimum Chern number of $E$ on spheres:
\[
|T| = 2\cdot \min_{\substack{\beta\in\pi_2(E) \\ c_1^{TE}(\beta)\neq 0}}|c_1^{TE}(\beta)|.
\]
$\Lambda$ has a valuation $ev:\Lambda\rightarrow\RR\cup{\infty}$, given by
\[
ev\left( \sum_{i=0}^{\infty}c_iT^{\alpha_i}\right) = \min_{c_i\neq 0}\alpha_i.
\]
A Hamiltonian $H:E\times S^1\rightarrow\RR$ has an associated {\it Hamiltonian vector field} $X_H$, defined by
\[
dH(-) = \Omega(-, X_H).
\]
We denote the set of time-one orbits $x:S^1\rightarrow E$ of $X_H$ by $\cg{P}(H)$.  If the time-one orbits of $X_H$ satisfy a non-degeneracy condition, then an additional generic choice of $\Omega$-compatible almost-complex structure $J\in End(TE)$ produces a graded vector space generated by these orbits.  The graded vector space is called the {\it Floer complex} and is denoted by
\[
CF^*(H, J) := \bigoplus_{\substack{x\in\cg{P}(H)\\ \dot{x} = X_H(x)}}\Lambda\la x\ra.
\]
A choice of grading is determined by the Conley-Zehnder index $\mu_{CZ}$ on a lift of $\cg{L}E$ (see \cite{audin-d} for a definition of the Conley-Zehnder index).  Choose a capping disk $\tilde{x}$ for every $X_H$-periodic orbit $x$, and define $|x| = \mu_{CZ}(\tilde{x})$.  Then, for example,
\[
|T^{\alpha}x| = \alpha\cdot|T| + \mu_{CZ}(\tilde{x})
\]
gives an $\RR$-grading on $CF^*(H, J)$.  Up to chain isomorphism, the Floer complex is independent of choice of capping disk.

Let $x_+, x_-$ be two periodic orbits of $X_H$.  Fix a homotopy class $\beta\in\pi_2(E)$.  Let $\widehat{\cg{M}}_{\beta}(x_-, x_+)$ be the moduli space of maps $u:\RR\times S^1\rightarrow E$ satisfying Floer's equation
\begin{equation}
\label{eq:floer}
\frac{\dd u}{\dd s} + J\left(\frac{\dd u}{\dd t} - X_H\right) = 0,
\end{equation}
with asymptotes
\[
\lim_{s\rightarrow\pm\infty}u(s, t) = x_{\pm}(t),
\]
and such that the sphere formed by the connect sum $\tilde{x}_+\# u\#(-\tilde{x}_-)$ represents the homotopy class $\beta$:
\[
[\tilde{x}_+\# u\#(-\tilde{x}_-)] = \beta\in\pi_2(E).
\]
$\RR$ acts by translation on each $u$; let $\cg{M}_{\beta}(x_-, x_+)$ be the quotient of $\widehat{\cg{M}}_{\beta}(x_-, x_+)$ by this $\RR$ action.  $\cg{M}_{\beta}(x_+, x_-)$ is a zero-dimensional manifold when
\[
|x_-| - |x_+| + 2c_1^{TE}(\beta) = 1.
\]
If {\it Gromov compactness} is satisfied, $\cg{M}_{\beta}(x_+, x_-)$ is compact.  This happens under our assumptions if, for instance, $E$ is monotone.  The Floer differential is defined as
\[
\dd^{fl}(x_+) = \sum_{\substack{x_-\in\cg{P}(H) \\ \beta\in\pi_2(E) \\ |x_-| - |x_+| + 2c_1^{TE}(\beta)= 1}}\sum_{u\in \cg{M}_{\beta}(x_+, x_-)} T^{\beta}x_-
\]
and satisfies $(\dd^{fl})^2 = 0$.  Thus, $CF^*(H, J)$ is a cochain complex.

We now specialize to monotone negative line bundles.  Let $(E, \Omega)$ be a complex line bundle over a closed symplectic manifold $(B, \omega)$ with bundle map denoted by
\[
\mathfrak{p}: E\longrightarrow B.
\]
The line bundle $E$ is {\it negative} if there exists a constant $k > 0$ such that
\[
c_1^E = -k[\omega].
\]
Assume that $B$ is monotone, so that there exists a constant $c > 0$ (the monotonicity constant) such that
\[
c_1^{TB} = c[\omega].
\]
Oancea proved in \cite{oancea-leray} that there exists a Hermitian metric on $E$ with curvature form $\alpha$ such that
\[
d\alpha = \mathfrak{p}^*\omega.
\]
Let $r$ be the radial coordinate on the complex fibers with respect to this metric.  $E$ has a symplectic form $\Omega$ that, on the complement of the zero section, is written in coordinates as
\[
\Omega = d((1+k\pi r^2)\alpha).
\]
$\Omega$ extends smoothly over the zero section as the area form in the fiber direction and $\omega$ in the direction of the base.  It is straight-forward to check that this extension, also denoted by $\Omega$, is a symplectic form.  If $k < c$, $\Omega$ is monotone with monotonicity constant 
\[
\kappa := c-k.
\]

Fix a sequence $\{0 = R_0 < R_1 < R_2 < \dots\}$ converging to some fixed $R\in\RR_{>0}$.  Let $\{h_i:[0, \infty)\rightarrow\RR\}_{i\in\NN}$ be functions defined inductively by convexly smoothing the piecewise linear function
\[
h_i(r) = \left\{\begin{array}{cc} h_{i-1}(r) & r < R_i \\ (i + \frac{1}{2}) r + h_{i-1}(R_i) - (i + \frac{1}{2})R_i & r \geq R_i \end{array}\right..
\]
We define
\[
h_0(r) = \frac{1}{2}r.
\]
Assume without loss of generality that the sequence $\{R_i\}$ converges quickly enough that the family
$\{h_i\big|_{[0, R]}\}$ is universally bounded.  See Figure \ref{fig:unbddh}.
\begin{figure}[htpb!]
\centering
\begin{tikzpicture}[scale=5]
\draw (0,-.07) -- (0,-1.01916631) node [left = 1pt] {$\RR$};
\draw(0,-1.01916631)-- (1, -1.03660231)node [right = 1pt] {$r$};
\draw[violet] (0,-1.01916631)-- (.267948, -1);
\draw (.267948, -1.03660231)node [anchor=north]{{\tiny$R_1$}};
\draw (.641751, -1.03660231)node [anchor=north]{{\tiny$R_2$}};
\draw (.822168, -1.03660231)node [anchor=north]{{\tiny$R_3$}};
\draw[->, violet]  (.267948, -1) -- (1, -.9)node [right=1pt] {$h_0$};
\draw[->, violet] (.267948, -1) -- (1, -.57735)node [right=1pt] {$h_1$};
\draw[->, violet]  (.641751, -.784185) -- (1, -.37797)node [right=1pt] {$h_2$};
\draw[->, violet] (.822168, -.5796118) -- (1, -.2582)node [right=1pt] {$h_3$};
\draw[->, violet] (.911356, -.418414) -- (1, -.1796)node [right=1pt] {.};
\draw[->, violet] (.955751, -.2988102) -- (1, -.12599) node [right=1pt] {.};
\draw[->, violet] (.977884, -.212366) -- (1, -.08873) node [right=1pt] {.};
\draw (.267948, -1.02660231) -- (.267948, -1.04660231);
\draw (.641751, -1.02660231) -- (.641751, -1.04660231);
\draw (.822168, -1.02660231) -- (.822168, -1.04660231);
\end{tikzpicture}
\caption{The family of functions $\{h_i(r)\}$}
\label{fig:unbddh}
\end{figure}
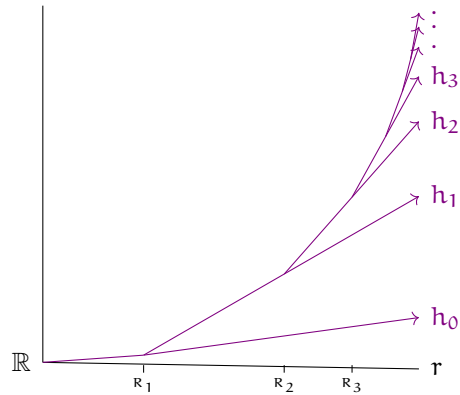

Let $G:M\rightarrow\RR$ be a Morse function.  On the complement of the zero section, $E$ is symplectomorphic to $(1, \infty)\times\Sigma$, where $\Sigma$ is the radius-one $S^1$-subbundle over $B$.  Let $(r, x)$ be coordinates on $(1, \infty)\times\Sigma$.  For each $i\in\NN$ define a Hamiltonian $H_i:(1, \infty)\times\Sigma\rightarrow\RR$ by
\[
H_i((r, x)) = h_i(k \pi r^2) + (1 + k\pi r^2)\mathfrak{p}^*G((r, x)).
\]
Extend $H_i$ to $E$ by setting 
\[
H_i = G
\]
on the zero-section.  For each $i\in\ZZ_{<0}$, similarly define $H_i:E\rightarrow\RR$ through
\[
H_i\big|_{(1, \infty)\times\Sigma} = -h_{-i-1}(k \pi r^2) + (1 +k\pi r^2)\mathfrak{p}^*G((r, x)).
\]
The periodic orbits of $H_i$ are not all non-degenerate; they are either
\begin{enumerate}
\item constant, non-degenerate orbits on the zero section, or
\item {\it transversally non-degenerate} $S^1$-orbits: orbits that come in $S^1$ families.
\end{enumerate}
The Floer differential $\dd^{fl}$ is modified by Morse-Bott methods to create a well-defined complex $CF^*(H_i)$, as in the methods defined for {\it autonomous Hamiltonians} in \cite{b-o}.  We refer there for details, or to \cite{venkatesh-thesis} for the study of autonomous Hamiltonians in the context of negative line bundles.

There are continuation maps $c_i:CF^*(H_i)\rightarrow CF^*(H_{i+1})$ for each $i\in\ZZ$ producing a directed system
\[
\dots\xrightarrow{c_{i-2}} CF^*(H_{i-1})\xrightarrow{c_{i-1}} CF^*(H_i) \xrightarrow{c_i}CF^*(H_{i+1})\xrightarrow{c_{i+1}}\dots.
\]
Continuation maps commute with the differential; thus, both the direct limit and the inverse limit of the directed system are chain complexes.  
The following useful Lemma is proved in \cite{venkatesh-thesis}.
\begin{lemma}
When $i \geq 0$, the continuation map $c_i$ can be chosen to be the canonical inclusion, taking an $S^1$-orbit $x:S^1\rightarrow E$ to itself.  When $i < -1$, the continuation map $c_i$ can be chosen to be the canonical projection.
\end{lemma}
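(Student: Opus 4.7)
I will prove the statement for $i\ge 0$; the $i<-1$ case follows by dual reasoning. The key structural observation is that by construction $h_i$ and $h_{i+1}$ coincide on $[0,k\pi R_{i+1}^2]$, so $H_i=H_{i+1}$ on the region $\{k\pi r^2\le R_{i+1}\}$. Every $X_{H_i}$-orbit family lies in this common region and is automatically an $X_{H_{i+1}}$-orbit family. The additional $S^1$-orbit families of $H_{i+1}$ occur at radii arising in the convex smoothing between slope $i+\tfrac12$ and slope $i+\tfrac32$, corresponding to the newly swept integer slope $i+1$.

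The plan is to select a monotone smooth homotopy $\{H_s\}_{s\in\RR}$ of autonomous Hamiltonians with $H_s=H_i$ for $s\le 0$, $H_s=H_{i+1}$ for $s\ge 1$, $\partial_s H_s\ge 0$ everywhere, and $H_s$ independent of $s$ on a fixed neighborhood of every shared orbit family (this is possible since $H_i=H_{i+1}$ already near shared families). Ritter's maximum principle for autonomous Hamiltonians in negative line bundles \cite{ritter-gromov} confines Floer continuation solutions to a compact region of $E$, giving Gromov compactness in the Morse-Bott framework of \cite{b-o}. The standard energy identity then reads, for a continuation solution $u$ from $x_-\in\cg{P}(H_i)$ at $s=-\infty$ to $x_+\in\cg{P}(H_{i+1})$ at $s=+\infty$,
\[
\cg{A}_{H_{i+1}}(x_+) - \cg{A}_{H_i}(x_-) = E(u) + \int_{-\infty}^{\infty}\int_0^1 (\partial_s H_s)(u)\,dt\,ds \ge 0,
\]
where $\cg{A}_H$ denotes the Hamiltonian action functional.

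For a shared orbit family $x$, the two actions coincide because the Hamiltonians and capping disks agree near $x$, and the identity forces $E(u)=0$ together with $(\partial_s H_s)(u)\equiv 0$. Hence $u$ is $s$-independent, lies in the region where the homotopy is constant, and solves the $H_i$-Floer equation there; these constraints force $u\equiv x$. After passing to the Morse-Bott quotient, this unique constant solution contributes the identity on the $S^1$-orbit family, giving the canonical inclusion on shared generators. To exclude mixed continuation trajectories from an old orbit family to a new one, I would invoke a Conley-Zehnder index comparison: orbit families at distinct integer slopes carry distinct gradings for any fixed choice of cappings, so the degree-zero continuation moduli space between old and new families is empty.

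The main obstacle I expect is precisely this last index step. The Morse-Bott setup introduces auxiliary Morse functions on each $S^1$-family, shifting gradings by $\pm 1$, and in principle a mixed trajectory could try to balance such shifts against Chern numbers $c_1^{TE}(\beta)$ of sphere classes. Ruling this out requires careful bookkeeping of indices specific to the negative line bundle geometry, which I would import from the computations in \cite{venkatesh-thesis}. The $i<-1$ case is the precise dual: here $\cg{P}(H_{i+1})\subsetneq \cg{P}(H_i)$; the same monotone homotopy setup (still with $\partial_s H_s\ge 0$, since $h_{-i-2}\le h_{-i-1}$ gives $H_{i+1}\ge H_i$) shows shared orbit families map identically, and the analogous index/action analysis forces the extra orbit families of $H_i$ to contribute zero under $c_i$, realizing $c_i$ as the canonical projection.
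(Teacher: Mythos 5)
The paper itself does not prove this lemma; it simply cites \cite{venkatesh-thesis}, so there is no internal argument to compare against. Your outline — a monotone homotopy made $s$-independent near shared orbit families, an energy identity forcing constant continuation solutions between a shared orbit and itself, and an extra argument to kill off-diagonal matrix entries — is the natural strategy and almost certainly the one intended.

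That said, there are two real gaps you should fix. First, the index argument you invoke to exclude mixed trajectories cannot do the job by itself over the Novikov field. The continuation map has degree $0$ over $\Lambda$, and the variable $T$ carries nonzero degree $|T| = 2\kappa$; a trajectory from an old orbit $x$ of index $|x|$ to a new orbit $y$ of index $|y|$ contributes $T^{\alpha}y$ with $2\kappa\alpha = |x| - |y|$, which is a perfectly legal generator of the same $\Lambda$-degree whenever $(|x|-|y|)/2$ is a multiple of the minimum Chern number (e.g. always when that minimum is $1$). Distinct integer windings having distinct $\mu_{CZ}$ therefore does not empty the moduli space. The robust tool is an action estimate: show that $\cg{A}(T^{\alpha}y)$ lies on the wrong side of $\cg{A}(x)$ for the energy identity to permit a solution, exploiting that the new $S^1$-family sits near radius $R_{i+1}$ where $h_i$ and $h_{i+1}$ begin to diverge. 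This is exactly the flavor of estimate the paper carries out in Lemma \ref{lem:actionest}, and it subsumes your index heuristic.

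Second, you never verify that the canonical inclusion (respectively projection) is a chain map, which is a prerequisite for the statement to be meaningful. For $i\ge 0$ this requires that the Floer differential of $H_{i+1}$ not flow from a shared orbit to a new one, so that the span of shared orbits is a subcomplex of $CF^*(H_{i+1})$; for $i<-1$ it requires that the span of the extra orbits of $H_i$ be a subcomplex (or dually, that the differential not flow from shared to extra), so that the quotient is well-defined. This is a statement about $\dd^{fl}$, not about the continuation map, and is not a corollary of the continuation being the identity on shared generators. It follows from the same action estimate, but must be stated and checked separately. With these two points supplied, your proof would be complete; as written, the burden you defer to "import from \cite{venkatesh-thesis}" is precisely the load-bearing part.
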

Define the symplectic cohomology of the Hamiltonians $\{H_i\}$, denoted by $SH^*(H)$, as the homology of the direct limit
\[
SC^*(H) = \lim_{0\leq i\rightarrow\infty} CF^*(H_i).
\]
The condition $0\leq i$ is superfluous, but it will allow for cleaner formulations in proofs.  

The Floer complexes $CF^*(H_i)$ have a {\it filtration by action}.  Recall that the index of a periodic orbit $x$ of the Hamiltonian vector field $X_{H_i}$ is defined by fixing a capping disk $\tilde{x}$.  With this choice made, the action of $x$ is defined to be
\[
\cg{A}_{H_i}(x) = -\int_{D}\tilde{x}^*\Omega + \int_0^1H_i(x(t))dt.
\]
The action of a sum $X = \sum\limits_{j=0}^nC_jx_j \in CF^*(H_i)$ is given by
\begin{equation}
\label{eq:action}
\cg{A}_{H_i}(X) = \min_{C_j\neq 0} \left(ev(C_j) + \cg{A}(x_j)\right).
\end{equation}
Action is increased by the Floer differential, and so
\[
CF^*_{(a, \infty)}(H_i) := \left\{ X\in CF^*(H_i)\hspace{.2cm}\bigg|\hspace{.2cm} \cg{A}_{H_i}(X) > a\right\}
\]
is a subcomplex of $CF^*(H_i)$.  Continuation maps may be chosen to increase action; with these choices,
\[
SC_{(a, \infty)}^*(H) := \lim_{0\leq i\rightarrow\infty} CF^*_{(a, \infty)}(H_i)
\]
is a subcomplex of $SC^*(H)$.

Similarly, taking inverse limits, the complex
\[
SC^{(a, \infty)}_*(H) := \lim_{\substack{\leftarrow \\ i < 0}}CF^*_{(a, \infty)}(H_i)
\]
is a subcomplex of
\[
SC_*(H) := \lim_{\substack{\leftarrow \\ i < 0}}CF^*(H_i).
\]
If $a > a'$, there are inclusion maps
\[
SC^*_{(a, \infty)}(H)\hookrightarrow SC^*_{(a', \infty)}(H)
\]
and
\[
SC^{(a, \infty)}_*(H)\hookrightarrow SC^{(a', \infty)}_*(H).
\]
Thus, the action-filtered subcomplexes form directed systems.  We take the direct limit over all action windows $(a, \infty)$.  As direct limits commute with each other,
\[
\lim_{\substack{\rightarrow \\ a}}\lim_{0\leq i\rightarrow\infty} CF^*_{(a, \infty)}(H_i) = \lim_{0\leq i\rightarrow\infty} \lim_{\substack{\rightarrow \\ a}}CF^*_{(a, \infty)}(H_i) \simeq SC^*(H). 
\]
However,
\begin{equation}
\label{eq:completedcomplex}
\widehat{SC_*}(H) := \lim_{\substack{\rightarrow \\ a}}\lim_{\substack{\leftarrow \\ i < 0}}CF^*_{(a, \infty)}(H_i)
\end{equation}
is not necessarily equal to $SC_*(H)$.  Denote the homology of (\ref{eq:completedcomplex}) by $\widehat{SH^*}(H)$, and call it {\it completed symplectic homology}.

The following computation was performed by Ritter in \cite{ritter-gromov}.
\begin{theorem}[Ritter \cite{ritter-fano}]
\label{thm:ritter}
The symplectic cohomology of a monotone negative line bundle over a toric base is non-vanishing:
\[
SH^*(H)\neq 0.
\]
\end{theorem}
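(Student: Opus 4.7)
The plan is to deduce the nonvanishing of $SH^*(H)$ from two separate pieces of Ritter's work: a structural identification of $SH^*(E)$ with a localization of $QH^*(E)$, and an explicit nonnilpotency calculation in the toric setting.

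First, using the Hamiltonians $\{H_i\}_{i\geq 0}$ constructed in Section \ref{sec:floer}, I would produce a unital ring map $c^*\colon QH^*(E)\to SH^*(H)$ induced by a PSS-type continuation from a $C^2$-small Morse function $G$ on the zero section up through the family. The key computation, carried out in \cite{ritter-gromov}, is that each continuation map $c_i\colon CF^*(H_i)\to CF^*(H_{i+1})$ is chain-homotopic to quantum multiplication by the class $\mathfrak{p}^*c_1^E$ on the summand corresponding to orbits on and near the zero section. Taking the direct limit over $i\geq 0$ identifies $SH^*(H)$ with the localization
\[
SH^*(H)\;\simeq\; QH^*(E)\bigl[(\mathfrak{p}^*c_1^E)^{-1}\bigr],
\]
where the inverse is taken with respect to quantum cup product. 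Consequently $SH^*(H)\neq 0$ if and only if the operator $\mathfrak{p}^*c_1^E\star(-)$ on $QH^*(E)$ is not nilpotent.

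Second, I would invoke the Batyrev/Givental-type presentation of the quantum cohomology of a monotone toric symplectic manifold to show that $[\omega]\star(-)$ on $QH^*(M)$ is not nilpotent: $QH^*(M)$ is a Jacobian ring of the Hori-Vafa superpotential $W$, whose critical values give the eigenvalues of quantum multiplication by $c_1^{TM}$. Since $M$ is monotone toric, $W$ has at least one nonzero critical value, so $[\omega]\star(-)$ has a nonzero eigenvalue on $QH^*(M)$. A Leray-type computation in the spirit of \cite{oancea-leray} (using the fibration $\mathfrak{p}\colon E\to M$ and the $S^1$-symmetry on the fibers) then promotes this eigenvector to an eigenvector of $\mathfrak{p}^*c_1^E\star(-)=-k\,\mathfrak{p}^*[\omega]\star(-)$ on $QH^*(E)$ with nonzero eigenvalue, so this operator is not nilpotent either. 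Combined with the first step, $SH^*(H)\neq 0$.

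The hard part is step one: showing that the acceleration maps in the direct system really do act as quantum multiplication by $\mathfrak{p}^*c_1^E$. This requires an index and action bookkeeping argument for Floer cylinders asymptotic to $S^1$-orbits at different radii, together with a Morse-Bott gluing computation that reinterprets the ``extra'' Floer trajectories generated by increasing the slope of $h_i$ as holomorphic spheres intersecting the zero section, whose enumerative contribution is exactly the quantum product with the first Chern class of the normal bundle. Once the localization statement is in place, step two is a formal consequence of the toric mirror theorem (or may be cited from \cite{ritter-fano} as a black box, as indicated in Remark \ref{rmk:chern}).
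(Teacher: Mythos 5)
The paper does not contain a proof of Theorem \ref{thm:ritter}; it imports the result from Ritter as a black box (see the sentence directly above it, ``The following computation was performed by Ritter in \cite{ritter-gromov}.''). There is therefore no internal proof against which to calibrate your argument. That said, your sketch is a broadly faithful reconstruction of the shape of Ritter's actual proof, and it shows a correct understanding of what is going on, but two of its steps are stated in ways that would need repair if you tried to make them precise. First, Ritter's structural theorem identifies $SH^*(E)$ with the \emph{quotient} of $QH^*(E)$ by the generalized $0$-eigenspace of $\mathfrak{p}^*c_1^E\star(-)$ (equivalently, with the image of a large power of that operator); this agrees with the localization $QH^*(E)[(\mathfrak{p}^*c_1^E)^{-1}]$ only after invoking the generalized eigenspace decomposition over the Novikov field, so ``localization'' is a correct slogan but the raw statement of the isomorphism is a quotient, and the nonvanishing criterion reads off as non-nilpotency of the operator exactly as you say. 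Second, the step where you promote an eigenvector of $[\omega]\star(-)$ on $QH^*(M)$ to an eigenvector of $\mathfrak{p}^*c_1^E\star(-)$ on $QH^*(E)$ via a Leray-type argument is the weakest link: $QH^*(E)$ is not additively $QH^*(M)\otimes H^*(\mathbb{C})$ with a pulled-back product, since the quantum product on the total space sees holomorphic spheres with nontrivial fiber component, and the cited \cite{oancea-leray} computes classical, not quantum, cohomology of the circle bundle. Ritter instead computes $QH^*(E)$ and the eigenvalues of $\mathfrak{p}^*c_1^E\star(-)$ directly using the Seidel representation attached to the fiberwise $S^1$-action; the toric hypothesis enters to make that representation computable, not through a Hori--Vafa mirror. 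As Remark \ref{rmk:chern} already signals, within this paper both the quotient/localization isomorphism and the toric non-nilpotency should simply be cited.
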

Ritter's method of proof implies the dual theorem:
\[
SH_*(H)\neq 0.
\]  
However, these non-vanishing theorems do not always hold for completed symplectic homology.
\begin{theorem}
\label{thm:vanishing}
The completed symplectic homology of a monotone negative line bundle vanishes for small enough radii:
\[
\widehat{SH_*}(H) = 0
\]
whenever $R < \frac{1}{\sqrt{k\kappa \pi}}$.
\end{theorem}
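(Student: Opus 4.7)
The natural approach is to exhibit a null-homotopy of the identity class in the completed complex $\widehat{SC_*}(H)$. Since $\widehat{SH_*}(H)$ inherits a unital module structure over $QH^*(E)$ via a PSS-type continuation map, showing that the unit (represented by a Morse critical point on the zero section) is a Floer boundary forces the whole homology to vanish. The bounding chain I expect to construct is an infinite series of $S^1$-orbits indexed by the slope jumps of $\{h_{n-1}\}$, which does not converge in the uncompleted complex $SC_*(H)$ but does converge in $\widehat{SC_*}(H)$ precisely under the hypothesis $R < 1/\sqrt{k\kappa\pi}$.

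First I would compute the action of each Morse-Bott $S^1$-orbit family $\gamma_j$ of $H_{-n}$ at slope-jump radius $r_j \approx R_j$. Using the primitive $(1 + k\pi r^2)\alpha$ of $\Omega$ outside the zero section, applying Stokes' theorem to a fiber capping disk, and using the universal bound on $\{h_i|_{[0, R]}\}$, one obtains
\[
\cg{A}_{H_{-n}}(\gamma_j) = -j(1 + k\pi r_j^2) + O(1)
\]
uniformly in $n$ and $j$. The hypothesis $R < 1/\sqrt{k\kappa\pi}$ is equivalent to $1 + k\pi R^2 < c/\kappa$, which controls the rate at which these actions decrease and is what ultimately allows Novikov weights in $\Omega(\pi_2(E)) = \frac{N}{\kappa}\ZZ$ (with $N$ the minimum Chern number on spheres) to compensate.

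Second, using the Morse-Bott Floer differential between consecutive families, as worked out for autonomous Hamiltonians in \cite{b-o} and specialized to negative line bundles in \cite{venkatesh-thesis}, I would identify a leading term $\dd^{fl}\gamma_j = T^{\beta_j}\gamma_{j-1} + \cdots$ with $ev(T^{\beta_j}) = \cg{A}(\gamma_{j-1}) - \cg{A}(\gamma_j)$, together with a term connecting $\gamma_1$ to a constant orbit on the zero section. Choosing recursive Novikov powers $\alpha_j$ so that the shifted actions $\alpha_j + \cg{A}(\gamma_j)$ are uniformly bounded below in $j$, I would form $y := \sum_{j \geq 1} T^{\alpha_j} \gamma_j$ as an element of $\widehat{SC_*}(H)$ and verify that $\dd y$ collapses, after cancellation of intermediate terms, to a constant-orbit class representing the unit.

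The main obstacle I expect is the careful analysis of the Morse-Bott Floer differential on $CF^*(H_{-n})$: ensuring that the leading telescoping term has the claimed form, that contributions from Floer cylinders skipping a family or interacting unexpectedly with the zero section either vanish for index reasons or cooperate in the cancellation, and that the auxiliary Morse differential on each family is compatible. The role of the inequality $R < 1/\sqrt{k\kappa\pi}$ must then be pinned down as the exact threshold beyond which the Novikov weights needed to land $y$ in the completed complex cease to be available in $\Omega(\pi_2(E))$: in effect, the hypothesis aligns the lattice structure of the Novikov coefficients with the geometric rate of action decay along the tower of slope jumps, so that telescoping can be carried out term by term in the inverse–direct limit defining $\widehat{SC_*}(H)$.
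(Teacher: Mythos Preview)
Your route is different from the paper's. The paper (the detailed argument is the proof of Proposition~\ref{prop:shvanishes}, which is the same statement in slightly different notation) does not single out a unit. Instead it filters the completed complex by the Conley--Zehnder index of the projection $\mathfrak{p}_*(x)$ to the base, decomposes $\dd^{fl} = \dd_0 + \dd_1 + \cdots$ according to how far a Floer cylinder moves in this filtration, and proves as the key lemma that the fiberwise piece $\dd_0$ is acyclic with action-controlled primitives (Lemmas~\ref{lem:dd0} and~\ref{lem:czaction}). A primitive for an \emph{arbitrary} cocycle $X = \sum_p X_p$ is then built by solving $\dd_0(Y_p) = X_p - \sum_{j\geq 1}\dd_j(Y_{p-j})$ inductively in $p$; the radius hypothesis enters through Lemma~\ref{lem:czaction}, which guarantees $\cg{A}(Y_p)\not\to -\infty$ so that $Y = \sum_p Y_p$ lands in the completion. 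Your telescoping sum is essentially the special case of this construction in which $X$ is a single constant orbit, but the paper runs it for every cocycle and thereby avoids any appeal to multiplicative structure.

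The reduction ``the unit is a boundary, hence everything vanishes'' is the gap in your proposal. It requires that $\widehat{SH_*}(H)$ carry a ring structure with unit, or at least a unital $QH^*(E)$-module structure in which the PSS image of $1$ acts as the identity. For the uncompleted direct-limit theory this is standard, but $\widehat{SH_*}(H)$ is built from an inverse limit over Hamiltonians of increasingly negative slope followed by a direct limit in the action window, and neither the pair-of-pants product nor the quantum module action is obviously compatible with that double limit; the paper never claims such a structure. Without it, bounding one distinguished cocycle does not force the remaining ones to bound. If you can establish the module structure on the completion your argument should go through, but that is not free, and the paper's filtration argument is designed precisely to sidestep this issue.
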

\begin{remark}
Theorem \ref{thm:vanishing} is reminiscent of Theorem 1.2 in \cite{albers-kang}, which proved a non-vanishing result for a related homology theory termed {\it Rabinowitz Floer homology}.  Theorem \ref{thm:vanishing} is also the natural extension of Theorem 1 in \cite{venkatesh-neg}.  The proof employs, with very little modification, the methods developed in \cite{albers-kang}.
\end{remark}

\subsection{Hamiltonians with truncated support}
\label{subsec:trunc}

In preparation for setting up Floer theory for leaf-wise intersection points, consider the following modification to the Floer chain complex.  Let $\rho:\RR\rightarrow\RR$ be a bump function with support in $[0, 1/2]$ that integrates to one:
\[
\int_{\RR}\rho(t)dt = 1.
\]
Define Hamiltonians 
\[
\cg{H}_n = \rho(t)H_n.
\]
The periodic orbits of $\cg{H}_n$ are precisely those of $H_n$.  Standard results in Floer theory, which we relegate to the Appendix, show that the symplectic chain complex $SC^*(\cg{H})$ is well-defined, and the non-vanishing of $SH^*(H)$ in Theorem \ref{thm:ritter} implies the non-vanishing
\begin{equation}
\label{eq:truncatediso}
SH^*(\cg{H})\neq 0
\end{equation}
where $SH^*(\cg{H})$ is the symplectic cohomology defined using the Hamiltonians $\{\cg{H}_n\}$.

Analogously to Theorem \ref{thm:vanishing},
\begin{theorem}
\label{thm:truncated-vanishing}
The completed symplectic homology of $\cg{H}$ vanishes whenever $ R < \frac{1}{\sqrt{k\kappa\pi}}:$
\[
\widehat{SH_*}(\cg{H}) = 0.
\]
\end{theorem}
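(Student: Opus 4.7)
\medskip\noindent\textbf{Proof plan for Theorem \ref{thm:truncated-vanishing}.} The plan is to reduce the statement to Theorem \ref{thm:vanishing} by exploiting the fact that multiplying a time-independent Hamiltonian by a nonnegative bump function $\rho(t)$ with $\int\rho=1$ does not change the underlying set of one-periodic orbits, only the parametrization of the Hamiltonian flow. First I would show that $\cg{P}(\cg{H}_n)=\cg{P}(H_n)$: indeed, if $\psi_s$ is the flow of $X_{H_n}$, then the flow of $X_{\cg{H}_n}$ at time $t$ is $\psi_{\int_0^t\rho(\tau)d\tau}$, so one-periodic orbits correspond exactly. This identifies the generators of the chain complexes $CF^*(\cg{H}_n)$ and $CF^*(H_n)$, and a brief computation shows that the action functional satisfies $\cg{A}_{\cg{H}_n}(x)=-\int_D\tilde x^*\Omega+\int_0^1\rho(t)H_n(x(t))\,dt$, which equals $\cg{A}_{H_n}(x)$ for orbits $x$ on which $H_n\circ x$ is constant in $t$ (which is the case for the transversally non-degenerate $S^1$-orbits of our autonomous $H_n$).

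Next I would verify that the Floer differentials and continuation maps on $CF^*(\cg{H}_n)$ can be set up by the same Morse--Bott scheme as for $CF^*(H_n)$ (as in \cite{b-o}), and that continuation maps remain canonical inclusions (for $i\geq 0$) and projections (for $i<-1$) after the truncation, using a monotone homotopy of the form $\rho(t)H_s$ where $H_s$ interpolates between $H_i$ and $H_{i+1}$. In particular the resulting directed systems and inverse systems are filtered isomorphic to those built from $\{H_n\}$, which identifies the action-filtered complexes
\[
CF^*_{(a,\infty)}(\cg{H}_n)\simeq CF^*_{(a,\infty)}(H_n)
\]
compatibly with the structure maps. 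Taking direct limits in $a$ and inverse limits in $n<0$ then yields
\[
\widehat{SH_*}(\cg{H})\simeq \widehat{SH_*}(H),
\]
and the vanishing follows from Theorem \ref{thm:vanishing}.

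The one point requiring care, and the main obstacle, is the compatibility of the continuation and Floer moduli problems under the time-truncation: the perturbed Floer equation with Hamiltonian $\rho(t)H_n$ is no longer autonomous, so one must check that the Morse--Bott perturbations and gluing arguments of \cite{b-o} still apply and that the identification of continuation maps with inclusions or projections survives. This is essentially a routine verification, since the critical circles of $\cg{A}_{\cg{H}_n}$ sit at the same locations in $\cg{L}E$ and the linearized operators differ by a compact perturbation; once this bookkeeping is in place, the vanishing argument of Theorem \ref{thm:vanishing} (following \cite{albers-kang}) carries over with no change.
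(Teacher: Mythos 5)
Your reduction hinges on the displayed claim
\[
CF^*_{(a,\infty)}(\cg{H}_n)\simeq CF^*_{(a,\infty)}(H_n)\quad\text{``compatibly with the structure maps,''}
\]
and this is where the proposal has a genuine gap. What you have actually established is weaker: the two complexes have the same generators (after the unproblematic reparametrization $t\mapsto\int_0^t\rho$, which sends $1$-periodic orbits to $1$-periodic orbits and preserves the underlying unparametrized loop, hence the capping area), and the action filtration agrees because $H_n$ is constant along its own flow so $\int_0^1\rho(t)H_n(x(t))\,dt=\int_0^1 H_n(y(t))\,dt$. But the Floer differentials on the two sides are defined by counting solutions of \emph{different} PDEs: $\partial_su+J(\partial_tu-X_{H_n})=0$ versus $\partial_su+J(\partial_tu-\rho(t)X_{H_n})=0$. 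A substitution $v(s,t)=u(s,\tau(t))$ does not intertwine them (one picks up the error term $J(\rho(t)-1)(\partial_tu-X_{H_n})$), and your observation that ``the linearized operators differ by a compact perturbation'' only controls Fredholm indices, not the integer counts of rigid cascades. So there is no canonical identification of the two differentials, and the filtered vector-space isomorphism you describe is not a chain isomorphism. One cannot simply quote Theorem~\ref{thm:vanishing}.

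The fallback you mention in your last paragraph --- re-running the Albers--Kang filtration argument directly for the truncated Hamiltonians --- is not ``just bookkeeping once the chain identification is in place''; it \emph{is} the proof, and it is essentially what the paper does. The paper's version appears in Section~4.2, where Proposition~\ref{prop:shvanishes} proves $\widehat{SH_*}(K)=0$ for the $\rho(t)$-truncated family $\{K_m\}$ by choosing an upper-triangular almost-complex structure, filtering by $\mu_{CZ}(\mathfrak{p}_*x)$ (Lemma~\ref{lem:filtered}), decomposing $\partial^{fl}=\partial_0+\partial_1+\cdots$ (Lemma~\ref{lem:filtered-d}), and exploiting fiberwise $\partial_0$-acyclicity with a uniform action drop (Lemma~\ref{lem:dd0}) together with the action--index correspondence (Lemma~\ref{lem:czaction}) to inductively build a primitive of any cocycle inside the completion. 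Those inputs transfer to $\cg{H}_n$ because the orbits, their fiberwise Conley--Zehnder indices, and their actions agree with the autonomous case, as you yourself noted; but the argument must be carried out for the time-dependent moduli problem, not imported via an unproved chain isomorphism.

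A second, more practical obstacle to patching your reduction by continuation maps (rather than a chain isomorphism): any monotone or anti-monotone homotopy between $H_n$ and $\cg{H}_n=\rho(t)H_n$ shifts action by roughly $\sup|(\rho(t)-1)H_n|$ on the region where Floer solutions live. Because the slopes of $h_n$ diverge as $n\to\pm\infty$, these shifts are not uniform in $n$. This makes it impossible to pass the continuation maps through the inverse limit $\lim_{\leftarrow,\,n<0}$ and then the direct limit in $a$ needed to define $\widehat{SH_*}$. Uniformity in $n$ is exactly the thing that fails, which is why the paper argues intrinsically rather than by reduction.
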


\subsection{Leaf-wise intersections in Floer theory}

Leaf-wise intersection points were first characterized through Floer theory by Albers-Frauenfelder in \cite{albers-f}.  Kang subsequently adjusted their framework to study leaf-wise intersection points through symplectic cohomology \cite{kang}.  In this section, we recall their methods and prove some necessary, albeit technical, Lemmas on perturbing away degenerate orbits.

Let $F:E\times \RR\rightarrow\RR$ be a compactly supported Hamiltonian.  Let $\Sigma_R = \{R\}\times\Sigma$ be the circle subbundle at radius $R$.  A {\it leafwise intersection point of $F$ on $\Sigma_R$} is given by a flow line of the Hamiltonian flow corresponding to $F$
\[
x:[0, 1]\rightarrow E
\] 
where $x(0)$ and $x(1)$ lie on the same Reeb trajectory in $\Sigma_R$.  The Reeb trajectories transcribe the fibers of the subbundle $\Sigma_R$; $x(1)$ is a leaf-wise intersection point if and only if $x(0)$ and $x(1)$ lie on the same circle fiber.  We are interested in leaf-wise intersection points on the fixed contact hypersurface $\Sigma_{\frac{1}{\sqrt{k\kappa\pi}}}$.

Assume without loss of generality that the support of $F$ lies in $E\times[1/2, 1]$.  As in Subsection \ref{subsec:trunc}, let $\rho:\RR\rightarrow\RR$ be a bump function with support in $[0, 1/2]$ that integrates to one:
\[
\int_{\RR}\rho(t)dt = 1.
\]
Define Hamiltonians $\left\{H_i^F:E\times [0, 1]\rightarrow\RR\right\}_{i\in\ZZ}$ by
\[
H_i^F = \rho(t)h_i(k\pi r^2) + F.
\]
The dynamics of $H_i^F$ are governed by $\rho(t)h_i(k \pi r^2)$ when $t\in[0, 1/2]$ and by $F$ when $t\in[1/2, 1]$.  Denoting by $R_{\alpha}$ the Reeb vector field of $\alpha$, the Hamiltonian vector field of $H_i^F$ is
\[
\left(X_{H_i^F}\right)_t = \left\{\begin{array}{cc} \rho(t)h'_i(k \pi r^2) R_{\alpha} & t\in[0, 1/2] \\ \left(X_F\right)_t & t\in[1/2, 1] \end{array}\right..
\]
The periodic orbits of $H_i^F$ that begin on a hypersurface $\Sigma_R$ correspond to leaf-wise intersection points on $\Sigma_R$ (see Figure \ref{fig:lw}).  Denoting the Reeb flow of $(\Sigma_R, \alpha)$ by $\phi_R^t$, these periodic orbits are in bijection precisely with the leaf-wise intersection points $x:[0, 1]\rightarrow E$ satisfying 
\[
x(1) = \phi_1^{h'_i(k\pi r)}(x(0)).
\]
Note that the periodic orbits of $H_i^F$ that begin on the zero section are periodic orbits of $F$.  

\begin{figure}
\centering
\includegraphics[scale=.5]{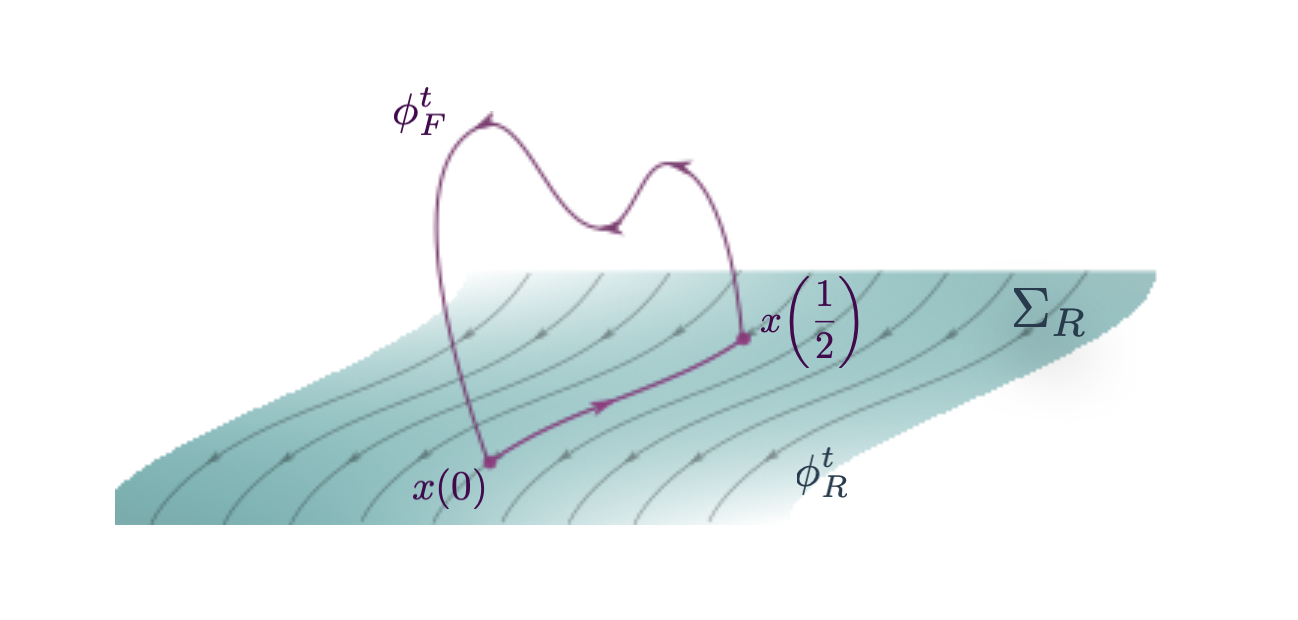}
\caption{A leaf-wise intersection point exhibited as a periodic orbit $x(t)$ of $X_{H_i^F}$.  The purple orbit $x(t)$ travels backwards along a Reeb trajectory, pauses at $x(1/2)$, and then travels according to the flow $\phi_F^t$ of the Hamiltonian vector field  $(X_F)_t$.}
\label{fig:lw}
\end{figure}

We want to show that $F$ has a leaf-wise intersection point on $\Sigma_{\frac{1}{\sqrt{k\kappa\pi}}}$.  Abusing terminology, call such points the {\it leaf-wise intersection points of $F$}, dropping mention of the specific fixed hypersurface.  If the periodic orbits of $H_i^F$ beginning on $\Sigma_R$ are degenerate for any $i$, it follows that these periodic orbits exist, and we conclude that $F$ has a leaf-wise intersection point.  So, for the rest of the paper, we will assume that all such periodic orbits are non-degenerate.

To set up Hamiltonian Floer theory for $H_i^F$, all periodic orbits of $H_i^F$ must be non-degenerate, not just those beginning on $\Sigma_R$.  While this does not hold {\it a priori} for $F$, it holds for a suitable perturbation of $F$.  The existence of such a suitable perturbation follows from a standard result in Floer theory.

\begin{lemma}
\label{lem:perturbopen}
Fix a Hamiltonian $H$.  Let $U\subset E\times \RR$ be an open set.  Let $\cg{H}(U)$ be the set of Hamiltonians with support in $U$.  The subset of Hamiltonians $K\in\cg{H}(U)$ for which every period orbit of $x(t)$ of $K+H$ is non-degenerate if $(x(t), t)$ passes through $U$ is comeager in $\cg{H}(U)$.
\end{lemma}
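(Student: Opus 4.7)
The plan is to apply the Sard--Smale theorem to a universal moduli space of perturbations and periodic orbits. Since $\cg{H}(U)$ is not itself a Banach space, I would first replace it by a dense Banach subspace $\cg{H}_{\eps}(U) \subset \cg{H}(U)$ defined using Floer's $C^{\eps}$-norms, for a sequence $\eps_k \searrow 0$ chosen so that every smooth compactly supported function in $U$ can be $C^\infty$-approximated by elements of $\cg{H}_{\eps}(U)$.

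Consider the universal moduli space
\[
\cg{M} := \left\{ (K, x) \in \cg{H}_{\eps}(U) \times W^{1,p}(S^1, E) \,\bigg|\, \dot{x}(t) = X_{K+H}(x(t)),\ (x(t_0), t_0) \in U \text{ for some } t_0 \in S^1 \right\},
\]
realized as the zero set of the section $\cg{F}(K, x) := \dot{x} - X_{K+H}\composition x$ valued in $L^p(S^1, x^*TE)$. Granting that $\cg{F}$ is transverse to zero, the implicit function theorem gives $\cg{M}$ the structure of a Banach manifold, and the projection $\pi: \cg{M} \rightarrow \cg{H}_{\eps}(U)$ is Fredholm of index zero. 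A value $K$ is regular for $\pi$ precisely when the linearized flow of $X_{K+H}$ along each orbit $x \in \pi^{-1}(K)$ has no nonzero periodic section, that is, when every such $x$ is non-degenerate. Sard--Smale then yields that such $K$ form a comeager subset of $\cg{H}_{\eps}(U)$, and a standard exhaustion argument over a family of weight sequences $\eps^{(n)}$ transfers comeagerness to $\cg{H}(U)$ in the $C^\infty$-topology.

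The main obstacle I expect is verifying transversality of $\cg{F}$ at each $(K, x) \in \cg{M}$. The linearization decomposes as $\xi \mapsto \nabla_t \xi - DX_{K+H}(x)\xi$ in the $x$-direction and $\eta \mapsto -X_{\eta}\composition x$ in the $K$-direction. An element $\zeta$ of the dual cokernel is a weak solution of the formal adjoint linearized-flow equation along $x$ satisfying
\[
\int_{S^1} \Omega\bigl(X_{\eta}(x(t), t),\, \zeta(t)\bigr)\, dt = 0
\]
for every admissible $\eta \in \cg{H}_{\eps}(U)$. Choosing $\eta$ a bump function supported in a small neighborhood of $(x(t_0), t_0) \in U$ forces $\zeta(t_0) = 0$, and unique continuation for the linear ODE then gives $\zeta \equiv 0$. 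The hypothesis that the graph of $x$ meets $U$ is essential here, as it is precisely what allows a $K$-perturbation to have any effect on $x$. The only remaining subtlety is ensuring that arbitrarily localized bump functions on $U$ lie in the $C^{\eps}$-space, which is achieved by a standard diagonal construction of the weights $\eps_k$.
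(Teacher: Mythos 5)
The paper does not prove this lemma itself; it defers entirely to the appendix of Albers--Bramham--Wendl (\cite{albers-b-w}). Your Sard--Smale argument via a universal moduli space, with the cokernel killed by bump functions localized near a point $(x(t_0),t_0)\in U$ together with unique continuation for the linear ODE, is precisely the standard proof appearing in that appendix, so your proposal reconstructs the cited argument faithfully (the only point worth firming up is the transfer of comeagerness from the Floer $C^{\epsilon}$-Banach spaces back to the Fr\'echet space $\cg{H}(U)$, which is most cleanly done by a Taubes-style exhaustion rather than varying the weight sequence, but this is a well-documented technicality).
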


\begin{proof}
See the Appendix in \cite{albers-b-w}.
\end{proof}

\begin{lemma} 
\label{lem:perturbF}
There exists a perturbation $\tilde{F}$ of $F$ such that 
\begin{enumerate}
\item the support of $\tilde{F}$ lies in $E\times[1/2, 1]$,
\item the leaf-wise intersection points are unchanged,
\item the periodic orbits of $\tilde{F}+\rho(t)(n-\frac{1}{2})k\pi r^2$ are non-degenerate for any $n\in\ZZ$, and 
\item the periodic orbits of $\tilde{F}$ contained in $\text{supp}(\tilde{F})$ are non-degenerate.
\end{enumerate}
\end{lemma}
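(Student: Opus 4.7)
The plan is to obtain $\tilde{F}$ as $F + K$, where $K$ is a generic, compactly-supported perturbation whose support avoids a neighborhood of the Hamiltonian trajectories realizing the leaf-wise intersection points of $F$. By the standing assumption stated just before the lemma, the periodic orbits of the auxiliary Hamiltonians $H_i^F$ beginning on $\Sigma_R$ are non-degenerate and therefore isolated; because $F$ has compact support these orbits form a finite family for each $i$, and only finitely many Reeb rotations $h'_i(k\pi R^2)$ are realizable by leaf-wise intersections of $F$. Let $P \subset E \times [1/2, 1]$ denote the union of the traces of these finitely many orbits during $[1/2, 1]$ (the interval on which $F$ alone acts); then $P$ is a finite union of smooth embedded arcs, and in particular is closed and nowhere dense in $E \times [1/2, 1]$.

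Pick an open neighborhood $V \supset P$ whose closure is still nowhere dense, and set $U := (E \times [1/2, 1]) \setminus \overline{V}$. Let $\cg{H}(U)$ denote the space of smooth Hamiltonians supported in $U$. For each $n \in \ZZ$, Lemma \ref{lem:perturbopen} applied with $H = F + \rho(t)(n - 1/2)k\pi r^2$ and the open set $U$ produces a comeager subset $\cg{G}_n \subset \cg{H}(U)$ consisting of those $K$ for which every periodic orbit of $H + K$ meeting $U$ is non-degenerate; a parallel application with $H = F$ yields a comeager $\cg{G}_\infty$ handling condition (4). The countable intersection $\cg{G} := \cg{G}_\infty \cap \bigcap_{n \in \ZZ} \cg{G}_n$ is comeager in $\cg{H}(U)$, hence non-empty.

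Choose any $K \in \cg{G}$ with $\|K\|_{C^2}$ sufficiently small and set $\tilde{F} := F + K$. Condition (1) is immediate. For condition (2), $K$ vanishes identically on the neighborhood $V \supset P$, so the trajectories realizing the leaf-wise intersection points of $F$ remain flow lines of $X_{\tilde{F}}$ and the original leaf-wise intersection points persist; their non-degeneracy (hence isolation) combined with the smallness of $K$ forbids new ones from appearing. For conditions (3) and (4), each periodic orbit of the relevant Hamiltonian either passes through $U$---in which case it is non-degenerate by the defining property of $\cg{G}_n$ or $\cg{G}_\infty$---or remains in $\overline{V}$ throughout $[1/2, 1]$. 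In the latter case $\tilde{F} \equiv F$ along the whole trajectory, so the orbit is already an orbit of the unperturbed Hamiltonian, and after shrinking $V$ it must coincide with one of the arcs of $P$, inheriting non-degeneracy from the standing assumption.

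The main obstacle lies in this last step: the standing assumption is phrased in terms of $H_i^F$, while condition (3) concerns $\tilde{F} + \rho(t)(n - 1/2)k\pi r^2$. Although the two Hamiltonians share a periodic orbit whenever $h'_i(k\pi R^2) = n - 1/2$, their linearizations along the common orbit need not agree, so transferring non-degeneracy is not automatic. I expect the cleanest workaround to be a preliminary adjustment of the smoothings $h_i$ so that each is genuinely linear on a fixed neighborhood of $k\pi R^2$; on such a neighborhood the two Hamiltonians differ only by a constant, and the hypothesized non-degeneracy transports directly.
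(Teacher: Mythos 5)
Your overall strategy---perturb only on an open set whose complement contains the relevant unperturbed trajectories, invoke Lemma~\ref{lem:perturbopen} for each $n$, and take a countable intersection of comeager sets---matches the paper's. But the set you choose to excise is too small, and this is where the argument develops a real gap.

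You excise a tubular neighborhood $\overline{V}$ of the finitely many leaf-wise intersection \emph{arcs} of $F$.  The paper instead takes
\[
B \;=\; \left\{(\phi_t^n(x), t) : x\in\Sigma_{\frac{1}{\sqrt{k\kappa\pi}}}\right\}\cup\left(E\times[0,1/2]\right),
\]
i.e.\ the entire image of the full circle subbundle $\Sigma_R$ under the combined Reeb-plus-$F$ flow (which is independent of $n$ because the Reeb part, acting during $[0,1/2]$, is a bijection of $\Sigma_R$).  The crucial feature of $B$ that $\overline{V}$ lacks is that $B$ restricted to $t=\tfrac12$ is \emph{exactly} $\Sigma_R$.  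Consequently, any $1$-periodic orbit of $\tilde F + \rho(t)(n-\tfrac12)k\pi r^2$ that fails to pass through $U_B = (E\times[0,1])\setminus B$ must, by flowing backwards along the $F$-flow, lie on $\Sigma_R$ at time $\tfrac12$; it is therefore a leaf-wise intersection \emph{on $\Sigma_R$} and non-degenerate by the standing assumption.  No isolation argument, no shrinking, and no bound on the perturbation's size are required; in fact condition~(2) holds exactly, not approximately, because the perturbation vanishes on all of $B$ and so the flow of $\Sigma_R$ is literally unchanged.

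Your version runs into two difficulties that the paper's larger excised set avoids.  First, the assertion that ``only finitely many Reeb rotations $h_i'(k\pi R^2)$ are realizable by leaf-wise intersections of $F$,'' and hence that $P$ is a finite union of arcs, is not justified: for each fixed $i$ the number of such orbits is finite, but the claim that the set of realizable rotations as $i$ (or $n$) ranges over $\ZZ$ is finite does not follow from compactness of $\text{supp}(F)$ and can fail.  Second, even granting finiteness of $P$, the step ``after shrinking $V$ it must coincide with one of the arcs of $P$'' is exactly the point that needs proof: an orbit of $F + \rho(t)(n-\tfrac12)k\pi r^2$ whose trace over $[1/2,1]$ lies in $\overline{V}$ will start at time $\tfrac12$ at a radius \emph{near} $R$, not necessarily \emph{equal} to $R$, so it need not be one of the arcs of $P$, need not lie on $\Sigma_R$, and need not be covered by the standing non-degeneracy assumption.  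Making the shrinking uniform over all $n\in\ZZ$ is an additional issue you would need to address.  Replacing $\overline{V}$ with the flow-out $B$ of the whole of $\Sigma_R$ collapses both problems at once.

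As a side note, the structural issue you flag at the end---that the standing assumption refers to orbits of $H_i^F$, while condition~(3) involves the truly linear Hamiltonians $\tilde F + \rho(t)(n-\tfrac12)k\pi r^2$---is a reasonable concern and is also present, implicitly, in the paper's proof; your proposed fix of arranging $h_i$ to be genuinely linear near $k\pi R^2$ is consistent with how Lemma~\ref{lem:nondeg} is subsequently used.
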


\begin{proof}
Let $\phi_t^n$ be the time-$t$ flow of the Hamiltonian vector field of $F+\rho(t)(n-\frac{1}{2})k\pi r^2$, and let $B$ be the subset
\[
B = \left\{(\phi_t^n(x), t) \in E\times [0, 1] \hspace{.2cm}\bigg|\hspace{.2cm} x\in\Sigma_{\frac{1}{\sqrt{k\kappa\pi}}} \right\}\cup\bigg( E\times[0, 1/2]\bigg).
\]
Note that $B$ does not depend on $n$.

Let $U_B$ be the open subset
\[
U_B = \big(E\times [0, 1]\big)\setminus B.
\]  
By Lemma \ref{lem:perturbopen}, there exists a comeager set of perturbations $\cg{M}(n)$ of $F+\rho(t)(n-\frac{1}{2})k\pi r^2$ such that every periodic orbit passing through $U_B$ is non-degenerate.  By assumption, each perturbation in $\cg{M}(n)$ occurs for $t\in[1/2, 1]$, the interval on which $F = F+\rho(t)(n-\frac{1}{2})k\pi r^2$.  We therefore view elements of $\cg{M}(n)$ as direct perturbations of the Hamiltonian $F$.  Define the intersection
\[
\cg{M}(U_B) = \bigcap_{n\in\ZZ}\cg{M}(n).
\]  
As a countable intersection of comeager sets, $\cg{M}(U_B)$ is also comeager.  Let
$
\cg{M}(F) 
$
be a comeager set of perturbations of $F$ with non-degenerate periodic orbits.  The periodic orbits of $F$ contained in $B\bigcap\text{supp}(F)$ are leaf-wise intersection points, and so are non-degenerate by assumption.  There therefore exists a comeager subset of $\cg{M}(F)$ comprised of perturbations of $F$ with support in $U_B$.  It follows that $\cg{M}(U_B)\cap \cg{M}(F)$ is itself comeager.  Choose a perturbation \[\tilde{F}\in\cg{M}(U_B)\cap\cg{M}(F).\]  By construction, the support of $\tilde{F}$ is in $E\times[1/2, 1]$.

Let
\[
B' = \left\{(\phi_t^n(x), t) \in E\times [0, 1] \hspace{.2cm}\big|\hspace{.2cm} x\in\Sigma_{\frac{1}{\sqrt{k\kappa\pi}}} \right\}.
\]  Define $\cg{M}(U_{B'})$ analogously to $\cg{M}(U_B)$.  Note that a periodic orbit $x(t)$ of any Hamiltonian of the form $F + \rho(t)(n-\frac{1}{2})k\pi r^2$ that passes through $U_{B'}$ also passes through $U_{B}$.  Therefore, $\tilde{F}$ is an element of the larger set $\cg{M}(U_{B'})$.

By construction the perturbation $\tilde{F}$ satisfies the conditions of the Lemma.

\end{proof}

\begin{lemma} 
\label{lem:nondeg}
Define Hamiltonians $H_n^{\tilde{F}} = \tilde{F} + \rho(t)h_n(k\pi r^2)$.  Recall the data of radii $\{R_1, R_2, \dots\}$ in the definition of the functions $\{h_n\}$.  The periodic orbits of $H_n^{\tilde{F}}$ with starting point within
\[
E\setminus\bigg([R_1, R_n]\times\Sigma_{\frac{1}{\sqrt{k\kappa\pi}}}\bigg).
\]
are non-degenerate.
\end{lemma}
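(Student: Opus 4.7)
My plan is to observe that, away from the smoothing intervals of $h_n$, the Hamiltonian $H_n^{\tilde F}$ agrees locally with a Hamiltonian to which Lemma \ref{lem:perturbF}(3) directly applies. Specifically, outside $[R_1, R_n]$ the derivative $h_n'$ is locally constant, taking one of the half-integer values $1/2, 3/2, \dots, n + 1/2$, and in such a region the Hamiltonian $h_n(k\pi r^2)$ is affine in $k\pi r^2$.

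First I would set up the identification. Let $x(t)$ be a 1-periodic orbit of $H_n^{\tilde F}$ with $x(0)$ outside the excluded region, and let $r_0$ denote the radial coordinate of $x(0)$. By hypothesis $h_n'$ is locally constant equal to some $c = n' - \tfrac{1}{2}$ (with $n' \in \ZZ$) on an open neighborhood of $k\pi r_0^2$, so there is a neighborhood $U$ of the circle subbundle $\Sigma_{r_0}$ (a neighborhood of the zero section in the case $r_0 = 0$, where $c = 1/2$ because $h_n \equiv h_0(u) = u/2$ for $u$ near $0$) on which $h_n(k\pi r^2) - c\,k\pi r^2$ is constant. Hence on $U\times[0,1]$,
\[
H_n^{\tilde F}(x,t) \;=\; \tilde F(x,t) \;+\; \rho(t)\,c\,k\pi r^2 \;+\; \rho(t)\cdot\mathrm{const},
\]
where the last summand depends only on $t$ and therefore does not affect the Hamiltonian vector field.

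Next I would argue that the orbit $x(t)$ stays inside $U$ for $t\in[0,1/2]$: the Hamiltonian vector field of $\rho(t)h_n(k\pi r^2)$ is $\rho(t)h_n'(k\pi r^2)R_\alpha$, which is tangent to every circle subbundle, so $x([0,1/2])\subset\Sigma_{r_0}\subset U$. On $[1/2,1]$ we have $\rho(t) = 0$, so $H_n^{\tilde F} = \tilde F = \tilde F + \rho(t) c\, k\pi r^2$ globally. Therefore the two Hamiltonians $H_n^{\tilde F}$ and $\tilde F + \rho(t)\,c\,k\pi r^2$ have identical Hamiltonian vector fields — and, after shrinking $U$ if necessary, identical first derivatives of the flow — along the whole trajectory $x(t)$ and on a neighborhood of it. Consequently their time-$1$ maps agree on a neighborhood of $x(0)$, so $x(t)$ is a non-degenerate $1$-periodic orbit of $H_n^{\tilde F}$ if and only if it is a non-degenerate $1$-periodic orbit of $\tilde F + \rho(t)(n'-\tfrac{1}{2})k\pi r^2$. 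By Lemma \ref{lem:perturbF}(3) the latter is non-degenerate, finishing the proof.

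The step that requires care is the local identification of Hamiltonians: one must verify that $U$ can be taken to contain the entire Reeb arc $x([0,1/2])\subset\Sigma_{r_0}$ together with a transverse neighborhood sufficient to compute the linearization. This follows from the fact that $r$ is preserved by the Reeb flow together with the local constancy of $h_n'$, but uniformity over $t\in[0,1/2]$ must be checked. The zero-section case is the only genuinely separate bookkeeping point and is handled uniformly by the observation above that $h_n$ is affine in $u = k\pi r^2$ near $u = 0$, so that the identification of Hamiltonians extends smoothly across the zero section.
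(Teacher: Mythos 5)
Your proof is correct and takes essentially the same approach as the paper's: outside the smoothing interval $[R_1,R_n]$ the radial Hamiltonian is affine in $k\pi r^2$, so the flow of $H_n^{\tilde F}$ agrees (on a neighborhood of the orbit) with that of $\tilde F + \rho(t)(n'-\tfrac12)k\pi r^2$, and Lemma~\ref{lem:perturbF}(3) gives non-degeneracy. One small cosmetic slip: outside $[R_1,R_n]$ only the slopes $1/2$ (inner disk) and $n+\tfrac12$ (outer end) actually occur --- the intermediate half-integers are attained on the shelves \emph{inside} $[R_1,R_n]$, which is the region excluded --- but this does not affect the argument, since Lemma~\ref{lem:perturbF}(3) applies to any half-integer slope.
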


\begin{proof}
By construction, the function $h_n(k \pi r^2)$ is linear in $k \pi r^2$ on the cylindrical end modeled as $[R_n, \infty)\times\Sigma$ and on the radius-$R_1$ disk bundle.  The dynamics of $\tilde{F} + \rho(t)h_n(k\pi r^2)$ coincide with the dynamics of $\tilde{F} + \rho(t)(n- \frac{1}{2})k\pi r^2$ on any trajectory beginning in $[R_n, \infty)\times\Sigma$, and they coincide with the dynamics of $\tilde{F} + \rho(t)\frac{1}{2}k\pi r^2$ within the disk bundle of radius $R_1$.  By Lemma \ref{lem:perturbF}, the periodic orbits in these sets are non-degenerate.

\end{proof}

\section{Auxiliary Hamiltonians and a proof sketch}
\label{sec:sketch}

\addtocounter{theorem}{-6}
Recall Theorem \ref{thm:linebundle}.
\begin{theorem}
Let $E \rightarrow M$ be a negative line bundle with negativity constant $k$ over a monotone symplectic manifold with monotonicity constant $c> k$.  If the symplectic cohomology of $E$ is non-zero, then for any compactly-supported Hamiltonian, the radius-$\frac{1}{\sqrt{k\kappa\pi}}$ circle subbundle  contains a leaf-wise intersection point.
\end{theorem}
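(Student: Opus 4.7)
I would argue by contradiction: suppose $F$ has no leaf-wise intersection point on $\Sigma_{R_*}$, where $R_* := 1/\sqrt{k\kappa\pi}$. By Lemma \ref{lem:perturbF}, replace $F$ with a non-degenerate perturbation $\tilde{F}$ so that all orbits of $H_i^{\tilde{F}} = \tilde{F} + \rho(t)h_i(k\pi r^2)$ are non-degenerate (Lemma \ref{lem:nondeg}) and so that $\Sigma_{R_*}$ still admits no leaf-wise intersection points. By the dictionary of Subsection \ref{subsec:trunc} and Section \ref{sec:floer}, this means no $S^1$-family of orbits of $H_i^{\tilde{F}}$ starts on $\Sigma_{R_*}$; hence $CF^*(H_i^{\tilde{F}})$ is a well-defined chain complex with the usual action filtration and continuation maps.

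The central step is to turn the absence of generators at the critical radius into a genuine \emph{splitting} of the Floer theory across $\Sigma_{R_*}$. Organizing generators of $CF^*(H_i^{\tilde{F}})$ by the radius at which they start, I would use a careful action-energy estimate — modeled on the promised Lemma \ref{lem:actionest} and exploiting monotonicity of $E$ with constant $\kappa = c - k$ — to rule out Floer cylinders and continuation homotopies connecting a generator inside $\Sigma_{R_*}$ to one outside. The value $R_*$ is precisely the radius at which the $c_1^{TE}$-contribution to the energy of a Floer cylinder balances the $-k[\omega]$-contribution coming from the line bundle structure, and this balance is what makes connecting trajectories impossible. Consequently $CF^*(H_i^{\tilde{F}})$ decomposes as a direct sum of an ``inside'' and an ``outside'' subcomplex, stable under all continuation maps.

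With the splitting in place, the contradiction comes from combining the non-vanishing and vanishing theorems already in the paper. A small-perturbation argument, absorbing $\tilde{F}$ into a continuation, identifies the ``inside'' parts of $SH^*(H^{\tilde{F}})$ and $\widehat{SH_*}(H^{\tilde{F}})$ with the analogous truncations of $SH^*(\cg{H})$ and $\widehat{SH_*}(\cg{H})$ built from a sequence $\{R_i\}$ chosen to converge to some $R<R_*$. By Theorem \ref{thm:ritter} and \eqref{eq:truncatediso}, $SH^*(\cg{H})\neq 0$, and an action estimate places a non-zero class on the ``inside''. By Theorem \ref{thm:truncated-vanishing}, however, $\widehat{SH_*}(\cg{H}) = 0$ for any such choice of $R$, so the ``inside'' of $\widehat{SH_*}$ vanishes. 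The splitting reduces the comparison between $SH^*$ and $\widehat{SH_*}$ to this ``inside'' piece, where the one is non-zero and the other is zero, yielding the contradiction.

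The main obstacle is the \emph{a priori} splitting estimate — establishing rigorously that no Floer cylinder or continuation trajectory of the perturbed Hamiltonians can have asymptotes on opposite sides of $\Sigma_{R_*}$. Everything after this splitting is formal manipulation of action-filtered complexes, direct and inverse limits, and continuation maps. The estimate hinges on the precise interaction between $\Omega = d((1+k\pi r^2)\alpha)$ and Floer's equation, and it is where the specific numerical value of $R_*$ is forced; in effect it is the content of the promised Lemma \ref{lem:actionest}, and everything else in the proof will flow once it is in hand.
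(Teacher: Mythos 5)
Your proposal diverges from the paper's argument at the crucial step, and the divergence is a genuine gap rather than an alternative route.

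First, the claimed ``splitting.''  You propose that the absence of orbit-generators on $\Sigma_{R_*}$, combined with an action-energy estimate and monotonicity, should rule out Floer cylinders and continuation trajectories joining a generator inside $\Sigma_{R_*}$ to one outside, so that $CF^*(H_i^{\tilde F})$ decomposes as a direct sum stable under all continuation maps.  This is not true and cannot be repaired.  Floer trajectories for the Hamiltonians $H_i^{\tilde F}$ routinely cross $\Sigma_{R_*}$: a cylinder interpolating between, say, a constant orbit near the zero section and a Reeb-type orbit at radius $R_i > R_*$ must cross every circle subbundle in between.  Absence of \emph{generators} at a single radius gives no control over where \emph{trajectories} may pass.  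And the monotonicity balance at $R_*$ you invoke is exactly the balance used in Lemma~\ref{lem:actionest}, but that lemma does something different: it bounds the action of weighted orbits at large radius in terms of their index, so that a fixed action window eventually excludes them.  It is an action-\emph{truncation} statement, not a geometric no-crossing statement, and it gives a filtration up to truncation error, not a splitting.

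Second, the objects you want to compare sit on opposite sides of a limit.  $SH^*(\cg H)$ is a direct limit of complexes built from Hamiltonians with slope $\to +\infty$ and kink radii $R_i \to R > R_*$; $\widehat{SH_*}(\cg H)$ (and the analogous $\widehat{SH_*}(K)$) is a completed inverse limit built from slopes $\to -\infty$ with kink radii $P_i \to P < R_*$.  There is no canonical map between them, let alone a common ``inside'' piece.  The paper's entire apparatus of v-shaped Hamiltonians $\check K_{m,n}$ and $\check H_{m,n}$, with the chain of isomorphisms of Subsection~\ref{subsec:chain}, exists precisely to compare these two constructions, and that comparison goes through only under the hypothesis that $F$ has no leaf-wise intersection point anywhere in the annulus $[P_1, R]\times\Sigma$ (this is required explicitly in Lemma~\ref{lem:hh}).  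Assuming only that $\Sigma_{R_*}$ alone has no leaf-wise intersection point is strictly weaker and does not feed the isomorphisms.

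Third, even after the Floer-theoretic contradiction, you only learn that some circle subbundle inside the chosen annulus carries a leaf-wise intersection point; you do not yet know that the point is on $\Sigma_{R_*}$.  The paper handles this with Proposition~\ref{prop:nested}: shrink the annulus, extract a sequence of leaf-wise intersection points, and show (using compactness and the Zoll condition on the Reeb flow) that a subsequence converges to a leaf-wise intersection point on $\Sigma_{R_*}$ itself.  This convergence step is essential and is absent from your outline.  So the structure should be: (i) assume none in an annulus, run the v-shaped machinery and the vanishing/non-vanishing to get a contradiction; (ii) shrink the annulus and take a limit.  Your proposal collapses both into a single step via a splitting that does not exist.
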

\addtocounter{theorem}{+5}

The proof of Theorem \ref{thm:linebundle} proceeds in two steps:
\begin{enumerate}
\item show that, for any annulus subbundle $A\subset E$ containing $\Sigma_{\frac{1}{\sqrt{k\kappa\pi}}}$, there exists some circle subbundle $\Sigma\subset A$ containing a leaf-wise intersection point; and
\item show that there exists a sequence of leaf-wise intersection points whose limit exists and is a leaf-wise intersection point on $\Sigma_{\frac{1}{\sqrt{k\kappa\pi}}}$.
\end{enumerate}

The second step is straight-forward, and is given by Proposition \ref{prop:nested}.
The first step proceeds by a contradiction, arising through the subtle maneuverings of different families of Hamiltonians.  In this section, we collect the families of Hamiltonians, and we sketch the proof of Step 1.

\subsection{The Hamiltonians}
\subsubsection{$H_n$ and $K_n$}
Fix a radius $P$ and a radius $R$, with $0 < P < \frac{1}{\sqrt{k\kappa\pi}} < R$.  Fix monotone-increasing sequences of real numbers $P_1, P_2, \dots$ and $R_1, R_2, \dots$ with
\[
0 < P_1 < P_2 < \dots \rightarrow P < R_1 < R_2 < \dots\rightarrow R
\]
and converging to $P$, respectively $R$.

For $n \geq 0$, define $H_n$ and $H_n^F$ using the functions described in Section \ref{sec:floer}.  We set
\[
H_n = \rho(t)\left(h_n(k\pi r^2) + \mathfrak{p}^*G\right)
\]
and
\[
H_n^F = \rho(t)h_n(k\pi r^2) + F.
\]
Thus, $H_n^F$ is precisely as defined in Section \ref{sec:floer}, and, in the notation of Subsection \ref{subsec:trunc}, $H_n$ is now the Hamiltonian $\cg{H}_n$.  See Figure \ref{fig:unbddh} to recall the definition of $h_n$.  

For $n\geq0$, define $K_n$ and $K_n^F$ analogously to the Hamiltonians $H_n$ and $H_n^F$, except 
\begin{itemize}
\item the radial functions $k_n$ are negated and
\item are defined through the radii sequence $P_1, P_2, \dots$ in lieu of $R_1, R_2, \dots$.
\end{itemize}  
Precisely, there are functions $k_n:\RR\rightarrow\RR$, as depicted in Figure \ref{fig:unbddk}, and a Morse function $G$ on the base of the bundle $B$ such that
\[
K_n = \rho(t)\left(k_n(k\pi r^2) + (1 + k\pi r^2)\mathfrak{p}^*G\right)
\]
and
\[
K_n^F = \rho(t)k_n(k\pi r^2) + F.
\]
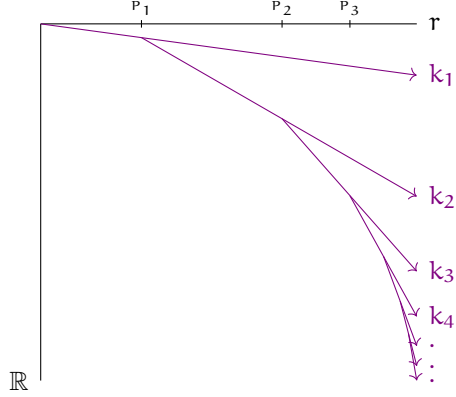
\begin{figure}[htpb!]
\centering
\begin{tikzpicture}[scale=5]
\draw (0,-.07) -- (0,-1.01916631) node [left = 1pt] {$\RR$};
\draw (0, -.07) -- (1, -.07)node [right = 1pt] {$r$};
\draw[->, violet] (0, -.07) -- (1, -0.20660321)node [right=1pt] {$k_1$};
\draw[->, violet] (.267948, -0.10660321) -- (1, -0.52925231)node [right=1pt] {$k_2$};
\draw[->, violet]  (.641751, -0.32241731) -- (1, -0.72863231)node [right=1pt] {$k_3$};
\draw[->, violet] (.822168, -0.5274841) -- (1, -0.8488959)node [right=1pt] {$k_4$};
\draw[->, violet] (.911356, -0.68818831) -- (1,-0.92700231)node [right=1pt] {.};
\draw[->, violet] (.955751, -0.80779211) -- (1, -0.98061231) node [right=1pt] {.};
\draw[->, violet] (.977884, -0.89423631) -- (1, -1.01916631) node [right=1pt] {.};
\draw (.267948, -.07)node [anchor=south]{{\tiny$P_1$}};
\draw (.641751, -.07)node [anchor=south]{{\tiny$P_2$}};
\draw (.822168, -.07)node [anchor=south]{{\tiny$P_3$}};
\draw (.267948, -.08) -- (.267948, -.06);
\draw (.641751, -.08) -- (.641751, -.06);
\draw (.822168, -.08) -- (.822168, -.06);
\end{tikzpicture}
\caption{The family of functions $\{k_n(r)\}$}
\label{fig:unbddk}
\end{figure}
Fix continuation maps
\begin{align*}
CF^*(H_n)\rightarrow CF^*(H_{n+1}), &\hspace{1cm} CF^*(H_n^F)\rightarrow CF^*(H_{n+1}^F),
\\
CF^*(K_m)\rightarrow CF^*(K_{m-1}), &\hspace{1cm}CF^*(K_m^F)\rightarrow CF^*(K_{m-1}^F).
\end{align*}

\subsubsection{$\check{H}_{m, n}$ and $\check{K}_{m, n}$}
Fix one more radius $Q$, with $P < Q < R$.
Let $\check{k}_{m, n}$ be a function that is
\begin{enumerate}
\item equal to $k_m$ on $[0, P_m]$,
\item convex on $[P, \infty)$,
\item and a translation of $h_n$ on $[Q, \infty)$.
\end{enumerate}
See Figure \ref{fig:checksk}.  Define functions $\check{K}_{m, n}$ and $\check{K}_{m, n}^F$ by
\[
\check{K}_{m, n} = \rho(t)\left(\check{k}_{m, n}(k\pi r^2) + (1+k\pi r^2)\mathfrak{p}^*G\right)
\]
and
\[
\check{K}_{m, n}^F = \rho(t)\check{k}_{m, n}(k\pi r^2) + F.
\]
Let
\[
\check{h}_{m, n} = \check{k}_{m, n} + \min_{r\geq0}\check{k}_{m, n}(r),
\]
as in Figure \ref{fig:checksh}.  Define functions $\check{H}_{m, n}$ and $\check{H}_{m, n}^F$ by
\[
\check{H}_{m, n} = \rho(t)\left(\check{h}_{m, n}(k\pi r^2) + (1+k\pi r^2)\mathfrak{p}^*G\right)
\]
and
\[
\check{H}_{m, n}^F = \rho(t)\check{h}_{m, n}(k\pi r^2) + F.
\]
These are variations on the ``v-shaped'' Hamiltonians appearing in \cite{c-f-o}.
\begin{figure}[htpb!]
\centering
\subfigure[$\check{k}_{1, 4}$]
{
\label{fig:checksk}
\begin{tikzpicture}[scale=5]
\draw (0,-.07) -- (0,-1.01916631) node [left = 1pt] {$\RR$};
\draw (0, -.07) -- (1, -.07)node [right = 1pt] {$r$};
\draw[-, violet] (0, -.07) -- (0.133974, -0.088301605)node [right=1pt] {};
\draw[-, violet] (0.133974, -0.088301605)-- (0.3208755, -0.196208655)node [right=1pt] {};
\draw[-, violet] (0.3208755, -0.196208655) -- (0.411084, -0.29874205)node [right=1pt] {};
\draw[-, violet]  (0.411084, -0.29874205) -- (0.683517, -0.8136412325)node [right=1pt] {};
\draw[-, violet]  (0.683517, -0.8136412325) -- (0.817491,-0.7953400736 ) node [right=1pt] {};
\draw[-, violet] (0.817491,-0.7953400736 ) -- ( 1, -0.6815038011) node [right=1pt] {};
\draw (0.133974, -.07)node [anchor=south]{{\tiny$P_1$}};
\draw (0.3208755, -.07)node [anchor=south]{{\tiny$P_2$}};
\draw (0.411084, -.07)node [anchor=south]{{\tiny$P_3$}};
\draw (0.817491, -.07)node [anchor=south]{{\tiny$R_1$}};
\draw (0.683517, -.07)node [anchor=south]{{\tiny$Q_{}$}};
\draw (0.54, -.07)node [anchor=south]{{\tiny$P_{}$}};
\draw[-] (0.133974, -.08) -- (0.133974, -.06);
\draw[-] (0.3208755, -.08) -- (0.3208755, -.06);
\draw[-] (0.411084, -.08) -- (0.411084, -.06);
\draw[-] (0.817491, -.08) -- (0.817491, -.06);
\draw[-] (0.683517, -.08) -- (0.683517, -.06);
\draw[-] (0.54, -.08) -- (0.54, -.06);
\end{tikzpicture}

}
\hspace{2cm}
\subfigure[$\check{h}_{1, 4}$]
{
\label{fig:checksh}
\begin{tikzpicture}[scale=5]
\draw (0,-.07) -- (0,-1.01916631) node [left = 1pt] {$\RR$};
\draw (0,-0.8136412325) -- (1,-0.8136412325)node [right = 1pt] {$r$};
\draw[-, violet] (0, -.07) -- (0.133974, -0.088301605)node [right=1pt] {};
\draw[-, violet] (0.133974, -0.088301605)-- (0.3208755, -0.196208655)node [right=1pt] {};
\draw[-, violet] (0.3208755, -0.196208655) -- (0.411084, -0.29874205)node [right=1pt] {};
\draw[-, violet]  (0.411084, -0.29874205) -- (0.683517, -0.8136412325)node [right=1pt] {};
\draw[-, violet]  (0.683517, -0.8136412325) -- (0.817491,-0.7953400736 ) node [right=1pt] {};
\draw[-, violet] (0.817491,-0.7953400736 ) -- ( 1, -0.6815038011) node [right=1pt] {};
\draw (0.133974, -0.8136412325)node [anchor=north]{{\tiny$P_1$}};
\draw (0.3208755, -0.8136412325)node [anchor=north]{{\tiny$P_2$}};
\draw (0.411084, -0.8136412325)node [anchor=north]{{\tiny$P_3$}};
\draw (0.817491, -0.8136412325)node [anchor=north]{{\tiny$R_1$}};
\draw (0.683517,-0.8136412325)node [anchor=north]{{\tiny$Q$}};
\draw (0.54, -0.8136412325)node [anchor=north]{{\tiny$P_{}$}};
\draw[-] (0.133974, -.823) -- (0.133974, -.803);
\draw[-] (0.3208755, -.823) -- (0.3208755, -.803);
\draw[-] (0.411084, -.823) -- (0.411084, -.803);
\draw[-] (0.817491, -.823) -- (0.817491, -.803);
\draw[-] (0.683517, -.823) -- (0.683517, -.803);
\draw[-] (0.54, -.823) -- (0.54, -.803);
\end{tikzpicture}
}
\caption{``v-shaped'' Hamiltonians}
\end{figure}
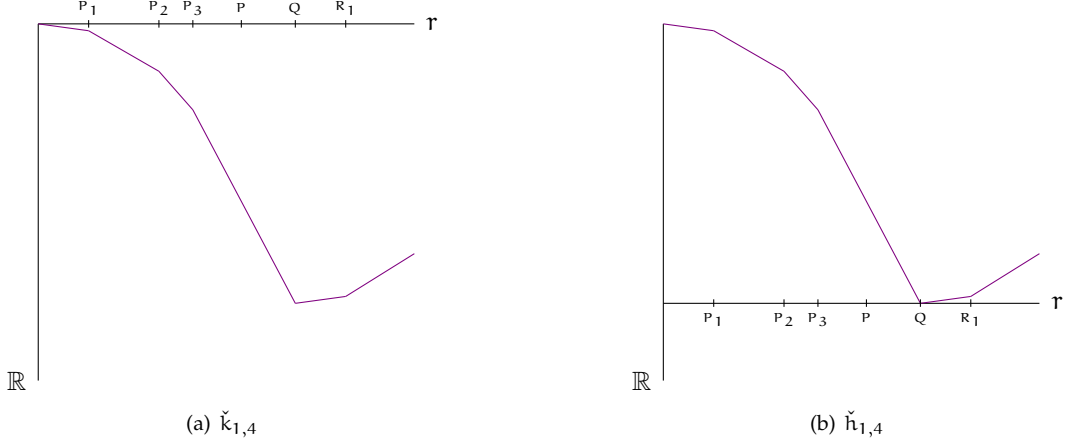
For each $m\in\ZZ_{<0}$ and $n\in\NN$, define a generic homotopy $\check{K}^s_{m, n}$ between $\check{K}_{m, n}$ and $\check{K}_{m-1, n}$ that is
\begin{enumerate}
    \item monotone increasing everywhere and
    \item is equal to $\check{K}_{m, n} + k(s)$ for an $s$-dependent function $k$ on $E\setminus\left([P_{m-1}, \infty)\times\Sigma\right)$ and on $[Q, \infty)\times\Sigma$.
\end{enumerate}
Let
\[
c_m:CF^*(\check{K}_{m, n})\rightarrow CF^*(\check{K}_{m-1, n})
\]
be the continuation map induced by $\check{K}^s_{m, n}$.  We suppress the $n$-index from the notation.

Similarly define continuation maps
\[
c_n:CF^*(\check{K}_{m, n})\rightarrow CF^*(\check{K}_{m, n+1})
\]
The continuation maps $c_n$ and $c_m$ induce a barrage of continuation maps
\begin{align*}
CF^*(\check{H}_{m, n})\rightarrow CF^*(\check{K}_{m-1, n}^F), &\hspace{1cm}CF^*(\check{K}_{m, n}^F)\rightarrow CF^*(\check{K}_{m, n+1}^F)
\\
CF^*(\check{H}_{m, n})\rightarrow CF^*(\check{H}_{m-1, n}), &\hspace{1cm}CF^*(\check{H}_{m, n})\rightarrow CF^*(\check{H}_{m, n+1})
\\
CF^*(\check{H}_{m, n}^F)\rightarrow CF^*(\check{H}_{m-1, n}^F), &\hspace{1cm}CF^*(\check{H}_{m, n}^F)\rightarrow CF^*(\check{H}_{m, n+1}^F).
\end{align*}

\subsection{Sketch of proof}

Assume for contradiction that there does not exist a circle subbundle contained inside the annulus subbundle $[P_1, R]\times\Sigma$ with a leaf-wise intersection point of $F$.  By Lemma \ref{lem:nondeg}, we may assume that the periodic orbits of $H_n^F, K_m^F, \check{H}_{m, n}^F,$ and $\check{K}_{m, n}^F$ are all non-degenerate for every $n$ and $m$.

Denote the disk bundle of radius $R$ by $D_R$.  Let
\[
C = \sup_{x\in D_R}|F(x)| + \sup_{x\in D_R}(1+k\pi R^2)|\mathfrak{p}^*G(x)|.
\]
We show in Subection \ref{subsec:chain} the following chain of quasi-isomorphisms and quasi-inclusions
\begin{align*}
\lim_{\substack{\leftarrow\\ m}}CF^*_{(a, \infty)}(K_m) &\overset{(1)}{\simeq} \lim_{\substack{\rightarrow \\ n}}\lim_{\substack{\leftarrow\\ m}}CF^*_{(a, \infty)}(\check{K}_{m, n})\overset{(2)}{\hookrightarrow}  \lim_{\substack{\rightarrow \\ n}}\lim_{\substack{\leftarrow\\ m}} CF^*_{(a + C, \infty)}(\check{K}_{m, n}^F) \\ &\overset{(3)}{\simeq} \lim_{\substack{\rightarrow \\ n}}\lim_{\substack{\leftarrow\\ m}} CF^*_{(a + C, \infty)}(\check{H}_{m, n}^F) \overset{(4)}{\hookrightarrow}\lim_{\substack{\rightarrow \\ n}}\lim_{\substack{\leftarrow\\ m}} CF^*_{(a + 2C, \infty)}(\check{H}_{m, n}) \overset{(5)}{\simeq} \lim_{\substack{\rightarrow \\ n}}CF^*_{(a + 2C, \infty)}(H_n).
\end{align*}
Taking the direct limit over action window, the quasi-inclusions become quasi-isomorphisms, so that there is a quasi-isomorphism
\[
\lim_{\substack{\rightarrow \\ a}}\lim_{\substack{\leftarrow\\ m}}CF^*_{(a, \infty)}(K_m)\simeq \lim_{\substack{\rightarrow \\ a}}\lim_{\substack{\rightarrow \\ n}}CF^*_{(a + 2C, \infty)}(H_n).
\]  
Taking homology yields an isomorphism
\[
\widehat{SH_*}(K) := H\left(\lim_{\substack{\rightarrow \\ a}}\lim_{\substack{\leftarrow\\ m}}CF^*_{(a, \infty)}(K_m)\right) \simeq H\left(\lim_{\substack{\rightarrow \\ a}}\lim_{\substack{\rightarrow \\ n}}CF^*_{(a + 2C, \infty)}(H_n)\right) =: SH^*(H).
\]
We show in Proposition \ref{prop:shvanishes} that $\widehat{SH_*}(K) = 0$.  Thus, if the symplectic cohomology $SH^*(H)$ is non-zero, we reach a contradiction.  We conclude that there exists a leaf-wise intersection point somewhere within the annulus $[P_1, R]\times\Sigma$.

Choose a sequence of nested annuli
\[
[P_1, R]\times\Sigma \supset [P_1^1, R^1]\times\Sigma \supset [P_1^2, R^2] \times\Sigma\supset \dots 
\]
whose intersection is 
\[
 \Sigma_{\frac{1}{\sqrt{k\kappa\pi}}} = \bigcap\limits_{i=1}^{\infty} [P_1^i, R^i]\times\Sigma.
\]
Let $\{x_i(t)\}$ be a sequence of leaf-wise intersection points, such that $x_i(0) \in [P_1^i, R^i]\times\Sigma$.  We show in Proposition \ref{prop:nested} that the sequence $\{x_i\}$ has a subsequence that converges to a leaf-wise intersection point on $\Sigma_{\frac{1}{\sqrt{k\kappa\pi}}}$.

\section{Proof of Theorem \ref{thm:linebundle}}
\label{sec:thm1}

\subsection{A chain of isomorphisms}
\label{subsec:chain}
In this subsection we fix an action window $(a, \infty)$ and prove the following chain of isomorphisms and inclusions.
\begin{align*}
\lim_{\substack{\leftarrow\\ m}}HF^*_{(a, \infty)}(K_m) &\overset{(1)}{\simeq} \lim_{\substack{\rightarrow \\ n}}\lim_{\substack{\leftarrow\\ m}}HF^*_{(a, \infty)}(\check{K}_{m, n})\overset{(2)}{\hookrightarrow}  \lim_{\substack{\rightarrow \\ n}}\lim_{\substack{\leftarrow\\ m}} HF^*_{(a + C, \infty)}(\check{K}_{m, n}^F) \\ &\overset{(3)}{\simeq} \lim_{\substack{\rightarrow \\ n}}\lim_{\substack{\leftarrow\\ m}} HF^*_{(a + C, \infty)}(\check{H}_{m, n}^F) \overset{(4)}{\simeq}\lim_{\substack{\rightarrow \\ n}} HF^*_{(a + C, \infty)}(H_n^F) \overset{(5)}{\hookrightarrow} \lim_{\substack{\rightarrow \\ n}}HF^*_{(a + 2C, \infty)}(H_n).
\end{align*}

\subsubsection{The isomorphism (1)}

We prove the following Proposition, illustrated in Figure \ref{fig:qi(1)}.
\begin{proposition}
\label{prop:qi(1)}
There is a quasi-isomorphism
\[
\lim_{\substack{\leftarrow \\ m}}CF^*_{(a, \infty)}(K_m) \cong \lim_{\substack{\rightarrow \\ n}}\lim_{\substack{\leftarrow \\ m}} CF^*_{(a, \infty)}(\check{K}_{m,n})
\]
for any $a\in\RR$.
\end{proposition}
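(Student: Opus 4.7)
The approach is to show that, for each action threshold $a$ and each $m$ with $|m|$ sufficiently large (depending on $a$), there is a natural chain-level isomorphism
\[
CF^*_{(a,\infty)}(K_m) \;\cong\; CF^*_{(a,\infty)}(\check K_{m,n})
\]
that is independent of $n$ and commutes with the continuation maps $c_m$. Since inverse limits over $m$ are determined by their cofinal tails, this $n$-independent identification immediately collapses the direct limit over $n$ on the right-hand side and identifies the two inverse limits, yielding the claimed quasi-isomorphism.

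Both the $n$-independence and the chain-level identification rest on an action estimate. Since $\check k_{m,n}$ agrees with $k_m$ on $[0, P_m]$, every periodic orbit of $K_m$ persists as one of $\check K_{m,n}$. The remaining (``extra'') orbits of $\check K_{m,n}$ consist of negative-slope orbits in the convex transition $[P_m, Q]$, the slope-zero $S^1$-family at the minimum of $\check k_{m,n}$, and positive-slope orbits in the translated-$h_n$ region $[Q,\infty)$. All such orbits lie inside the disk bundle $D_R$, on which the Morse contribution $(1+k\pi r^2)\mathfrak{p}^*G$ is uniformly bounded. A Legendre-transform computation then gives, for an orbit at integer slope $s$,
\[
\mathcal{A}(x_s) \;=\; -\check{k}_{m,n}^{\,*}(s) + O(1),
\]
and since $\check k_{m,n}$ inherits the flat plateau of $k_m$ at the origin, its convex Legendre dual $\check{k}_{m,n}^{\,*}$ attains its minimum at $s=0$, yielding
\[
\mathcal{A}(x_s) \;\leq\; \min_r \check k_{m,n}(k\pi r^2) + O(1)
\]
for every extra orbit. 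Because the convex transition on $[P,Q]$ is determined, once $R_1 > Q$, solely by $k_m(P_m)$ and the fixed slope-matching data at the endpoints, $\min \check k_{m,n}$ is $n$-independent for large $n$ and tends to $-\infty$ as $|m|\to\infty$ by the telescoping depth of $k_m(P_m)$. Consequently there exists $M(a)$ such that, for all $|m|\geq M(a)$ and all $n$, every extra orbit has action strictly below $a$, so $CF^*_{(a,\infty)}(K_m)$ and $CF^*_{(a,\infty)}(\check K_{m,n})$ share the same generators.

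To match the differentials, I would invoke a standard maximum principle: with an almost complex structure of contact type on the level sets $\{r=\mathrm{const}\}$ outside $D_{P_m}$, the convexity of $\check k_{m,n}$ on $[P_m,\infty)$ confines every Floer trajectory of $\check K_{m,n}$ whose asymptotes lie in $D_{P_m}$ to $D_{P_m}$. There $\check K_{m,n}$ and $K_m$ coincide, so their Floer differentials agree on the common generators; the continuation maps $c_m$ are likewise compatible with this identification by the same action bound. Passing to $\lim_{\substack{\leftarrow \\ m}}$ on both sides then gives the claimed quasi-isomorphism.

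The principal obstacle is running the maximum principle in the Morse--Bott framework of \cite{b-o}: the transversally degenerate $S^1$-orbits force the $\log r$-subharmonicity estimate to be applied component-by-component along each Floer cascade rather than on a single Floer cylinder. Bookkeeping this cascade-level confinement, together with the action estimate above, is where the bulk of the technical work concentrates; the remainder is a routine manipulation of filtered double limits.
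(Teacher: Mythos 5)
Your overall strategy is the same as the paper's: identify $CF^*(K_m)$ with a subcomplex of $CF^*(\check K_{m,n})$ (the paper's $CF^*_P(\check K_{m,n})$), confine Floer trajectories by a maximum principle, and then argue that the extra orbits sitting in $[P,\infty)\times\Sigma$ disappear from the action window $(a,\infty)$ once $|m|$ is large. However, the central action estimate you give does not work, for two compounding reasons.

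First, the Legendre bound is misstated. For an orbit $x_\mathfrak{w}$ of winding number $\mathfrak{w}$ at radius $\rho_\mathfrak{w}$ (in the coordinate $k\pi r^2$), the action of the bare orbit is $\cg{A}(x_\mathfrak{w}) = -\mathfrak{w}\rho_\mathfrak{w} + \check k_{m,n}(\rho_\mathfrak{w})$, i.e. $-\check k_{m,n}^*(\mathfrak{w})$ over the convex part. But $(\check k_{m,n}^*)'(s)=\rho_s > 0$ there, so $\check k_{m,n}^*$ is strictly \emph{increasing} in $s$; it does not attain its minimum at $s=0$. The cap-area term $-\mathfrak{w}\rho_\mathfrak{w}$ is large and positive for $\mathfrak{w}\to-\infty$, and is precisely the term your ``$+\,O(1)$'' suppresses — it is not $O(1)$ in the winding. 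So the claimed uniform bound $\cg{A}(x_s)\le\min_r\check k_{m,n}(k\pi r^2)+O(1)$ fails; the depth $-(Q-P)m$ of the well does not by itself dominate the cap area.

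Second, and more structurally, even if every bare extra orbit $x$ had $\cg{A}(x)<a$, this would not imply that $CF^*_{(a,\infty)}(K_m)$ and $CF^*_{(a,\infty)}(\check K_{m,n})$ ``share the same generators.'' The action filtration is on Novikov-weighted chains: for any extra orbit $x$ and $\alpha > a - \cg{A}(x)$, $T^\alpha x$ lies in $CF^*_{(a,\infty)}(\check K_{m,n})$ but not in the $K_m$-side. The only way to discard such elements is to fix the grading $\ell$ and use the monotonicity relation $\ell = -2\mathfrak{w} + 2\kappa\,ev(C) + C'$ (the paper's equation (\ref{eq:indexbound})) to bound the Novikov weight $ev(C)$ in terms of $\mathfrak{w}$ and $\ell$. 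Only after this substitution does the dangerous cap-area/Novikov combination collapse to the term $\mathfrak{w}(\tfrac{1}{\kappa}-P_\mathfrak{w})\le 0$, and the $-(Q-P)m$ term can then do its job. This is exactly the paper's proof; the threshold $M$ in your ``$|m|\ge M(a)$'' must in fact depend on the degree $\ell$ as well as on $a$, and without that dependence the statement is simply false. Once you work degree by degree and bring in the grading formula, the rest of your outline (matching generators, confining trajectories via the integrated maximum principle, passing to the limits) is sound and matches the paper's route through the subcomplex $CF^*_P(\check K_{m,n})$ and Corollary \ref{cor:phia}.
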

The proof of Proposition \ref{prop:qi(1)} proceeds through the study of an intermediary subcomplex, $CF^*_P(\check{K}_{m, n})$.  Define $CF^*_P(\check{K}_{m, n})$ as the vector space generated by the subset of orbits of $CF^*_P(\check{K}_{m, n}$ contained in $E\setminus[P, \infty)\times\Sigma$.  The Floer differential and continuation map on $CF^*_P(\check{K}_{m, n})$ are characterized by Lemma \ref{lem:remain} and the immediate Corollary \ref{cor:subcplx}.

\begin{figure}[!htbp]
\centering
\includegraphics[scale=.7]{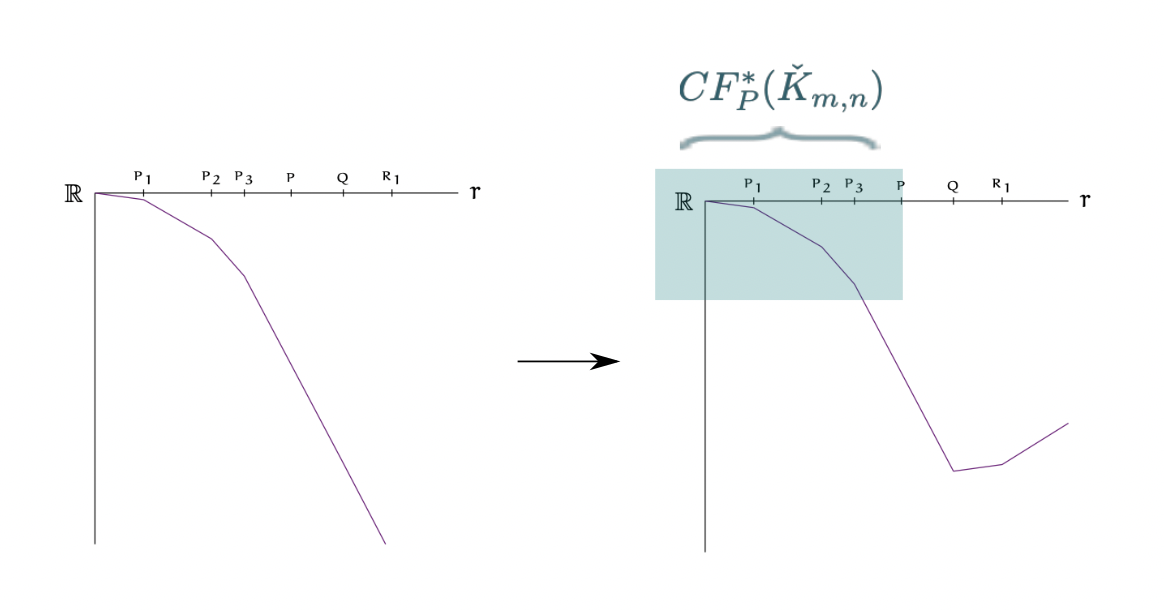}
\caption{The subcomplex $CF^*_P(\check{K}_{m, n})$, and the quasi-isomorphism of Proposition \ref{prop:qi(1)}}
\label{fig:qi(1)}
\end{figure}

\begin{remark}
Lemma \ref{lem:remain}, Lemma \ref{lem:cm0}, and Lemma \ref{lem:phiiso}, as well as Corollary \ref{cor:subcplx} and Corollary \ref{cor:phia} are completely analogous to Lemmas 1, 2, and 3
appearing in \cite{venkatesh-rab}.  The Hamiltonians $\check{K}_{m, n}$ are of a slightly different flavor, but the only thing to check is the integrated maximum principle, which we reprove in the Appendix.  With the reproven result, these Lemmas follow immediately from the results in \cite{venkatesh-rab}, and we therefore omit the proofs.  See also Lemma 3
in \cite{venkatesh-neg} for a maximum principle that is very closely related to the result we need.
\end{remark}

\begin{lemma}
\label{lem:remain}
Any Floer solution of $\check{K}_{m, n}$ or $\check{K}_{m, n}^s$ with negative asymptote contained in $E\setminus[P, \infty)\times\Sigma$ remains entirely in $E\setminus[P, \infty)\times\Sigma$.
\end{lemma}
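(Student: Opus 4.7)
The plan is to prove this via the integrated maximum principle, essentially repeating the argument of Lemma~1 of \cite{venkatesh-rab}; the remark preceding the lemma already advertises this, and the key geometric input is condition~(2) in the definition of $\check{k}_{m,n}$: that $\check{k}_{m,n}$ is convex on $[P, \infty)$. I would argue by contradiction. Suppose $u:\RR\times S^1\to E$ is a Floer solution of $\check{K}_{m,n}$ whose negative asymptote lies in $E\setminus[P,\infty)\times\Sigma$, but that the image of $u$ enters $[P,\infty)\times\Sigma$. By Sard's theorem, pick a regular value $P'>P$ arbitrarily close to $P$ and set $S:=u^{-1}([P',\infty)\times\Sigma)$. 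By the asymptotic assumption, $S$ avoids a neighborhood of $s=-\infty$; if the positive asymptote lies outside $[P',\infty)\times\Sigma$, then $S$ is compact with smooth boundary mapping to $\Sigma_{P'}$, and otherwise one truncates at a large $s_0$ and lets $s_0\to\infty$.

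Next, I would compute the energy $\int_S|\partial_s u|_J^2\,ds\,dt$. Rewriting $|\partial_s u|_J^2\,ds\wedge dt = u^*\Omega - dH\wedge dt$ via the Floer equation, and applying Stokes' theorem on $S$, one obtains a boundary integral $\int_{\partial S}u^*\bigl(\check{k}_{m,n}'(k\pi r^2)\,\alpha\bigr)$ together with a bulk term that, thanks to convexity of $\check{k}_{m,n}$ on $[P,\infty)$, has the correct sign to bound the energy above by zero. Since energy is non-negative, $u|_S$ must be locally constant, contradicting the fact that $u$ crosses $\Sigma_{P'}$. For the $s$-dependent case $\check{K}_{m,n}^s$, the additional contribution to the energy identity is $-\int_S(\partial_s\check{K}_{m,n}^s)\,ds\,dt$, which is non-positive because the homotopy is monotone increasing; this preserves the direction of the inequality.

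The main obstacle, and the reason the author promises a reproof in the Appendix, is that $\check{K}_{m,n}$ carries the extra base-direction contribution $(1+k\pi r^2)\mathfrak{p}^*G$. Its Hamiltonian vector field has a horizontal component (with respect to the connection $\alpha$) on top of the radial-Reeb component produced by $\check{k}_{m,n}(k\pi r^2)$, so the standard integrated maximum principle of \cite{c-f-o, venkatesh-rab} must be checked to still apply. The point is that $\mathfrak{p}^*G$ is constant along the radial direction, so it does not enter the sign computation that controls the radial coordinate of $u$; once this is verified (which is exactly the content of the appendix), the deduction above goes through verbatim and the lemma follows.
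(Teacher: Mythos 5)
Your proposal follows the same route as the paper: the paper explicitly declines to reprove the lemma, stating that it is ``completely analogous'' to Lemma~1 of \cite{venkatesh-rab}, with the sole new ingredient being the integrated maximum principle for the Hamiltonians $\check K_{m,n}$ (which carry the base term $(1+k\pi r^2)\mathfrak p^*G$), and that verification is Lemma~\ref{lem:maxprinc} in the Appendix; that is precisely where you locate the content as well, and your handling of the $\check K_{m,n}^s$ case via monotonicity of the homotopy is also what \cite{venkatesh-rab} does. One small bookkeeping point: the integrated maximum principle as carried out in the Appendix (following Abouzaid) has no bulk term at all and does not use convexity of $\check k_{m,n}$ on $[P,\infty)$. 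After Stokes everything is a boundary integral on $\partial S$, and the decisive sign comes from the geometric fact that $dr\circ dv\circ j(\zeta)\ge 0$ for $\zeta$ positively oriented along $\partial S$, together with the observation that the leftover term $\rho(t)(h_n'(k\pi\sigma^2)-c_\sigma)\,dt$ is exact and therefore integrates to zero over $\partial S$. The role of $\mathfrak p^*G$ is handled exactly as you suspect: because $J$ is chosen upper-triangular/conical, $\alpha(JX_G^h)=0$, so the horizontal component of the Hamiltonian vector field drops out of the computation. So your description of the mechanism (convexity controlling a bulk term) is a different but less accurate picture of the estimate; the conclusion and strategy are the same.
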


\begin{corollary}
\label{cor:subcplx}
The following inclusions are inclusions of well-defined subcomplexes:
\[
CF^*_P(\check{K}_{m, n})\subset CF^*(\check{K}_{m, n}),
\]
\[
\lim_{\substack{\leftarrow \\ m}}CF^*_P(\check{K}_{m, n})\subset\lim_{\substack{\leftarrow \\ m}} CF^*(\check{K}_{m, n}),
\]
and
\[
\lim_{\substack{\rightarrow \\ n}}\lim_{\substack{\leftarrow \\ m}}CF^*_P(\check{K}_{m, n})\subset\lim_{\substack{\rightarrow \\ n}}\lim_{\substack{\leftarrow \\ m}} CF^*(\check{K}_{m, n}),
\]
\end{corollary}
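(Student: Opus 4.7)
The plan is to deduce the corollary directly from Lemma \ref{lem:remain}, which is a confinement-style maximum principle. The subcomplex property amounts to checking that the Floer differential and the relevant continuation maps preserve the subspaces generated by orbits contained in $E \setminus [P, \infty) \times \Sigma$, and the lemma is stated precisely to deliver this.

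First, I would verify that $CF^*_P(\check{K}_{m,n}) \subset CF^*(\check{K}_{m,n})$ is closed under the Floer differential. Given a generator $x$ of $CF^*_P(\check{K}_{m,n})$, that is, an orbit supported in $E \setminus [P,\infty) \times \Sigma$, any term appearing in $\dd^{fl} x$ arises from a Floer trajectory of $\check{K}_{m,n}$ having $x$ as one of its asymptotes. By Lemma \ref{lem:remain}, once the relevant asymptote is contained in $E \setminus [P,\infty)\times\Sigma$, the entire cylinder remains inside that region, so the other asymptote also lies there and therefore generates $CF^*_P$. Thus $\dd^{fl}$ restricts to an endomorphism of $CF^*_P(\check{K}_{m,n})$.

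Next, I would repeat the argument for the continuation maps $c_m \colon CF^*(\check{K}_{m,n}) \to CF^*(\check{K}_{m-1,n})$ and $c_n \colon CF^*(\check{K}_{m,n}) \to CF^*(\check{K}_{m,n+1})$. These maps count solutions of the parameterized Floer equation governed by the homotopy $\check{K}^s_{m,n}$, which is precisely the second family of Hamiltonians covered by Lemma \ref{lem:remain}. The same confinement argument shows that each continuation map sends $CF^*_P$ into $CF^*_P$, so both the inverse system in $m$ and the direct system in $n$ restrict to the subcomplexes.

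Finally, compatibility with the inverse limit over $m$ and the subsequent direct limit over $n$ is a formal categorical step: a compatible family of subcomplexes of an inverse (respectively direct) system assembles into a subcomplex of the limit. The only analytic content is the maximum principle of Lemma \ref{lem:remain}, whose proof is deferred to the Appendix; given that input, the corollary is essentially a bookkeeping unpacking of definitions, which is why the paper flags it as immediate. The only point requiring care is to match the direction of asymptote used in Lemma \ref{lem:remain} with the conventions for the Floer differential and for the continuation homotopies $\check{K}^s_{m,n}$, but once this is aligned no further obstacle arises.
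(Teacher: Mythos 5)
Your proposal is correct and matches the paper's intended approach exactly: the paper presents this corollary as an immediate consequence of Lemma \ref{lem:remain} (citing the analogous argument in \cite{venkatesh-rab}), and you have correctly unpacked what "immediate" means — the confinement statement forces both the Floer differential and the continuation maps $c_m$, $c_n$ to preserve the span of orbits in $E\setminus[P,\infty)\times\Sigma$, after which passage to the inverse limit over $m$ and the direct limit over $n$ is purely formal. Your caveat about matching the sign/direction of the asymptote in Lemma \ref{lem:remain} with the conventions in the differential and the continuation homotopies is the one genuine subtlety, and it is worth recording explicitly, since the energy inequality dictates which end of the cylinder is the input of $\dd^{fl}$ and this must be reconciled with the phrase "negative asymptote" in the lemma.
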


Relate $\check{K}_{m, n}$ to $K_m$ by defining a homotopy $K_{m, 0}^s$ between the Hamiltonian $K_m$ and the Hamiltonian $\check{K}_{m, 0}$ that is monotone increasing everywhere in $s$.  Let
\[
\iota_m:CF^*(K_m)\rightarrow CF^*(\check{K}_{m, 0})
\]
be the continuation map induced by $K_{m, 0}^s$.
\begin{lemma}
\label{lem:cm0}
The map $\iota_m$ induces an isomorphism of complexes
\[
CF^*(K_m)\cong CF^*_P(K_{m, 0}).
\]
\end{lemma}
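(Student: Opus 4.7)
The plan is to exhibit a canonical bijection between the generators of $CF^*(K_m)$ and those of $CF^*_P(\check{K}_{m,0})$, and then verify that the continuation map $\iota_m$ realizes this bijection in a way that identifies the two Floer differentials. Throughout I will use that by construction $\check{k}_{m,0}\equiv k_m$ on $[0,P_m]$, so $X_{\check{K}_{m,0}}$ and $X_{K_m}$ agree on the disk bundle of radius $P_m$.

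First I would match generators. All nonconstant $1$-periodic orbits of $X_{K_m}$ arise where $k_m'(k\pi r^2)$ equals the period of a Reeb orbit, and these radii all sit in the convex-smoothing regions of $k_m$ inside $[0,P_m]\times\Sigma$; the remaining orbits of $K_m$ are constants on the zero section. Since $\check{K}_{m,0}$ agrees with $K_m$ on $[0,P_m]\times\Sigma$, these orbits also occur as $1$-periodic orbits of $\check{K}_{m,0}$. By selecting $\check{k}_{m,0}$ on the buffer $[P_m,P]$ so that its derivative avoids every integer Reeb period, no new $1$-periodic orbits of $\check{K}_{m,0}$ appear in $E\setminus[P,\infty)\times\Sigma$. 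This produces a canonical bijection between the generators of $CF^*(K_m)$ and those of $CF^*_P(\check{K}_{m,0})$.

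Next I would show that $\iota_m$ realizes this bijection. Choose the monotone homotopy $K_{m,0}^s$ to be $s$-independent on the disk bundle of radius $P_m$, which is consistent since $K_m$ and $\check{K}_{m,0}$ already agree there. By Lemma \ref{lem:remain} applied to the homotopy (its proof for $\check{K}_{m,n}^s$-type Hamiltonians relying on the integrated maximum principle reproven in the Appendix), any continuation cylinder with negative asymptote an orbit of $K_m$ remains in $E\setminus[P,\infty)\times\Sigma$. A standard localization argument then shows that the only rigid continuation cylinders are the constant cylinders $u(s,t)=x(t)$ at each orbit $x$ of $K_m$, so $\iota_m$ acts as the canonical bijection on generators. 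Applying the same confinement argument to the Floer differentials on both complexes shows they are computed from identical moduli spaces under the bijection, upgrading it to an isomorphism of chain complexes.

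The main obstacle is ensuring that (i) no stray $1$-periodic orbits of $\check{K}_{m,0}$ appear in the transitional interval $[P_m,P]$, and (ii) continuation cylinders do not escape into $[P,\infty)\times\Sigma$. The first is handled by the combinatorial freedom in shaping $\check{k}_{m,0}$ on $[P_m,P]$, and the second by the integrated maximum principle. Once both are secured, the proof is completely analogous to Lemma~2 of \cite{venkatesh-rab}, as noted in the remark preceding the lemma.
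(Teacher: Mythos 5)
The paper itself does not supply a proof of Lemma~\ref{lem:cm0}; it defers entirely to Lemma~2 of \cite{venkatesh-rab}, with the accompanying remark that the only new ingredient is the integrated maximum principle reproven in the Appendix.  Your reconstruction follows what is almost certainly the intended outline: match generators, confine continuation cylinders, and conclude that $\iota_m$ is the tautological identification.  The generator-matching step is sound: the nonconstant orbits of $K_m$ all sit in the smoothing regions near $P_1,\dots,P_{m-1}\subset[0,P_m)$, $\check{k}_{m,0}\equiv k_m$ on $[0,P_m]$ guarantees these persist, and the slope of $\check{k}_{m,0}$ on $[P_m,P)$ can indeed be arranged to miss the Reeb spectrum so that no new orbits appear below radius $P$.

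The gap is in the localization step, and it is a real one.  You choose the homotopy $K_{m,0}^s$ to be $s$-independent only on the disk bundle of radius $P_m$, but the confinement you obtain from (the analogue of) Lemma~\ref{lem:remain} is to the strictly larger region $E\setminus[P,\infty)\times\Sigma$.  On the intermediate collar $[P_m,P)\times\Sigma$, a continuation cylinder is free to wander and the Hamiltonian there \emph{is} $s$-dependent, so the ``standard localization argument'' does not go through as stated: the $\RR$-translation action that forces rigid solutions to be constant is available only for genuine Floer trajectories of an $s$-independent Hamiltonian, not for continuation solutions whose $s$-dependence is supported inside the confinement region.  The fix is to make the two Hamiltonians agree, and the homotopy $s$-independent, on the whole of $E\setminus[P,\infty)\times\Sigma$ rather than just the disk of radius $P_m$.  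This is consistent with the constraints: the definition of $\check{k}_{m,n}$ only prescribes its values on $[0,P_m]$ and on $[P,\infty)$, and $k_m$ has constant slope $-m+\tfrac12$ on $[P_m,\infty)$, so one may simply set $\check{k}_{m,0}=k_m$ on $[0,P]$, transitioning to the $h_0$-tail convexly inside $[P,Q]$.  With that arrangement the confined cylinders live entirely where the Hamiltonian is $s$-independent, the rigid ones are constant by the usual $\RR$-action argument, $\iota_m$ is the tautological inclusion of generators, and the two differentials are computed by identical moduli spaces confined to the common region.  One should also note, as you partly do, that Lemma~\ref{lem:remain} as stated concerns $\check K_{m,n}^s$, not $K_{m,0}^s$, so the confinement for the interpolating homotopy needs a one-line reapplication of the Appendix maximum principle rather than a citation of that lemma verbatim.
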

More generally, define
\[
\phi_{m, n} = c_n\circ...\circ c_0\circ \iota_m: CF^*(K_m)\rightarrow CF^*(\check{K}_{m, n}).
\]
Because each continuation map $c_n$ acts as the canonical inclusion of a subcomplex, Lemma \ref{lem:cm0} directly extends to show that $\phi_n$ induces an isomorphism of complexes
\[
CF^*(K_m)\cong CF^*_P(\check{K}_{m, n}).
\]
Let
\[
\Phi_n = \lim_{\substack{\leftarrow \\ m}}\phi_{m, n}
\]
and
\[
\Phi = \lim_{\substack{\rightarrow \\ n}} \Phi_n.
\]
\begin{lemma}
\label{lem:phiiso}
$\Phi_n$ induces an isomorphism of complexes
\[
\lim_{\substack{\leftarrow \\ m}} CF^*(K_m) \cong \lim_{\substack{\leftarrow \\ m}}CF^*_P(\check{K}_{m, n})
\]
and $\Phi$ induces an isomorphism of complexes
\[
 \lim_{\substack{\leftarrow \\ m}} CF^*(K_m) \cong  \lim_{\substack{\rightarrow \\ n}}\lim_{\substack{\leftarrow \\ m}}CF^*_P(\check{K}_{m, n}).
\]
\end{lemma}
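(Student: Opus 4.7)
\textbf{Proof plan for Lemma~\ref{lem:phiiso}.} The strategy is to build both claimed isomorphisms out of the fact, already extended from Lemma~\ref{lem:cm0} in the text, that for each fixed pair $(m,n)$ the map $\phi_{m,n}$ is an isomorphism of chain complexes $CF^*(K_m)\cong CF^*_P(\check{K}_{m,n})$. Given this, the first statement reduces to showing that the family $\{\phi_{m,n}\}_{m}$ assembles into a morphism of inverse systems in $m$, and the second to showing compatibility with the direct system in $n$. Both reductions are essentially formal once the continuation data are chosen coherently.

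For the first assertion, fix $n$ and consider the square whose horizontal arrows are $\phi_{m,n}$ and $\phi_{m-1,n}$, whose left vertical is the continuation map $c_m^K:CF^*(K_m)\to CF^*(K_{m-1})$, and whose right vertical is $c_m:CF^*_P(\check{K}_{m,n})\to CF^*_P(\check{K}_{m-1,n})$ (which restricts to the subcomplex by Lemma~\ref{lem:remain}). Both composites $c_m\circ\phi_{m,n}$ and $\phi_{m-1,n}\circ c_m^K$ are continuation maps from $CF^*(K_m)$ to $CF^*(\check{K}_{m-1,n})$ built from monotone-increasing homotopies, so by the standard homotopy invariance of Floer continuation they agree up to chain homotopy. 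By choosing the monotone homotopy underlying $\phi_{m-1,n}\circ c_m^K$ to be the concatenation of the ones defining $c_m^K$, $\iota_{m-1}$, and $c_0,\ldots,c_{n-1}$, and matching this with the concatenation underlying $c_m\circ\phi_{m,n}$, one arranges strict commutativity at the chain level. Taking the inverse limit in $m$ of the commuting squares yields $\Phi_n$ as an isomorphism of chain complexes.

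For the second assertion, recall that by construction $\phi_{m,n+1}=c_n\circ\phi_{m,n}$, where $c_n$ is (by the direct analog in this setting of the lemma recalled in Section~\ref{sec:floer} for the $H_i$) the canonical inclusion of $CF^*_P(\check{K}_{m,n})$ into $CF^*_P(\check{K}_{m,n+1})$. Taking $\lim_{\leftarrow m}$ gives $\Phi_{n+1}=(\lim_{\leftarrow m}c_n)\circ\Phi_n$, and in particular the transition map on the source is $\Phi_{n+1}^{-1}\circ(\lim_{\leftarrow m}c_n)\circ\Phi_n=\mathrm{id}$ on $\lim_{\leftarrow m}CF^*(K_m)$. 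Hence the direct system on the left is the constant system on $\lim_{\leftarrow m}CF^*(K_m)$ with identity transitions, whose colimit is itself, and applying $\lim_{\rightarrow n}$ to the isomorphisms $\Phi_n$ yields $\Phi$ as an isomorphism.

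The main obstacle is the strict (rather than just up-to-chain-homotopy) commutativity of the $m$-direction square; once this is arranged by coherent choices of monotone homotopies, the rest of the proof is the formal observation that inverse and direct limits of strictly commuting isomorphisms of chain complexes are again isomorphisms. The delicate ingredient enabling this coherence is Lemma~\ref{lem:remain}, which guarantees that all the relevant continuation homotopies restrict cleanly to the subcomplexes $CF^*_P$, so that the diagram actually lives inside a single diagram of honest chain complexes.
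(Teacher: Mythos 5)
Your proof plan has a genuine gap in the first part, namely the claim that strict commutativity of the square
\[
\begin{tikzcd}
CF^*(K_m) \arrow{r}{\phi_{m,n}} \arrow{d}{c_m^K} & CF^*_P(\check{K}_{m,n}) \arrow{d}{c_m} \\
CF^*(K_{m-1}) \arrow{r}{\phi_{m-1,n}} & CF^*_P(\check{K}_{m-1,n})
\end{tikzcd}
\]
can be arranged ``by choosing the monotone homotopy underlying $\phi_{m-1,n}\circ c_m^K$ to be the concatenation'' of the homotopies defining its factors. In Hamiltonian Floer theory, a composition of continuation maps is \emph{not} a continuation map; it is only chain homotopic to the continuation map attached to the glued homotopy, and that chain homotopy is generically non-zero. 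So there is no ``monotone homotopy underlying $\phi_{m-1,n}\circ c_m^K$'' to choose, and concatenation by itself cannot promote chain-homotopy commutativity to chain-level commutativity. Since the lemma asserts an isomorphism of cochain \emph{complexes} (not merely of cohomologies), chain-level commutativity is precisely what one must establish, and this step as written would fail.

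The ingredient that actually produces strict commutativity here is not coherent concatenation but the localization of all the relevant data to a region where nothing depends on $m$. The defining property (2) of the homotopy $\check{K}^s_{m,n}$ is that on $E\setminus([P_{m-1},\infty)\times\Sigma)$ it differs from $\check{K}_{m,n}$ only by an $s$-dependent constant, so the Hamiltonian vector field there is $s$-independent. Combined with Lemma~\ref{lem:remain} (Floer solutions of $\check{K}_{m,n}$ and $\check{K}^s_{m,n}$ with negative asymptote in $E\setminus[P,\infty)\times\Sigma$ remain there), this forces every rigid continuation trajectory contributing to $c_m$ on $CF^*_P$ to satisfy the $s$-independent Floer equation, so $c_m$ restricted to $CF^*_P(\check{K}_{m,n})\to CF^*_P(\check{K}_{m-1,n})$ is the canonical identification. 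A parallel statement identifies $c_m^K$ with a canonical projection in the Morse--Bott framework (as in the lemma about the $H_i$ in Section~\ref{sec:floer}). Once \emph{both} vertical arrows, and likewise the continuation trajectories computing $\phi_{m,n}$ and $\phi_{m-1,n}$, are pinned down as canonical maps between orbit sets that literally coincide in this region, commutativity of the square is a tautology, and $\Phi_n$ is an isomorphism of chain complexes; this is the content of the analogues of Lemmas 1--3 in \cite{venkatesh-rab} that the paper invokes. Your reading of Lemma~\ref{lem:remain} as providing only well-definedness of the subcomplex is the wrong role for it; its real force is that it confines trajectories to the region where the homotopies are constant shifts. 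Your treatment of the second assertion (the direct-limit-in-$n$ step via constancy of the induced system) is fine modulo an index typo $\phi_{m,n+1}=c_{n+1}\circ\phi_{m,n}$, not $c_n\circ\phi_{m,n}$.
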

The maps $\Phi_n$ and $\Phi$ are built out of the continuation maps $\{c_n\}$ and the inclusions $\iota_{m, n}$.  By construction, both $c_n$ and $\iota_{m, n}$ are action-preserving.  As a consequence, Lemma \ref{lem:phiiso} has an action-truncated version.  Let
\[
\Phi_{n, a}: \lim_{\substack{\leftarrow \\ m}} CF^*_{(a, \infty)}(K_m) \rightarrow \lim_{\substack{\leftarrow \\ m}}CF^*_{P, (a, \infty)}(\check{K}_{m, n})
\]
be the restriction of $\Phi_n$ to the action-truncated pieces.  Similarly define $\Phi_a$.
\begin{corollary}
\label{cor:phia}
$\Phi_{n, a}$ induces an isomorphism of complexes
\[
\lim_{\substack{\leftarrow \\ m}} CF^*_{(a, \infty)}(K_m) \cong \lim_{\substack{\leftarrow \\ m}}CF^*_{P, (a, \infty)}(\check{K}_{m, n})
\]
and $\Phi_a$ induces an isomorphism of complexes
\[
 \lim_{\substack{\leftarrow \\ m}} CF^*_{(a, \infty)}(K_m) \cong  \lim_{\substack{\rightarrow \\ n}}\lim_{\substack{\leftarrow \\ m}}CF^*_{P, (a, \infty)}(\check{K}_{m, n}).
\]
\end{corollary}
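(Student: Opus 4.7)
The plan is to deduce the action-truncated statement as an immediate formal consequence of Lemma \ref{lem:phiiso}, by verifying that every building-block map in the definition of $\phi_{m,n}$ respects the action filtration. Since Lemma \ref{lem:phiiso} already provides the underlying isomorphism of complexes, the corollary reduces to confirming a single bookkeeping property.

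First I would verify that the maps $\iota_m$ and $c_n$ each send $CF^*_{(a,\infty)}$ into $CF^*_{(a,\infty)}$. The homotopies $K^s_{m,0}$ and $\check{K}^s_{m,n}$ were built to be monotone increasing in $s$, and the standard energy estimate for continuation trajectories then guarantees that action is non-decreasing along continuation solutions; the resulting continuation maps therefore preserve the action-filtered subcomplexes. Composing, the map $\phi_{m,n} = c_n \circ \dots \circ c_0 \circ \iota_m$ restricts to a chain map $CF^*_{(a,\infty)}(K_m) \to CF^*_{P,(a,\infty)}(\check{K}_{m,n})$.

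Second, I would observe that the isomorphism in Lemma \ref{lem:phiiso} arises from a bijection on generators, since Lemma \ref{lem:remain} forces every Floer trajectory of $\check{K}_{m,n}$ beginning in $E \setminus [P,\infty)\times\Sigma$ to remain there. This same bijection of generators restricts to a bijection on the subset of generators with action exceeding $a$, and combined with action-preservation of $\phi_{m,n}$ this yields the chain-level isomorphism $CF^*_{(a,\infty)}(K_m) \cong CF^*_{P,(a,\infty)}(\check{K}_{m,n})$. Passing to the inverse limit over $m$ preserves this isomorphism on the nose and gives the statement for $\Phi_{n,a}$; then taking the direct limit over $n$ (which commutes with the isomorphisms) yields the statement for $\Phi_a$.

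The only technical point that requires genuine verification is the action-preserving claim for each continuation map, and in particular ensuring that the standard energy estimate still applies to the $s$-dependent homotopies $\check{K}^s_{m,n}$ despite their nontrivial modifications on $[Q,\infty)\times\Sigma$. This is handled by the integrated maximum principle reproven in the appendix (as indicated in the preceding remark), after which the corollary is a purely formal consequence of Lemma \ref{lem:phiiso}, and no new analytic input is needed.
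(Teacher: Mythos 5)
Your approach matches the paper's: the corollary is deduced directly from Lemma \ref{lem:phiiso} by noting that the constituent maps $\iota_m$ and $c_n$ respect the action filtration, and so the isomorphism descends to the action-truncated subcomplexes (the paper's own proof is essentially the single sentence ``By construction, both $c_n$ and $\iota_{m,n}$ are action-preserving'').

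However, one point in your justification is not quite right and would actually fail if read literally. You argue from monotonicity of the homotopies and the standard energy estimate that action is \emph{non-decreasing} under each $\iota_m$ and $c_n$. That does give you a chain map $CF^*_{(a,\infty)}(K_m)\to CF^*_{P,(a,\infty)}(\check K_{m,n})$, but non-decrease of action does \emph{not} imply that the bijection on generators restricts to a bijection on the action-above-$a$ pieces: if $\phi_{m,n}(x)=y$ with $\cg{A}(y)>\cg{A}(x)$, it could be that $\cg{A}(y)>a$ while $\cg{A}(x)\le a$, and surjectivity onto $CF^*_{P,(a,\infty)}$ would fail. What the corollary actually needs, and what the paper asserts, is that $\phi_{m,n}$ preserves action \emph{exactly}. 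This holds not because of the energy estimate but because the generators of $CF^*(K_m)$ and of $CF^*_P(\check K_{m,n})$ are the same orbits: $\check k_{m,n}$ agrees with $k_m$ on $[0,P_m]$, which is where all orbits of $CF^*_P$ live, and the maps $\iota_m$ and $c_n$ (after Lemma~\ref{lem:cm0}) act as the identity on these orbits and on their cappings. Once you state that, the bijection of generators trivially carries over to the action-truncated pieces. Also, the citation of Lemma~\ref{lem:remain} for the bijection on generators is slightly misplaced: Lemma~\ref{lem:remain} is what guarantees $CF^*_P$ is a subcomplex, while the identification of $CF^*(K_m)$ with $CF^*_P(\check K_{m,n})$ as a bijection of generators is the content of Lemma~\ref{lem:cm0} (extended along the $c_n$'s). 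With these two adjustments your proof matches the intended argument.
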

We are now ready to prove Proposition \ref{prop:qi(1)}.

\begin{proofprop1}
From Corollary \ref{cor:phia} it suffices to show that the inclusion
\[
\lim_{\substack{\rightarrow \\ n}}\lim_{\substack{\leftarrow \\ m}} CF^*_{P, (a, \infty)}(\check{K}_{m,n})\hookrightarrow\lim_{\substack{\rightarrow \\ n}}\lim_{\substack{\leftarrow \\ m}} CF^*_{(a, \infty)}(\check{K}_{m,n})
\]
is a quasi-isomorphism.  This is equivalent to showing that the quotient complex
\begin{equation} 
\label{eq:bigquot}
\bigslant{\lim\limits_{\substack{\rightarrow \\ n}}\lim\limits_{\substack{\leftarrow \\ m}} CF^*_{(a, \infty)}(\check{K}_{m,n})}{\lim\limits_{\substack{\rightarrow \\ n}}\lim\limits_{\substack{\leftarrow \\ m}} CF^*_{P, (a, \infty)}(\check{K}_{m,n})}
\end{equation}
is acyclic.  We will show that, in fact, (\ref{eq:bigquot}) is identically zero.  As direct limits commute with quotients, it suffices to show that
\begin{equation} 
\label{eq:bigquot2}
\bigslant{\lim\limits_{\substack{\leftarrow \\ m}} CF^*_{(a, \infty)}(\check{K}_{m,n})}{\lim\limits_{\substack{\leftarrow \\ m}} CF^*_{P, (a, \infty)}(\check{K}_{m,n})}
\end{equation}
is identically 0 for any $n\in\NN$.

All connecting maps are surjections, and so the Mittag-Leffler condition is satisfied.  This implies that (\ref{eq:bigquot2}) is isomorphic to the inverse limit
\begin{equation} 
\label{eq:inv}
\lim_{\substack{\leftarrow \\ m}}\hspace{.2cm}\bigslant{CF^*_{(a, \infty)}(\check{K}_{m, n})}{CF^*_{P, (a, \infty)}(\check{K}_{m, n})}.
\end{equation}
Thus, it suffices to show that
\[
0\hspace{.2cm}=\hspace{.2cm}\bigslant{CF^{\ell}_{(a, \infty)}(\check{K}_{m, n})}{CF^{\ell}_{P, (a, \infty)}(\check{K}_{m, n})}
\]
for fixed $\ell\in\RR$ and for sufficiently large $m$.  Note that we may indeed consider each graded component individually, as the inverse system respects the grading.
Define a splitting
\[
CF^{\ell}_{(a, \infty)}(\check{K}_{m, n}) = CF^{\ell}_{P, (a, \infty)}(\check{K}_{m, n})\oplus\cg{C}^{\ell}_{(a,\infty), m, n},
\]
where $\cg{C}^{\ell}_{(a,\infty), m, n}$ is generated by Novikov-weighted periodic orbits in $[P, R]\times\Sigma$ of index ${\ell}$ and action at least $a$.

Let $Cx$ be a Novikov-weighted periodic orbit in ${[P, \infty]}\times\Sigma$ of $\check{K}_{m, n}$ and of index ${\ell}$.  We want to show that $\cg{A}(Cx) < a$ for $m >> 0$.

Let $x$ have winding number $\mathfrak{w}$. As shown in, for example, \cite{venkatesh-thesis}, the grading of $Cx$ has the formula
\[
{\ell} = -2\mathfrak{w} + {2}{\kappa}ev(C) + C',
\]
where $C'$ is a universally-bounded constant.  Rearranging,
\begin{equation}
\label{eq:indexbound}
    ev(C) = \frac{1}{2\kappa}\left({\ell} + 2\mathfrak{w} - C'\right).
\end{equation}
There exists a constant $B$ independent of $m, n$ such that, if $x$ is an orbit of $\check{K}_{m, n}$ that starts in $[P, R]\times\Sigma$, 
\[
\check{K}_{m, n}(x(t)) \leq -(Q-P)m + B.
\]
The action of $Cx$ is therefore bounded above by
\begin{equation} 
\label{eq:actionbound}
\cg{A}(Cx) < -\mathfrak{w}P_{\mathfrak{w}} + ev(C) -(Q-P)m + B.
\end{equation}
Substituting (\ref{eq:indexbound}) into (\ref{eq:actionbound}) yields
\[
\cg{A}(Cx) < \mathfrak{w}\left(-P_{\mathfrak{w}} + \frac{1}{\kappa} \right) -(Q-P)m + B + \frac{1}{2\kappa}\left({\ell} - C'\right).
\]
By assumption either $P_{\mathfrak{w}} <\frac{1}{\kappa} $ and $\mathfrak{w} \leq 0$ or $P_{\mathfrak{w}} > \frac{1}{\kappa} $ and $\mathfrak{w} \geq 0$.  Either way,
\[
\mathfrak{w}\left(-P_{\mathfrak{w}} +\frac{1}{\kappa}  \right) < 0.
\]
As $m \rightarrow \infty$, $-(Q-P)m + B\rightarrow -\infty$.  Thus, there exists some value $M$ such that, for all $m > M$,
\[
\cg{A}(Cx) < a.
\]
It follows that for $m >> 0$ there is a $\Lambda_0$-module isomorphism
\[
\bigslant{CF^{\ell}_{(a, \infty)}(\check{K}_{m, n})}{CF^{\ell}_{P, (a, \infty)}(\check{K}_{m, n})} \hspace{.2cm}\cong\hspace{.2cm} \cg{C}^{\ell}_{(a, \infty), m, n} = 0.
\]
\end{proofprop1}

\begin{corollary}
There is an isomorphism
\[
\lim_{\substack{\rightarrow \\ a}} \lim_{\substack{\leftarrow \\ m}}HF^*_{(a, \infty)}(K_m) \cong \lim_{\substack{\rightarrow \\ a}}\lim_{\substack{\rightarrow \\ n}}\lim_{\substack{\leftarrow \\ m}} HF^*_{(a, \infty)}(\check{K}_{m,n})
\]
\end{corollary}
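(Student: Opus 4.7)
The plan is to extract the corollary directly from Proposition \ref{prop:qi(1)} (together with Corollary \ref{cor:phia}) by passing to homology and then taking a direct limit over action windows. Since Proposition \ref{prop:qi(1)} already furnishes, for each fixed $a\in\RR$, a quasi-isomorphism of chain complexes
\[
\lim_{\substack{\leftarrow \\ m}}CF^*_{(a, \infty)}(K_m) \;\simeq\; \lim_{\substack{\rightarrow \\ n}}\lim_{\substack{\leftarrow \\ m}} CF^*_{(a, \infty)}(\check{K}_{m,n}),
\]
the first step is simply to take cohomology on both sides, obtaining an isomorphism
\[
\lim_{\substack{\leftarrow \\ m}}HF^*_{(a, \infty)}(K_m) \;\cong\; \lim_{\substack{\rightarrow \\ n}}\lim_{\substack{\leftarrow \\ m}} HF^*_{(a, \infty)}(\check{K}_{m,n})
\]
for each $a$. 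Here one uses that direct limits are exact on the category of $\Lambda$-modules, so that the direct limit over $n$ on the right commutes with homology.

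Second, I would verify naturality in $a$. The chain-level isomorphism is witnessed by the map $\Phi_a$ constructed from the continuation maps $c_n$ and the inclusions $\iota_m$ (see Lemma \ref{lem:phiiso} and Corollary \ref{cor:phia}). Both of these are, by construction, action-preserving (or action-increasing in the sense that they send $CF^*_{(a,\infty)}$ into $CF^*_{(a,\infty)}$), and in particular they are compatible with the inclusion maps $CF^*_{(a, \infty)}\hookrightarrow CF^*_{(a',\infty)}$ for $a>a'$. Consequently, the isomorphisms produced at each action window assemble into a morphism of direct systems indexed by $a$.

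Third, I would take the direct limit over $a$ on both sides. Because direct limits are exact, they commute both with cohomology and with the other direct limit in $n$, so no reordering issues arise on the right. On the left, the inverse limit over $m$ is outside the direct limit in $a$; this is fine because we are not asked to interchange them, only to apply $\lim_{\rightarrow\,a}$ to an already-established isomorphism. Putting the pieces together yields
\[
\lim_{\substack{\rightarrow \\ a}}\lim_{\substack{\leftarrow \\ m}}HF^*_{(a, \infty)}(K_m) \;\cong\; \lim_{\substack{\rightarrow \\ a}}\lim_{\substack{\rightarrow \\ n}}\lim_{\substack{\leftarrow \\ m}} HF^*_{(a, \infty)}(\check{K}_{m,n}),
\]
which is the desired statement.

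There is essentially no main obstacle here; the corollary is purely a formal consequence of Proposition \ref{prop:qi(1)} once one observes that $\Phi_a$ is natural with respect to action inclusions and that direct limits commute with cohomology. The only point that requires care is bookkeeping the interaction between $\lim_{\rightarrow\,a}$ and the existing $\lim_{\leftarrow\,m}$, but since we do not swap those two, no Mittag-Leffler hypothesis is needed at this stage (that hypothesis was already invoked inside the proof of Proposition \ref{prop:qi(1)}).
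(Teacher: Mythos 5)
Your proposal contains a genuine gap at the very first step. Taking cohomology of the chain-level quasi-isomorphism of Proposition \ref{prop:qi(1)} produces an isomorphism
\[
H\left(\lim_{\substack{\leftarrow \\ m}}CF^*_{(a, \infty)}(K_m)\right) \;\cong\; H\left(\lim_{\substack{\rightarrow \\ n}}\lim_{\substack{\leftarrow \\ m}} CF^*_{(a, \infty)}(\check{K}_{m,n})\right),
\]
not the isomorphism you write. You silently replace the left-hand side $H(\lim_{\leftarrow m} CF^*)$ by $\lim_{\leftarrow m} HF^*$, and similarly on the right. These are not equal in general: homology does \emph{not} commute with inverse limits, and the discrepancy is measured exactly by a $\lim^1$ term sitting in the Milnor exact sequence
\[
0 \longrightarrow \lim_{\substack{\leftarrow \\ m}}\!\!{}^{1}\; HF^{*-1}_{(a, \infty)}(K_m) \longrightarrow H\left(\lim_{\substack{\leftarrow \\ m}}CF^*_{(a, \infty)}(K_m)\right) \longrightarrow \lim_{\substack{\leftarrow \\ m}} HF^*_{(a, \infty)}(K_m) \longrightarrow 0.
\]
You remark that ``no Mittag-Leffler hypothesis is needed at this stage,'' but that is precisely backwards: to kill the $\lim^1$ term (or, equivalently, to compare the two vertical maps in a ladder of Milnor sequences built from $\Phi_a$) you must invoke the Mittag-Leffler condition again, which holds here because the continuation maps $c_m$ are canonical surjections by construction. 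That is exactly what the paper does: it observes that the quasi-isomorphism induces a morphism of Milnor exact sequences, notes that Mittag-Leffler is satisfied on both towers so that the $\lim^1$ terms are controlled, deduces that the arrow between the $\lim_{\leftarrow m} HF^*$ groups is an isomorphism, and only then passes to $\lim_{\rightarrow a}$.

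The rest of your write-up — naturality of $\Phi_a$ in the action window and the harmless commutation of $\lim_{\rightarrow a}$ and $\lim_{\rightarrow n}$ with cohomology — is fine and matches the paper; exactness of filtered colimits handles those. But the passage from $H$ of an inverse limit to an inverse limit of $H$'s is a real step that requires an extra (Mittag-Leffler) input, and omitting it leaves the argument incomplete.
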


\begin{proof}
Recalling that direct limits preserve exact sequences, the quasi-isomorphism of Proposition \ref{prop:qi(1)} induces a map of Milnor exact sequences
\[
\begin{tikzcd}
0 \arrow{r} &  \lim\limits_{\substack{\leftarrow \\ m}}\text{\textsuperscript{$1$} }HF^*_{(a, \infty)}(K_m) \arrow{r}{} \arrow{d} &  H\left(\lim\limits_{\substack{\leftarrow \\ m}}CF^*_{(a, \infty)}(K_m)\right) \arrow{r} \arrow{d}{\simeq} &  \lim\limits_{\substack{\leftarrow \\ m}}HF^*_{(a, \infty)}(K_m) \arrow{r} \arrow{d}{\simeq}& 0 \\
0 \arrow{r} &  \lim\limits_{\substack{\rightarrow \\ n}}\lim\limits_{\substack{\leftarrow \\ m}}\text{\textsuperscript{$1$} } HF^*_{(a, \infty)}(\check{K}_{m,n}) \arrow{r} & H\left( \lim\limits_{\substack{\rightarrow \\ n}}\lim\limits_{\substack{\leftarrow \\ m}} CF^*_{(a, \infty)}(\check{K}_{m,n})\right) \arrow{r} &  \lim\limits_{\substack{\rightarrow \\ n}}\lim\limits_{\substack{\leftarrow \\ m}}HF^*_{(a, \infty)}(\check{K}_{m,n})\arrow{r}{} & 0
\end{tikzcd}
\]
The right-hand vertical arrow is an isomorphism because the Mittag-Leffler condition is satisfied: the continuation maps, by construction, act as canonical surjections.  Taking the direct limit $a\rightarrow-\infty$ over the right-most isomorphism yields the Corollary.

\end{proof}

\subsubsection{The isomorphisms (2) and (5)}

We begin with a general result.  Let $F_1:E\times S^1\rightarrow\RR$ and $F_2:E\times S^1\rightarrow\RR$ be two Hamiltonians with compact support in $E$ and support in $[0, 1/2]\subset S^1$.  Assume that $F_1\geq F_2$ everywhere.  Let
\[
\check{K}^{F_1, G}_{m, n} = \rho(t)(\check{k}_m(k\pi r^2) + \mathfrak{p}^*G) + F_1 \hspace{1cm}\text{and}\hspace{1cm}K^{F_2, G}_{m, n} = \rho(t)(\check{k}_m(k\pi r^2) + \mathfrak{p}^*G) + F_2.
\]
Denote by $\mathcal{S}$ the supremum of $F_1-F_2$:
\[
\mathcal{S} = \sup_{x\in E\times S^1}F_1(x)-F_2(x).
\]
The following Lemma appears in a slightly different guise in \cite{kang}.
\begin{lemma}
\label{lem:fg}
There are chain maps
\[
CF^*_{(a, \infty)}(\check{K}^{F_2, G}_{m, n})\rightarrow CF^*_{(a, \infty)}(\check{K}^{F_1, G}_{m, n} )\rightarrow CF^*_{(a, \infty)}(\check{K}^{F_2 + \mathcal{S}, G}_{m, n})\cong CF^*_{(a-\mathcal{S}, \infty)}(\check{K}^{F_2, G}_{m, n})
\]
whose composition is chain-homotopic to the inclusion of action-filtered pieces
\[
CF^*_{(a, \infty)}(\check{K}^{F_2, G}_{m, n})\hookrightarrow CF^*_{(a - \mathcal{S}, \infty)}(\check{K}^{F_2, G}_{m, n}).
\]
\end{lemma}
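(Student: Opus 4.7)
The plan is to build the two chain maps via monotone continuation and then to exploit that $\check{K}^{F_2+\mathcal{S},G}_{m,n}$ differs from $\check{K}^{F_2,G}_{m,n}$ only by the constant $\mathcal{S}$. Since $F_2\leq F_1\leq F_2+\mathcal{S}$ pointwise (the second inequality being the definition of $\mathcal{S}$), we have the pointwise chain
\[
\check{K}^{F_2,G}_{m,n}\leq\check{K}^{F_1,G}_{m,n}\leq\check{K}^{F_2+\mathcal{S},G}_{m,n}.
\]
Monotone homotopies (non-decreasing in the homotopy parameter $s$) between consecutive pairs induce continuation chain maps. In the paper's sign convention the Floer differential increases action, and the standard energy identity for Floer continuation trajectories implies that monotone continuation maps preserve every action window $(a,\infty)$; this produces the first two arrows.

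For the isomorphism $CF^*_{(a,\infty)}(\check{K}^{F_2+\mathcal{S},G}_{m,n})\cong CF^*_{(a-\mathcal{S},\infty)}(\check{K}^{F_2,G}_{m,n})$, observe that adding the constant $\mathcal{S}$ leaves the Hamiltonian vector field (and hence the set of periodic orbits) unchanged, while shifting the action of each orbit by exactly $\int_0^1\mathcal{S}\,dt=\mathcal{S}$. The Floer moduli spaces depend only on the Hamiltonian vector field and on $J$, so the identity on generators is a chain isomorphism between the full complexes, and the action shift produces precisely the stated identification of action-filtered subcomplexes.

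For the chain-homotopy statement, I invoke the standard composition property of continuation maps: the composition of the first two arrows is chain-homotopic to any continuation map induced by a single monotone homotopy from $\check{K}^{F_2,G}_{m,n}$ directly to $\check{K}^{F_2+\mathcal{S},G}_{m,n}$. I choose the homotopy $H_s:=\check{K}^{F_2,G}_{m,n}+\sigma(s)\mathcal{S}$, where $\sigma:\RR\to[0,1]$ is a non-decreasing smooth cutoff equal to $0$ for $s\ll 0$ and $1$ for $s\gg 0$. Because a time-independent additive constant does not affect the Hamiltonian vector field, Floer's equation for $H_s$ is literally the $s$-independent equation for $\check{K}^{F_2,G}_{m,n}$, so the resulting continuation map is chain-homotopic to the identity on generators. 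Composing with the isomorphism of the previous paragraph yields the tautological inclusion $CF^*_{(a,\infty)}(\check{K}^{F_2,G}_{m,n})\hookrightarrow CF^*_{(a-\mathcal{S},\infty)}(\check{K}^{F_2,G}_{m,n})$, as claimed.

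The only non-routine point is verifying action-preservation for the monotone continuation maps, which relies on the standard energy identity together with the integrated maximum principle reproved in the appendix (to keep Floer continuation cylinders away from the cylindrical end where the homotopy is non-compactly supported). Given the setup already developed in the paper, I expect this step to be essentially a bookkeeping exercise rather than a genuine obstacle, so the content of the proof is the three-step argument above.
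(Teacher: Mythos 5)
Your proposal is correct and follows essentially the same approach as the paper's source: the paper does not spell out a proof of Lemma~\ref{lem:fg}, but instead observes that Kang's proof in \cite{kang} carries over verbatim once the integrated maximum principle has been rechecked in the present setting (as the paper does in the Appendix). That proof is exactly the Albers--Frauenfelder sandwich argument you reconstruct: two monotone continuation maps for $F_2\leq F_1\leq F_2+\mathcal{S}$, the tautological constant-shift identification $\check{K}^{F_2+\mathcal{S},G}_{m,n}=\check{K}^{F_2,G}_{m,n}+\mathcal{S}$ with the accompanying $\mathcal{S}$-shift of action, and the observation that the composite is chain-homotopic to the continuation map for the direct $s$-dependent shift $\check{K}^{F_2,G}_{m,n}+\sigma(s)\mathcal{S}$, whose Floer equation is $s$-independent and whose rigid solutions are therefore constants, making that continuation map literally the identity on generators. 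One small caveat worth flagging for precision: you need the \emph{chain homotopy} between the two continuation maps, not just the maps themselves, to be compatible with the action windows, which follows because the interpolating family of homotopies can be chosen to stay pointwise between $\check{K}^{F_2,G}_{m,n}$ and $\check{K}^{F_2+\mathcal{S},G}_{m,n}$ and monotone in $s$ throughout, so the homotopy operator also shifts action by at most $\mathcal{S}$; and your cutoff $\sigma$ should be oriented to match the paper's $s\to\pm\infty$/asymptote convention so that the direct continuation map actually points from the $F_2$ complex to the $F_2+\mathcal{S}$ complex rather than the reverse. Neither of these affects the substance.
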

Kang considers Hamiltonians that are truly linear at infinity: Hamiltonians of the form $h_m(k\pi r^2)$, rather than $K_m = k_m(k\pi r^2) + (1 + k\pi r^2)\mathfrak{p}^*G$.  However, as with all other similar results in this paper, the proof of Lemma \ref{lem:fg} is identical, given that the integrated maximum principle holds.
We therefore refer to \cite{kang} for the proof of Lemma \ref{lem:fg}.

Specializing to our set-up,
\begin{lemma}
\label{lem:0fg}
There is an isomorphism
\[
\lim_{\substack{\rightarrow \\ a}}\lim_{\substack{\rightarrow \\ n}}\lim_{\substack{\leftarrow \\ m}}HF^*_{(a, \infty)}(\check{K}_{m, n}^{0, G})\rightarrow \lim_{\substack{\rightarrow \\ a}}\lim_{\substack{\rightarrow \\ n}}\lim_{\substack{\leftarrow \\ m}}HF^*_{(a, \infty)}(\check{K}_{m, n}^{F, G}).
\]
\end{lemma}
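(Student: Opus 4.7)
The plan is to apply Lemma \ref{lem:fg} with an auxiliary dominating Hamiltonian, exploiting the fact that Lemma \ref{lem:fg} promotes any monotone continuation between compactly-supported perturbations into an isomorphism after passing to the direct limit over the action window.

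First I would fix a compactly-supported smooth function $F_+ : E\times S^1\rightarrow\RR$ with support in $E\times [0, 1/2]$ satisfying $F_+ \geq F$ and $F_+ \geq 0$ pointwise; such an $F_+$ exists because $F$ itself is compactly supported, so any smooth bump that dominates $\max(F, 0)$ on its support will do. Apply Lemma \ref{lem:fg} twice: once with $(F_1, F_2) = (F_+, 0)$, producing chain maps
\[
CF^*_{(a, \infty)}(\check{K}^{0, G}_{m, n}) \xrightarrow{\alpha} CF^*_{(a, \infty)}(\check{K}^{F_+, G}_{m, n}) \xrightarrow{\alpha'} CF^*_{(a - \mathcal{S}_0, \infty)}(\check{K}^{0, G}_{m, n})
\]
with $\alpha' \circ \alpha$ chain-homotopic to the inclusion of action-filtered pieces and $\mathcal{S}_0 = \sup F_+$; and once with $(F_1, F_2) = (F_+, F)$, producing analogous chain maps $\beta, \beta'$ with $\beta' \circ \beta$ chain-homotopic to the inclusion into $CF^*_{(a - \mathcal{S}_F, \infty)}(\check{K}^{F, G}_{m, n})$, where $\mathcal{S}_F = \sup(F_+ - F)$. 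Applying Lemma \ref{lem:fg} once more with the roles reversed --- with $(F_1', F_2') = (\mathcal{S}_0, F_+)$ and $(F_1', F_2') = (F + \mathcal{S}_F, F_+)$ --- shows that the opposite compositions $\alpha \circ \alpha'$ and $\beta \circ \beta'$ are also chain-homotopic to inclusions of action-filtered subcomplexes of $CF^*(\check{K}^{F_+, G}_{m, n})$. Consequently, after passing to homology and taking the direct limit over $a$, each of $\alpha$ and $\beta$ becomes an isomorphism onto $\lim_a HF^*_{(a, \infty)}(\check{K}^{F_+, G}_{m, n})$, and composing one with the inverse of the other yields an isomorphism $\lim_a HF^*_{(a, \infty)}(\check{K}^{0, G}_{m, n}) \xrightarrow{\sim} \lim_a HF^*_{(a, \infty)}(\check{K}^{F, G}_{m, n})$ for every fixed $m, n$.

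Finally, I would package these isomorphisms into the triple-limited statement. The chain maps produced by Lemma \ref{lem:fg} are defined via standard monotone continuation homotopies and are therefore compatible with the continuations $c_m$ and $c_n$ relating the $\check{K}_{m, n}$ for different $m$ and $n$; hence they assemble into maps of the directed and inverse systems. Since the connecting maps in the inverse system over $m$ act as canonical surjections, the Mittag-Leffler condition holds and passage to homology commutes with $\lim_m$, exactly as in the Milnor sequence argument of the corollary to Proposition \ref{prop:qi(1)}. Applying $\lim_m$, then $\lim_n$, and finally $\lim_a$ to the isomorphism above gives the isomorphism of the Lemma. The principal obstacle is the bookkeeping in the previous step, namely verifying that the chain-homotopies supplied by Lemma \ref{lem:fg} can be chosen coherently over the directed system in $(m, n)$ so as to descend to the limits; this amounts to standard naturality of monotone continuations and poses no new analytic difficulties beyond the integrated maximum principle already needed for Lemma \ref{lem:fg} itself.
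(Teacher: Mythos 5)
Your proposal is correct and follows essentially the same strategy as the paper's proof: invoke Lemma \ref{lem:fg} in both directions to produce chain maps whose compositions are inclusions of action-filtered pieces up to chain homotopy, pass to homology, take limits, and observe that the inclusions become identities (hence the maps become inverse isomorphisms) in the direct limit over $a$. The paper streamlines this slightly by first normalizing $F \geq 0$ (harmless, since adding a constant leaves $X_F$ unchanged and merely shifts the action filtration, which is absorbed in $\lim_a$) and then applying Lemma \ref{lem:fg} directly to the pairs $(F, 0)$ and $(\mathcal{S}, F)$ with $\mathcal{S} = \sup F$; your factoring through an auxiliary dominating $F_+$ reaches the same conclusion with one extra intermediate step, and your appeal to Mittag-Leffler is not actually needed since the lemma is stated for $\lim HF$ rather than $H(\lim CF)$.
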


\begin{proof}
Assume without loss of generality that $F\geq 0$ everywhere.  Taking $F_2=0$, $F_1 = F$, and 
\[
\mathcal{S} = \sup_{x\in E\times S^1}F(x),
\] 
Lemma \ref{lem:fg} produces maps
\[
\mu_{m, n}: CF^*_{(a, \infty)}(\check{K}_{m, n}^{0, G})\rightarrow CF^*_{(a, \infty)}(\check{K}_{m, n}^{F, G}).
\]
and
\[
\nu_{m, n}: CF^*_{(a, \infty)}(\check{K}_{m, n}^{F, G})\rightarrow CF^*_{(a - \mathcal{S}, \infty)}(\check{K}_{m, n}^{0, G}).
\]
such that $\nu_{m, n}\circ\mu_{m, n}$ is chain-homotopic to the inclusion 
\[
\iota_{a, a-M}: CF^*_{(a, \infty)}(\check{K}_{m, n}^{0, G})\hookrightarrow CF^*_{(a - \mathcal{S}, \infty)}(\check{K}_{m, n}^{0, G}).
\]
Let $\mu_{m, n}^*$ and $\nu_{m, n}^*$ be the induced maps on homology.  As continuation maps commute up to homology, there are induced maps on limits
\[
M^*_a = \lim_{\substack{\rightarrow \\ n}} \lim_{\substack{\leftarrow \\ m}}\mu_{m, n}^*: \lim_{\substack{\rightarrow \\ n}}\lim_{\substack{\leftarrow \\ m}}HF^*_{(a, \infty)}(\check{K}_{m, n}^{0, G})\rightarrow \lim_{\substack{\rightarrow \\ n}}\lim_{\substack{\leftarrow \\ m}}HF^*_{(a, \infty)}(\check{K}_{m, n}^{F, G})
\]
and
\[
N^*_a = \lim_{\substack{\rightarrow \\ n}} \lim_{\substack{\leftarrow \\ m}}\nu_{m, n}^*:\lim_{\substack{\rightarrow \\ n}}\lim_{\substack{\leftarrow \\ m}}HF^*_{(a, \infty)}(\check{K}_{m, n}^{F, G})\rightarrow \lim_{\substack{\rightarrow \\ n}}\lim_{\substack{\leftarrow \\ m}}HF^*_{(a-\mathcal{S}, \infty)}(\check{K}_{m, n}^{0, G})
\]
such that
\[
N^*_{a}\circ M^*_a = \iota^*_{a, a-\mathcal{S}}
\]
is the inclusion of action-filtered subcomplexes.
Taking the limit over action window $(a, \infty)$ produces maps $M^*$ and $N^*$ with
\begin{equation} 
\label{eq:ninvmoneway}
N^*\circ M^* = \lim_{\substack{\rightarrow \\a}}\iota^*_{a, a-\mathcal{S}} = id.
\end{equation}

Conversely, there are maps
\[
\cg{M}^*_a: \lim_{\substack{\rightarrow \\ n}}\lim_{\substack{\leftarrow \\ m}}HF^*_{(a, \infty)}(\check{K}_{m, n}^{\mathcal{S}, G})\rightarrow \lim_{\substack{\rightarrow \\ n}}\lim_{\substack{\leftarrow \\ m}}HF^*_{(a-\mathcal{S}, \infty)}(\check{K}_{m, n}^{F, G})
\]
and
\[
\cg{N}^*_a: \lim_{\substack{\rightarrow \\ n}}\lim_{\substack{\leftarrow \\ m}}HF^*_{(a, \infty)}(\check{K}_{m, n}^{F, G})\rightarrow \lim_{\substack{\rightarrow \\ n}}\lim_{\substack{\leftarrow \\ m}}HF^*_{(a, \infty)}(\check{K}_{m, n}^{\mathcal{S}, G})
\]
satisfying
\begin{equation} 
\label{eq:mniota}
\cg{M}^*_a\circ\cg{N}^*_a = \iota_{a, a-\mathcal{S}}.
\end{equation}
The identification $CF^*_{(a, \infty)}(\check{K}_{m, n}^{M, G}) = CF^*_{(a-\mathcal{S}, \infty)}(\check{K}_{m, n}^{0, G})$ identifies the maps $\cg{M}_a$ and $M_{a-\mathcal{S}}$ and the maps $N_a^*$ and $\cg{N}_{a-\mathcal{S}}^*$.  Under this identification, Equation (\ref{eq:mniota}) becomes
\[
M_{a-M}^*\circ N_a^* = \iota_{a, a-\mathcal{S}}.
\]
Taking the limit over action window $(a, \infty)$ yields the identity
\begin{equation} 
\label{eq:ninvmotherway}
M^*\circ N^* = id.
\end{equation}
Equations (\ref{eq:ninvmoneway}) and (\ref{eq:ninvmotherway}) shows that $M^*$ and $N^*$ are inverses.

\end{proof}

Note that the Hamiltonian $\check{K}_{m, n}$ is precisely the Hamiltonian $\check{K}_{m, n}^{0, G}$.  Thus, the isomorphism (2) is shown by the following Proposition.

\begin{proposition}
\label{prop:qi(2)}
There is an isomorphism 
\[
\lim_{\substack{\rightarrow \\ a}}\lim_{\substack{\rightarrow \\ n}}\lim_{\substack{\leftarrow \\ m}}HF^*_{(a, \infty)}(\check{K}_{m, n}^{0, G})\rightarrow \lim_{\substack{\rightarrow \\ a}}\lim_{\substack{\rightarrow \\ n}}\lim_{\substack{\leftarrow \\ m}}HF^*_{(a, \infty)}(\check{K}_{m, n}^{F}).
\]
\end{proposition}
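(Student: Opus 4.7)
The plan is to mirror the proof of Lemma \ref{lem:0fg} nearly verbatim, since the preceding remark identifies $\check{K}_{m,n} = \check{K}_{m,n}^{0,G}$ and the passage from $\check{K}_{m,n}^{0,G}$ to $\check{K}_{m,n}^{F}$ can be achieved by a direct application of the sandwich Lemma \ref{lem:fg} with $(F_1, F_2) = (F, 0)$ added to the common radial-plus-base skeleton. Strictly speaking, the target in Lemma \ref{lem:0fg} is $\check{K}_{m,n}^{F,G}$ rather than $\check{K}_{m,n}^F$, but since $\check{K}_{m,n}^F$ differs from $\check{K}_{m,n}^{F,G}$ only by the compactly-supported (in the base-direction) perturbation $\rho(t)(1+k\pi r^2)\mathfrak{p}^*G$, the same sandwich produces the comparison we need.

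First, assume without loss of generality that $F \geq 0$ everywhere; this is harmless because an additive shift of $F$ shifts all action windows uniformly and is absorbed in the eventual direct limit over $a$. Set $\mathcal{S} = \sup_{E\times S^1} F$. By Lemma \ref{lem:fg}, we obtain chain maps
\[
\mu_{m,n}\colon CF^*_{(a,\infty)}(\check{K}_{m,n}^{0,G}) \longrightarrow CF^*_{(a,\infty)}(\check{K}_{m,n}^{F}), \qquad \nu_{m,n}\colon CF^*_{(a,\infty)}(\check{K}_{m,n}^{F}) \longrightarrow CF^*_{(a-\mathcal{S},\infty)}(\check{K}_{m,n}^{0,G})
\]
whose composition is chain-homotopic to the action-window inclusion $\iota_{a,a-\mathcal{S}}$. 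Applying Lemma \ref{lem:fg} once more with $(F_1,F_2) = (\mathcal{S}, F)$ produces analogous maps in the reverse direction whose composition is again a shift inclusion.

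Next, I would pass to the iterated limit $\lim_{\rightarrow n}\lim_{\leftarrow m}$. Because the various continuation maps in $m$ and $n$ commute with $\mu_{m,n}$ and $\nu_{m,n}$ up to chain homotopy (exactly as in the proof of Lemma \ref{lem:0fg}), the induced maps on homology assemble into well-defined maps $M^*_a$ and $N^*_a$ between the doubly-indexed homology limits, satisfying $N^*_a \circ M^*_a = \iota^*_{a,a-\mathcal{S}}$ and $M^*_{a-\mathcal{S}} \circ N^*_a = \iota^*_{a,a-\mathcal{S}}$. Finally, taking the outer direct limit as $a \to -\infty$ renders these inclusions the identity, so the limiting maps $M^*$ and $N^*$ are mutually inverse, yielding the desired isomorphism.

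The main obstacle is verifying that Lemma \ref{lem:fg}'s sandwich construction, which rests on monotone homotopies together with the integrated maximum principle, genuinely applies to the present v-shaped family $\check{K}_{m,n}$ and to their $F$-perturbed cousins $\check{K}_{m,n}^F$. This is precisely the point flagged in the remark before Lemma \ref{lem:remain} and is handled by the maximum principle reproven in the Appendix; once that is in hand, the remainder of the argument is formal bookkeeping that parallels Lemma \ref{lem:0fg} almost line-for-line.
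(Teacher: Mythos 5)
Your proposal conflates the target of Lemma \ref{lem:fg} with the target of the proposition. Applying Lemma \ref{lem:fg} with $(F_1,F_2)=(F,0)$ produces chain maps landing in $CF^*_{(a,\infty)}(\check{K}_{m,n}^{F,G})$, not in $CF^*_{(a,\infty)}(\check{K}_{m,n}^{F})=CF^*_{(a,\infty)}(\check{K}_{m,n}^{F,0})$: the lemma compares two compactly supported perturbations $F_1,F_2$ added to the \emph{same} radial-plus-base skeleton (which includes the $\mathfrak{p}^*G$ term), so the $G$-part never moves. When you write $\mu_{m,n}\colon CF^*_{(a,\infty)}(\check{K}_{m,n}^{0,G}) \to CF^*_{(a,\infty)}(\check{K}_{m,n}^{F})$ and then pass to limits, those maps do not exist as consequences of Lemma \ref{lem:fg}; their codomain must carry the $G$.

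You notice the mismatch in the ``strictly speaking'' aside but then wave it away with ``the same sandwich produces the comparison we need.'' It does not, because $\rho(t)(1+k\pi r^2)\mathfrak{p}^*G$ is not compactly supported in $E$ (it is pulled back from the base and grows with $k\pi r^2$), so it cannot be absorbed into the $F_i$-slot of Lemma \ref{lem:fg}, whose hypotheses require compact support. What is actually needed is a \emph{second} sandwich, comparing $\check{K}_{m,n}^{F,G}$ with $\check{K}_{m,n}^{F,0}$, built from monotone homotopies $G_s^1,G_s^2$ that interpolate in the $G$-direction while holding $F$ and the radial part fixed. The resulting action shift is not $\sup F$ but $(1+k\pi\mathcal{R}^2)\mathcal{S}$ with $\mathcal{S}=\max_M G$ and $\mathcal{R}$ a radius bounding a compact region containing $\mathrm{supp}(F)$; the point is that although the $G$-perturbation is unbounded on $E$, all relevant periodic orbits and, by the integrated maximum principle, all Floer trajectories remain in a compact set where it \emph{is} bounded, and the factor $(1+k\pi r^2)$ introduces the $\mathcal{R}$-dependence. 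Your proposal carries out only the first stage (adding $F$, which is Lemma \ref{lem:0fg}) and asserts rather than proves the second (removing $G$), which is precisely where the proposition's new content lies.
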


\begin{proof}
By Lemma \ref{lem:0fg} it suffices to show that there is an isomorphism
\[
\lim_{\substack{\rightarrow \\ a}}\lim_{\substack{\rightarrow \\ n}}\lim_{\substack{\leftarrow \\ m}}HF^*_{(a, \infty)}(\check{K}_{m, n}^{F, G})\rightarrow \lim_{\substack{\rightarrow \\ a}}\lim_{\substack{\rightarrow \\ n}}\lim_{\substack{\leftarrow \\ m}}HF^*_{(a, \infty)}(\check{K}_{m, n}^{F}).
\]
The proof of this isomorphism is very similar to the proof of Lemma \ref{lem:0fg}.  We therefore outline the strategy and refer to the above proof for details.

Assume without loss of generality that $G \geq 0$.  Let
\[
\mathcal{S} = \max_{x\in M} G(x)
\]
and
\[
\mathcal{R} = \min_{\substack{ \text{supp}(F)\subset D_r}} r
\]
Let $G_s^1$ be a monotone homotopy such that $G_s^1 = 0$ when $s >> 0$ and $G_s^1 = G$ when $s << 0$.  Let $G_s^2$ be a monotone homotopy such that $G_s^2 = G$ when $s >> 0$ and $G_s^2 = \mathcal{S}$ when $s << 0$.  As in the proof of Lemma \ref{lem:0fg}, the Hamiltonians 
\[
\rho(t)(\check{k}_{m, n}(k\pi r^2) + (1 + k\pi r^2)\mathfrak{p}^*G_s^1) + F\hspace{1cm}\text{and}\hspace{1cm} \rho(t)(\check{k}_{m, n}(k\pi r^2) + (1 + k\pi r^2)\mathfrak{p}^*G_s^2) + F
\]
induce maps
\[
CF^*_{(a, \infty)}(\check{K}_{m, n}^{F, 0}) \rightarrow CF^*_{(a, \infty})(\check{K}_{m, n}^{F, G}) \rightarrow CF^*_{(a - (1 + k\pi \mathcal{R}^2)\mathcal{S}, \infty})(\check{K}_{m, n}^{F, 0}).
\]
whose composition is homotopic to the inclusion map
\[
CF^*_{(a, \infty)}(\check{K}_{m, n}^{F, 0}) \hookrightarrow CF^*_{(a - (1 +k\pi \mathcal{R}^2)\mathcal{S}, \infty})(\check{K}_{m, n}^{F, 0}).
\]
Note that compactness of Floer trajectories is achieved by the integrated maximum principle.  Similarly, all periodic orbits remain inside the disk bundle of radius $k\pi \mathcal{R}^2$; thus, the maximum bound on a change in action-window is given by $(1 + k\pi \mathcal{R}^2)\mathcal{S}$.

Similarly, there are maps
\[
CF^*_{(a, \infty)}(\check{K}_{m, n}^{F, G}) \rightarrow CF^*_{(a, \infty})(\check{K}_{m, n}^{F, \mathcal{S}}) \rightarrow CF^*_{(a - (1 + k\pi \mathcal{R}^2)\mathcal{S}, \infty})(\check{K}_{m, n}^{F, G}).
\]
whose composition is homotopic to the inclusion map
\[
CF^*_{(a, \infty)}(\check{K}_{m, n}^{F, G}) \hookrightarrow CF^*_{(a - (1 + k\pi \mathcal{R}^2)\mathcal{S}, \infty})(\check{K}_{m, n}^{F, G}).
\]
As in the proof of Lemma \ref{lem:0fg}, taking homology and limits over $m, n$ and $a$ yields inverse maps
\[
\lim_{\substack{\rightarrow \\ a}}\lim_{\substack{\rightarrow \\ n}}\lim_{\substack{\leftarrow \\ m}}HF^*_{(a, \infty)}(\check{K}_{m, n}^{F, G})\rightleftharpoons \lim_{\substack{\rightarrow \\ a}}\lim_{\substack{\rightarrow \\ n}}\lim_{\substack{\leftarrow \\ m}}HF^*_{(a, \infty)}(\check{K}_{m, n}^{F}).
\]
The Proposition follows.

\end{proof}

The isomorphism (5) proceeds in the same manner.

\begin{proposition}
\label{prop:qi(5)}
There is an isomorphism
\[
\lim_{\substack{\rightarrow \\ a}}\lim_{\substack{\rightarrow \\ n}}\lim_{\substack{\leftarrow \\ m}} HF^*_{(a, \infty)}(H_n^F)\cong \lim_{\substack{\rightarrow \\ a}}\lim_{\substack{\rightarrow \\ n}}\lim_{\substack{\leftarrow \\ m}} HF^*_{(a, \infty)}(H_n).
\]
\end{proposition}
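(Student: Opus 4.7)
The proof proceeds in parallel to Proposition \ref{prop:qi(2)}. Since neither $H_n^F$ nor $H_n$ depends on $m$, the inverse limits over $m$ are trivial, so it suffices to produce an isomorphism
\[
\lim_{\substack{\rightarrow \\ a}}\lim_{\substack{\rightarrow \\ n}} HF^*_{(a, \infty)}(H_n^F) \cong \lim_{\substack{\rightarrow \\ a}}\lim_{\substack{\rightarrow \\ n}} HF^*_{(a, \infty)}(H_n).
\]
The plan is to interpolate via the intermediate family
\[
H_n^{F, G} := \rho(t)\left(h_n(k\pi r^2) + \mathfrak{p}^*G\right) + F,
\]
for which $H_n^F = H_n^{F, 0}$ and $H_n = H_n^{0, G}$, and to compare pair-by-pair along the edges $H_n^{F,0} \leadsto H_n^{F, G} \leadsto H_n^{0, G}$.

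For the first edge, I would turn $G$ on through monotone homotopies $G_s^1, G_s^2$ exactly as in the proof of Proposition \ref{prop:qi(2)}. The analog of Lemma \ref{lem:fg} for the $H$-type Hamiltonians then yields chain maps
\[
CF^*_{(a, \infty)}(H_n^F) \longrightarrow CF^*_{(a, \infty)}(H_n^{F, G}) \longrightarrow CF^*_{(a - \mathcal{S}, \infty)}(H_n^F),
\]
with $\mathcal{S} = (1+k\pi\mathcal{R}^2)\max_{M}|G|$, where $\mathcal{R}$ is chosen large enough that the support of $F$ and all relevant periodic orbits lie in the disk bundle of radius $\mathcal{R}$. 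By the analog of Lemma \ref{lem:fg} the composition is chain-homotopic to the inclusion of action-filtered subcomplexes; the reverse composition is produced symmetrically. Taking homology, the direct limit over $n$, and finally the direct limit over action window, these inclusions become the identity, giving the first isomorphism. The second edge, comparing $H_n^{F, G}$ to $H_n = H_n^{0, G}$, proceeds identically with the roles of $F$ and $G$ swapped: one turns $F$ off through monotone homotopies, applies the analog of Lemma \ref{lem:fg} with action shift bounded by $\sup_{E \times S^1}|F|$, and takes the same limits. Composing the two isomorphisms and appending the trivial $\lim_m$ yields the statement.

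The main technical point is that the integrated maximum principle invoked by Lemma \ref{lem:fg} must continue to apply to the Hamiltonians $H_n$, $H_n^F$, and $H_n^{F, G}$ and to the monotone homotopies between them. Because $h_n$ is linear in $k\pi r^2$ on the cylindrical end $[R_n, \infty)\times\Sigma$ and $F$ has compact support, the argument given in the Appendix for the v-shaped Hamiltonians $\check{K}_{m, n}$ goes through verbatim: the maximum principle sees only the slope of the radial profile at infinity, not whether the profile is convex or v-shaped. With this in hand, the chain-homotopy arguments of Lemma \ref{lem:fg} and the algebraic manipulations of Lemma \ref{lem:0fg} and Proposition \ref{prop:qi(2)} carry over without modification, which is why the paper is justified in claiming the proof "proceeds in the same manner".
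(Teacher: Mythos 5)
Your proposal is correct and follows the same strategy the paper intends when it says the proof of Proposition~\ref{prop:qi(5)} is ``completely analogous to the proof of Proposition~\ref{prop:qi(2)}'': interpolate through the mixed Hamiltonian $H_n^{F,G}$ and use Lemma~\ref{lem:fg}-style chain maps from monotone homotopies turning $G$ and $F$ on and off, with the integrated maximum principle from the Appendix guaranteeing compactness for this family as well. Your observation that the $\lim_m$ in the statement is trivial (the $H_n$ and $H_n^F$ do not depend on $m$, matching the chain in Section~\ref{subsec:chain} where arrow~(5) has no $m$-limit) is a correct reading of what is otherwise slightly confusing notation.
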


\begin{proof}
The proof is completely analogous to the proof of Proposition \ref{prop:qi(2)}, and so we omit it.

\end{proof}

\subsubsection{The isomorphisms (3) and (4)}

We prove the following Propositions.

\begin{proposition}
\label{prop:qi(3)}
There is an isomorphism
\[
\lim_{\substack{\rightarrow \\ a}}\lim_{\substack{\rightarrow \\ n}}\lim_{\substack{\leftarrow \\ m}} HF^*_{(a, \infty)}(\check{K}_{m,n}^F)\simeq \lim_{\substack{\rightarrow \\ a}}\lim_{\substack{\rightarrow \\ n}}\lim_{\substack{\leftarrow \\ m}} HF^*_{(a, \infty)}(\check{H}_{m,n}^F)
\]
\end{proposition}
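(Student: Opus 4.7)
The strategy is exactly parallel to Lemma~\ref{lem:0fg} and Proposition~\ref{prop:qi(2)}, driven by the observation that $\check{H}^F_{m,n}$ and $\check{K}^F_{m,n}$ differ only by a function of time. Indeed, from the definitions
\[
\check{H}^F_{m,n} - \check{K}^F_{m,n} = \rho(t)\bigl(\check{h}_{m,n}(k\pi r^2) - \check{k}_{m,n}(k\pi r^2)\bigr) = \rho(t)\,c_{m,n},
\]
where $c_{m,n} := \min_{r \geq 0}\check{k}_{m,n}(r)$ is a spatial constant. Consequently the two Hamiltonians have identical Hamiltonian vector fields and therefore the same periodic orbits and Floer trajectories; replacing $\check{k}$ by $\check{h}$ only rigidly shifts the action of every orbit by $\int_0^1 \rho(t) c_{m,n}\,dt = c_{m,n}$.

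After possibly swapping roles, assume $\check{H}^F_{m,n} \geq \check{K}^F_{m,n}$ pointwise and set $\mathcal{S}_{m,n} := \max_t \rho(t)|c_{m,n}|$. Monotone homotopies in both directions give, exactly as in Lemma~\ref{lem:fg}, chain maps
\[
CF^*_{(a,\infty)}(\check{K}^F_{m,n}) \longrightarrow CF^*_{(a,\infty)}(\check{H}^F_{m,n}) \longrightarrow CF^*_{(a - \mathcal{S}_{m,n},\infty)}(\check{K}^F_{m,n})
\]
and a symmetric composition in the other direction, each composing to the canonical inclusion of action-filtered subcomplexes. The requisite energy and $C^0$ estimates follow from the integrated maximum principle of the Appendix, since $\check{K}^F_{m,n}$ and $\check{H}^F_{m,n}$ have the same $r$-slope outside a compact region.

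From here I would follow the template of Lemma~\ref{lem:0fg} almost verbatim: pass to homology, take the Mittag--Leffler inverse limit over $m$ (surjectivity of the connecting maps is arranged as in Proposition~\ref{prop:qi(1)}), then the direct limits over $n$ and over $a$. The outermost direct limit over the action window absorbs the shifts $\mathcal{S}_{m,n}$, so the two compositions become mutually inverse isomorphisms, yielding the stated quasi-isomorphism.

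The principal obstacle is bookkeeping: the continuation homotopies between the $\check{K}$- and $\check{H}$-families must intertwine, up to chain homotopy, with the continuation maps $c_m$ and $c_n$ defining the $m$-inverse and $n$-direct systems. Because $\check{H}_{m,n} - \check{K}_{m,n}$ is constant in $r$, this compatibility can be enforced by linearly interpolating the constants $c_{m,n}$ along each homotopy, producing a two-parameter family whose associated square gives the required chain homotopies by a standard argument. All other analytic inputs are identical to those of Lemma~\ref{lem:0fg} and Proposition~\ref{prop:qi(2)}, so no additional difficulty arises.
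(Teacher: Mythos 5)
Your opening observation is correct and matches the paper's Lemma~\ref{lem:km}: since $\check{H}_{m,n} - \check{K}_{m,n}$ is a spatial constant $C_m$, the Hamiltonians $\check{H}^F_{m,n}$ and $\check{K}^F_{m,n}$ have identical vector fields, so their Floer complexes coincide on the nose and the action is rigidly shifted by $C_m$. The monotone-homotopy scaffolding you import from Lemma~\ref{lem:fg} is therefore superfluous; one has an actual isomorphism of complexes $CF^*_{(a,\infty)}(\check{K}^F_{m,n}) \cong CF^*_{(a+C_m,\infty)}(\check{H}^F_{m,n})$ induced by the identity on orbits, without any energy estimates. That is a harmless inefficiency.

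The genuine gap is the final step. You assert that ``the outermost direct limit over the action window absorbs the shifts $\mathcal{S}_{m,n}$,'' but the shift constant $C_m$ \emph{depends on $m$} and diverges as $|m|\to\infty$, precisely because $\check{k}_{m,n}$ develops ever deeper wells. The inverse limit over $m$ is taken \emph{inside} the direct limit over $a$, so there is no single $a$-translation that uniformizes the family $\{C_m\}$; the analogy to Lemma~\ref{lem:0fg} fails here because the shift there was the \emph{single} constant $\mathcal{S}=\sup F$, independent of all the limit indices. Passing from $\lim_{a}\lim_{n}\lim_{m}HF^*_{(a+C_m,\infty)}(\check{H}^F_{m,n})$ to $\lim_{a}\lim_{n}\lim_{m}HF^*_{(a,\infty)}(\check{H}^F_{m,n})$ is the crux of the Proposition and is not a formal limit manipulation. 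The paper addresses it in Lemma~\ref{lem:hh}, which crucially \emph{invokes the standing hypothesis} that $F$ has no leaf-wise intersection point in $[P,R]\times\Sigma$: under that hypothesis, together with the action estimate of Lemma~\ref{lem:actionest}, for $n$ large and $a$ sufficiently negative every generator of $CF^\ell_{(a,\infty)}(\check{H}^F_{m,n})$ belongs to a fixed finite set of orbits lying in the compact region $E\setminus[P_1,\infty)\times\Sigma$; on that region $\check{H}^F_{m,n}$ equals $\check{H}^F_{-1,n}+C_m$, so the actions of these orbits are uniformly bounded below by $C_m$ plus an $m$-independent constant, which forces $CF^\ell_{(a+C_m,\infty)}=CF^\ell_{(a,\infty)}$. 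Your proposal never uses the no-leaf-wise-intersection hypothesis at all, and without it the claimed absorption of the divergent shifts is false.
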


\begin{proposition}
\label{prop:qi(4)}
There is an isomorphism
\[
\lim_{\substack{\rightarrow \\ a}}\lim_{\substack{\rightarrow \\ n}}\lim_{\substack{\leftarrow \\ m}}HF^*_{(a, \infty)}(\check{H}_{m, n}^F)\simeq
\lim_{\substack{\rightarrow \\ a}}\lim_{\substack{\rightarrow \\ n}} HF^*_{(a, \infty)}(H_n^F).
\]
\end{proposition}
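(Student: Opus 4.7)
The plan is to mirror the strategy of Proposition \ref{prop:qi(1)}, exploiting the fact that on the cylindrical region $[Q,\infty)\times\Sigma$ the v-shaped Hamiltonian $\check{h}_{m,n}$ differs from $h_n$ by the constant $|\min \check{k}_{m,n}|$, so that their time-$1$ periodic orbits are in canonical bijection there (and likewise for $\check{H}_{m,n}^F$ and $H_n^F$). First I would introduce the subspace $CF^*_{\geq Q}(\check{H}_{m,n}^F)\subseteq CF^*(\check{H}_{m,n}^F)$ spanned by orbits lying in $[Q,\infty)\times\Sigma$, and show that it is a subcomplex preserved by the continuation maps in $n$ and $m$. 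The justification is an integrated maximum principle in the spirit of Lemma \ref{lem:remain}, applied on the convex cylindrical end $[Q,\infty)\times\Sigma$ using the variant reproven in the Appendix; the Floer-theoretic and grading preliminaries of Section \ref{subsec:chain} then transplant verbatim.

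Next I would produce an isomorphism $CF^*_{\geq Q}(\check{H}_{m,n}^F)\cong CF^*(H_n^F)$, given on generators by the identity bijection and a shift of action by $|\min\check{k}_{m,n}|$, a shift which is ultimately absorbed in the final direct limit over action windows. Taking first the inverse limit in $m$ and then the direct limit in $n$ transports the ``right-half'' of the claimed quasi-isomorphism, in parallel with how Corollary \ref{cor:phia} was used in Proposition \ref{prop:qi(1)}.

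The decisive step is to show that the quotient
\[
\bigslant{CF^*_{(a,\infty)}(\check{H}_{m,n}^F)}{CF^*_{\geq Q,(a,\infty)}(\check{H}_{m,n}^F)}
\]
vanishes for sufficiently large $|m|$ and each fixed action window $(a,\infty)$, paralleling the computation following (\ref{eq:actionbound}) in Proposition \ref{prop:qi(1)}. Its generators are Novikov-weighted orbits lying in $[0,Q]\times\Sigma$; inserting the grading formula (\ref{eq:indexbound}) into an action estimate of the form $\cg{A}(Cx) < -\mathfrak{w} P_{\mathfrak{w}} + ev(C) + \check{h}_{m,n}(k\pi r_0^2) + B'$ should drive the bound to $-\infty$ linearly in $|m|$, once the same sign analysis $\mathfrak{w}(-P_{\mathfrak{w}}+\tfrac{1}{\kappa})<0$ is repeated. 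Mittag-Leffler then identifies the resulting inverse limit, and passing to direct limits over $n$ and $a$ gives the Proposition. The hard part will be getting the sign bookkeeping correct on the descending branch of the v-shape, where winding and slope conspire with the opposite sign from Proposition \ref{prop:qi(1)}, and tracking how the translation $|\min\check{k}_{m,n}|$ interacts with the action bound so that it does not cancel the linear-in-$m$ decay.
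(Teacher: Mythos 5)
Your strategy is built around the wrong half of the Hamiltonian, and this makes several key steps fail. The paper's actual argument (Lemma~\ref{lem:km} plus Lemma~\ref{lem:hh}, followed by the degree-wise finiteness argument in the proof of Proposition~\ref{prop:qi(4)}) keeps the \emph{low-radius} orbits and discards the \emph{high-radius} ones; you have this inverted.

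Concretely: (i) Your proposed subcomplex $CF^*_{\geq Q}$ spanned by orbits in $[Q,\infty)\times\Sigma$ is not actually a subcomplex. The integrated maximum principle of Lemma~\ref{lem:remain} protects the disk-bundle region $E\setminus[P,\infty)\times\Sigma$ from escaping trajectories, so that low-radius region is the subcomplex and the high-radius region is only a quotient. There is no ``inward'' maximum principle preventing a Floer cylinder with $-\infty$-asymptote at radius $\geq Q$ from dipping below $Q$. (ii) The claimed identity-on-generators isomorphism $CF^*_{\geq Q}(\check{H}_{m,n}^F)\cong CF^*(H_n^F)$ cannot hold: $CF^*(H_n^F)$ is generated in part by orbits near the zero section -- and these, not the high-radius orbits, are the ones that survive and carry the homology in each fixed degree and action window. (iii) Your quotient does not vanish. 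Once one adopts the standing assumption that there are no leaf-wise intersections in $[P_1,R]\times\Sigma$, the generators of $CF^*/CF^*_{\geq Q}$ all sit near the zero section, where the winding number $\mathfrak{w} = 0$, so your sign analysis $\mathfrak{w}(-P_{\mathfrak{w}}+\frac1\kappa)<0$ gives nothing; and since $\check{H}_{m,n}^F = \check{K}_{m,n}^F + C_m$ with $C_m = \check{H}_{m,n}-\check{K}_{m,n}\to+\infty$, the actions of these orbits \emph{grow} with $|m|$ rather than escaping to $-\infty$. The linear-in-$|m|$ decay you hope for does not occur; it occurs for the \emph{high}-radius orbits, and only in the $n$-direction (that is Lemma~\ref{lem:actionest}).

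The paper's proof instead fixes a degree $\ell$ and shows, via Lemma~\ref{lem:actionest}, that for $n$ large every summand of a class in $CF^\ell_{(a,\infty)}(\check{H}_{m,n}^F)$ comes from the finite list of orbits near the zero section; since those orbits and the Hamiltonian there are independent of $m$ up to the additive constant $C_m$, the complex becomes $m$-independent, and Lemma~\ref{lem:km}/Lemma~\ref{lem:hh} absorb the $C_m$ action shift in the direct limit over $a$. To repair your outline you would have to replace your subcomplex by $CF^*_P$ (orbits at radius $< P$), show its complement dies for large $n$ (not $m$), and match the low-radius orbits with those of $H_n^F$.
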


These propositions require a careful analysis of the action of orbits on the conical end of $E$, which we package into Lemma \ref{lem:bij} and Lemma \ref{lem:actionest}.

\begin{lemma}
\label{lem:bij}
Suppose $m > 0$.  Away from the collar neighborhood $[P_n, P_{n+1}]\times\Sigma$ there is a set bijection between periodic orbits of $K_m^F$ and periodic orbits of $K_{m+1}^F$.  

Suppose $n \geq 0$.  Away from the collar neighborhood $[R_n, R_{n+1}]\times\Sigma$ there is a set bijection between periodic orbits of $H_n^F$ and periodic orbits of $H_{n+1}^F$.  
\end{lemma}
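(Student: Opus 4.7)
The plan is to reduce both statements to the tautological observation that if two time-dependent Hamiltonians agree pointwise on a subset $U \subset E \times S^1$, then their Hamiltonian vector fields agree on $U$, and hence any periodic orbit of one whose image lies in $U$ is automatically a periodic orbit of the other.

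First I would identify $U$. By the inductive construction of the radial profile $\{k_n\}$ recalled in Section \ref{sec:sketch} and depicted in Figure \ref{fig:unbddk}, the functions $k_m$ and $k_{m+1}$ coincide outside a small thickening of the interval $[P_n, P_{n+1}]$, which is the interval over which the new smoothing distinguishing $k_{m+1}$ from $k_m$ is localized. Since the perturbation $F$, the Morse function $G$, and the time cutoff $\rho$ appear identically in the definitions of $K_m^F$ and $K_{m+1}^F$, we obtain
\[
K_m^F \equiv K_{m+1}^F \quad \text{on } S^1 \times \bigl( E \setminus ([P_n, P_{n+1}] \times \Sigma) \bigr),
\]
and hence $X_{K_m^F} = X_{K_{m+1}^F}$ on this complement.

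From this the bijection is immediate. Any time-1 periodic orbit $x : S^1 \to E$ of $K_m^F$ whose image lies in $E \setminus ([P_n, P_{n+1}] \times \Sigma)$ satisfies $\dot x(t) = X_{K_m^F}(t, x(t)) = X_{K_{m+1}^F}(t, x(t))$ for all $t$, so it is also a time-1 periodic orbit of $K_{m+1}^F$; the converse is symmetric. The identity assignment $x \mapsto x$ is the claimed bijection. The argument for $H_n^F$ versus $H_{n+1}^F$ is verbatim the same, with $\{k_n\}$ replaced by $\{h_n\}$ and the sequence $\{P_i\}$ by $\{R_i\}$, and produces the bijection outside $[R_n, R_{n+1}] \times \Sigma$.

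There is no serious technical obstacle here: the argument is a purely local comparison of Hamiltonians and does not appeal to any Floer-theoretic machinery beyond uniqueness for the Hamiltonian ODE. The only point worth sanity-checking is that the stated collar interval is broad enough to absorb the (arbitrarily small) smoothing region around the new break radius, which is automatic from the inductive construction and costs nothing substantive.
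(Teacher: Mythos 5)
Your proposal has a genuine gap. You assert that $K_m^F$ and $K_{m+1}^F$ coincide pointwise on the complement of $[P_m,P_{m+1}]\times\Sigma$ (and likewise $H_n^F$, $H_{n+1}^F$ outside $[R_n,R_{n+1}]\times\Sigma$), but that is not how the radial profiles are built. The inductive step defining $h_{n+1}$ from $h_n$ \emph{changes the slope on the entire unbounded ray} $[R_{n+1},\infty)$, not just in a localized smoothing window: $h_n$ has slope $n+\tfrac{1}{2}$ there while $h_{n+1}$ has slope $n+\tfrac{3}{2}$, and analogously for the $k_m$'s with the $P_i$'s. So on the unbounded component $[R_{n+1},\infty)\times\Sigma$ of the complement of the collar, the Hamiltonians and their vector fields genuinely differ, and the identity map is \emph{not} a bijection of orbits there---a parametrized loop that solves one Hamiltonian ODE does not usually solve the other, since the Reeb components of the two vector fields differ during $t\in[0,1/2]$. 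Your argument therefore only covers the bounded component below $R_n$ (resp.\ below $P_m$) and leaves the orbits on the cone unaddressed.

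The paper's proof bifurcates accordingly. Below $R_n$ your observation works verbatim, modulo the remark (which you should also make explicit) that an orbit starting there stays there during $[0,1/2]$ because the vector field $\rho(t)\,h_i'(k\pi r^2)\,R_\alpha$ has no radial component. Above $R_{n+1}$, however, the bijection is nontrivial: during $t\in[0,1/2]$ both flows are fiberwise Reeb rotations, and since $\int_0^{1/2}\rho\,dt = 1$ and the slopes of $h_n$ and $h_{n+1}$ differ by exactly $1$, the two time-$\tfrac{1}{2}$ flow maps differ by one full period of the (periodic) Reeb flow, hence agree as maps. Thus the two integral curves issued from the same initial point meet again at $t=1/2$, coincide for $t\in[1/2,1]$ where the vector fields agree, and so one closes up if and only if the other does. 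This matches \emph{distinct} parametrized loops that merely share endpoints and their $[1/2,1]$-portions. So the lemma is not the local ODE-uniqueness tautology you describe; it relies essentially on the integer slope increment, the normalization $\int\rho=1$, and the periodicity of the Reeb flow on the circle bundle.
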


\begin{proof}
We prove the Lemma for $n \geq 0$.  The case $m > 0$ is completely analogous.

The Hamiltonians vector fields of $H_n^F$ and $H_{n+1}^F$ coincide on $\left(E\setminus[R_n, \infty)\times\Sigma\right)\times [0,\frac{1}{2}]$ and on $E\times[\frac{1}{2}, 1]$.  Any periodic orbit that starts in $E\setminus[R_n, \infty)\times\Sigma$ remains in $E\setminus[R_n, \infty)\times\Sigma$ through the time interval $[0, \frac{1}{2}]$.  Thus, there is a set bijection between the subset of orbits starting in $E\setminus[R_n, \infty)\times\Sigma$.

The integral curves corresponding to ${H_n^F}$, respectively ${H_{n+1}^F}$, on $[R_{n+1}, \infty)\times\Sigma\times [0, \frac{1}{2}]$ is fiberwise $re^{2\pi (i + 1/2) nt}$, respectively $re^{2\pi i(n+3/2)t}$.  If $x_n(t)$ is an integral curve of ${H_n^F}$ and $x_{n+1}(t)$ is an integral curve of ${H_{n+1}^F}$ with $x_n(0) = x_{n+1}(0) \in [R_{n+1}, \infty)\times\Sigma$ then it follows that
\[
x_n(1/2) = x_{n+1}(1/2).
\]
As the Hamiltonian vector fields of ${H_n^F}$ and ${H_{n+1}^F}$ coincide on $E\times[\frac{1}{2}, 1]$, it follows that $x_n(t) = x_{n+1}(t)$ for all $t\in[\frac{1}{2}, 1]$.  In particular, $x_n(1) = x_{n+1}(1)$.  Thus, $x_n$ is a periodic orbit if and only if $x_{n+1}$ is a periodic orbit.

\end{proof}

%%%change \kappas to ks?

\begin{lemma}
\label{lem:actionest}
Fix an action window $(a, \infty)$ and a degree $q$.  For $n >> 0$ large enough, all periodic orbits appearing in $CF^q_{(a, \infty)}({H_n^F})$ begin in $E\setminus [R_n, \infty)\times\Sigma$.  Similarly, for $m << 0$ negative enough, all periodic orbits appearing in $CF^q_{(a, \infty)}(K_m^F)$ begin in $E\setminus[P_m, \infty)\times\Sigma$.
\end{lemma}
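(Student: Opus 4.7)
The plan is to repeat, with only cosmetic changes, the action-index computation carried out at the end of the proof of Proposition \ref{prop:qi(1)}. I will describe the argument for $H_n^F$; the case of $K_m^F$ is strictly symmetric, with $m\to-\infty$ and the intercept of $k_m$ playing the roles of $n\to+\infty$ and the intercept of $h_n$.

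Fix a Novikov-weighted periodic orbit $Cx$ of $H_n^F$ of degree $q$ whose starting point lies in $[R_n,\infty)\times\Sigma$, and let $\mathfrak{w}$ denote its winding number. Since $\rho$ is supported in $[0,1/2]$, $F$ in $[1/2,1]$, and $h_n$ has constant slope $n+\tfrac12$ on $[R_n,\infty)$, the $t\in[0,1/2]$ portion of $x$ is a reparametrized Reeb trajectory of angular displacement $n+\tfrac12$, while the part on $[1/2,1]$ is a flow of $X_F$ and therefore moves $x$ by an amount bounded independently of $n$. Consequently $\mathfrak{w}$ differs from $n$ by a universally bounded quantity. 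Using the grading formula from \cite{venkatesh-thesis} recalled in the proof of Proposition \ref{prop:qi(1)},
\[
q \;=\; -2\mathfrak{w} + 2\kappa\, ev(C) + C',
\]
with $C'$ universally bounded, I can express $ev(C)$ as an explicit affine function of $\mathfrak{w}$.

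Next I will bound the action. Writing $h_n(k\pi r^2) = (n+\tfrac12)k\pi r^2 + c_n$ on $[R_n,\infty)$, the intercept $c_n$ satisfies the recursion $c_n = c_{n-1} - R_n$ up to bounded smoothing errors; since $R_n \to R > 0$, this forces $c_n \to -\infty$ linearly in $n$. Combining this with the standard formula for $\int \tilde{x}^*\Omega$ on the cone, absorbing the contributions of $F$ and of the bounded smoothing corrections into a universal constant $B$, and substituting the affine expression for $ev(C)$, the action takes the shape
\[
\cg{A}(Cx) \;\leq\; \mathfrak{w}\!\left(-k\pi r_0^2 + \tfrac{1}{\kappa}\right) + c_n + B + \tfrac{1}{2\kappa}(q - C'),
\]
where $r_0$ is the starting radius and $k\pi r_0^2 \geq R_n$. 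Because $R > \tfrac{1}{\sqrt{k\kappa\pi}}$, for $n$ large enough we have $k\pi r_0^2 > 1/\kappa$, making the coefficient of $\mathfrak{w}$ strictly negative; since $\mathfrak{w} \geq 0$ and $c_n \to -\infty$, the right-hand side is driven below $a$ for all $n \geq N$ and all admissible $\mathfrak{w}, r_0$, which is exactly the claim.

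The step I expect to be most delicate is tracking the sign and magnitude of the $\mathfrak{w}$-coefficient after combining the grading relation with the topological area term and the Hamiltonian integral on the cone. This is essentially the same verification already performed in the proof of Proposition \ref{prop:qi(1)}, transposed to the positive-slope setting of $h_n$, so I anticipate only routine sign-tracking. Once this is in place the $K_m^F$ statement follows line by line, with $k_m$, $P_m$, and $m\to-\infty$ replacing $h_n$, $R_n$, and $n\to+\infty$, and with $P < \tfrac{1}{\sqrt{k\kappa\pi}}$ now ensuring that the corresponding $\mathfrak{w}$-coefficient has the correct sign.
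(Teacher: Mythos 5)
The core structure of your argument — tracking winding, converting the degree $q$ into a bound on the Novikov exponent $ev(C)$, and then bounding the action — is the same as the paper's. However, the key inequality you write down is not valid, and the reason is the one place where this lemma genuinely differs from the proof of Proposition \ref{prop:qi(1)} that you are transposing.

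In Proposition \ref{prop:qi(1)} the Hamiltonian $\check{K}_{m,n}$ has a $V$-shape: its value on a periodic orbit in $[P,R]\times\Sigma$ is bounded above by $-(Q-P)m+B$, a quantity that goes to $-\infty$ on its own. Here the radial function $h_n$ is \emph{monotone increasing}. On an orbit starting at $k\pi r_0^2\geq R_n$ one has
\[
\int_0^1 H_n^F\,dt \;=\; h_n(k\pi r_0^2) + O(1) \;=\; (n+\tfrac12)k\pi r_0^2 + c_n + O(1),
\]
and while $c_n\to-\infty$, the positive term $(n+\tfrac12)k\pi r_0^2$ is unbounded and cannot be absorbed into a universal constant $B$. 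Your claimed bound
\[
\cg{A}(Cx)\;\leq\; \mathfrak{w}\!\left(-k\pi r_0^2 + \tfrac{1}{\kappa}\right) + c_n + B + \tfrac{1}{2\kappa}(q-C')
\]
is obtained by taking the area term $-\mathfrak{w}\,k\pi r_0^2$ and adding $c_n$ as if that were the whole Hamiltonian contribution; the $(n+\tfrac12)k\pi r_0^2$ piece has been silently dropped. Since the actual action, after the cancellation
\[
-\!\int_D\tilde{x}^*\Omega + \int_0^1 H_n^F\,dt \;\approx\; -(n+\tfrac12)k\pi r_0^2 + (n+\tfrac12)k\pi r_0^2 + c_n \;=\; c_n + O(1),
\]
is $\cg{A}(Cx)\approx c_n + \tfrac{\mathfrak{w}}{\kappa} + O(1)$, your expression underestimates it by roughly $\mathfrak{w}\,k\pi r_0^2$, so it is not an upper bound at all. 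The paper avoids this by explicitly decomposing the capping disk into a fiber part $\tilde{u}_n$ and a uniformly bounded part $\tilde{v}$ (via the auxiliary arc $z$), precisely so that the $(n+\tfrac12)k\pi r_0^2$ terms visibly cancel and only the intercept $-(n+\tfrac12)R_n\approx c_n$ survives in the combined area-plus-Hamiltonian estimate. The conclusion you want does hold, with the correct bound $\mathfrak{w}(\tfrac{1}{\kappa}-R_n)+O(1)\to-\infty$ (this is where $R_n>\tfrac{1}{\kappa}$ enters), but you need the cancellation argument rather than the Proposition \ref{prop:qi(1)} template, and your proposal does not carry it out.
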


\begin{proof}
Let $x(t)$ be a periodic orbit of $F + \rho(t)\frac{1}{2}k\pi r^2$.  With the orientation induced from the complex structure, let $z$ be the arc segment starting at $x(1/2)$ and continuing to $x(1)$.  Fix a capping disk $\tilde{v}_x$ of the loop $\ell = (-z)\#x\big|_{[1/2, 1]}$.
\begin{figure}[htbp!]
\centering
\includegraphics[scale=.5]{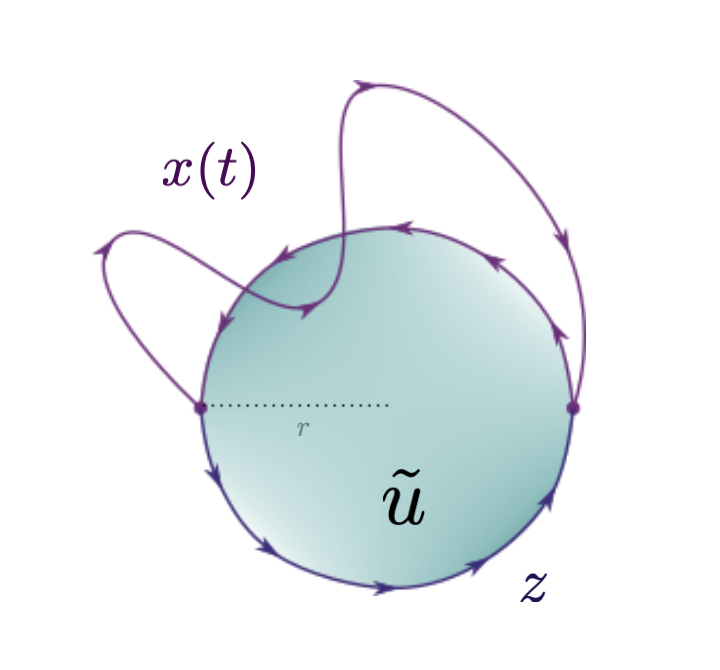}
\caption{The capping disk in the fiber}
\label{fig:capping}
\end{figure}

Let $\tilde{u}$ be the fiber disk of the loop $\gamma = x\big|_{[0, 1/2]}\#z$.  See Figure \ref{fig:capping}.  Then $x \simeq \gamma\#\ell$ has a capping disk $\tilde{x} \simeq \tilde{u}\#\tilde{v}$ such that
\[
\int_{D}\tilde{x}^*\Omega = \int_D(\tilde{u}\#\tilde{v})^*\Omega = -2k\pi r(x)^2 + \int_D\tilde{v}^*\Omega,
\]
where $r(x)$ is the radius at which $x\big|_{[0, 1/2]}$ lives.  The Conley-Zehnder index $\mu_{CZ}$ of $\tilde{x}$ is, by the Additivity Axiom,
\[
\mu_{CZ}(\tilde{x}) = \mu_{CZ}(\tilde{u})+\mu_{CZ}(\tilde{v}) = -2 + \mu_{CZ}(\tilde{v})
\]
(see, for example, \cite{salamon}).

Now let $n \geq 0$, and let $x_n(t)$ be a periodic orbit of $H_n^F$ living in $[R_n, \infty)\times\Sigma$.  By definition of ${H_n^F}$, $x_n(t)$ is a periodic orbit of $F+ \rho(t)(nk\pi r^2 + \frac{1}{2}k\pi r^2)$.  Thus, by Lemma \ref{lem:bij}, $x_n(t)$ corresponds to some periodic orbit $x(t)$ of $F + \rho(t)\frac{1}{2}k\pi r^2$.  Note that $x$ and $x_n$ begin at the same point, and therefore begin at the same radius $r(x)$; similarly, $x\big|_{[1/2, 1]} = x_n\big|_{[1/2, 1]}$.  As above, let $z$ be the arc segment starting at $x(1/2)$ and continuing widdershins to $x(1)$ and let $\tilde{v}$ be the fixed capping disk of $(-z)\#x\big|_{[1/2, 1]} = (-z)\#x_n\big|_{[1/2, 1]}$.  Let $\tilde{u}_n$ be the fiber disk of the loop $x_n\big|_{[0, 1/2]}\#z$, so that $x_n$ has a capping disk $\tilde{x}_n\simeq \tilde{u}_n\#\tilde{v}$, satisfying
\[
\int_{D}\tilde{x}_n^*\Omega = \int_D(\tilde{u}_n\#\tilde{v})^*\Omega = -2k\pi nr(x)^2 + \int_D\tilde{v}^*\Omega
\]
Note that 
\[
\mu_{CZ}(\tilde{x}_n) = \mu_{CZ}(\tilde{u}_n) + \mu_{CZ}(\tilde{v}) = -2n + \mu_{CZ}(\tilde{v}).
\]
Away from the support of $F$, the function $F+ \rho(t)\frac{1}{2}\kappa\pi r^2$ has flow $re^{(1/2)\pi i t}$, which has no time-one orbits.  All periodic orbits are therefore contained in a compact set.  By the non-degeneracy assumption, there are finitely many periodic orbits.  The symplectic area of each chosen cap $\tilde{v}$ is bounded by some constant $C_{\Omega}$:
\[
\bigg|\int_D\tilde{v}^*\Omega\bigg| \leq C_{\Omega}.
\]
Similarly, the Conley-Zehnder index of each chosen cap $\tilde{v}$ is bounded by some constant $C_{\mu}$:
\[
|\mu_{CZ}(\tilde{v})| \leq C_{\mu}.
\]

Now let $X = T^{\alpha}x_n(t)$ be the orbit $x_n$ with a Novikov weight, and suppose $X$ lies in degree $q$.  As shown in, for example, \cite{venkatesh-thesis}, $X$ has grading
\begin{equation}
\label{eq:grading}
q = \mu(X) = 2\kappa\alpha - 2n + \mu_{CZ}(\tilde{v}).
\end{equation}
The bounds on $\mu_{CZ}(\tilde{v})$ imply that
\[
q - C_{\mu} \leq 2\kappa\alpha - 2n \leq q + C_{\mu},
\]
so that
\begin{equation}
\label{eq:indexineq}
\frac{1}{2\kappa}\left(q - C_{\mu} + 2n\right) \leq \alpha \leq \frac{1}{2\kappa}\left(q + C_{\mu} + 2n\right)
\end{equation}
The action of $X$ is, by definition,
\[
\cg{A}(X) = -k\pi (n+\frac{1}{2})r(x)^2 + \alpha + \int_D\tilde{v}^*\Omega + \int_0^1H_n^F(x_n)dt
\]

Let $(a, \infty)$ be a fixed action window.  $X$ is in $CF^{\ell}_{(a, \infty)}({H_n^F})$ if and only if
\[
a < \alpha -k\pi (n+\frac{1}{2})r(x)^2 + \int_D\tilde{v}^*\Omega+ \int_0^1H_n^F(x_n)dt,
\]
for which
\[
a - C_{\Omega} < \alpha - k\pi (n+\frac{1}{2}) r(x)^2 + \int_0^1H_n^F(x_n)dt 
\]
must be true.  Let
\[
C_F = \max_{p\in E}|{F}(p)|.
\]
There exists a bound $B$, independent of $n$, such that
\[
\int_0^{1/2}{H_n^F}(k\pi r(x(t))^2)dt < (n+\frac{1}{2})\left(k\pi r(x)^2-R_n\right) + B.
\]
Altogether,
\[
\int_0^1 H_n^F(x_n)dt \leq C_F + (n+\frac{1}{2})k\pi r(x)^2 - (n + \frac{1}{2})R_n + B.
\]
Thus,
\begin{equation}
\label{eq:actionineq}
a - C_{\Omega} - C_F  - B + (n+\frac{1}{2})R_n  < \alpha
\end{equation}

Suppose $n >> 0$.  More precisely, suppose that $n$ is large enough that
\[
\left(R_n - \frac{1}{\kappa}\right)n > -a + C_{\Omega} + C_F + B + \frac{q}{2\kappa} - \frac{C_{\mu}}{2\kappa}- \frac{1}{2}R_n.
\]
Note that $R_n - \frac{1}{\kappa}$ is bounded below by some positive number by construction, so this is always possible.
Combining equations (\ref{eq:indexineq}) and (\ref{eq:actionineq}) yields a contradiction:
\[
\alpha\leq \frac{1}{2\kappa}\left(q - C_{\mu} + 2n\right) < a - C_{\Omega} - C_F -B  + (n+\frac{1}{2})R_n < \alpha.
\]
Now let $m < 0$, and suppose that $x_m(t)$ is a periodic orbit of $K_m^F$ living in $[P_n, \infty)\times\Sigma$.  Running the same program, suppose that $m$ is negative enough that
\[
\left(P_m - \frac{1}{\kappa}\right)m > -a + C_{\Omega} + C_F + B + \frac{q}{2\kappa} - \frac{C_{\mu}}{2\kappa}- \frac{1}{2}P_m.
\]
Note that $P_m - \frac{1}{\kappa}$ is bounded above by some negative number by construction, so this is always possible.  Combining equations (\ref{eq:indexineq}) and (\ref{eq:actionineq}) again yields a contradiction:
\[
\alpha\leq \frac{1}{2\kappa}\left(q - C_{\mu} + 2m\right) < a - C_{\Omega} - C_F - B + (m+\frac{1}{2})P_m) < \alpha.
\]
The Lemma follows.

\end{proof}

The proofs of Proposition \ref{prop:qi(3)} and Proposition \ref{prop:qi(4)} now follow as corollaries of two Lemmas.  To state the first Lemma, let $C_m = \check{H}_{m, n} - \check{K}_{m, n}$.

\begin{lemma}
\label{lem:km}
There is a quasi-isomorphism
\[
\lim_{\substack{\leftarrow \\ a}}\lim_{\substack{\rightarrow \\ n}}\lim_{\substack{\leftarrow \\ m}} CF^*_{(a, \infty)}(\check{K}_{m,n}^F)\cong \lim_{\substack{\leftarrow \\ a}}\lim_{\substack{\rightarrow \\ n}}\lim_{\substack{\leftarrow \\ m}} CF^*_{(a+C_m, \infty)}(\check{H}_{m,n}^{F}).
\]
\end{lemma}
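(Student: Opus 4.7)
The plan is to exploit the fact that $\check{H}_{m,n}^F$ and $\check{K}_{m,n}^F$ differ only by a function of the $S^1$-coordinate $t$, with no dependence on the point in $E$, so they generate identical Hamiltonian flows and identical Floer theories at the chain level, differing only by a uniform shift of the action filtration.  By the definition of $\check{h}_{m,n}$, the difference $\check{h}_{m,n} - \check{k}_{m,n}$ is a real constant (depending only on $m$ and $n$); consequently
\[
\check{H}_{m,n}^F - \check{K}_{m,n}^F \;=\; \rho(t)\bigl(\check{h}_{m,n}(k\pi r^2) - \check{k}_{m,n}(k\pi r^2)\bigr)
\]
has zero $E$-differential, whence $X_{\check{H}_{m,n}^F} = X_{\check{K}_{m,n}^F}$.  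The two Hamiltonians therefore share the same time-one orbits, Conley-Zehnder indices, Floer equation, and Floer moduli spaces; the graded chain complexes $CF^*(\check{K}_{m,n}^F)$ and $CF^*(\check{H}_{m,n}^F)$ are canonically identified on generators with identical differentials.

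A direct computation of actions (using the same capping disk on each side) shows
\[
\cg{A}_{\check{H}_{m,n}^F}(x) - \cg{A}_{\check{K}_{m,n}^F}(x) \;=\; \int_0^1 \rho(t)\bigl(\check{h}_{m,n} - \check{k}_{m,n}\bigr)\,dt \;=\; C_m,
\]
since $\rho$ integrates to $1$.  Thus the canonical identification of generators intertwines action filtrations via the shift $a \mapsto a + C_m$, furnishing a chain isomorphism
\[
CF^*_{(a,\infty)}(\check{K}_{m,n}^F) \;\cong\; CF^*_{(a+C_m,\infty)}(\check{H}_{m,n}^F)
\]
for each triple $(a, m, n)$.

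To propagate the identification through the triple limit, I would choose the homotopies defining the $\check{H}$-continuation maps (in both the $m$- and $n$-directions) to differ from the corresponding $\check{K}$-homotopies by an additive function of $(s,t)$ alone.  The parametrized Floer equation depends only on the $E$-derivative of the homotopy, which is then unchanged, so the continuation maps on the $\check{H}$-side act identically on generators to those on the $\check{K}$-side and commute with the canonical identification (after the appropriate additive shift in action).  The chain isomorphisms above therefore assemble into an isomorphism of the two triple-limit complexes.  The main delicate point is the $m$-dependence of the action shift $C_m$: since $|C_m|$ is unbounded as $m \to -\infty$, the action windows $(a+C_m,\infty)$ vary nontrivially along the inverse system.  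Nonetheless, because the identification is exact on generators for each fixed $m$ and commutes with the (surjective) connecting maps, the variation is coherent, and the inverse limit of the shifted windows matches that of the fixed windows on the $\check{K}$-side, which yields the claim.
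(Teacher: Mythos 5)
Your proof is correct and follows essentially the same approach as the paper: you observe that $\check{H}_{m,n}^F$ and $\check{K}_{m,n}^F$ differ only by $\rho(t)(\check h_{m,n}-\check k_{m,n})$, a function of $t$ alone, hence have identical Hamiltonian vector fields and Floer chain complexes, and you compute the action shift $C_m = \int_0^1 \rho(t)(\check h_{m,n}-\check k_{m,n})\,dt$ exactly as the paper does. The paper stops there with ``The Lemma follows,'' whereas you supply some extra (and harmless, if slightly over\-cautious) discussion of compatibility of continuation maps and the $m$-dependence of $C_m$ when passing to the triple limit; this is fine, though note that once the inverse systems are identified term-by-term with compatible connecting maps, the unboundedness of $C_m$ poses no issue, and $\check h_{m,n}-\check k_{m,n}=\min_r \check k_{m,n}(r)$ in fact depends on $m$ alone (not on $n$ as you parenthetically suggest).
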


\begin{proof}
By construction,
\begin{align*}
\check{H}_{m, n}^F &= \rho(t)\check{H}_{m, n} + F(t) \\
&= \rho(t)\left(\check{K}_{m, n} + C_m\right) + F(t).
\end{align*}
The periodic orbits of $\check{H}_{m, n}^F$ and $\check{K}_{m, n}^F$ coincide.  Let $x(t)$ be one such periodic orbit.  The action of $x(t)$ as a periodic orbit of $\check{H}_{m, n}^F$ is
\begin{align*}
\cg{A}_{\check{H}_{m, n}^F}(x) &= -\int_{D}\tilde{x}^*\Omega + \int_0^1\check{H}_{m, n}^F(x(t))dt \\
&= -\int_{D}\tilde{x}^*\Omega + \int_0^1\rho(t)\check{H}_{m, n}(x(t)) + F(x(t)) dt \\
&=-\int_{D}\tilde{x}^*\Omega + \int_0^1\rho(t)\left(\check{K}_{m, n} + C_m\right)(x(t)) + F(x(t))dt \\
&= -\int_{D}\tilde{x}^*\Omega + \int_0^1\rho(t)\check{K}_{m, n}(x(t)) + F(x(t))dt + C_m \\
&= \cg{A}_{\check{K}_{m, n}^F}(x) + C_m.
\end{align*}
The Lemma follows.

\end{proof}

\begin{lemma}
\label{lem:hh}
Suppose that $F$ has no leafwise intersection point in $[P, R]\times \Sigma$.  Then there is a quasi-isomorphism
\[
\lim_{\substack{\leftarrow \\ a}}\lim_{\substack{\rightarrow \\ n}}\lim_{\substack{\leftarrow \\ m}} CF^*_{(a+C_m, \infty)}(\check{H}_{m,n}^F) \cong \lim_{\substack{\leftarrow \\ a}}\lim_{\substack{\rightarrow \\ n}}\lim_{\substack{\leftarrow \\ m}} CF^*_{(a, \infty)}(\check{H}_{m,n}^F)
\]
\end{lemma}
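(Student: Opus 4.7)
The plan is to show the inclusion $CF^*_{(a+C_m,\infty)}(\check{H}_{m,n}^F) \hookrightarrow CF^*_{(a,\infty)}(\check{H}_{m,n}^F)$ induces a quasi-isomorphism after passing to the iterated limits, by proving that the quotient vanishes in each degree once $m$ is large enough.

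First, since $\check{H}_{m,n}^F$ and $\check{K}_{m,n}^F$ differ only by the time-dependent constant $\rho(t)C_m$, they share the same Hamiltonian vector field and the same set of $1$-periodic orbits; by Lemma \ref{lem:km}, the actions shift by exactly $C_m$. Because $C_m \geq 0$, the inclusion above is defined, and its quotient is generated by orbits whose $\check{H}_{m,n}^F$-action lies in the window $(a, a+C_m]$. Next, I would use the no-leafwise-intersection hypothesis: by the decomposition in Section \ref{subsec:trunc}, any periodic orbit of $\check{H}_{m,n}^F$ with starting point on $\Sigma_r$ for $r\in[P,R]$ is a leafwise intersection point of $F$ in that annulus, and these are excluded by assumption. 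So every periodic orbit begins either in $[0,P]\times\Sigma$ (possibly on the zero section) or in $[R,\infty)\times\Sigma$.

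Third, I would carry out an action-grading computation in the style of Lemma \ref{lem:actionest}. In the inner region $[0,P]\times\Sigma$, $\check{H}_{m,n}^F$ restricts to $\rho(t)(k_m(k\pi r^2)+C_m)+F$, so the analog of the grading equation relates Conley-Zehnder index, winding number $\mathfrak{w}$, and Novikov weight, now corrected by the additive constant $C_m$. In the outer region $[R,\infty)\times\Sigma$, $\check{H}_{m,n}^F$ matches a translate of $\rho(t)h_n(k\pi r^2)+F$ and the $H_n^F$-case analysis of Lemma \ref{lem:actionest} applies verbatim. Together these relations pin the action of any degree-$q$ orbit into a narrow band depending on $\mathfrak{w}$, $n$, and $C_m$; a direct computation shows these bands avoid $(a, a+C_m]$ for $m$ sufficiently large, so the quotient complex in degree $q$ is zero.

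Finally, I would assemble the degree-wise vanishing into a quasi-isomorphism of iterated limits, using the Milnor/Mittag-Leffler argument analogous to the corollary to Proposition \ref{prop:qi(1)}. The continuation maps in the $m$-tower are canonical surjections, so the $\lim^1$ terms vanish; direct limits in $n$ and $a$ preserve exact sequences; and the map induced by the inclusion is an isomorphism in every degree, hence a quasi-isomorphism after all three limits. The main technical obstacle is the third step: one must track the $C_m$-shift in the inner-region action formula and the divergence of $C_m$ in the window $(a, a+C_m]$ precisely enough to confirm that the narrow bands of admissible actions do not sneak back into that window as $m$ grows, for which the sign of the coefficient $(-P_{\mathfrak{w}} + 1/\kappa)$ appearing in the proof of Proposition \ref{prop:qi(1)} does most of the work.
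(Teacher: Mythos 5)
Your overall strategy (reduce to showing the quotient $CF^{\ell}_{(a,\infty)}/CF^{\ell}_{(a+C_m,\infty)}$ vanishes degreewise; discard middle-annulus orbits via the no-leafwise-intersection hypothesis; discard outer orbits for $n$ large via Lemma~\ref{lem:actionest}) matches the paper's. The genuine gap is in your third step, which handles the inner orbits. The paper's key observation is structural: on $E\setminus[P_1,\infty)\times\Sigma$, the Hamiltonians $\check{H}_{m,n}^F$ all coincide with $\check{H}_{-1,n}^F$ up to the additive constant $C_m$, so the inner orbits form a single finite set $\{C_1x_1,\dots,C_Nx_N\}$ whose $\check{H}_{m,n}^F$-actions are exactly their $\check{H}_{-1,n}^F$-actions shifted by $C_m$. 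Once one sets $C=\min_i \cg{A}_{\check{H}_{-1,n}^F}(C_ix_i)$ and takes $a<C$, no inner orbit can ever land in $(a,a+C_m]$: the window travels by $C_m$ and the actions travel by exactly $C_m$ along with it. There is no ``narrow band'' to control, nothing specific to $m$ to estimate, and no winding-number sign to invoke; the identity $\cg{A}_{\check{H}_{m,n}^F}(X)=\cg{A}_{\check{H}_{-1,n}^F}(X)+C_m>a+C_m$ closes the argument uniformly in $m$.

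By contrast, you propose to redo a Lemma~\ref{lem:actionest}/Proposition~\ref{prop:qi(1)}-style index-versus-action estimate on the inner region and to let the sign of $(-P_{\mathfrak{w}}+1/\kappa)$ save the day for ``$m$ sufficiently large.'' That sign is what controls the action of orbits with large winding number living in the annulus where $\check{k}_{m,n}$ is steep and $m$-dependent --- i.e.\ exactly the orbits ruled out here by the leafwise-intersection hypothesis. On the actual inner region ($[0,P_1]\times\Sigma$ together with the zero section) the slope is fixed at $-1/2$ for every $m,n$, the winding of the periodic orbits is bounded, and your restriction formula $\check{H}_{m,n}^F=\rho(t)\bigl(k_m(k\pi r^2)+C_m\bigr)+F$ on $[0,P]\times\Sigma$ is not even correct for $k\pi r^2\in(P_m,P]$, where $\check{k}_{m,n}$ departs from $k_m$. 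Without the observation that the inner part of the complex is literally the same finite set across all $m$ with actions shifting rigidly by $C_m$, the estimate you sketch has nothing to anchor it, and I do not see how the ``band'' argument can be made to work. I would rewrite the third step around that structural identification rather than an asymptotic computation in $m$.
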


\begin{proof}
Fix $\ell\in \RR$.  It suffices to show that there is an isomorphism
\begin{equation} 
\label{eq:h=k}
CF^{\ell}_{(a + C_m, \infty)}(\check{H}_{m, n}^F)\cong CF^{\ell}_{(a, \infty)}(\check{H}_{m, n}^F)
\end{equation}
for large enough values of $n$ and negative enough values of $a$.  Equation (\ref{eq:h=k}) will hold on the level of cochain complexes if it holds on the level of sets.  Thus, it suffices to show that if $X\in CF^{\ell}(\check{H}^F_{m, n})$ has action $\cg{A}(X) > a$, the action of $X$ also satisfies $\cg{A}(X) > a + C_m$.

As $\check{H}_{m, n}^F = \check{H}_{m', n'}^F$ on $E\setminus[P_1, \infty)\times\Sigma$, there is a fixed finite set
\[
\left\{C_1x_1, \dots, C_Nx_N\right\}
\]
to which the generators of $CF^{\ell}_{(a+C_m, \infty)}(\check{H}_{m, m})$ belong, for any $m, n$.  Let
\[
C = \min_{1\leq i\leq N}\cg{A}_{\check{H}^F_{-1, 0}}(C_ix_i)
\]
and choose $a < C$.

By Lemma \ref{lem:actionest}, if $n$ is large enough, all periodic orbits of $\check{H}_{m, n}^F$ with action greater than $a$ are contained inside $E\setminus[R, \infty)\times\Sigma$.  By assumption, this means that all periodic orbits are contained inside $E\setminus[P_1, \infty)\times\Sigma$.
We may therefore view $X$ as an element of $CF^{\ell}(\check{H}_{-1, n}^F)$ with action
\[
\cg{A}_{\check{H}_{-1, n}^F}(X) \geq C > a.
\]
Since $\check{H}_{m, n}^F = \check{H}_{-1, n}^F + C_m$,
\begin{align*} 
\cg{A}_{\check{H}^F_{m, n}}(X) = \cg{A}_{\check{H}^F_{-1, n}}(X) + C_m > a + C_m,
\end{align*}
and the claim follows.

\end{proof}

%Proof of PROPOSITION \ref{prop:qi(3)}
\begin{proofprop4}
As in the proof of Proposition \ref{prop:qi(1)}, Lemma \ref{lem:km} implies that there is an isomorphism
\[
\lim_{\substack{\leftarrow \\ a}}\lim_{\substack{\rightarrow \\ n}}\lim_{\substack{\leftarrow \\ m}} HF^*_{(a, \infty)}(\check{K}_{m,n}^F)\cong \lim_{\substack{\leftarrow \\ a}}\lim_{\substack{\rightarrow \\ n}}\lim_{\substack{\leftarrow \\ m}} HF^*_{(a+C_m, \infty)}(\check{H}_{m,n}^{F}).
\]
Lemma \ref{lem:hh} provides an isomorphism
\[
\lim_{\substack{\leftarrow \\ a}}\lim_{\substack{\rightarrow \\ n}}\lim_{\substack{\leftarrow \\ m}} CF^*_{(a+C_m, \infty)}(\check{H}_{m,n}^F) \cong \lim_{\substack{\leftarrow \\ a}}\lim_{\substack{\rightarrow \\ n}}\lim_{\substack{\leftarrow \\ m}} CF^*_{(a, \infty)}(\check{H}_{m,n}^F).
\]
The Mittag-Leffler solution is satisfied because the dimension of $CF^{\ell}_{(a, \infty)}(\check{H}_{m,n}^F)$ is degree-wise universally bounded.  Thus, there is an isomorphism
\[
\lim_{\substack{\leftarrow \\ a}}\lim_{\substack{\rightarrow \\ n}}\lim_{\substack{\leftarrow \\ m}} HF^*_{(a+C_m, \infty)}(\check{H}_{m,n}^F) \cong \lim_{\substack{\leftarrow \\ a}}\lim_{\substack{\rightarrow \\ n}}\lim_{\substack{\leftarrow \\ m}} HF^*_{(a, \infty)}(\check{H}_{m,n}^F).
\]
Stringing these isomorphisms together yields the Proposition.

\end{proofprop4}

%proof [OF PROPOSITION \ref{prop:qi(4)}]
\begin{proofprop5}
By the assumption that there are no leaf-wise intersection points on a circle subbundle within $[P_1, R]\times\Sigma$,  the periodic orbits of any $\check{H}_{m, n}^F$ are naturally divided into:
\begin{enumerate}
\item a finite set $\{x_1, \dots, x_N\}$ of orbits that begin inside $E\setminus[P_1, \infty)\times\Sigma$ that is the same for every choice of $m, n$, and
\item orbits that begin in $[R, \infty)\times\Sigma$.
\end{enumerate}
Fix $\ell\in\RR$.  By Lemma \ref{lem:actionest}, for sufficiently large $n$ and any $m$, all summands of an element in $CF^{\ell}_{(a, \infty)}(\check{H}_{m, n}^F)$ are of type (1).  Let $C_1, \dots C_N$ be Novikov coefficients such that $|C_ix_I| = \ell$.  Since 
\[
\check{H}_{m, n}^F\big|_{E\setminus[P_1, \infty)\times\Sigma} = \check{H}_{0, n}^F\big|_{E\setminus[P_1, \infty)\times\Sigma}
\] 
for all $n, n'$, $C_ix_i$ can be viewed as an element of $CF^{\ell}(\check{H}_{0, n}^F)$ (of smaller action).
Fix any
\begin{equation}
\label{eq:boundona}
a < \min_{1\leq i\leq N} \cg{A}_{\check{H}_{0, n}^F}(C_ix_i) = \min_{\substack{1\leq i\leq N \\ m\in\ZZ_{<0}}} \cg{A}_{\check{H}_{m, n}^F}(C_ix_i) 
\end{equation}
Fix $m\in\ZZ_{< 0}$.  Since all elements have action greater than $a$, the quasi-inverse maps
\[
CF^{\ell}(\check{H}_{m, n}^F) \xleftrightharpoons{\rule{.5cm}{0pt}}CF^{\ell}(\check{H}_{0, n}^F)
\]
restrict to quasi-isomorphisms
\[
CF^{\ell}_{(a, \infty)}(\check{H}_{m, n}^F) \xleftrightharpoons{\rule{.5cm}{0pt}} CF^{\ell}_{(a, \infty)}(\check{H}_{0, n}^F)
\]
We conclude that there are isomorphisms
\[
HF^{\ell}_{(a, \infty)}(\check{H}_{m, n}^F) \simeq HF^{\ell}_{(a, \infty)}(\check{H}_{0, n}^F) 
\]
compatible with continuation.

\end{proofprop5}

\subsection{Vanishing computations}
\label{subsec:(non)vanish}

\subsubsection{Proof that $\widehat{SH_*}(K) = 0$}
\begin{proposition}
\label{prop:shvanishes}
The completed symplectic homology of the family $\{K_m\}$ vanishes:
\[
\widehat{SH_*}(K) = 0.
\]
\end{proposition}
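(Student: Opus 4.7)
The plan is to adapt the proof of Theorem \ref{thm:truncated-vanishing} (which itself transcribes the argument of \cite{albers-kang}) from the family $\{\cg{H}_n\}$ to the family $\{K_m\}$. Structurally, the two families play dual roles: $\cg{H}_n$ uses the upward-sloping profiles $h_n$ with transition radii $R_i$ and drives the direct limit appearing in $SH^*$, while $K_m$ uses the downward-sloping profiles $k_m$ with transition radii $P_i$ and drives the inverse limit appearing in $\widehat{SH_*}$. The radial hypothesis $P_i \to P < \frac{1}{\sqrt{k\kappa\pi}}$ is the direct analog of the hypothesis $R < \frac{1}{\sqrt{k\kappa\pi}}$ in Theorem \ref{thm:truncated-vanishing}, and it is the only condition that enters the vanishing argument.

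The core technical ingredient is a Novikov-weighted index-action estimate for cylindrical-end orbits of $K_m$. Using the Morse-Bott grading formula
\[
q = -2\mathfrak{w} + 2\kappa\, ev(C) + C'
\]
from \cite{venkatesh-thesis}, I would pin down the Novikov exponent $ev(C)$ in each fixed cohomological degree $q$, and then bound the action of a weighted orbit $Cx$ starting near radius $P_{|\mathfrak{w}|}$ exactly as in the proof of Proposition \ref{prop:qi(1)}. The sign of the winding contribution $\mathfrak{w}(-k\pi P_{|\mathfrak{w}|}^2 + 1/\kappa)$ is controlled by the hypothesis $P < \frac{1}{\sqrt{k\kappa\pi}}$, and the Hamiltonian contribution $\int_0^1 K_m(x(t))\,dt$ becomes arbitrarily negative as $m \to \infty$. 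Consequently, for each fixed action window $(a, \infty)$ and each fixed degree $q$, the action-truncated complex $CF^q_{(a, \infty)}(K_m)$ eventually loses all cylindrical-end contributions, and the inverse limit is supported entirely inside the disk bundle of radius $P_1$.

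Once the inverse limit has been stabilized on a fixed compact region, vanishing of $\widehat{SH_*}(K)$ follows by the null-homotopy construction of \cite{albers-kang}: the continuation maps $CF^*(K_{m+1}) \to CF^*(K_m)$, arranged to act as canonical projections in the spirit of the Lemma preceding Theorem \ref{thm:ritter}, collapse every would-be cycle to a Morse-theoretic chain on which the Floer differential restricted to the disk bundle of radius $P_1$ provides a primitive. Passing to the direct limit $a \to -\infty$ preserves this vanishing.

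The main obstacle, and the only genuinely novel step not copied verbatim from \cite{albers-kang} or \cite{venkatesh-neg}, is uniformity of the integrated maximum principle across the $K_m$-family: because the $k_m$ are not strictly linear on the cylindrical end, the standard no-escape lemma must be re-derived so that the grading-action calculation applies uniformly in $m$. This is precisely the point handled by the maximum-principle reproof promised in the Appendix, after which the Albers-Kang/Venkatesh-Neg machinery transfers without further adjustment.
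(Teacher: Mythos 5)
Your proposal has a genuine gap at the action-estimate step, and the fix is not a small one: the paper's argument is structurally different.

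The estimate that you propose to borrow from Proposition~\ref{prop:qi(1)} eliminates cylindrical orbits of $\check{K}_{m,n}$ by driving their action below any threshold, and the mechanism there is the inequality $\check{K}_{m,n}(x) \leq -(Q-P)m + B$, which comes from the fact that the ``v-shaped'' profile $\check{k}_{m,n}$ at its valley (near the radii between $P$ and $Q$) takes values that go like $-(Q-P)m$. No analogue exists for $K_m$ itself. An orbit of $K_m$ of winding $-j$ sits near radius $P_j$, and $k_m$ agrees with $k_j$ on $[0, P_{j+1})$ for every $m \geq j$; hence $K_m$ evaluated on that orbit is \emph{independent of $m$} once $m > j$. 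So the term you describe as ``the Hamiltonian contribution $\int_0^1 K_m(x(t))\,dt$ becomes arbitrarily negative as $m\to\infty$'' does not become negative at all: for each fixed winding number it freezes, and the orbits persist in every $CF^q_{(a,\infty)}(K_m)$. Consequently the intermediate conclusion that the inverse limit ``is supported entirely inside the disk bundle of radius $P_1$'' is false. That limit contains generators of every negative winding number, scattered over radii $P_1, P_2, \dots$, not just the Morse critical points of $G$ at the zero section--and in any case the Morse complex of $G$ is not acyclic, so even that reduction would not by itself yield vanishing.

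The proof in the paper does not pass through any such action-escape reduction. It works directly with the full inverse-limit complex and exploits the line-bundle fibration: the pointwise splitting $T_xE = T_x\CC \oplus T_xB$ and an upper-triangular almost-complex structure induce a filtration of the chain complex by $\mu_{CZ}(\mathfrak{p}_*(x))$ (Lemma~\ref{lem:filtered}) and a corresponding decomposition of the differential $\dd = \dd_0 + \dd_1 + \dd_2 + \cdots$ with $(\dd_0 + \dots + \dd_k)^2 = 0$ for all $k$ (Lemma~\ref{lem:filtered-d}). The heart of the argument is that $\dd_0$, which tracks the base-wise Morse/Floer trajectories, is acyclic with action-controlled primitives (Lemma~\ref{lem:dd0}), and that action of a weighted orbit grows linearly in $\mu_{CZ}(\mathfrak{p}_*)$ (Lemma~\ref{lem:czaction}). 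A primitive $Y = \sum_p Y_p$ for any cocycle $X = \sum_p X_p$ is then built inductively over the filtration degree $p$ via the recursion $\dd_0(Y_p) = -\dd_1(Y_{p-1}) - \cdots - \dd_{p-P}(Y_P) + X_p$, and Lemma~\ref{lem:czaction} guarantees that the resulting formal sum converges in the action-complete inverse limit. This is the Albers--Kang method, and it is what you should be emulating; the ``canonical projection'' property of continuation maps and the integrated maximum principle play only supporting roles in it, not the leading one.
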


The proof of Proposition \ref{prop:shvanishes} closely follows the work of Albers-Kang \cite{albers-kang}.
The tangent bundle of $E$ has a pointwise splitting $T_xE = T_x\CC\oplus T_xB$.  Let $\cg{J}$ be the set of $S^1$-families of almost-complex structures that are $\Omega$-tame and upper-triangular with respect to this splitting.  Thus, a $J\in\cg{J}$ looks like
\[
J = \left[ \begin{array}{cc} i_t & B \\ 0 & j\end{array}\right].
\]
We state without proof a few Lemmas, referring to \cite{albers-kang} and \cite{venkatesh-thesis} instead.
\begin{lemma}
The set of $J\in\cg{J}$ that are regular, and whose components $i_t$ and $j$ are regular as well, is comeager and non-empty.  Gromov compactness holds.
\end{lemma}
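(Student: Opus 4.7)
The plan is to prove the three assertions—comeagerness of regular $J$, simultaneous regularity of the upper-triangular components $i_t$ and $j$, and Gromov compactness—by a Sard-Smale argument combined with the integrated maximum principle. First, I would set up the universal moduli space $\cg{M}^{univ}$ of pairs $(J, u)$ where $J\in\cg{J}$ and $u$ is a parametrized Floer cylinder for the relevant Hamiltonian ($K_m$, $H_n$, $\check{K}_{m,n}$, or their $F$-perturbed variants) connecting non-degenerate orbits. The off-diagonal block $B$ in the upper-triangular form gives enough free perturbation directions so that at a somewhere-injective $u$ the linearization of the parametric Floer operator is surjective. Standard arguments then show $\cg{M}^{univ}$ is a smooth Banach manifold, its projection to $\cg{J}$ is Fredholm, and Sard-Smale yields a comeager, non-empty subset of regular $J\in\cg{J}$.

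For the regularity of the components separately, I would exploit the partial decoupling induced by upper-triangularity: any Floer cylinder $u:\RR\times S^1\rightarrow E$ for an upper-triangular $J$ and a Hamiltonian of the form $\rho(t)(h_n(k\pi r^2)+(1+k\pi r^2)\mathfrak{p}^*G)+F$ projects via $\mathfrak{p}$ to a Floer cylinder in $B$ for $(j,\mathfrak{p}^*G)$. I would first apply Sard-Smale to the universal moduli space of base-level Floer cylinders to obtain a comeager set of regular $j$. Fixing such a $j$, the fiber equation for $i_t$ becomes a perturbed $\bar{\partial}$-type problem with coefficients depending on the chosen base solution; an independent Sard-Smale argument on the fiber universal moduli space, using that $i_t$ on the complex fiber can be freely perturbed, produces a comeager set of regular $i_t$. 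Intersecting the $\cg{J}$-preimages of these two sets with the set of regular $J$ from the first step gives a comeager, non-empty set satisfying all three regularity conditions simultaneously.

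Gromov compactness rests on three ingredients. First, the integrated maximum principle reproven in the Appendix confines Floer trajectories of $\check{K}_{m,n}$, $\check{H}_{m,n}$, $H_n^F$, and $K_m^F$ inside a fixed compact sublevel of $E$, preventing escape up the cylindrical end. Second, the usual action-energy identity combined with fixed action windows $(a,\infty)$ yields uniform energy estimates. Third, monotonicity of $(E,\Omega)$ with constant $\kappa=c-k>0$ together with the Novikov field $\Lambda$ controls sphere bubbling in the low-dimensional moduli spaces relevant to differentials and continuation maps, exactly as in the proofs throughout \cite{albers-kang} and \cite{venkatesh-thesis}.

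The main obstacle I expect is achieving regularity of the fiber component $i_t$ when the projected base cylinder is constant, since in that situation the fiber component is not somewhere-injective in any useful sense within $E$. To handle this case I would fall back on the Morse-Bott framework of \cite{b-o}, as adapted for autonomous Hamiltonians on negative line bundles in \cite{venkatesh-thesis}: over a constant base cylinder the fiber problem reduces to a Hamiltonian Floer problem on $(\CC, i_t)$ with radial Hamiltonian $\rho(t)h_n(k\pi r^2)$ (respectively $-\rho(t)h_{-n-1}(k\pi r^2)$), which is a standard transversality problem solvable by perturbing $i_t$ alone. This reduction mirrors the argument used in \cite{albers-kang}, so I would cite their framework rather than reprove it in detail.
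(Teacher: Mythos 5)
The paper does not actually prove this lemma: it immediately precedes the statement with ``We state without proof a few Lemmas, referring to \cite{albers-kang} and \cite{venkatesh-thesis} instead.'' So there is no in-text argument to compare your proposal against; your obligation is to reconstruct what those references do, and on that score your outline is essentially the right one. The upper-triangular structure of $J\in\cg{J}$, the observation that Floer cylinders for $K_m$ project under $\mathfrak{p}_*$ to Floer cylinders for $(\mathfrak{p}^*G, j)$ on the base (this is precisely Lemma~\ref{lem:filtered-d} in the paper), the separate Sard--Smale treatment of $j$ and $i_t$, and the use of the integrated maximum principle plus monotonicity for Gromov compactness all match the strategy of Albers--Kang.

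Two points deserve adjustment. First, you phrase the projection step as applying to ``a Hamiltonian of the form $\rho(t)(h_n(k\pi r^2)+(1+k\pi r^2)\mathfrak{p}^*G)+F$.'' That is too broad: the $\mathfrak{p}$-projection of a Floer cylinder is a base Floer cylinder only when the Hamiltonian is itself pulled back from fiberwise-radial plus $\mathfrak{p}^*G$ data, which is the case for $K_m$ but \emph{not} for $K_m^F$, $H_n^F$, or $\check{K}_{m,n}^F$, since $F$ is an arbitrary compactly supported Hamiltonian on $E$. The lemma as stated in the paper sits inside Section~4.2.1 and concerns only the family $\{K_m\}$, so your argument is fine for the claim at hand, but you should restrict the scope accordingly rather than cite the $F$-perturbed Hamiltonians as covered by the same projection trick. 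Second, the ``intersect the preimages'' step for simultaneous regularity of $J$, $j$, and $i_t$ is a little loose as written, because the comeager set of regular $i_t$ you obtain depends on the previously chosen $j$; the cleaner packaging, which is what the cited sources do, is one universal moduli space parametrized by all of $\cg{J}$ together with a parametric transversality argument using the independent perturbation directions from $j$, $i_t$, and the off-diagonal block $B$, so that a single Sard--Smale application yields a comeager set of $J$ for which all three Fredholm problems are simultaneously cut out transversally. Your handling of the constant-base case via the Morse--Bott/autonomous framework of \cite{b-o} and \cite{venkatesh-thesis} is the correct fix for the failure of somewhere-injectivity there and is exactly the reduction used in \cite{albers-kang}.

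Your Gromov compactness paragraph is correct in outline: the integrated maximum principle (reproved in the Appendix) confines trajectories, action windows bound energy, and monotonicity with $\kappa=c-k>0$ rules out sphere bubbling in rigid moduli spaces via the index drop. One could add that the Novikov field $\Lambda$ is what renders the infinite sums over $\pi_2(E)$ in the differential well-defined, which is a finiteness rather than a compactness point, but this is consistent with how the paper and its references treat it.
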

\begin{lemma}
\label{lem:filtered}
The complex $\lim\limits_{\substack{\leftarrow \\ m}} CF^*_{(a, \infty)}(K_m)$ has a filtration given by $\mu_{CZ}(\mathfrak{p}_*(x))$.  In other words, the projection $\mathfrak{p}_*(x)$ corresponds to a periodic orbit of $G$, the projection $\mathfrak{p}_*(\tilde{x})$ corresponds to a capping of $\mathfrak{p}_*(x)$, and the Conley-Zehnder index of $\mathfrak{p}_*(\tilde{x})$ lifts to a filtration on $\lim\limits_{\substack{\leftarrow \\ m}} CF^*(K_m)$.
\end{lemma}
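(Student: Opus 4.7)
The plan is to exploit the upper-triangular form of any $J\in\cg{J}$ to \emph{project} the Floer data of $K_m$ down to the base $B$: periodic orbits become critical points of $G$, Floer cylinders become perturbed $j$-holomorphic maps, and the filtration degree is read off from the Conley--Zehnder index of the projected capped orbit.

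First I would argue that each periodic orbit $x$ of $X_{K_m}$ projects to a critical point of $G$. Because $K_m = \rho(t)\bigl(k_m(k\pi r^2) + (1+k\pi r^2)\mathfrak{p}^*G\bigr)$ decomposes its Hamiltonian vector field, with respect to the splitting $T_xE = T_x\CC\oplus T_xB$, into a purely vertical Reeb term $\rho(t)k_m'(k\pi r^2)R_\alpha$ plus the horizontal lift of $\rho(t)(1+k\pi r^2)X_G$, and because $r(x(t))$ is constant, $\mathfrak{p}\circ x$ is a closed orbit of a rescaled $X_G$ on $B$ and hence constant at $\Crit(G)$ for generic $G$. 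A capping $\tilde x$ then projects to a disk $\mathfrak{p}_*\tilde x$ capping this critical point, and the standard $C^2$-small Hamiltonian identity gives, up to a fixed constant,
\[
\mu_{CZ}(\mathfrak{p}_*\tilde x) \;=\; \mathrm{ind}_G(\mathfrak{p}_*x) - 2c_1^{TB}(\mathfrak{p}_*\tilde x).
\]

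Second I would show this quantity yields a filtration on $CF^*(K_m)$. Upper-triangularity of $J$ forces the projection $v = \mathfrak{p}\circ u$ of any Floer cylinder $u$ for $(K_m,J)$ to satisfy a perturbed Floer equation on $B$ with Hamiltonian proportional to $(1+k\pi r(u)^2)G$ and almost-complex structure $j$. Regularity of $j$ then makes the expected dimension of the projected moduli
\[
\mathrm{ind}_G(\mathfrak{p}_*x_-) - \mathrm{ind}_G(\mathfrak{p}_*x_+) + 2c_1^{TB}(\mathfrak{p}_*[u])
\]
non-negative wherever such $v$ exists, which by the formula above translates to $\mu_{CZ}(\mathfrak{p}_*\tilde x_-)\leq \mu_{CZ}(\mathfrak{p}_*\tilde x_+)$ along $\dd^{fl}$. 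Declaring $F^{\leq q}$ to be the $\Lambda$-span of capped orbits with $\mu_{CZ}(\mathfrak{p}_*\tilde x)\leq q$ thus yields a filtration of $CF^*(K_m)$, and of its action truncation, at each $m$. The continuation maps $c_m$ are built from monotone homotopies within the same class of pullback-plus-radial Hamiltonians and the same $J\in \cg{J}$, so an identical projection argument shows they preserve $F^{\leq q}$; passing to $\lim_{\substack{\leftarrow\\ m}}$ in each filtration degree gives the desired filtration on $\lim_{\substack{\leftarrow\\ m}}CF^*_{(a,\infty)}(K_m)$.

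The main obstacle is ensuring the projected equation is genuinely index-controlled, since the rescaling factor $(1+k\pi r(u)^2)$ depends on the solution and sphere bubbles may form in $B$. The rescaling is a positive function that does not alter the Fredholm index of the linearization, and the integrated maximum principle (used throughout the paper and reproved in the appendix) keeps $r(u)$ bounded along Floer trajectories; monotonicity of $B$ ensures that any sphere bubble contributes non-negatively to $2c_1^{TB}$, which is consistent with, rather than destructive of, the filtration inequality.
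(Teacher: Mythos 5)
There is a genuine computational gap. You decompose $X_{K_m}$ into a Reeb component $\rho(t)k_m'(k\pi r^2)R_\alpha$ plus the horizontal lift of $\rho(t)(1+k\pi r^2)X_G$, and consequently assert that the projection $v=\mathfrak{p}\circ u$ of a Floer cylinder satisfies a perturbed Floer equation with domain-dependent Hamiltonian $(1+k\pi r(u)^2)G$. This is not correct, and the error is exactly what the factor $(1+k\pi r^2)$ in the definition of $K_m$ is designed to avoid. Because the horizontal part of $\Omega=d\bigl((1+k\pi r^2)\alpha\bigr)$ restricts to $(1+k\pi r^2)\mathfrak{p}^*\omega$, the Hamiltonian vector field of $\rho(t)(1+k\pi r^2)\mathfrak{p}^*G$ has horizontal component $\rho(t)X_G^h$ --- the two factors of $(1+k\pi r^2)$ cancel. (The correct Reeb component is $\rho(t)\bigl(k_m'(k\pi r^2)+\mathfrak{p}^*G\bigr)R_\alpha$; you also dropped the $\mathfrak{p}^*G$ term there, though that one projects to zero.) With the correct formula, $\mathfrak{p}_*X_{K_m}=\rho(t)X_G$ and upper-triangularity of $J$ give that $v$ solves the \emph{fixed} Floer equation for $(\rho(t)G,j)$ on $B$, with no $u$-dependent rescaling. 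This is precisely what the paper's own Appendix computation records for $X_{\cg{H}_n}$, and it is the form used implicitly in the neighboring Lemma~\ref{lem:filtered-d}, where $\mathfrak{p}_*u$ is declared a genuine Floer trajectory of $(G,j)$.

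The ``obstacle'' you then try to dispose of is therefore a red herring, and the disposal itself is not sound: a domain-dependent modification of the Hamiltonian term does change the linearization, and it is not true in general that transversality for a fixed $(G,j)$ would control the perturbed problem. If the projected equation really did carry a factor $(1+k\pi r(u)^2)$, your filtration inequality would not follow from regularity of $j$ alone; it follows here only because the factor is not there. For the record, the paper does not supply a proof of this lemma --- it is stated without proof, citing Albers--Kang and the author's thesis --- but the strategy you outline (project via upper-triangular $J$, identify projected orbits with $\Crit(G)$, read the filtration off the CZ index of the projected capped orbit, and use regularity of $j$ on the base moduli to get monotonicity along $\dd^{fl}$) is the right one once the projected equation is identified correctly. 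I would also note that your index identity should be written up to the usual half-dimension shift, and that the filtration on continuation maps needs the homotopies $K_{m,0}^s$, $\check K_{m,n}^s$ to remain within the pullback-plus-radial class, which the paper has arranged.
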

\begin{lemma}
\label{lem:filtered-d}
A Floer trajectory $u$ of $(K_m, J)$ descends to a Floer trajectory $\mathfrak{p}_*u$ of $(G, j)$.  The Floer differential decomposes as 
\[
\dd^{fl} = \dd_0 + \dd_1 + \dd_2 + \dots 
\]
where $\dd_{\ell}$ counts Floer trajectories $u$ such that $\mathfrak{p}_*u$ sits in a $k$-dimensional strata of the moduli space of Floer trajectories of $(G, j)$.  The decomposition of $\dd^{fl}$ satisfies
\[
(\dd_0 + \dd_1 + \dots + \dd_k)^2 = 0
\]
for all $k\geq 0$.
\end{lemma}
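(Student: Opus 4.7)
The plan is to prove the three parts of the lemma in order: the descent statement, the decomposition of $\dd^{fl}$, and the truncated square-zero identities.

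First, for the descent, write $u = (u^F, u^B)$ with $u^B = \mathfrak{p}\circ u$. The upper-triangularity of
\[
J = \left[\begin{array}{cc} i_t & B \\ 0 & j \end{array}\right]
\]
makes the $B$-projection of Floer's equation for $(K_m, J)$ decouple to
\[
\dd_s u^B + j(u^B)\bigl(\dd_t u^B - d\mathfrak{p}(X_{K_m})\bigr) = 0.
\]
Using $d\alpha = \mathfrak{p}^*\omega$ and the fiber-radial invariance of $\mathfrak{p}^*G$, one checks that $d\mathfrak{p}(X_{K_m})$ equals $X_G$ up to a positive scalar depending only on the fiber radius; a standard $t$-reparametrization absorbing this scalar identifies $u^B$ with a Floer trajectory for $(G, j)$ on $B$.

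Second, for the decomposition, given endpoints $(x_+, x_-)$ the base moduli space $\mathcal{M}^B(\mathfrak{p}_*x_+, \mathfrak{p}_*x_-)$ has dimension $L$ determined by the endpoint Morse indices of $G$. Define $\dd_\ell$ as the contribution to $\dd^{fl}$ from endpoint pairs with $L = \ell$, so that $\dd^{fl} = \sum_\ell \dd_\ell$. The filtration $F^p := \text{span}\{x : \text{ind}_G(\mathfrak{p}_*x) \leq p\}$ is preserved by $\dd^{fl}$ because base Floer flow lowers Morse index, and $\dd_\ell$ strictly lowers $F^\bullet$ by $\ell + 1$.

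Third, for the truncated square-zero, the main tool is boundary analysis of compactified one-dimensional upstairs moduli spaces. A once-broken boundary pair $(u_1, u_2)$ of such a moduli space projects to a pair of base trajectories with stratum dimensions $\ell_1, \ell_2$ satisfying $\ell_1 + \ell_2 = L - 1$, where $L$ is the base stratum dimension of the $1$-dim upstairs moduli space. The standard identity $(\dd^{fl})^2 = 0$ decomposes into identities $\sum_{\ell_1 + \ell_2 = L - 1} \dd_{\ell_1}\dd_{\ell_2} = 0$ indexed by $L$. To get $D_k := \dd_0 + \cdots + \dd_k$ to satisfy $D_k^2 = 0$, I would interpret $D_k$ as the induced differential on a quotient complex where $\dd_\ell$ for $\ell > k$ vanishes for filtration-shift reasons; since $\dd^{fl}$ squares to zero on that quotient, so does $D_k$. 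The hard part of the argument is making this identification honest on the full complex $CF^*(K_m)$, not merely modulo the next filtration level: the truncation $\ell_1, \ell_2 \leq k$ must be compatible with the boundary decomposition at every $L$, not only those with $L \leq k + 1$ where the decomposition is tautologically contained in the truncation. The cleanest route I see is to package everything as a spectral sequence attached to $F^\bullet$ and use the index formula $\mu(X) = 2\kappa\alpha - 2n + \mu_{CZ}(\tilde v)$ for orbits of $K_m$ to constrain which $(\ell_1, \ell_2)$-pairs can arise for given endpoints.
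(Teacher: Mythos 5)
The paper does not actually prove this Lemma — it is stated without proof, with a pointer to Albers--Kang and to the author's thesis ("We state without proof a few Lemmas, referring to \cite{albers-kang} and \cite{venkatesh-thesis} instead"). So there is no in-paper argument to compare against; I can only assess whether your proposal would work.

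Your parts (1) and (2) are the standard argument and look fine: the upper-triangular shape of $J$ makes the base projection of Floer's equation close up, and the filtration by $\mu_{CZ}(\mathfrak{p}_*\tilde{x})$ is lowered by the differential, with $\dd_\ell$ the graded piece that drops it by $\ell+1$.

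Part (3) has a genuine gap, and you yourself flag it. Let me make your worry concrete. The boundary analysis of a one-dimensional upstairs moduli space between fixed orbits $x_\pm$ with base index drop $L+1$ gives, for each fixed $L$, the relation
\[
\sum_{\ell_1+\ell_2\,=\,L-1}\dd_{\ell_1}\dd_{\ell_2} \;=\;0.
\]
For the truncation $D_k=\dd_0+\cdots+\dd_k$ one has $D_k^2=\sum_{\ell_1,\ell_2\le k}\dd_{\ell_1}\dd_{\ell_2}$, and the pieces with $L-1\le k$ are captured wholesale by the relations above. But for $k<L-1\le 2k$ the truncated sum $\sum_{\ell_1+\ell_2=L-1,\ \ell_1,\ell_2\le k}\dd_{\ell_1}\dd_{\ell_2}$ is a \emph{proper subsum} of a vanishing expression and has no a priori reason to vanish. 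Already at $k=1$ one sees the issue: the $L-1=2$ relation reads $\dd_0\dd_2+\dd_1\dd_1+\dd_2\dd_0=0$, so $D_1^2=\dd_1\dd_1=\dd_0\dd_2+\dd_2\dd_0$, which is not forced to be zero by anything you have written. Restating the problem as ``\,$D_k$ is the induced differential on a quotient where $\dd_\ell$ vanishes for $\ell>k$\,'' proves square-zero on the quotient, not on $CF^*(K_m)$ itself, and Proposition 6 in the paper needs the honest operator identity on the full complex. Your final suggestion — using the index formula $\mu(X)=2\kappa\alpha-2n+\mu_{CZ}(\tilde v)$ to constrain which pairs $(\ell_1,\ell_2)$ can appear — is a plausible mechanism (for instance, it could force all the ``mixed'' terms $\dd_{\ell_1}\dd_{\ell_2}$ with $\ell_1\le k<\ell_2$ to vanish individually, which would suffice), but you have not carried it out, and without it the claim does not follow. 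This is exactly the content that Albers--Kang's argument supplies and that your sketch is missing.
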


\begin{lemma}
\label{lem:dd0}
If $\dd_0(X) = 0$, then there exists $Y\in\lim\limits_{\substack{\leftarrow \\ m}} CF^*(K_m)$ such that $\dd_0(Y) = X$.  The action of $Y$ is 
\[
\cg{A}(Y) \geq \cg{A}(X) - A.
\]
\end{lemma}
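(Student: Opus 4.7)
\begin{proofsketch}
The plan is to exploit the Hermitian line-bundle structure to reduce the $\dd_0$-acyclicity to a fiber-wise statement about completed symplectic homology of $\mathbb{C}$.

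First I would decompose the complex according to the projected critical point in $B$. By Lemma \ref{lem:filtered-d} and the upper-triangular form of $J\in\cg{J}$, a Floer cylinder contributing to $\dd_0$ projects to a constant Morse trajectory at some $p\in\Crit(G)$. Hence $\dd_0$ preserves, for each $p\in\Crit(G)$, the subspace $C^*_p \subset \lim\limits_{\substack{\leftarrow\\m}}CF^*(K_m)$ generated by $X_{K_m}$-orbits whose $B$-projection is the constant at $p$. Since $\Crit(G)$ is finite and the summands are $\dd_0$-invariant, it suffices to prove the statement separately in each $(C^*_p,\dd_0)$. The restriction of $K_m$ to the fiber $\mathfrak{p}^{-1}(p)\cong\mathbb{C}$ is $\rho(t)k_m(k\pi r^2)+(1+k\pi r^2)G(p)$, whose generators consist of the constant at the origin together with Morse-Bott $S^1$-families of orbits at the radii $P_1, P_2,\ldots$

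Second I would identify $(C^*_p,\dd_0)$ with a completed Floer complex for the radial Hamiltonians $k_m$ on $\mathbb{C}$ and show its homology vanishes. Each Morse-Bott circle contributes a min--max pair of generators in the Floer complex, and these pairs cancel under $\dd_0$ along the filtration by winding number in precisely the telescoping pattern used in the proof of Theorem \ref{thm:truncated-vanishing}, specialised to a point base. This cancellation yields an explicit contracting chain homotopy $h_0$ on each vertical complex, satisfying $\dd_0 h_0+h_0\dd_0=\mathrm{id}$. Setting $Y=h_0(X)$ summand-by-summand produces $\dd_0 Y=X$. The action difference introduced by $h_0$ is controlled by the oscillation $\sup_{r\leq R}|k_m(k\pi r^2)-k_m(k\pi P_1^2)|+\sup_{p\in\Crit(G)}|G(p)|$, which is bounded uniformly in $m$ by the universal boundedness of $\{k_m|_{[0,R]}\}$. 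Denoting this bound by $A$ gives $\cg{A}(Y)\geq\cg{A}(X)-A$.

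The main obstacle is ensuring that the fiber-wise homotopy $h_0$ commutes strictly (not merely up to homotopy) with the inverse-limit connecting maps $CF^*(K_m)\to CF^*(K_{m-1})$, so that the pieces $Y_m$ are compatible and assemble into an element of the inverse limit. By the analog for $\{K_m\}$ of the Lemma preceding Theorem \ref{thm:truncated-vanishing}, these connecting maps act as canonical projections on the fiber-wise Morse-Bott generators, discarding those at radii $\geq P_m$, and the matched pairs underlying $h_0$ are stable under these projections. Consequently $h_0$ descends to a well-defined operator on the inverse limit and supplies the desired $Y$.
\end{proofsketch}
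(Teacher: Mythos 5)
The paper does not give a proof of this lemma: it is among the results stated with the remark that they appear in \cite{albers-kang} and \cite{venkatesh-thesis}, so there is no proof in the text to compare against, only the references. Your sketch is a reasonable reconstruction of the argument those references actually carry out: the decomposition of $\dd_0$ by critical fiber is valid (projections to non-constant Morse trajectories between equal-index critical points give negative-dimensional, hence generically empty, moduli spaces), the fiber-wise analysis is indeed the core content, and the strict compatibility with the inverse system holds exactly because the connecting maps are the canonical projections, as you flag.

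I would push back on the phrase ``min-max pair of generators \ldots cancel,'' which suggests a within-circle cancellation and would lead to the wrong $h_0$. Because $\dd_0$ must raise cohomological degree by $1$, the generators it connects in a single fiber $\mathfrak{p}^{-1}(p)$ lie one winding level apart, and the innermost Morse-Bott family must connect to the constant orbit at the origin so that this orbit becomes exact. The fiber complex is a single long chain rather than a direct sum of two-term pieces, and the contraction $h_0$ one builds annihilates half the generators and shifts the rest outward by one radius step; it is this long chain, not a min-max pair inside one $S^1$-family, that kills $\mathrm{const}_p$. To get the chain started you should exhibit the rigid fiber-wise Floer cylinder producing $\dd_0(\hat x_1)=\mathrm{const}_p$. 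For the action bound, crossing a winding level costs a Novikov weight, so the drop $\cg{A}(\dd_0 Y)-\cg{A}(Y)$ is controlled by the geometric energy of the fiber-wise cylinder and not solely by the Hamiltonian oscillation you quote; the uniform-in-$m$ bound $A$ then follows from the universal boundedness hypothesis on $\{k_m|_{[0,R]}\}$ together with the fact that all contributing orbits lie in the radius-$P$ disk bundle. None of this is a fatal error --- it is literally the Albers-Kang computation the paper is invoking --- but these are the steps the lemma is actually about, and your sketch elides them.
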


\begin{lemma}
\label{lem:czaction}
A sequence $\{X_1, X_2, X_3, ...\}$ of elements in $\lim\limits_{\substack{\leftarrow \\ m}} CF^*(K_m)$ satisfies
\[
\mu_{CZ}(\mathfrak{p}_*(X_i))\rightarrow\pm\infty 
\]
if and only if
\[
\cg{A}(X_i)\rightarrow\pm\infty.
\]
\end{lemma}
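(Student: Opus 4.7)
The plan is to show that, for any Novikov-weighted generator $T^\alpha x$ of $K_m$, both $\mu_{CZ}(\mathfrak{p}_*(T^\alpha x))$ and $\cg{A}(T^\alpha x)$ are affine functions of the Novikov exponent $\alpha$ with positive slope and a uniformly bounded correction term. Once this is established, the generator-wise biconditional is immediate, and the statement for elements of the inverse limit follows from the fact that both $\cg{A}$ and $\mu_{CZ}(\mathfrak{p}_*(\cdot))$ are computed as (respectively) the infimum and the extremum of the corresponding values over Novikov-weighted summands.

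For the Conley-Zehnder filtration, recall that each fiberwise $S^1$-orbit of $K_m$ projects to a constant loop at a critical point $p$ of $G$. Using monotonicity $c_1^{TB}=c[\omega]$, the Morse-Novikov grading of $T^\alpha p$ in the Floer complex for $G$ on $B$ is
\[
\mu_{CZ}(\mathfrak{p}_*(T^\alpha x)) \;=\; 2c\alpha + \mathrm{const}(p),
\]
where $\mathrm{const}(p)$ packages the Morse index of $p$ and a fixed cap choice. Since $\mathrm{Crit}(G)$ is finite, this correction is uniformly bounded.

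For the action, write $\Omega = \mathfrak{p}^*\omega + d(k\pi r^2\alpha)$ and apply Stokes to the cap $\tilde x$, giving $\int_D\tilde x^*\Omega = \omega(\mathfrak{p}_*\tilde x) + k\pi r_\mathfrak{w}^2\cdot\mathfrak{w}$, where $r_\mathfrak{w}$ is the orbit radius and $\mathfrak{w}$ its fiber winding. On the linear piece of $k_m$ through $k\pi r_\mathfrak{w}^2$, say $k_m(s)=\mathfrak{w}\,s+b_\mathfrak{w}$, one has $\int_0^1 K_m(x(t))\,dt = \mathfrak{w}\cdot k\pi r_\mathfrak{w}^2 + b_\mathfrak{w} + (1+k\pi r_\mathfrak{w}^2)G(p)$. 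The $\mathfrak{w}\cdot k\pi r_\mathfrak{w}^2$ terms cancel, yielding
\[
\cg{A}(T^\alpha x) \;=\; \alpha - \omega(\mathfrak{p}_*\tilde x) + b_\mathfrak{w} + (1+k\pi r_\mathfrak{w}^2)G(p).
\]
The universal boundedness of the family $\{k_i|_{[0,P]}\}$ (which was arranged in the construction of $\{h_i\}$ and transfers to $\{k_i\}$) keeps $|b_\mathfrak{w}|$ bounded uniformly in $\mathfrak{w}$, and the remaining terms are bounded by compactness of $[0,P]\times\mathrm{Crit}(G)$ together with a fixed choice of capping representative. Hence $\cg{A}(T^\alpha x) = \alpha + O(1)$.

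Putting these together, $\mu_{CZ}(\mathfrak{p}_*(T^\alpha x)) = 2c\alpha + O(1)$ and $\cg{A}(T^\alpha x) = \alpha + O(1)$ are both strictly-increasing affine in $\alpha$ with corrections that are uniform in the winding $\mathfrak{w}$ and the index $m$. Therefore on generators the two quantities tend to $\pm\infty$ together; taking infima over Novikov expansions in an inverse-limit element passes this bi-Lipschitz equivalence to the sequence $\{X_i\}$. The main delicacy is the bookkeeping of cap classes and Novikov degrees: the cap $\tilde x$ in $E$ and its projection in $B$ have different Chern valuations ($\kappa$ vs.\ $c$), so one must absorb the fiber contribution into the bounded correction rather than into the slope. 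This is precisely where both monotonicities ($c$ of $B$ and $\kappa = c-k$ of $E$) must be used in parallel, and where the cancellation in the action computation is critical.
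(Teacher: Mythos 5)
Your strategy — express both $\mu_{CZ}(\mathfrak{p}_*(\cdot))$ and $\cg{A}(\cdot)$ as affine functions of the Novikov exponent $\alpha$ with bounded corrections, then compare — is sound in outline and correctly identifies most of the bookkeeping, but the claimed bound on $b_\mathfrak{w}$ is wrong, and that sinks the argument as written. The quantity $b_{\mathfrak{w}} = k_m(s_{\mathfrak{w}}) - k_m'(s_{\mathfrak{w}})\,s_{\mathfrak{w}}$ is the $y$-intercept of the tangent line to $k_m$ at $s_{\mathfrak{w}} := k\pi r_{\mathfrak{w}}^2$. The value $k_m(s_{\mathfrak{w}})$ is universally bounded on $[0,P]$, but the slope there is $k_m'(s_{\mathfrak{w}}) = \mathfrak{w}$ and the footpoint satisfies $s_{\mathfrak{w}} \geq P_1 > 0$; hence $b_{\mathfrak{w}} = k_m(s_\mathfrak{w}) - \mathfrak{w}\, s_{\mathfrak{w}}$ diverges to $+\infty$ as $\mathfrak{w}\to -\infty$. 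So after your (correct) cancellation of the $\mathfrak{w}\,k\pi r_\mathfrak{w}^2$ terms you are left with $\cg{A}(T^\alpha x) = \alpha + b_\mathfrak{w} + O(1)$, which is \emph{not} $\alpha + O(1)$: the correction is unbounded, and the claimed bi-Lipschitz comparison with $\mu_{CZ}(\mathfrak{p}_*(T^\alpha x)) = 2c\alpha + O(1)$ does not follow. Indeed, for a sequence with $\alpha$ fixed and $\mathfrak{w}\to -\infty$, your two formulas give $\cg{A}\to +\infty$ while $\mu_{CZ}(\mathfrak{p}_*(\cdot))$ stays bounded.

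What saves the lemma — and what your proof omits — is the degree constraint. In a fixed degree $\ell$, the grading identity $\ell = 2\kappa\alpha - 2\mathfrak{w} + O(1)$ forces $\alpha = \mathfrak{w}/\kappa + O(1)$, so that $\cg{A}(T^\alpha x) = \mathfrak{w}\left(1/\kappa - s_\mathfrak{w}\right) + O(1)$. It is precisely the choice $P < 1/\kappa$ (equivalently, that the annulus lies strictly below the distinguished radius $1/\sqrt{k\kappa\pi}$) that makes $1/\kappa - s_\mathfrak{w}$ bounded below by a positive constant, so this expression has the same sign and comparable growth to $\mu_{CZ}(\mathfrak{p}_*(T^\alpha x)) = 2c\,\mathfrak{w}/\kappa + O(1)$ as $\mathfrak{w}\to -\infty$. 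This is the same radius threshold that drives Proposition~\ref{prop:qi(1)} and Lemma~\ref{lem:actionest}, and it is the mechanism the paper defers to Albers--Kang and to \cite{venkatesh-thesis} (the paper states this lemma without proof). Without the grading constraint and the threshold $1/\kappa$, the purely $\alpha$-affine comparison does not hold.
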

We mimic the proof of Theorem 1 in \cite{albers-kang}

%Proof of prop 6
\begin{proofprop6}
Let $X\in \lim\limits_{\substack{\substack{\rightarrow \\ a}}}\lim\limits_{\substack{\leftarrow \\ m}} CF^*_{(a, \infty)}(K_m)$ be a cocycle.  By design the continuation maps $c:CF^*(K_m)\rightarrow CF^*(K_{m-1})$ are canonical surjections.  Because of this, $X$ can be written as a sum of weighted periodic orbits
\[
X = \sum_{i=0}^{\infty}C_ix_i,
\]
where $C_i\in\Lambda$, $x_i\in\cg{P}(K)$, and $\lim\limits_{i\rightarrow\infty}\cg{A}(C_ix_i)\neq -\infty$.  Lemma \ref{lem:czaction} states that there exists a lower bound $P$ such that 
\[
\mu_{CZ}(C_i\mathfrak{p}_*(x_i)) \geq P
\]
for all $i$.  Let
\[
X_p = \sum_{\mu_{CZ}(C_i\mathfrak{p}_*(x_i)) = p} C_ix_i.
\]
Suppose there exists $Y\in \lim\limits_{\substack{\substack{\rightarrow \\ a}}}\lim\limits_{\substack{\leftarrow \\ m}} CF^*_{(a, \infty)}(K_m)$, decomposed into filtered pieces as
\[
Y = \sum_{p=P}^{\infty}Y_p,
\]
and satisfying
\begin{equation} 
\label{eq:inductive}
\dd_0(Y_p) = -\dd_1(Y_{p-1}) - \dd_2(Y_{p-2}) - \dots - \dd_{p-P}(Y_P) + X_p.
\end{equation}
Then
\[
X_p = \dd_0(Y_p) + \dd_1(Y_{p-1}) + \dd_2(Y_{p-2}) + \dots + \dd_{p-P}(Y_P),
\]
\[
X = \sum_{p=P}^{\infty} X_p = \sum_{p=P}^{\infty} \dd_0(Y_p) + \dd_1(Y_{p-1}) + \dd_2(Y_{p-2}) + \dots + \dd_{p-P}(Y_P),
\]
and so, rearranging,
\[
X = \sum_{p=P}^{\infty}(\dd_0 + \dd_1 + \dd_2 \dots)(Y_p) = \sum_{p=P}^{\infty}\dd(Y_p)= \dd(Y).
\]
It remains to construct such a $Y$.  We do so inductively.

As $X$ is a cocycle, the filtration on $\dd$ yields the equation
\begin{equation} 
\label{eq:xkd}
\dd_k(X_P) + \dd_1(X_{P+1}) \dots + \dd_0(X_{P+k}) = 0
\end{equation}
for all $k$.  In particular, $\dd_0(X_P) = 0$.  By Lemma \ref{lem:dd0}, there is a unique $Y_P$ satisfying $\dd_0(Y_P) = X_P$.  So suppose that there exists $Y_p$ such that equation (\ref{eq:inductive}) holds.  Lemma \ref{lem:filtered-d} states that
\begin{equation} 
\label{eq:dk=0}
(\dd_0 + \dd_1 + ... + \dd_k)^2 = 0
\end{equation}
for all $k$.  

Consider the equation
\begin{equation} 
\label{eq:ddinduct}
\dd_1(Y_p) + \dd_2(Y_{p}) + \dots + \dd_{p+1-P}(Y_P) + X_{p+1}.
\end{equation}
Applying $\dd_0$ to (\ref{eq:ddinduct}), then manipulating via (\ref{eq:dk=0}) and (\ref{eq:xkd}),
\begin{align}
-\dd_0\dd_1(Y_p) - \dd_0\dd_2(Y_{p-1}) &- \dots - \dd_0\dd_{p+1+P}(Y_P) + \dd_0X_{p+1} \\
&= -\dd_0\dd_0(Y_{p})-\dd_1(\dd_0 + \dd_1)(Y_{p})  \\ &\hspace{.4cm}- \dd_0(\dd_0 + \dd_1)(Y_{p-1}) - \dd_1(\dd_0 + \dd_1 + \dd_2)(Y_{p-1}) - \dd_ 2(\dd_0 + \dd_1 + \dd_2)(Y_{p-1}) \\&\hspace{.4cm}- \dots \\&\hspace{.4cm}- \dd_1(X_{p}) - \dots - \dd_{p+1}(X_P) \\
&= \dd_0(\dd_0(Y_{p}) + \dd_1(Y_{p-1}) + \dd_2(Y_{p-2}) + \dots + \dd_{p-P}(Y_P))
\\&\hspace{.4cm} + (\dd_0 + \dd_1)(\dd_0(Y_{p-1}) + \dd_1(Y_{p-2}) + \dots + \dd_{p-1-P}(Y_P))
 \\ &\hspace{.4cm} + \dots
 \\ &\hspace{.4cm} + (\dd_0 + \dots + \dd_{p+1-P})\dd_0(Y_P) \label{eq:statementy}
\end{align}
Use the induction hypothesis (\ref{eq:inductive}) to replace each statement about $Y$ in (\ref{eq:statementy}) with one about $X$:
\begin{align*}
&= \dd_0(X_{p}) + (\dd_0+\dd_1)(X_{p-1}) + \dots + (\dd_0 + \dots + \dd_{p+1-P})(X_P) \\
&= \sum_{k=P}^p \dd_0(X_k) + \dd_1(X_{k-1}) + \dots + \dd_k(X_P) \\
&= 0,
\end{align*}
where the final equality is equation (\ref{eq:xkd}).

By Lemma \ref{lem:dd0} there exists $Y_{p+1}$ satisfying 
\[
\dd_0(Y_{p+1}) = \dd_1(Y_p) + \dd_2(Y_{p}) + \dots + \dd_{p+1-P}(Y_P) + X_{p+1}.
\]
By construction, $\mu_{CZ}(\mathfrak{p}_*(Y_p)) = p$.  By Lemma \ref{lem:czaction}, the action of $Y_p$ grows like $p$.  In particular,
\[
\lim_{p\rightarrow\infty}\cg{A}(Y_p)\neq -\infty.
\]
Thus, $Y$ is indeed an element of $\lim\limits_{\substack{\rightarrow \\ a}}\lim\limits_{\substack{\leftarrow \\ m}} CF^*_{(a, \infty)}(K_m)$.  This proves the induction hypothesis, and so proves the proposition.

\end{proofprop6}

\subsection{Proof of Theorem \ref{thm:linebundle}}

Recall that the Reeb flow of a contact one-form is {\it Zoll} if it is periodic, and if all simple Reeb orbits have the same period.  We prove the following proposition.
\begin{proposition}
\label{prop:nested}
Let $\Sigma$ be a hypersurface of contact type with Zoll Reeb flow in a monotone symplectic manifold $E$.  Choose a sequence of positive numbers $\epsilon_1, \epsilon_2, \dots \rightarrow0$ such that 
\[
U_{\epsilon_i}\simeq(-\epsilon_i, \epsilon_i)\times\Sigma\subset E
\]
is a tubular neighborhood.  Suppose that for every $i$ there exists $\delta_i\in(-\epsilon_i, \epsilon_i)$ such that $\{\delta_i\}\times\Sigma$ contains a leaf-wise intersection point of some Hamiltonian $F$.  Then $\Sigma$ contains a leaf-wise intersection point of $F$ as well.
\end{proposition}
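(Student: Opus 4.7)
\medskip\noindent\textbf{Proof proposal.} The plan is a compactness-and-limits argument: extract a subsequence of the given leaf-wise intersection points that converges, in a controlled way, to a leaf-wise intersection point on $\Sigma$ itself. Write each leaf-wise intersection point as a Hamiltonian flow line $x_i:[0,1]\rightarrow E$ with $x_i(0)\in\{\delta_i\}\times\Sigma$ and $x_i(1)=\phi_{F}^1(x_i(0))$, where $\phi_F^t$ denotes the (globally defined, smooth, compactly supported) Hamiltonian flow of $F$. Since $\delta_i\rightarrow 0$ and $\Sigma$ is compact, the starting points $x_i(0)$ lie in a compact subset of the tubular neighborhood $U_{\epsilon_1}$. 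Passing to a subsequence, $x_i(0)\rightarrow y\in\Sigma$, and by continuous dependence on initial conditions $x_i(t)\rightarrow x_\infty(t):=\phi_F^t(y)$ uniformly in $t\in[0,1]$.

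Next I would analyze the Reeb trajectories linking $x_i(0)$ and $x_i(1)$. Shrinking $\epsilon_1$ if necessary, use the contact-type Weinstein neighborhood theorem to identify $U_{\epsilon_1}$ symplectically with a piece of the symplectization $(-\epsilon_1,\epsilon_1)\times\Sigma$ equipped with $d(e^s\alpha)$, where $\alpha$ is a fixed contact form on $\Sigma$ with Zoll Reeb vector field $R_\alpha$ of common period $T$. Under this identification, the Reeb vector field on the hypersurface $\{\delta\}\times\Sigma$ is $e^{-\delta}R_\alpha$, so every nearby hypersurface is itself Zoll with period $e^\delta T$, and each Reeb orbit is a reparametrization of a Reeb orbit on $\Sigma$. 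In particular, for each $i$ there exists $\tau_i\in[0,e^{\delta_i}T]$ with
\[
x_i(1) \;=\; \phi_{R_\alpha}^{\tau_i}(x_i(0))
\]
(here using the identification to transport the Reeb orbit on $\{\delta_i\}\times\Sigma$ back to $\Sigma$ via projection along the collar). Since the times $\tau_i$ lie in the bounded interval $[0,e^{\epsilon_1}T]$, a further subsequence satisfies $\tau_i\rightarrow\tau_\infty\in[0,T]$.

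Finally I would combine the two limits. Continuity of $\phi_{R_\alpha}^t$ in both $t$ and the base point, together with $x_i(0)\rightarrow y$ and $\tau_i\rightarrow\tau_\infty$, gives
\[
\phi_{R_\alpha}^{\tau_i}(x_i(0))\;\longrightarrow\;\phi_{R_\alpha}^{\tau_\infty}(y).
\]
But the left-hand side is $x_i(1)$, which also converges to $x_\infty(1)=\phi_F^1(y)$. Hence $\phi_F^1(y)=\phi_{R_\alpha}^{\tau_\infty}(y)$, so $y$ and $\phi_F^1(y)$ lie on a common Reeb orbit of $\Sigma$, exhibiting $x_\infty$ as a leaf-wise intersection point of $F$ on $\Sigma$.

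The only delicate point I anticipate is ensuring that the transit times $\tau_i$ can be taken in a uniformly bounded interval; without the Zoll hypothesis the Reeb orbit through $x_i(0)$ need not close up, and $\tau_i$ could in principle escape to infinity, preventing extraction of a limit. Zoll-ness on $\Sigma$ propagates to nearby hypersurfaces through the symplectization model and confines $\tau_i$ to $[0,e^{\delta_i}T]$, which makes the final compactness argument go through. Everything else is routine ODE continuity plus compactness of $\Sigma$.
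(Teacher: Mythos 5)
Your proof is correct and follows the same core strategy as the paper, namely extracting a subsequence by compactness with the Zoll hypothesis supplying the uniform bound on Reeb transit times that makes the extraction possible. The packaging is different, though, and arguably more elementary. The paper concatenates a time-normalized Reeb arc $y_i$ with the Hamiltonian arc $z_i$ to form a closed loop $x_i = y_i\#z_i$, observes that $x_i$ is a periodic orbit of a ``switching'' vector field ($\rho(t)\sigma_i R_\alpha$ for $t\in[0,1/2]$, $X_F$ for $t\in[1/2,1]$), applies Arzel\`a--Ascoli to the family of loops after verifying a uniform $L^2$ bound and equicontinuity, and then shows the $\cg{C}^0$ limit satisfies the limiting ODE before bootstrapping to smoothness. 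You avoid Arzel\`a--Ascoli altogether: compactness of the closure $\bigcup_i\{\delta_i\}\times\Sigma\cup\Sigma$ yields convergence of the starting points $x_i(0)\to y\in\Sigma$, continuous dependence of $\phi_F^t$ on initial data upgrades this to convergence of the full Hamiltonian arcs, and the Zoll bound $\tau_i\in[0,e^{\delta_i}T]$ lets you pass to a limit $\tau_\infty$ for the transit times. This sidesteps the equicontinuity check, which the paper needs precisely because the Reeb arcs $y_i$ could otherwise misbehave; your approach isolates the role of Zoll-ness more cleanly as ``bounded transit time.'' The one step you gloss over is writing $x_i(1)=\phi^{\tau_i}_{R_\alpha}(x_i(0))$ rather than the projected version $\pi(x_i(1))=\phi^{\tau_i}_{R_\alpha}(\pi(x_i(0)))$, but since the Reeb foliations on all nearby slices $\{\delta\}\times\Sigma$ project to the Reeb foliation of $\Sigma$ in the collar model, this is harmless, and you flag it parenthetically. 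Both proofs are sound; yours is a modest simplification of the paper's.
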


\begin{proof}
The proof is standard; we sketch the argument and refer to \cite{audin-d} for details.  Let $\{z_i\}$ be the set of leaf-wise intersection points in the statement of the proposition.  Without loss of generality, assume that the time support of $z_i$ occurs in the interval $[1/2, 1]$.  Fix embedded paths $\{y_i:[0, 1/2]\rightarrow\{\delta_i\}\times\Sigma\}$ that are flow lines of the vector field $\rho(t)\sigma_iR_{\alpha}$ for some $\sigma_i\in[0, 1)$ and satisfy $y_i(0) = z_i(1)$ and $y_i(1/2) = z_i(1/2)$.  Let $x_i = y_i\#z_i$ be the composition.  Thus, there is a Hamiltonian vector field $X_i$ defined by
\[
X_i = \left\{\begin{array}{cc} \rho(t)\sigma_iR_{\alpha} & t\in[0, 1/2] \\ X_F & t\in[1/2, 1]\end{array}\right.
\]
such that
\[
\dot{x}_i = X_i(x_i).
\]
As $supp(F)$ is compact, $y_i$ is embedded, and the Reeb flow is Zoll, the $L^2$ norm of $x_i$ is uniformly bounded and the family $\{x_i\}$ is equicontinuous.  There therefore exists a subsequence of $x_i$ with a $\cg{C}^0$ limit $x$ of $x_i$.  Abusing notation, we replace the sequence $x_i$ with the convergent subsequence.

Necessarily, there is a constant $\sigma\in[0, 1]$ such that
\[
\lim_{i\rightarrow\infty}\sigma_i = \sigma.
\]
Let
\[
X = \left\{\begin{array}{cc} \rho(t)\sigma R_{\alpha} & t\in[0, 1/2] \\ X_F & t\in[1/2, 1]\end{array}\right..
\]
Then
\begin{align*}
x(t) - x(0) - \int_0^tX(x(\tau))d\tau &= \lim_{i\rightarrow\infty}\left(x_i(t) - x_i(0) - \int_0^tX(x(\tau))d\tau\right) \\
&= \lim_{i\rightarrow\infty}\left(\int_0^t\dot{x}_i(\tau)d\tau - \int_0^tX(x(\tau))d\tau\right) \\
&= \lim_{i\rightarrow\infty}\left(\int_0^t\dot{x}_i(\tau) - X(x_i(\tau))d\tau\right) + \lim_{\rightarrow\infty}\left(\int_0^tX(x_i(\tau)) - X(x(\tau))d\tau\right) \\
&= \lim_{i\rightarrow\infty}\left(\int_0^t\dot{x}_i(\tau) - X_i(x_i(\tau)) + (\sigma_i - \sigma)\rho(t)R_{\alpha}(x_i)d\tau\right) + \lim_{i\rightarrow\infty}\left(\int_0^tX(x_i(\tau)) - X(x(\tau))d\tau\right) \\
&= 0.
\end{align*}
Boot-strapping, $x$ is $\cg{C}^{\infty}$ satisfying
\[
\dot{x} = X(x(t)),
\]
with $x(0) = x(1)\in \Sigma$.

The periodic orbits of $X$ correspond to leaf-wise intersection points of $F$; since $x(0)\in\Sigma$, $x$ corresponds to a leaf-wise intersection point of $F$ on $\Sigma$.

\end{proof}

We are finally ready to prove the Theorem.

\begin{proofthm1}
Let $U_{\epsilon} = (R-\epsilon, R+\epsilon)\times\Sigma$.  Suppose that there does not exist a circle subbundle inside $U_{\epsilon}$ with a leaf-wise intersection point of $F$.  Set $P_1 = R-\epsilon$.  Stringing together Propositions \ref{prop:qi(1)}, \ref{prop:qi(2)}, \ref{prop:qi(3)}, \ref{prop:qi(4)}, and \ref{prop:qi(5)}, there is an isomorphism
\[
\lim_{\substack{\rightarrow \\ a}}\lim_{\substack{\leftarrow \\ m}}HF^*_{(a, \infty)}(K_{m}) \simeq \lim_{\substack{\rightarrow \\ a}}\lim_{\substack{\rightarrow \\ n}}HF^*_{(a, \infty)}(H_{n})
\]
There is a surjective map
\[
H\left(\lim\limits_{\substack{\rightarrow \\ a}}\lim\limits_{\substack{\leftarrow \\ m}} CF^*_{(a, \infty)}(K_m) \right) \rightarrow \lim\limits_{\substack{\rightarrow \\ a}}\lim\limits_{\substack{\leftarrow \\ m}} HF^*_{(a, \infty)}(K_m).
\]
The left-hand side is isomorphic to $\widehat{SH_*}(K)$, which, by Proposition \ref{prop:shvanishes} is zero:
\[
\widehat{SH_*}(K) = 0.
\]
Thus, 
\[
\lim_{\substack{\rightarrow \\ a}}\lim_{\substack{\leftarrow \\ m}}HF^*_{(a, \infty)}(K_{m}) = 0.
\]
The right-hand side is $SH^*(H)$, which, by Lemma \ref{lem:truncisusual}, is non-vanishing under the assumption that symplectic cohomology is non-vanishing.

We reach a contradiction and conclude that there exists a circle subbundle in $U_{\epsilon}$ with a leaf-wise intersection point of $F$.  Let $\epsilon_1, \epsilon_2, \dots\rightarrow0$ be a sequence defining a nested sequence of neighborhoods $U_{\epsilon_i}$.  Each $U_{\epsilon_i}$ contains a circle subbundle with a leaf-wise intersection point.  By Proposition \ref{prop:nested}, $\Sigma = \{R\}\times\Sigma$ contains a leaf-wise intersection point.

\end{proofthm1}

Examples of negative monotone line bundles with non-vanishing symplectic cohomology were found by Ritter \cite{ritter-gromov}\cite{ritter-fano}.  He showed that the symplectic cohomology is non-vanishing whenever the base $M$ is toric.  
\addtocounter{corollary}{-5}
\begin{corollary}
Let $E \rightarrow M$ be a monotone negative line bundle with negativity constant $k$ and monotonicity constant $\kappa$ over a  toric symplectic manifold.  For any compactly-supported Hamiltonian, the radius-$\frac{1}{\sqrt{k\kappa\pi}}$ circle subbundle  contains a leaf-wise intersection point.
\end{corollary}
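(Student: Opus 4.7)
The corollary is essentially a direct combination of Theorem \ref{thm:linebundle} with Ritter's non-vanishing result (Theorem \ref{thm:ritter}), so the plan is simply to verify that the hypotheses of Theorem \ref{thm:linebundle} are met in the toric setting. First I would observe that the corollary assumes exactly the monotonicity and negativity data required by Theorem \ref{thm:linebundle}: a negative line bundle $E \to M$ with negativity constant $k$ over a monotone toric base, and the monotonicity constant $\kappa = c - k$ of the total space $E$ is positive by the hypothesis $c > k$ implicit in the corollary's statement (as discussed in the construction of $\Omega$ in Section \ref{sec:floer}).

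Next I would invoke Theorem \ref{thm:ritter} (proved by Ritter in \cite{ritter-fano}), which asserts that for a monotone negative line bundle over a toric base, the symplectic cohomology $SH^*(H)$ is non-vanishing. Equivalently, as noted in Remark \ref{rmk:chern}, the quantum action of $\mathfrak{p}^*c_1^E$ on $QH^*(E)$ is not nilpotent in the toric case, which by Ritter's criterion is equivalent to $SH^*(H) \neq 0$.

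With the non-vanishing hypothesis of Theorem \ref{thm:linebundle} satisfied, the conclusion follows immediately: for any compactly-supported Hamiltonian $F$, the circle subbundle $\Sigma_{1/\sqrt{k\kappa\pi}}$ contains a leaf-wise intersection point of $F$. There is essentially no obstacle here since all of the work was carried out in the proof of Theorem \ref{thm:linebundle}; the only thing to note is bookkeeping on constants, namely confirming that the radius $\frac{1}{\sqrt{k(c-k)\pi}}$ appearing in Theorem \ref{thm:linebundle} coincides with the radius $\frac{1}{\sqrt{k\kappa\pi}}$ in the corollary, via the identification $\kappa = c - k$ made in Section \ref{sec:floer}. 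The proof therefore reduces to a one-line citation of Theorem \ref{thm:linebundle} and Theorem \ref{thm:ritter}.
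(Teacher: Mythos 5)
Your proposal is correct and matches the paper's (implicit) argument exactly: the paper deduces the corollary by citing Ritter's non-vanishing result for toric bases (Theorem \ref{thm:ritter}) to satisfy the hypothesis of Theorem \ref{thm:linebundle}, with $\kappa = c - k$. Nothing more is needed, and your bookkeeping on the constants is right.
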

\addtocounter{corollary}{+4}

\section{Other line bundles}
\label{sec:thm2}
Monotone line bundles have exactly one ``special'' circle subbundle; the hypersurface for which Theorem \ref{thm:linebundle} produces a leaf-wise intersection point.  This is the circle subbundle on which Floer-essential Lagrangian tori appear (in toric line bundles) and it is the only circle subbundle on which a Rabinowitz-Floer-type invariant $RFH^*(\Sigma)$ is non-zero \cite{ritter-smith}\cite{venkatesh-neg}.

Non-monotone line bundles can have multiple special subbundles.  Although very little is understood about their Floer-essential Lagrangian tori, we showed in \cite{venkatesh-neg} that $RFH^*(\Sigma)$ is non-vanishing at a finite number of circle subbundles in {\it weak\textsuperscript{+} monotone} line bundles.  Precisely, the cohomological class $\mathfrak{p}^*c_1^E$ acts on the quantum cohomology of $E$, $QH^*(E)$, by quantum cup product.  This action has a finite set of eigenvalues $\{\lambda_1, \dots, \lambda_n\}$.  

\begin{theorem}[Venkatesh \cite{venkatesh-neg}]
\label{thm:venkneg}
The invariant $RFH^*(\Sigma_R)$ of a circle subbundle of radius $R$ in a weak\textsuperscript{+}-monotone toric negative line bundle is non-vanishing precisely when
\[
R = \sqrt{\frac{ev(\lambda_i)}{k\pi}}
\]
for some $\lambda_i\in\{\lambda_1, \dots, \lambda_n\}$.
\end{theorem}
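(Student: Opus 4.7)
\medskip\noindent\textbf{Proof proposal.} The plan is to realize $RFH^*(\Sigma_R)$ as a direct/inverse limit of Hamiltonian Floer groups for $v$-shaped Hamiltonians whose break sits exactly at radius $R$, in the spirit of the $\check H_{m,n}$ introduced in Section~3. Concretely I would set up Hamiltonians on $E$ whose slopes on $[0,R]$ tend to $-\infty$ and whose slopes on $[R,\infty)$ tend to $+\infty$, and identify
\[
RFH^*(\Sigma_R) \;\simeq\; \lim_{\substack{\rightarrow \\ n}}\lim_{\substack{\leftarrow \\ m}} HF^*(\check H_{m,n}).
\]
The point of this model is that it exposes a natural action filtration whose cutoff is governed by $k\pi R^2$.

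Next I would invoke Ritter's description of Floer theory on $E$, which decomposes $HF^*(\check H_{m,n})$ into generalized eigenspaces for the action of $\mathfrak p^*c_1^E$ by pair-of-pants product, and identifies $SH^*(E)$ with the localization of $QH^*(E)$ at the non-nilpotent eigenspaces. The crucial point is that on a generator $Cx$ lying in the $\lambda_i$-eigenspace, the Novikov weight $C$ and the winding number $\mathfrak w$ of $x$ are constrained: the index formula forces $ev(C) = \tfrac{1}{\kappa}\mathfrak w + \text{bounded}$, and a calculation identical to the one carried out in the proof of Proposition~\ref{prop:qi(1)} shows that the action of $Cx$ stabilizes (neither blows up nor drops to $-\infty$ in the double limit) precisely when
\[
R^2 \;=\; \frac{ev(\lambda_i)}{k\pi}.
\]
At every other radius the balance between the $\mathfrak w(-R^2 + 1/\kappa)$ term and the linear growth of $\check h_{m,n}$ forces one side of the limit to trivialize the eigenspace contribution.

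From here the argument splits into a non-vanishing half and a vanishing half. For non-vanishing, the toric hypothesis ensures (by \cite{ritter-fano}) that each eigenvalue $\lambda_i$ corresponds to a non-trivial idempotent in $QH^*(E)$; at the resonance radius its image under the natural map $QH^*(E)\to HF^*(\check H_{m,n})$ survives the direct-inverse limit for the reason above, and cannot be a boundary because action-increasing continuation maps fix its action class. For vanishing, away from the resonance radii I would use the stratification by eigenspaces together with a Mittag--Leffler argument: in each graded piece, the action estimates of Lemma~\ref{lem:actionest} show that every cycle eventually becomes a boundary under one of the two limits, so the limit complex is acyclic eigenspace by eigenspace, hence acyclic overall.

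The main obstacle is the chain-level eigenspace decomposition in this Morse--Bott setting: the $\check H_{m,n}$ carry transversally nondegenerate $S^1$-families of orbits, so one has to commute the eigenspace decomposition with the cascade model of the differential. I expect this to go through using the same upper-triangular almost-complex structures employed in Subsection~\ref{subsec:(non)vanish} (see Lemmas~\ref{lem:filtered}--\ref{lem:filtered-d}), which split the Floer differential compatibly with the bundle projection $\mathfrak p$ and therefore with the quantum-cup-product action of $\mathfrak p^*c_1^E$. The remaining fine point is to verify that the idempotent projection onto the $\lambda_i$-eigenspace can be chosen Novikov-adically convergent so that the resulting cocycle in $RFH^*(\Sigma_R)$ is genuinely non-zero, rather than being washed out by the completion implicit in $\widehat{SC_*}$.
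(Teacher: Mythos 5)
This theorem is cited from \cite{venkatesh-neg} and the paper under review does not reprove it; there is no in-paper proof to compare against. So I will instead assess your proposal on its own terms, reading it against the techniques the paper does use (Propositions~\ref{prop:qi(1)}--\ref{prop:qi(5)}, Lemma~\ref{lem:actionest}, and the discussion around \eqref{eq:shbehavior}).

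Your overall architecture is right: realize $RFH^*(\Sigma_R)$ as a double limit of $v$-shaped Hamiltonians and pit the resulting action filtration against the eigenspace decomposition of $QH^*(E)$ under $\mathfrak p^*c_1^E$. That is the strategy the paper's monotone versions of these results follow, and it is also what \cite{venkatesh-neg} is built on. Two steps of your sketch, however, need more than you give them.

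First, you claim that ``a calculation identical to the one carried out in the proof of Proposition~\ref{prop:qi(1)} shows that the action of $Cx$ stabilizes precisely when $R^2 = ev(\lambda_i)/(k\pi)$.'' Proposition~\ref{prop:qi(1)}'s estimate rests on the monotone index--action identity $\ell = -2\mathfrak w + 2\kappa\,ev(C) + C'$, which couples the Conley--Zehnder index, the winding number, and the Novikov valuation through a single constant $\kappa$. In the weak$^{+}$ setting there is no such single constant: $c_1^{TE}$ and $[\Omega]$ are not proportional, and the valuation growth of a Novikov coefficient on a generator is not pinned to its index this way. The resonance radii in the theorem are exactly the valuations $ev(\lambda_i)$, not a monotonicity constant, and extracting them requires working with the eigenvalue structure of the quantum action of $\mathfrak p^*c_1^E$ at the chain level rather than with the single-slope bound $\mathfrak w(-P_{\mathfrak w} + 1/\kappa)$. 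As written, the ``identical'' step simply does not carry over; it is the place where the real work happens.

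Second, you treat the two completions interchangeably: the inverse/direct limit complex $\widehat{SC_*}$ of \eqref{eq:completedcomplex} and the norm-completed $\widehat{SH^*}$ defined via $\widehat{\ker\partial}/\overline{\mathrm{im}\,\partial}$ in Section~\ref{sec:thm2}. In the monotone situation the paper passes between them without incident, but in the weak$^{+}$ case the relevant invariant and the statement of \eqref{eq:shbehavior} use the norm completion, and the Mittag--Leffler argument you propose for the vanishing half lives naturally on the limit side. Reconciling the two is a genuine step you need to carry out or at least flag; otherwise the ``every cycle eventually becomes a boundary'' conclusion is being asserted for the wrong object. Finally, the assertion that ``each eigenvalue $\lambda_i$ corresponds to a non-trivial idempotent in $QH^*(E)$'' is the content, not the hypothesis, of Ritter's toric computation; you should cite it explicitly rather than fold it into ``the toric hypothesis ensures.''
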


We conjecture that each of the hypersurfaces indicated in the Theorem contain a leaf-wise intersection point for any compactly-supported Hamiltonian function.
\begin{conjecture}
Let $E$ be a negative line bundle, and let $F:E\times\RR\rightarrow\RR$ be a compactly-supported Hamiltonian function.  A circle subbundle $\Sigma_R$ of radius $R$ contains a leaf-wise intersection point of $F$ if
\[
R = \sqrt{\frac{ev(\lambda_i)}{k\pi}}.
\]
\end{conjecture}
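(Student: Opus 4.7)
The plan is to adapt the proof strategy of Theorem \ref{thm:linebundle} by refining it so that it localizes to a single eigenvalue $\lambda_i$, much as Theorem \ref{thm:weakplus} already does for the largest eigenvalue. Concretely, fix $\lambda_i$ and set $R_i = \sqrt{ev(\lambda_i)/(k\pi)}$. As in Section \ref{sec:sketch}, pick $P_i < R_i$ and $R > R_i$ chosen so that no other $R_j$ lies in $[P_i, R]$, and then, assuming no leaf-wise intersection exists in $[P_i, R] \times \Sigma$, run the chain of (quasi-)isomorphisms of Subsection \ref{subsec:chain} to obtain an isomorphism between an inverse-limit invariant $\widehat{SH_*}(K^{(i)})$ built from $K_m$-type Hamiltonians and an invariant $SH^*(H^{(i)})$ built from $H_n$-type Hamiltonians. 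The conjecture will then follow from the same contradiction scheme once one shows that the $\lambda_i$-summand of $\widehat{SH_*}$ vanishes while the $\lambda_i$-summand of $SH^*$ does not.

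To produce these $\lambda_i$-localized invariants I would use the module action of $QH^*(E)$ on the Floer complexes (via PSS-type maps), generalized-eigenspace decomposition by the nilpotent endomorphism $e_{\lambda_i} := \mathfrak{p}^*c_1^E - \lambda_i$, and split each complex in the chain in Subsection \ref{subsec:chain} by its generalized eigenspace for this operator. The key assertion is that all the continuation maps, action-truncation inclusions, and homotopies used in Propositions \ref{prop:qi(1)}--\ref{prop:qi(5)} are $QH^*(E)$-linear, and hence respect this splitting, so the chain descends to a chain of isomorphisms between $\lambda_i$-summands
\begin{equation*}
\widehat{SH_*}(K^{(i)})_{\lambda_i} \;\simeq\; SH^*(H^{(i)})_{\lambda_i}.
\end{equation*}
The non-vanishing of the right-hand side should follow from Ritter's spectral computation of $SH^*$ in the weak\textsuperscript{+}-monotone setting together with the characterization of eigenvalues in Theorem \ref{thm:venkneg} (equivalently, from the non-vanishing of $RFH^*(\Sigma_{R_i})$), while the vanishing of the left-hand side should follow from an adaptation of Proposition \ref{prop:shvanishes}, using the Albers--Kang filtration-by-$\mu_{CZ}(\mathfrak{p}_*x)$ argument applied to Hamiltonians whose radial slopes stay below $R_i$. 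Finally, Proposition \ref{prop:nested} is general enough to pass from leaf-wise intersection points on nearby circle subbundles to one on $\Sigma_{R_i}$ itself, so the limiting step is unchanged.

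The hardest step will be the two Floer-theoretic vanishing/non-vanishing statements at the level of $\lambda_i$-summands. Eigenvalue-localization of symplectic cohomology for weak\textsuperscript{+}-monotone targets requires verifying $QH^*$-linearity of the whole zoo of Floer maps (continuation, action-restriction, Lemma \ref{lem:fg}-type homotopies) together with well-definedness of the relevant module action when only weak\textsuperscript{+} monotonicity is assumed, which is where sphere bubbling of Chern number zero becomes a genuine issue and has to be handled by the Cieliebak--Mohnke or Fukaya--Oh--Ohta--Ono style abstract perturbation data, or else by a virtual perturbation scheme. A secondary obstacle is the action estimate in Lemma \ref{lem:actionest}: in the weak\textsuperscript{+} setting the grading formula \eqref{eq:grading} must be replaced by a valuation estimate compatible with the Novikov ring, and the monotonicity input in \eqref{eq:indexbound} must be replaced by the condition $ev(\lambda_i) = \kappa\cdot(\text{something})$, which only holds uniformly if one restricts attention to the generalized $\lambda_i$-eigenspace; making this precise is the crux of extending the contradiction argument from a single radius (as in Theorem \ref{thm:linebundle}) to each of the discrete radii predicted by Theorem \ref{thm:venkneg}.

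Once these ingredients are in place, the proof terminates in the same way as the proof of Theorem \ref{thm:linebundle}: assume no leaf-wise intersection point in a small tubular neighborhood of $\Sigma_{R_i}$, derive the eigenvalue-localized isomorphism $\widehat{SH_*}(K^{(i)})_{\lambda_i} \simeq SH^*(H^{(i)})_{\lambda_i}$, observe that the former vanishes and the latter does not, and conclude via Proposition \ref{prop:nested}.
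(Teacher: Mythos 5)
This statement is labeled \emph{Conjecture} in the paper, and the paper contains no proof of it; indeed the paper only establishes the much weaker Theorem~\ref{thm:weakplus}, which produces a leaf-wise intersection point at \emph{some} radius in an unbounded interval, not at the specific radius $\sqrt{ev(\lambda_i)/(k\pi)}$. So there is nothing in the paper to compare your argument against, and your proposal should be judged on its own terms as a research program. As such, it is not a proof: you yourself flag the two load-bearing steps as open, and they really are.

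The first genuine gap is eigenvalue localization. Your entire contradiction scheme requires splitting the whole chain of Propositions~\ref{prop:qi(1)}--\ref{prop:qi(5)} through a generalized-eigenspace decomposition for $\mathfrak{p}^*c_1^E$, and then deducing that the $\lambda_i$-summand of the $K$-side invariant vanishes while the $\lambda_i$-summand of the $H$-side invariant does not. Two problems. (a) You need every map in the chain to be $QH^*(E)$-linear; the continuation maps and Lemma~\ref{lem:fg}-type homotopies are not obviously module maps at the chain level in the \emph{completed} theory, and the inverse-limit completion need not commute with passing to generalized eigenspaces (a nilpotent-plus-scalar operator on an inverse limit of finite-dimensional pieces can have completely different generalized eigenspace decomposition than the colimit of the eigenspace decompositions). (b) Even granting the splitting, the $K$-side vanishing is not an adaptation of Proposition~\ref{prop:shvanishes}: that proof uses the Albers--Kang Conley--Zehnder filtration, which is a global statement about action growth on the whole complex and does not see the eigenvalue decomposition. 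To get vanishing of exactly the $\lambda_i$-summand on the $P_i < R_i$ side while preserving the $\lambda_j$-summands for $R_j < P_i$, you would need a per-eigenvalue refinement of the action estimates in Lemma~\ref{lem:actionest}, and in the weak\textsuperscript{+}-monotone case the grading identity~\eqref{eq:grading} (which is the engine of that lemma) simply fails; there is no single constant $\kappa$ with $c_1^{TE} = \kappa[\Omega]$ on $\pi_2(E)$, so the index--valuation--action trichotomy collapses.

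The second genuine gap is the non-vanishing on the $H$-side. You invoke ``Ritter's spectral computation together with Theorem~\ref{thm:venkneg}'', but Theorem~\ref{thm:venkneg} concerns $RFH^*(\Sigma_R)$, a Rabinowitz-type invariant, not the $\lambda_i$-summand of $SH^*(H)$ for Hamiltonians growing past radius $R$; the identification between these two objects is exactly the kind of computation that the paper does \emph{not} carry out in the non-monotone setting, and your proposal does not supply it. In short, your outline is a reasonable description of what a proof might look like, and you are right about where the difficulty lies, but the crux steps --- $QH^*$-equivariant eigenvalue localization of reduced symplectic cohomology in the weak\textsuperscript{+}-monotone setting, and a replacement for the monotone index-action formula --- are precisely what is missing and what makes this a conjecture rather than a theorem in the paper.
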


The invariant in Theorem \ref{thm:venkneg} builds upon a completion of the symplectic cochain complex $SC^*(H)$.  Recall from equation (\ref{eq:action}) that $SC^*(H)$ has a valuation by action, denoted by $\cg{A}:SC^*(H)\rightarrow\RR$.  This valuation defines a non-Archimedean norm, via
\[
||X|| = e^{-\cg{A}(X)}.
\]
Denote by $\dd$ the differential on $SC^*(H)$.  Let
\[
\widehat{\ker(\dd)}
\]
be the completion of $\ker(\dd)$ with respect to the norm $||\cdot||$, and let
\[
\overline{\im(\dd)}
\]
be the closure of $\im(\dd)\subset\widehat{\ker(\dd)}$.
Define {\it completed symplectic cohomology} by
\[
\widehat{SH^*}(H) := \bigslant{\widehat{\ker(\dd)}}{\overline{\im(\dd)}}.
\]
A preliminary result from \cite{venkatesh-neg} shows that
\begin{equation}
\label{eq:shbehavior}
\widehat{SH^*}(H) \left\{\begin{array}{cc} = 0 & R < \min\limits_{\lambda\in\{\lambda_1, \dots, \lambda_n\}}\sqrt{\frac{ev(\lambda)}{k\pi}} \\
\neq 0 & \text{else}\end{array}\right..
\end{equation}

The proof of Theorem \ref{thm:linebundle} relies on the monotonicity of the line bundle to prove technical results on action bounds.  Replacing monotonicity with {\it weak\textsuperscript{+} monotonicity}, our techniques applied to the result (\ref{eq:shbehavior}) yield a less precise result.

\addtocounter{theorem}{-5}
\begin{theorem}
Let $E$ be a weak\textsuperscript{+}-monotone negative line bundle.  Let $\lambda$ be a largest non-zero eigenvalue of the action of $\mathfrak{p}^*c_1^E$ by quantum cup product on $QH^*(E)$.  For any compactly-supported Hamiltonian, there exists a circle subbundle $\Sigma_r$ with $r \in \left(\sqrt{\frac{ev(\lambda)}{k\pi}}, \infty\right)$ such that $\Sigma_r$ has a leaf-wise intersection point. 
\end{theorem}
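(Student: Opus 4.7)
The strategy mirrors the proof of Theorem~\ref{thm:linebundle}, with the inverse-limit completion $\widehat{SH_*}(K)$ replaced by the action-norm completion $\widehat{SH^*}$ introduced in Section~\ref{sec:thm2}. Because the weak\textsuperscript{+}-monotone setting lacks a unique distinguished hypersurface, the conclusion is weakened to existence of a leaf-wise intersection point somewhere in the open interval $(\sqrt{ev(\lambda)/(k\pi)}, \infty)$ rather than at a pinpointed radius, and the argument accordingly proceeds purely by contradiction, with no need to invoke Proposition~\ref{prop:nested}.

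First I would suppose, toward a contradiction, that no $\Sigma_r$ with $r > \sqrt{ev(\lambda)/(k\pi)}$ contains a leaf-wise intersection point of $F$. Pick $P < R$ with $\sqrt{ev(\lambda)/(k\pi)} < P$, and let the approximating sequences $\{P_i\}$ and $\{R_i\}$ of Section~\ref{sec:sketch} satisfy $P_1 > \sqrt{ev(\lambda)/(k\pi)}$. By hypothesis the annulus $[P_1, R]\times\Sigma$ contains no leaf-wise intersection points. Construct the families $\{H_n\}$, $\{K_m\}$, $\{\check{H}_{m,n}\}$, $\{\check{K}_{m,n}\}$ and their $F$-perturbations exactly as in Section~\ref{sec:sketch}.

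Next I would rerun the five-step chain of quasi-isomorphisms of Subsection~\ref{subsec:chain}. Each step relies only on (i) the integrated maximum principle (reproved in the Appendix and insensitive to the monotonicity hypothesis), (ii) the index-action estimates of Lemmas~\ref{lem:bij} and~\ref{lem:actionest}, which carry over with uniformly bounded correction terms in the weak\textsuperscript{+} setting, and (iii) the absence of leaf-wise intersection points in $[P_1, R]\times\Sigma$. Assembling the chain, now with the direct-limit-over-action-windows replaced by the action-norm completion on both ends, yields an isomorphism
\[
\widehat{SH^*}(K) \;\simeq\; \widehat{SH^*}(H).
\]
By equation~(\ref{eq:shbehavior}) the right-hand side is nonzero because $R > \sqrt{ev(\lambda)/(k\pi)}$, while the left-hand side vanishes by a direct adaptation of Proposition~\ref{prop:shvanishes}: the Albers-Kang page-by-page construction produces an inductive primitive $Y = \sum_p Y_p$ whose pieces are indexed by the Conley-Zehnder filtration, and Lemma~\ref{lem:czaction} guarantees $\cg{A}(Y_p) \to \infty$, so the series converges in the non-Archimedean action-norm completion. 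This contradiction proves the theorem.

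The main obstacle is the action-norm analogue of Proposition~\ref{prop:shvanishes}: while the algebraic scaffolding of the Albers-Kang argument is preserved verbatim, one must verify that the tower of inductive corrections is Cauchy in the non-Archimedean norm. This reduces to a weak\textsuperscript{+} version of Lemma~\ref{lem:czaction} relating Conley-Zehnder growth to action growth, which in turn uses the grading formula~(\ref{eq:grading}) with bounded remainder. A secondary nuisance is that equation~(\ref{eq:grading}) as stated assumes the clean monotone relationship between index and symplectic area; in the weak\textsuperscript{+} setting one must propagate bounded error terms through Lemma~\ref{lem:actionest}, but since those estimates only need to hold up to universally bounded constants, the same thresholds work after adjusting $B$, $C_\Omega$, and $C_\mu$ accordingly.
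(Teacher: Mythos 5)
Your proposal diverges sharply from the paper's actual proof, and it does not close the argument.

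The paper's proof of Theorem~\ref{thm:weakplus} is far lighter than what you propose. It never touches the $K_m$ Hamiltonians, the V-shaped $\check{H}_{m,n}$ or $\check{K}_{m,n}$, the five-step chain of Subsection~\ref{subsec:chain}, or the Albers--Kang page-by-page argument of Proposition~\ref{prop:shvanishes}. Instead it works entirely on the $\{H_n\}$ side with $R < \sqrt{ev(\lambda)/(k\pi)}$: it produces maps $\widehat{SH^*}(H)\rightleftharpoons\widehat{SH^*}(H^F)$ whose two compositions are identities (the chain-level trick of Lemma~\ref{lem:fg}), cites the vanishing $\widehat{SH^*}(H)=0$ from~(\ref{eq:shbehavior}), and then observes that \emph{if} there are no leaf-wise intersection points at radii in $(\sqrt{ev(\lambda)/(k\pi)},\infty)$ then $\mathcal{P}(H^F)$ is finite, so $SC^*(H^F)$ is finitely generated, so $\ker(\partial)$ is complete and $\im(\partial)$ is closed, hence $\widehat{SH^*}(H^F)=SH^*(H^F)\simeq SH^*(H)\neq 0$ --- contradicting $\widehat{SH^*}(H)=0$. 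That finiteness observation is the entire engine of the proof, and it appears nowhere in your proposal.

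Your version has three genuine gaps. First, you conflate two distinct completions: $\widehat{SH_*}(K)$ in Section~\ref{sec:thm1} is built from an inverse limit over the negative-slope family $\{K_m\}$, whereas the $\widehat{SH^*}$ of Section~\ref{sec:thm2} is the action-norm completion of the direct-limit complex $SC^*(H)$ over positive slopes. Proposition~\ref{prop:shvanishes} proves vanishing for the former; it does not transfer to the latter without further work, and citing it for an ``action-norm $\widehat{SH^*}(K)$'' is not meaningful as written. Second, you place the annulus $[P_1,R]\times\Sigma$ entirely above the critical radius $\sqrt{ev(\lambda)/(k\pi)}$. That breaks the very dichotomy you are relying on: in the monotone case the $K_m$ vanishing uses crucially that $P$ lies \emph{below} the distinguished radius (cf. Theorem~\ref{thm:vanishing}), and by~(\ref{eq:shbehavior}) the completed invariant is non-zero for radii at or above $\min_i\sqrt{ev(\lambda_i)/(k\pi)}$, so the $K$-side would not vanish for your choice of $P$. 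Third, and most fatally, your claim that the index--action estimates ``carry over with uniformly bounded correction terms in the weak\textsuperscript{+} setting'' is false. Equation~(\ref{eq:grading}), Lemma~\ref{lem:actionest}, and Lemma~\ref{lem:czaction} all rest on $c_1^{TE}=\kappa[\Omega]$ holding on $\pi_2(E)$, which forces the Conley--Zehnder index and symplectic area of a capping to be proportional. In a weak\textsuperscript{+}-monotone line bundle these two quantities decouple: one can find sphere classes of arbitrarily large area with bounded Chern number, so there is no universal bound on the discrepancy. The paper itself flags this when it says monotonicity is used ``as a technical assumption'' precisely in Lemma~\ref{lem:actionest}. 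Once that estimate fails, the five-step chain and the Albers--Kang induction both collapse. The paper's actual proof is designed specifically to avoid every one of these index-theoretic inputs.
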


\begin{proof}
As in the proof of Proposition \ref{prop:qi(2)}, there is a constant $C$, independent of $a$, that produces chain maps
\[
SC^*_{(a, \infty)}(H) \rightarrow SC^*_{(a - C, \infty)}(H^F)\rightarrow SC^*_{(a - 2C, \infty)}(H)
\]
such that the induced map on homology
\[
SH^*_{(a, \infty)}(H) \xrightarrow{\simeq} SH^*_{(a - 2C, \infty)}(H)
\]
is the canonical inclusion.  It follows that there are maps on the completions
\[
\widehat{SH^*}(H) \rightarrow \widehat{SH^*}(H^F)\rightarrow \widehat{SH^*}(H)
\]
such that the composition is an isomorphism.

Similarly, there are maps
\[
\widehat{SH^*}(H^F)\rightarrow \widehat{SH^*}(H)\rightarrow\widehat{SH^*}(H^F)
\]
whose composition is an isomorphism.  We conclude that there is an isomorphism
\begin{equation}
\label{eq:shf1}
\widehat{SH^*}(H^F) \xrightarrow{\simeq} \widehat{SH^*}(H).
\end{equation}

Choose $R < \min\limits_{\lambda\in\{\lambda_1, .\dots, \lambda_n\}}\sqrt{\frac{ev(\lambda)}{k\pi}}$.  By (\ref{eq:shbehavior}),
\begin{equation}
\label{eq:shf2}
\widehat{SH^*}(H) = 0.
\end{equation}
On the other hand, Ritter showed in \cite{ritter-gromov} that if $\lambda\neq 0$, 
\begin{equation}
\label{eq:shf3}
SH^*(H)\neq 0.
\end{equation}
Standard results in Floer theory yield an isomorphism
\begin{equation}
\label{eq:shf4}
SH^*(H^F)\simeq SH^*(H).
\end{equation}
If there does not exist a circle subbundle $\Sigma_r$, $r\in \left(\sqrt{\frac{ev(\lambda)}{k\pi}}, \infty\right)$, with a leaf-wise intersection point, then $\cg{P}(H^F)$ is a finite set, and $SC^*(H^F)$ is finitely generated as a vector space.  In particular, $\ker(\dd)$ is finitely generated, and is thus already complete.  Likewise, $\im(\dd)$ is finitely generated and is thus already closed.  There is therefore an isomorphism
\begin{equation}
\label{eq:shf5}
\widehat{SH^*}(H^F)\xrightarrow{\simeq} SH^*(H^F).
\end{equation}
Stringing together the equations (\ref{eq:shf1}) -- (\ref{eq:shf5}), 
\[
0\neq SH^*(H) \simeq SH^*(H^F)\simeq \widehat{SH^*}(H^F)\simeq \widehat{SH^*}(H) = 0,
\]
and we reach a contradiction.  We conclude that there exists a circle subbundle $\Sigma_r$ with a leaf-wise intersection point.

\end{proof}

\section{Leaf-wise intersection points from Lagrangians}
\label{sec:lagrangian}

The circle subbundles studied in this paper are conjecturally distinguished by Floer-essential Lagrangian submanifolds living inside them.  Such submanifolds point to a variety of fixed-point results, including results about leaf-wise intersection points.  In this section, we prove a theorem about leaf-wise intersection points on more general monotone symplectic manifolds.

\addtocounter{theorem}{+4}
\begin{theorem}
\label{thm:lagrangians}
Let $(E, \Omega)$ be a monotone symplectic manifold that is either compact or convex at infinity.  Let $\Sigma\subset E$ be a hypersurface of contact type containing a compact Lagrangian $L$, and let $(-\epsilon, \epsilon)\times \Sigma$ be a Weinstein neighborhood of $\Sigma$.  If there exists a local system $\gamma$ such that the Lagrangian Floer homology $HF^*(L, \gamma)$ is non-vanishing, then for any Hamiltonian perturbation, there exists $r\in(-\epsilon, \epsilon)$ such that $\{r\}\times\Sigma$ contains a leaf-wise intersection point.  If the Reeb flow of $\Sigma$ is Zoll, $\Sigma$ itself contains a leaf-wise intersection point.
\end{theorem}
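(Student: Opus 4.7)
The plan is to run the argument of Theorem \ref{thm:linebundle} verbatim, using the Weinstein collar $(-\epsilon,\epsilon)\times\Sigma$ in place of the cylindrical end of a line bundle, and replacing the non-vanishing hypothesis on $SH^*$ by a non-vanishing statement deduced from $HF^*(L,\gamma)\neq 0$ via a closed-open map.

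First, I would build the families $H_n,K_m,\check{H}_{m,n},\check{K}_{m,n}$ of auxiliary Hamiltonians inside the Weinstein collar exactly as in Section \ref{sec:sketch}, using the contact coordinate on $(-\epsilon,\epsilon)\times\Sigma$ in place of the radial coordinate. Since these constructions depend only on the local symplectization structure of a neighborhood of $\Sigma$, they are insensitive to the ambient geometry of $E$ beyond compactness or convexity at infinity. The chain of quasi-isomorphisms in Subsection \ref{subsec:chain} and the vanishing statement of Proposition \ref{prop:shvanishes} rest only on the integrated maximum principle and formal properties of the Hamiltonians, so both carry over. Consequently, if no hypersurface $\{r\}\times\Sigma$ with $r\in(-\epsilon,\epsilon)$ carried a leaf-wise intersection point of $F$, one would obtain
\[
SH^*(H) \;\simeq\; \widehat{SH_*}(K) \;=\; 0.
\]

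Second, I would construct a closed-open map
\[
\mathrm{CO}_L \colon SH^*(H) \longrightarrow HF^*(L,\gamma)
\]
by counting half-cylinders with one interior input asymptotic to a Hamiltonian orbit of some $H_n$ and boundary on $L$ twisted by the local system $\gamma$. Monotonicity of both $E$ and $L$ is precisely what allows the relevant moduli spaces to be compact and to admit transverse perturbations, and it is also what is needed to set up Lagrangian quantum cohomology as the target algebra of $\mathrm{CO}_L$. Standard arguments in monotone Lagrangian Floer theory show this map is unital, sending the symplectic cohomology unit to the Lagrangian Floer unit $1_L$, which is nonzero whenever $HF^*(L,\gamma)\neq 0$. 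Hence $SH^*(H)\neq 0$, contradicting the vanishing above. Therefore some $\{r\}\times\Sigma$ with $r\in(-\epsilon,\epsilon)$ must contain a leaf-wise intersection point of $F$. For the Zoll refinement, I would apply Proposition \ref{prop:nested} to a sequence of shrinking Weinstein collars $\epsilon_i \to 0$: the resulting leaf-wise intersection points on hypersurfaces nested inside $\Sigma$ admit a limit, which by Zoll-ness and the uniform period control in the proof of that proposition is itself a leaf-wise intersection point on $\Sigma$.

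The main obstacle is constructing $\mathrm{CO}_L$ in a way compatible with the $S^1$-Morse--Bott and ``V-shaped'' features of the Hamiltonians $H_n$, and verifying unitality at the chain level with the truncated-support and local-system modifications. This parallels the technical work already required to reprove the integrated maximum principle for the $\check{K}_{m,n}$; once the appropriate compactness and transversality are in place, the ring-map structure of $\mathrm{CO}_L$ and the behavior of units follow from the standard package of monotone Lagrangian Floer theory.
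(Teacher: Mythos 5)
Your broad strategy---a unital closed-open map $SH^*\to HF^*(L,\gamma)$ hitting the Floer-theoretic unit, combined with a vanishing argument assuming no leaf-wise intersection points---matches the paper's plan. However, the central vanishing step you propose to reuse does not in fact transport to the general setting, and this is a genuine gap.

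Proposition \ref{prop:shvanishes} is \emph{not} a consequence of ``the integrated maximum principle and formal properties of the Hamiltonians.'' Its proof, following Albers--Kang, relies in an essential way on the negative line bundle structure: the pointwise splitting $T_xE = T_x\CC\oplus T_xB$, the projection $\mathfrak{p}_*$ of orbits, trajectories and cappings to the base $B$, and the filtration of the Floer complex by $\mu_{CZ}(\mathfrak{p}_*(\tilde{x}))$ (Lemmas \ref{lem:filtered}, \ref{lem:filtered-d}, \ref{lem:dd0}, \ref{lem:czaction}). None of this data exists for a general contact hypersurface $\Sigma$ inside a monotone manifold $E$: there is no projection to a base, no canonical splitting of $TE$, and no Leray-type filtration. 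Likewise the chain of quasi-isomorphisms you invoke depends on the grading formula $q = 2\kappa\alpha - 2n + \mu_{CZ}(\tilde{v})$ used in Lemma \ref{lem:actionest}, which is again a line-bundle-specific computation. So your claimed identification $SH^*(H)\simeq\widehat{SH_*}(K)=0$ is not available, and the contradiction does not yet close.

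The paper instead changes the invariant and the vanishing mechanism. For the Lagrangian theorem it works with a single family of ``V-shaped'' Hamiltonians $H_n^i$ supported in the Weinstein collar, and with \emph{reduced} symplectic cohomology $\widehat{SH^*}(U_i) = H\bigl(\lim_{\leftarrow a}\lim_{\rightarrow n}CF^*_a(H_n^i)\bigr)$, where $CF^*_a$ denotes the quotient below action level $a$ and the inverse limit is over $a\to\infty$. The vanishing is then an elementary action estimate, not a spectral-sequence argument: if no $\{r\}\times\Sigma\subset U_i$ carries a leaf-wise intersection point, then the only periodic orbits of the $H_n^i$ are the finitely many orbits of $F$ outside $U_i$, and their $H_n^i$-actions grow linearly like $n\cdot\epsilon_i$; for fixed $a$ and fixed degree the truncated complex is therefore eventually zero in $n$, so $\widehat{SH^*}(U_i)=0$. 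The non-vanishing $\widehat{SH^*}(U_i)\neq 0$ then comes from a closed-open map $\widehat{SH^*}(U_i)\to HF^*(L,\gamma)$ that hits the unit (Lemmas 6 and 7 of \cite{venkatesh-rab}, Corollary \ref{cor:nonvanish}). To repair your proposal you would need to replace the appeal to Proposition \ref{prop:shvanishes} with this direct finiteness-of-orbits action argument, and change the target of your closed-open map to this reduced/completed theory; the ordinary $SH^*(H)$ is not the right object here since the elementary vanishing argument genuinely requires the inverse-limit completion over action.
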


The proof of Theorem \ref{thm:lagrangians} follows a well-documented algorithm.  Similarly to the recipes of Sections \ref{sec:thm1} and \ref{sec:thm2}
there exists a Floer invariant, denoted by $\widehat{SH^*}(\Sigma, E)$, that encodes leaf-wise intersection points close to $\Sigma$.    Precisely, if $\widehat{SH^*}(\Sigma, E)\neq 0$ then there exists a contact hypersurface arbitrarily close to $\Sigma$ with a leaf-wise intersection point.  If the Reeb flow of $\Sigma$ is Zoll, then this encoding, combined with Proposition \ref{prop:nested}, shows that $\Sigma$ itself has a leaf-wise intersection point.  To prove the Theorem, it thus suffices to show that $\widehat{SH^*}(\Sigma, E)\neq 0$.

Our method for proving that $\widehat{SH^*}(\Sigma, E)\neq 0$ originated in \cite{seidel-biased} and was adapted to the current framework in our paper \cite{venkatesh-rab}.  There is a map
\[
\widehat{SH^*}(\Sigma, E)\rightarrow HF^*(L, \gamma)
\]
that fits into a commutative diagram with the quantum cohomology of $E$, denoted by $QH^*(E)$.
\begin{equation}
\label{fig:nonvanish}
\begin{tikzcd}
QH^*(E) \arrow{r} \arrow{rd} & \widehat{SH^*}(\Sigma, E) \arrow{d} \\
& HF^*(L, \gamma)
\end{tikzcd}
\end{equation}
$HF^*(L, \gamma)$ is a ring with unit, and the image of the map
\[
QH^*(E)\rightarrow HF^*(L, \gamma)
\]
always contains the unit.  If $HF^*(L, \gamma)\neq 0$, then, in particular, the image of this map is non-zero.  It follows that 
\[
\widehat{SH^*}(\Sigma, E)\neq 0
\]
as well.

The cohomology theory $\widehat{SH^*}(\Sigma, E)$ relies on a new family of Hamiltonians, defined as follows.  A contact hypersurface $(\Sigma, \alpha)$ has a family of nested Weinstein neighborhood
\[
U_1 \supset U_2 \supset U_3\supset \dots \supset \Sigma
\]
whose intersection is $\Sigma$
\[
\bigcap_i U_i = \Sigma.
\]
We assume without loss of generality that there are positive real numbers $\epsilon_1 > \epsilon_2 > \epsilon_3 \dots > 0$ with symplectomorphisms
\[
U_i\simeq(-\epsilon_i, \epsilon_i)_r\times\Sigma,
\]
when the latter is equipped with the symplectic form $d(e^r\alpha)$.  Fix $i\in\NN$.  Let $h_n^i:\RR_x\rightarrow\RR$ be a smoothing of the function that is
\begin{enumerate}
\item equal to $-nx$ on $(-\epsilon_i, 0]$,
\item equal to $nx$ on $[0, \epsilon_i)$,
\item and constant everywhere else.
\end{enumerate}
See Figure \ref{fig:v}.

\begin{figure}[!htbp]
\centering
\begin{tikzpicture}[scale=5]
\draw  (.5,-1.01916631) -- (.5,-.07) node [right = 1pt] {$\RR$};
\draw(0,-1.01916631)-- (1, -1.03660231)node [right = 1pt] {$r$};
\draw[violet] (0,-.07)-- (.267948, -.07);
\draw[violet] (.267948, -.07) -- (.5, -1.01916631);
\draw[violet]  (0.732052,-.07) -- (.5, -1.01916631);
\draw[violet] (0.732052,-.07)-- (1, -.07) node [right=1pt] {$h_n^i$};
\draw (.267948, -1.03660231)node [anchor=north]{{\tiny$-\epsilon_i$}};
\draw (0.732052, -1.03660231)node [anchor=north]{{\tiny$\epsilon_i$}};
\draw (.267948, -1.02660231) -- (.267948, -1.04660231);
\draw (0.732052, -1.02660231) -- (0.732052, -1.04660231);
\end{tikzpicture}
\caption{The hamiltonians $h_n^i$}
\label{fig:v}
\end{figure}
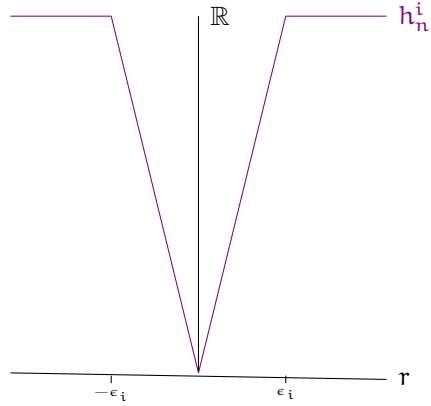

As in Subsection \ref{subsec:trunc}, let $\rho(t)$ be a bump function with support on $[0, 1/2]$ that integrates to 1, and let $F$ be a compactly-supported Hamiltonian with time-support in $[1/2, 1]$.  Define
\[
H_n^i = \rho(t)h_n^i(e^r) + F.
\]
Define quotient complexes
\[
CF^*_a(H_n^i) := \bigslant{CF^*(H_n^i)}{CF^*_{(a, \infty)}(H_n^i)}.
\]
Choose monotone-increasing homotopies $CF^*_a(H_n^i) \rightarrow CF^*_a(H_{n+1}^i)$.  With these choices, the family $\{CF^*_a(H_n^i)\}$ is bidirected; once by $n\in\NN_{>0}$ and once by $a\in\RR$.

Define the {\it reduced symplectic cohomology} of $U_i$, written $\widehat{SH^*}(U_i)$, to be the cohomology of the cochain complex
\[
\widehat{SC^*}(U_i) = \lim_{\substack{\leftarrow \\ a}}\lim_{\substack{\rightarrow \\ n}}CF^*_{a}(H_n^i).
\]

As with Lemma \ref{lem:perturbF}
we may assume, without changing the leaf-wise intersection points of $F$, that the periodic orbits of each $H_n^i$, and of $F$ itself, are non-degenerate.

\subsection{Lagrangian quantum cohomology}
In this subsection we recall a model of $HF^*(L, \gamma)$ called {\it Lagrangian quantum cohomology} and we define and study the maps in Figure \ref{fig:nonvanish}.  This picture was constructed in an analogous setup in \cite{venkatesh-rab}.
Rather than repeat each argument, we recall the results detailed there
and argue that we can transfer each argument to this setting.  We refer to \cite{venkatesh-rab}
for a detailed exposition.

Choose a Morse function $f:L\rightarrow\RR$ and a metric $g$ on $L$.  Denote by $\Phi_t$ the gradient flow of $-\nabla_g(f)$.  Choose critical points $x$ and $y$ of $f$ and a relative homology class $\beta\in H^2(E, L)$, and denote by $\cg{M}_{\ell}(f, g, x, y, \beta)$ the moduli space of tuples $(u_1, \dots, u_{\ell})$, where
\begin{enumerate}
\item $u_i:(D^2, \dd D^2)\rightarrow (E, L)$ is a non-constant $J$-holomorphic disk,
\item there exists $t\in(-\infty, 0)$ such that $u_i(-1) = \Phi_t(u_{i+1}(1))$ whenever $i\leq 1 < \ell$,
\item $u_1(1)$ is in the unstable manifold of $x$ and $u_{\ell}(-1)$ is in the stable manifold of $y$,
\item and $[u_1\#u_2\#\dots\#u_{\ell}]\in\beta$.
\end{enumerate}
Let $Aut(D^2, \pm 1)$ be the automorphisms of the disk fixing $-1$ and $1$, so that $Aut(D^2, \pm 1)$ acts on $\cg{M}_{\ell}(f, g, x, y, \beta)$.  Denote by $\cg{M}_0(f, g, x, y)$ the space of Morse trajectories from $x$ to $y$.  Note that $\RR$ acts canonically on $\cg{M}_0(f, g, x, y)$.  Define $\cg{M}^0(f, g, x, y, \beta)$ to be the rigid elements of 
\[
\bigslant{\cg{M}_{\ell}(f, g, x, y, \beta)}{Aut(D^2, \pm 1)}\cup \bigslant{\cg{M}_0(f, g, x, y)}{\RR}.
\]
We call the elements of $\cg{M}^0(f, g, x, y)$ {\it pearly trajectories}.

Choose a local system $\gamma$ on $L$, and note that there is a pairing $\la \gamma, \sigma\ra$ for each $\sigma\in H_1(L)$.  Let $CF^*(L, \gamma)$ be the cochain complex that is generated by critical points of $f$
\[
CF^*(L, \gamma) = \bigoplus_{x\in Crit(f)}\Lambda\la x\ra
\]
and whose differential is given on critical points of $f$ by
\[
\dd(x) = \sum_{\substack{y\in Crit(f) \\ \ell\in\NN \\ \beta\in H^2(M, L)}}\sum_{{\bf u}\in\cg{M}^0(f, g, x, y)}T^{-\Omega(\beta)}\la \gamma, [\dd u_1\#\dd u_2\#\dots \#\dd u_{\ell}] \rangle y.
\]
Lagrangian Floer cohomology, denoted by $HF^*(L, \gamma)$, is the cohomology of $CF^*(L, \gamma)$.

Analogously to symplectic cohomology, Lagrangian Floer cohomology has action truncated versions $HF^*_{(a, \infty)}(L; \gamma)$ and $HF^*_a(L; \gamma)$.

\begin{lemma}[Lemma 6 in \cite{venkatesh-rab}]
There is a map
\[
\widehat{SH^*}(U_i)\rightarrow HF^*(L, \gamma).
\]
\end{lemma}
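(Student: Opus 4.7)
The plan is to build the map by a closed-open construction at the chain level and then pass to the appropriate limits. Fix $n$ and define a $\Lambda$-linear map
\[
CO_n: CF^*(H_n^i) \to CF^*(L,\gamma)
\]
by counting hybrid configurations: a Floer half-cylinder $u:[0,\infty)\times S^1 \to E$ solving the Floer equation for $(H_n^i, J)$, asymptotic at $+\infty$ to a time-one orbit $x$, whose $t=0$ circle is mapped onto a point of $L$; this is followed by a pearly trajectory $(u_1,\ldots,u_\ell)$ of $(f,g,J)$ on $L$ that terminates at a critical point $y$ of $f$ and starts at the point where the half-cylinder meets $L$. Weight each rigid configuration by $T^{-\Omega(\beta)}\langle\gamma,[\partial u_1 \# \cdots \# \partial u_\ell]\rangle$ to define the coefficient of $y$ in $CO_n(x)$. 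Standard boundary analysis of the one-dimensional components of the moduli space shows that $CO_n$ is a chain map, with monotonicity ruling out codimension-one disk and sphere bubbling.

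Second, I verify that the $CO_n$ are compatible with the monotone continuation maps $c_n: CF^*(H_n^i) \to CF^*(H_{n+1}^i)$, by interpolating between the $H_n^i$-half-cylinder and the $H_{n+1}^i$-half-cylinder using an $s$-dependent Hamiltonian on the Floer end; the resulting parametrized moduli space supplies a chain homotopy $CO_n \simeq CO_{n+1}\circ c_n$. Hence the $CO_n$ assemble into a chain map on the direct limit:
\[
CO_\infty: \lim_{\substack{\to \\ n}} CF^*(H_n^i) \to CF^*(L,\gamma).
\]

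Third, I address action. The usual energy-action identity for Floer half-cylinders with Lagrangian boundary, together with the non-negativity of energy, forces $\mathcal{A}(x) \geq -\Omega(\beta)$ on any contributing configuration. Consequently $CO_n$ sends the subcomplex $CF^*_{(a,\infty)}(H_n^i)$ into the subspace of $CF^*(L,\gamma)$ whose Novikov valuation is bounded below in terms of $a$, and therefore descends to each action-truncation quotient to give a map
\[
CO_{n,a}: CF^*_a(H_n^i) \to CF^*_a(L,\gamma),
\]
where $CF^*_a(L,\gamma)$ is the analogous quotient by the high-Novikov-valuation subspace. Taking the direct limit in $n$ and the inverse limit in $a$, and noting that $CF^*(L,\gamma)$ is finitely generated over $\Lambda$ so that the inverse system on the Lagrangian side is eventually stable (or that $HF^*(L,\gamma)$ is already complete in its natural valuation), produces the desired map $\widehat{SH^*}(U_i) \to HF^*(L,\gamma)$.

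The main obstacle is compactness and transversality for the hybrid moduli spaces cutting out $CO_n$: one must rule out trajectories escaping along the symplectization direction away from $\Sigma$. This is exactly where the integrated maximum principle is needed, in the form reproved in the Appendix and used throughout \cite{venkatesh-rab}; combined with monotonicity of $E$ (which controls disk and sphere bubbles in codimension one) and a generic choice of almost-complex structure of the upper-triangular type from Section~\ref{subsec:(non)vanish}, it delivers the required Gromov compactness. Once this analytic backbone is in place, the algebraic construction sketched above essentially mirrors the argument for Lemma~6 in \cite{venkatesh-rab}, so I would cite that reference for the remaining verifications.
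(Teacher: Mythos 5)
Your proposal reconstructs essentially the same closed-open argument the paper uses.  The paper's own proof is very short: it delegates virtually all of the construction to Lemma 6 of \cite{venkatesh-rab}, after observing that the only two facts about the Hamiltonian family needed there are (i) $H_n\le 0$ on a small neighborhood of $L$ and (ii) monotonicity of the continuation maps, both of which can be arranged by a constant shift of $h_n$ and $F$ without changing the Floer complexes.  Two small flags: the half-cylinder energy--action inequality you invoke is precisely where (i) enters, so you should state that normalization rather than leaving it implicit.  And the ``upper-triangular'' almost-complex structures you cite from Section \ref{subsec:(non)vanish} belong to the negative-line-bundle sections (they exploit the bundle projection $\mathfrak{p}$) and have no analogue for a general hypersurface of contact type $\Sigma$ inside a monotone manifold; in this generality one simply takes a generic $\Omega$-compatible $J$, cylindrical near $\Sigma$, and uses monotonicity of $E$ and the integrated maximum principle for compactness.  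Modulo these points, you have correctly recovered the construction the paper references.
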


\begin{proof}
Lemma 6 in \cite{venkatesh-rab} and the discussion following the proof of Lemma 6 consider autonomous Hamiltonians, which we denote here by $\{H_n'\}$, on a domain with convex boundary and produce a map
\[
\lim_{\substack{\leftarrow \\ a}}\lim_{\substack{\rightarrow \\ n}}HF^*_a(H_n') \rightarrow HF^*_a(L; \gamma)
\]
for any $n$ and $a$.  This map counts ``half-cylinders'', amalgamated with pearly trajectories.

The only facts used about the family $\{H_n'\}$ to construct this map are that
\begin{enumerate}
\item $H_n \leq 0$ on a small neighborhood of $L$,
\item and the continuation maps $H_n \rightarrow H_{n+1}$ are monotone-increasing.
\end{enumerate}
These can be easily arranged for our family $\{H_n\}$ without changing the Floer complexes by shifting each function $h_n$ and the Hamiltonian $F$ down by a constant.  After these adjustments, the proof translates verbatim.

\end{proof}
\begin{lemma}[Lemma 7 in \cite{venkatesh-rab}]

The map
\[
\Phi: \widehat{SH^*}(U_i)\rightarrow HF^*(L, \gamma).
\]
is non-vanishing.
\end{lemma}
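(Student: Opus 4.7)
The plan is to exhibit the commutative triangle of (\ref{fig:nonvanish}) rigorously and then use unitality. I would first construct the map $QH^*(E)\rightarrow \widehat{SH^*}(U_i)$ via a standard PSS-type construction: count Floer half-cylinders with a monotone homotopy between a $C^2$-small Morse Hamiltonian (representing $QH^*(E)$) and the V-shaped Hamiltonians $H_n^i$, then take the direct limit over $n$ and the inverse limit over action windows. Monotonicity of the homotopies makes continuation action-decreasing, which is compatible with both the $\lim\limits_{\leftarrow a}$ and $\lim\limits_{\rightarrow n}$ operations defining $\widehat{SH^*}(U_i)$. Next I would construct the diagonal closed-open map $QH^*(E)\rightarrow HF^*(L,\gamma)$ as the standard Albers-type map, using pearly trajectories on $L$ with an extra interior marked point where a sphere with a Morse flow-line asymptote is attached. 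Monotonicity of $E$ and of $L$ ensures Gromov compactness and transversality for these moduli spaces.

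Once these maps are in place, the key step is checking that the triangle
\[
\begin{tikzcd}
QH^*(E) \arrow{r} \arrow{rd} & \widehat{SH^*}(U_i) \arrow{d}{\Phi} \\
& HF^*(L,\gamma)
\end{tikzcd}
\]
commutes. This is a cobordism argument: concatenation of a Floer half-cylinder (going from $QH^*$ into $\widehat{SH^*}$) with the half-cylinder-with-pearls used to build $\Phi$ gives, after a neck-stretching/gluing homotopy, precisely the configuration defining the closed-open map. The proof is entirely analogous to the corresponding commutativity argument in \cite{venkatesh-rab}, since the only structural difference between our Hamiltonians $H_n^i$ and those considered there is the presence of the Hamiltonian perturbation $F$ supported in $t\in[1/2,1]$; this perturbation does not interact with the continuation homotopies used here, which can be chosen to be $s$-dependent only in the region where the V-function varies.

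The final step is unitality. The closed-open map $QH^*(E)\rightarrow HF^*(L,\gamma)$ is a unital ring homomorphism, so it sends the unit $1\in QH^*(E)$ to the unit $1_L\in HF^*(L,\gamma)$. By hypothesis $HF^*(L,\gamma)\neq 0$, and since it is a unital ring, $1_L\neq 0$. The diagonal map is therefore non-zero, so by commutativity of the triangle the composition through $\widehat{SH^*}(U_i)$ is non-zero, which forces $\Phi$ itself to be non-zero.

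The main obstacle will be adapting the commutativity of the triangle to the action-completed setting: one must verify that the gluing/homotopy argument relating the two cobordisms respects the filtration by action well enough to survive both limits $\lim\limits_{\leftarrow a}\lim\limits_{\rightarrow n}$. In particular, care is needed to show that the chain-homotopy witnessing commutativity has image in a uniformly controlled action window as $n\to\infty$, so that it descends to the quotients $CF^*_a(H_n^i)$ compatibly. Once this bookkeeping is in place --- which, as in \cite{venkatesh-rab}, reduces to an integrated maximum principle for the cylindrical ends of the homotopies together with the monotonicity condition --- the argument goes through verbatim.
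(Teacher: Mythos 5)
Your proposal is essentially correct and follows the spirit of the paper, but there is a small and instructive difference in the route. You build the triangle with $QH^*(E)$ at the apex (the triangle drawn in~(\ref{fig:nonvanish})) and invoke the unital ring homomorphism property of the closed-open map. The paper instead replaces $QH^*(E)$ in the apex with a single finite-stage Floer group $HF^*(H_0)$, for which the commutativity
\[
\begin{tikzcd}
HF^*(H_0) \arrow{r}{\iota} \arrow{dr}{\phi} & \widehat{SH^*}(\Sigma, E) \arrow{d}{\Phi} \\
& HF^*(L; \gamma)
\end{tikzcd}
\end{equation*}
holds essentially by construction: $\Phi$ is built as a $\varprojlim_a\varinjlim_n$ of finite-stage maps, and $\iota$ is the structure map of this limit, so $\Phi\circ\iota=\phi$ tautologically, with no cobordism or action bookkeeping needed. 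The paper then exhibits a PSS-type cycle $Z\in HF^*(H_0)$ (counting solutions of $(H_s,J_s)$ with a free negative end) and shows directly that $\phi([Z])=[\mathfrak{m}]$, where $\mathfrak{m}$ is the unique minimum of the Morse function and represents the unit in $HF^*(L,\gamma)$; the key observation is that the only rigid pearly trajectory ending at $\mathfrak{m}$ with no incoming constraint is the constant one. This is a concrete moduli computation that sidesteps invoking unitality of a separately-constructed closed-open map $QH^*(E)\to HF^*(L,\gamma)$. Your approach buys conceptual clarity (the triangle (\ref{fig:nonvanish}) is the natural picture, and the argument is ``obviously'' a formal consequence of unitality), but at the cost of exactly the obstacle you flag: one must verify that the gluing homotopy establishing commutativity is compatible with the $\varprojlim_a\varinjlim_n$ structure. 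The paper's detour through $HF^*(H_0)$ buys an almost tautological commutativity, transferring all the hard work into a finite-dimensional, action-bounded moduli-space computation in which the completion plays no role. Both are correct; the paper's choice is specifically designed to neutralize the concern you raise in your final paragraph.
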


\begin{proof}
The proof of Lemma 7 in \cite{venkatesh-rab} translates verbatim.  We briefly sketch it here.  By construction, the diagram
\begin{equation}
\begin{tikzcd}
HF^*(H_0) \arrow{r}{\iota} \arrow{dr}{\phi} & \widehat{SH^*}(\Sigma, E) \arrow{d}{\Phi} \\
& HF^*(L; \gamma)
\end{tikzcd}
\end{equation}
commutes.  It thus suffices to show that the map
\[
\phi: HF^*(H_0) \rightarrow HF^*(L; \gamma)
\]
is non-zero.

Assume without loss of generality that $f$ has a unique minimum $\mathfrak{m}$.  As $HF^*(L, \gamma)$ is a unital ring, $\mathfrak{m}$ represents the unit, and as $HF^*(L; \gamma)$ is non-zero by assumption, $[\mathfrak{m}]\neq 0$ as well.  Denote by $\cg{M}_0(\mathfrak{m})$ the moduli space of all pearly trajectories $(u_1, \dots, u_{\ell})$ with $u_{\ell}$ now possibly constant, $u_{\ell}(-1)$ in the stable manifold of $\mathfrak{m}$, and no restrictions on $u_1(1)$.  Because $\mathfrak{m}$ is a minimum, there is a single rigid element in $\cg{M}_0(\mathfrak{m})$: the constant map $(u_{\ell}:D^2\rightarrow\mathfrak{m})$.

Let $H_s$ be a family of Hamiltonians that is identically zero when $s << 0$ and is equal to $H_0$ when $s >> 0$.  Choose a generic family $J_s$ of $\Omega$-compatible almost-complex structures.  Let $\cg{M}^0(x)$ be the moduli space of rigid trajectories $w:\RR\times S^1\rightarrow E$ that satisfy the Floer equation for $(H_s, J_s)$ and with $\lim\limits_{s\rightarrow\infty}w(s, t) = x(t)$.  Define
\[
Z = \sum_{\substack{x\in \cg{P}(H_0) \\ w\in\cg{M}^0(x)}} T^{\Omega([\tilde{x}\#w])} x.
\]
Concatenating Floer cylinders, $\phi$ takes $[Z]$ to a cohomological-level sum of the elements in $\cg{M}_0(\mathfrak{m})$.  Thus,
\[
\phi([Z]) = [\mathfrak{m}] \neq 0.
\]

\end{proof}

\begin{corollary}
\label{cor:nonvanish}
The cohomology theory $\widehat{SH^*}(U_i)$ is non-zero.
\end{corollary}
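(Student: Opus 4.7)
The proof is essentially an immediate corollary of the preceding Lemma, so the ``plan'' is really just to extract the observation cleanly. The preceding Lemma establishes that the map
\[
\Phi: \widehat{SH^*}(U_i) \rightarrow HF^*(L, \gamma)
\]
is non-vanishing, meaning there exists a class in $\widehat{SH^*}(U_i)$ whose image under $\Phi$ is non-zero in $HF^*(L, \gamma)$. A non-zero element in the image of a linear map forces the domain to be non-zero. Hence $\widehat{SH^*}(U_i) \neq 0$.

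More concretely, the plan is to point to the element $[Z] \in \widehat{SH^*}(U_i)$ constructed in the proof of the preceding Lemma, which satisfies $\Phi([Z]) = [\mathfrak{m}]$, where $\mathfrak{m}$ is the unique minimum of the chosen Morse function $f$ on $L$. Since $HF^*(L, \gamma) \neq 0$ by the hypothesis of Theorem \ref{thm:lagrangians}, the unit $[\mathfrak{m}]$ in this unital ring is non-zero. Consequently $\Phi([Z]) \neq 0$, which in turn forces $[Z] \neq 0$ in $\widehat{SH^*}(U_i)$.

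There is no real obstacle here: the corollary is a one-line consequence of the fact that a map with a non-zero element in its image must have a non-zero domain. The substance of the argument is entirely carried by the construction of $\Phi$ and the identification $\Phi([Z]) = [\mathfrak{m}]$, both of which are already established in the preceding Lemmas. Thus the proof I would write consists simply of invoking the Lemma and remarking that non-vanishing of $\Phi$ implies non-vanishing of its source.
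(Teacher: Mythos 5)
Your proposal is correct and matches the paper's (implicit) reasoning exactly: the preceding Lemma exhibits the non-vanishing map $\Phi: \widehat{SH^*}(U_i) \rightarrow HF^*(L, \gamma)$ with $\Phi([Z]) = [\mathfrak{m}] \neq 0$, so the source must be non-zero. Nothing more is needed.
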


\subsection{Proof of Theorem \ref{thm:lagrangians}}

The proof of Theorem \ref{thm:lagrangians} follows almost immediately from the previous section.

\begin{proofthm8}
Choose a neighborhood $U_i\simeq (-\epsilon_i, \epsilon_i)\times\Sigma$ of $\Sigma$ and assume for contradiction that there does not exist a hypersurface $\{r\}\times\Sigma\subset U_i$ with a leaf-wise intersection point of $F$.  Then all elements of 
\[
\cg{P}(\cg{H}^i) := \bigcup_n \cg{P}(H_n^i)
\]
correspond to periodic orbits of $F$ with starting point in $E\setminus U_i$.  By assumption, the periodic orbits of $F$ are non-degenerate, and therefore finite.  Let $\{\tilde{x}_1, \dots, \tilde{x}_{\ell}\}$ be the elements of $\cg{P}(\cg{H}^i)$ equipped with fixed cappings.  Thus, there exists a constant $C_{\mu} > 0$ with
\[
|\mu_{CZ}(\tilde{x}_j)| < C_{\mu}
\]
for all $1\leq j\leq\ell$.

Similarly, there exists a constant $C_F$ with
\[
|\cg{A}_F(\tilde{x}_j)| < C_F.
\]
By construction, $H_n^i(x(0)) \simeq n\cdot\epsilon_i$.  Therefore, adjusting $C_F$ slightly if necessary,
\[
\cg{A}_{H_n^i}(\tilde{x}_j) > n\cdot\epsilon_i - C_F
\]
for all $n, j$.

Consider a graded component $\widehat{SH^{\ell}}(U_i)$.  By the monotonicity of $E$, there exists a constant $\kappa > 0$ with
\[
c_1^{TE} = \kappa[\Omega]
\]
on elements of the image of $\pi_2(E)\hookrightarrow H^2(E)$.  Let $\lambda\in\Lambda$.  Then
\[
|\lambda x_j| = {\ell}
\]  
implies that 
\[
\kappa ev(\lambda) > {\ell} - C_{\mu},
\]
or
\[
ev(\lambda) > \frac{{\ell}-C_{\mu}}{\kappa}.
\]
It follows that
\[
\cg{A}_{H_n^i}(\lambda\tilde{x}_j) > n\cdot\epsilon_i - C_F + \frac{{\ell}-C_{\mu}}{\kappa}.
\]
Fix $a >> 0$.  For large enough $n$,
\[
n\cdot\epsilon_i - C_F + \frac{{\ell}-C_{\mu}}{\kappa} > a
\]
for all $j$, and so
\[
\lim_{\substack{\rightarrow \\ n}}HF^{\ell}_a(H_n^i) = 0.
\]
Taking the inverse limit over $a\rightarrow\infty$, we conclude that
\[
\widehat{SH^*}(U_i) = 0,
\]
contradicting Corollary \ref{cor:nonvanish}.  We conclude that there exists a contact hypersurface $\{r\}\times\Sigma\subset U_i$ with a leaf-wise intersection point of $F$.  Taking the limit as $i\rightarrow\infty$, we conclude the first statement of the theorem.

The second statement now follows immediately from Proposition \ref{prop:nested}.

\end{proofthm8}

\appendix
\section{Hamiltonians with smaller time-support}

Let $\cg{H}_n$ be a Hamiltonian as in Subsection \ref{subsec:trunc}.  Thus, there is a Hamiltonian $H_n$ of the form
\[
H_n = h_n(\kappa\pi r^2) + (1+\pi r^2)\mathfrak{p}^*G,
\]
where $G:B\rightarrow\RR$ is a Morse function and $h_n(\kappa\pi r^2) = n\kappa\pi r^2$ outside of a compact set $D$; there is a choice of bump function $\rho:\RR\rightarrow\RR$ integrating to one, with
\[
supp(\rho)\subset[0, 1/2];
\]
and
\[
\cg{H}_n = \rho(t)\cdot H_n.
\]
The proof of the following Lemma also proves the integrated maximum principle in our setting.
\begin{lemma}
\label{lem:maxprinc}
For generic $\Omega$-tame $J$, conical outside of $D$, the symplectic cochain complex $SC^*(\cg{H}_n)$ is well-defined.
\end{lemma}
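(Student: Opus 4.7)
The plan is to reduce the well-definedness of $SC^*(\cg{H}_n)$ to the standard package of Floer-theoretic facts — transversality for generic $J$, Gromov compactness of moduli spaces, and the $\dd^2 = 0$ identity — by establishing an integrated maximum principle tailored to the time-dependent truncated Hamiltonian $\cg{H}_n = \rho(t) H_n$.  Once Floer trajectories are confined to a compact region determined by the asymptotic orbits, the rest follows verbatim from standard references.

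First I would set up the geometry on the conical end.  Outside $D$ we may identify $E$ with a piece of the symplectization $(R_0, \infty)_r \times \Sigma$ with $\Omega = d(r\alpha)$ after a reparametrization, and $J$ is conical in the sense that $J \dd_r = c(r) R_\alpha$ for some positive function $c$.  In these coordinates $H_n = h_n(\kappa\pi r^2) + (1+\pi r^2)\mathfrak{p}^*G$ is asymptotically a convex function of $r$ with linear growth, and $\cg{H}_n$ is this multiplied by the bump $\rho(t)$, which vanishes identically on $t\in[1/2,1]$.

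Second I would prove the integrated maximum principle: for $R$ large enough that both asymptotes of a Floer trajectory $u:\RR\times S^1\to E$ lie inside $\{r<R\}$, the preimage $U = u^{-1}(\{r\geq R\})$ is forced to be empty.  The argument parallels Cieliebak-Frauenfelder-Oancea: one uses the Floer equation together with $\Omega(\cdot, J\cdot)\geq 0$ to bound $\int_U u^*\Omega - \int_U \rho(t)\, u^*(dH_n)\wedge dt$ from below by a non-negative quantity, and then applies Stokes's theorem with primitive $r\alpha$.  The resulting boundary term on $\dd U \subset \{r = R\}$ is proportional to $\rho(t)\bigl(H_n(R) - R \cdot \dd_r H_n|_{r=R}\bigr)$ integrated against a positive measure; convexity of $h_n$ together with its linear asymptotic slope makes this non-positive for large $R$, and on the half of the $S^1$-factor where $\rho(t) = 0$ the boundary contribution vanishes outright.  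The only way to reconcile the two opposite signs is $U = \emptyset$, yielding the desired $C^0$-bound on Floer trajectories.

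Third I would invoke standard Floer-theoretic arguments: generic choice of $J$ within $\Omega$-tame almost-complex structures conical outside $D$ gives transversality of the linearized Floer operator (the proof is local on the compact region where orbits and trajectories live, cf.~the appendix of \cite{albers-b-w}); Gromov compactness combined with the $C^0$-bound from step two yields compact moduli spaces of the expected dimension; and the usual analysis of broken trajectories produces $(\dd^{fl})^2 = 0$.  Applying the same integrated maximum principle to $s$-dependent monotone homotopies handles the continuation maps that assemble $SC^*(\cg{H})$ as the direct limit.

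The main obstacle is the sign in step two given that $\cg{H}_n$ is not of the pure symplectization form $h(r)$ and carries the time-modulation $\rho(t)$.  The resolution is that the correction $(1+\pi r^2)\mathfrak{p}^*G$ has the same linear radial growth as $h_n(\kappa\pi r^2)$ but with bounded coefficient $\mathfrak{p}^*G$, so its contribution to $H_n - r\,\dd_r H_n$ is dominated by the convex leading term after taking $R$ sufficiently large; and the factor $\rho(t)$ only helps, since it is non-negative and reduces the analysis on $[1/2,1]$ to the standard maximum principle for $J$-holomorphic curves with conical $J$.
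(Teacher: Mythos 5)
Your step (2) is where the substance lies, and it is where the reasoning slips. The structure you describe --- energy $\geq 0$, Stokes with a primitive of $\Omega$, isolate a boundary term on $\dd U\subset\{r = R\}$ --- is the same as the paper's, but you control the boundary term by the wrong mechanism. You claim it is ``$\rho(t)\bigl(H_n(R) - R\,\dd_r H_n\big|_{r=R}\bigr)$ integrated against a positive measure'' and invoke convexity to make the coefficient non-positive for large $R$. However, $dt$ restricted to $\dd U$ is \emph{not} a positive measure: $\dd U$ is oriented as the boundary of the planar region $U\subset\RR\times S^1$, so its components traverse the $t$-direction in both senses. A pointwise sign on the coefficient therefore tells you nothing about the sign of the integral. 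What actually happens --- and what the paper uses --- is more rigid: the coefficient $h'_n(k\pi\sigma^2) - c_\sigma$ (in the paper's parametrization) is a \emph{constant} once the test hypersurface is fixed, so the problematic boundary term is of the form $\int_{\dd\Sigma}\rho(t)\cdot C\,dt$ with $C$ constant. Since $\rho(t)\,dt$ is a closed $1$-form on $\RR\times S^1$ and $\dd\Sigma$ is the boundary of a compact region (hence null-homologous), this integral is \emph{exactly} zero, regardless of the sign of $C$ and without any largeness assumption on $R$. This is a homological cancellation, not a convexity estimate, and it is also what makes the argument go through at every generic radius rather than only for $R\gg 0$.

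A secondary issue: after concluding $E_J(v)\leq 0$ you need $E_J(v) = 0$, hence $v$ constant, and then you must explain why a constant map on a region whose closure meets the transversally-chosen hypersurface $\mathfrak{S}$ is impossible; this is the genericity/transversality of $\mathfrak{S}$ invoked in the paper, and it should be made explicit rather than absorbed into ``the usual maximum principle.'' Your step (3) --- generic $J$ giving transversality, Gromov compactness from the $C^0$-bound, $(\dd^{fl})^2 = 0$, and the same maximum principle for $s$-dependent monotone homotopies defining the continuation maps --- is fine and coincides with the paper's treatment.
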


\begin{proof}
It suffices to show that the Floer solutions contributing to both the Floer differential and the continuation maps remain in a compact set.  We use the integrated maximum principal of \cite{abouzaid}.  We prove that Floer solutions contributing to the Floer differential remain in a compact set; the case of continuation maps proceeds analogously.  Let $u:\RR\times S^1\rightarrow E$ be a Floer solution for $\cg{H}_n$.  Choose a generic circle subbundle $\mathfrak{S}$ of radius $\sigma$ outside of $D$:
\[
D\cap\mathfrak{S} = \emptyset,
\]
where genericity means that $\Sigma$ intersects $\mathfrak{S}$ transversally.  Let $\cg{D}$ be the region bounded by $\mathfrak{S}$, and denote $\Sigma = u^{-1}(E\setminus\cg{D})$.  Assume for contradiction that $\Sigma$ is non-empty.  Let $v\colon \Sigma\longrightarrow E\setminus\cg{D}$ be the restriction of $u$.  We will equate $\Sigma$ with its image under the inclusion into $\RR\times S^1$ and use the coordinates $(s, t)$ induced on $\Int(\Sigma)$.

Let $c_x = h_n(k\pi x^2) - h'_n(k\pi x^2)k\pi x^2$ be the $y$-intercept of the tangent line to $h_n(k\pi r^2)$ at $k\pi x^2$.  Denote by $X_G^h$ the unique lift to $\mathfrak{S}$ of the vector field $X_G$ on $B$ that is induced by the connection form $\alpha$.  Then on $\mathfrak{S}$, 
\[
\cg{H}_n = \rho(t)\cdot h_n'(k\pi \sigma^2)k\pi \sigma^2 + \rho(t)\cdot(1 + k\pi \sigma^2)\mathfrak{p}^*G + \rho(t)\cdot c_{\sigma}
\]
 (and $(X_{\cg{H}_n})_t = \rho(t)\cdot(h_n'(k\pi \sigma^2) + \mathfrak{p}^*G)R_{\alpha} + \rho(t)\cdot X_G^h$).  In particular, 
\[
\cg{H}_n\big|_{\mathfrak{S}} = (1 + k\pi \sigma^2)\alpha(X_{\cg{H}_n}) - \rho(t)\big(h_n'(k\pi \sigma^2) + c_{\sigma}\big).
\]

Let $\dd\Sigma$ be the union of the boundary components of $\Sigma$ mapping into $\mathfrak{S}$.  Note that we have chosen $J$ to be $\Omega$-tame, so that
\[
E_J(v) := \frac{1}{2}\int_{\Sigma}\left(\Omega(\dd_sv, J\dd_sv\right) + \Omega\left(\dd_tv - X_{\cg{H}_n}(v), J(\dd_tv - X_{\cg{H}_n}(v))\right)ds\wedge dt \geq 0
\]
Shuffling terms, we have
\begin{equation*}
\begin{split}
E_J(v) &= \int_{\Sigma} v^*\Omega - v^*d\cg{H}_n\otimes dt \\
&= \int_{\dd\Sigma_+} (1 + k\pi r^2)v^*\alpha - \cg{H}_n(v(s, t))dt
 \\&= \int_{\dd\Sigma_+} (1 + k\pi \sigma^2)v^*\alpha - (1 + k\pi \sigma^2)\alpha(X_{\cg{H}_n})\otimes dt + \rho(t)\cdot\left(h'_n(k\pi \sigma^2)- c_{\sigma}\right)dt.
\end{split}
\end{equation*}
A solution $v(s, t)$ of Floer's equation satisfies 
\[
(dv - X_{\cg{H}_n}\otimes dt)^{(0, 1)} = 0.
\] 
The conical condition on $J$ says that, on $\mathfrak{S}$, $JR_{\alpha}$ is proportional to $\dd_r$ and $JX_F^h$ lives in the horizontal distribution.  So altogether, $\alpha(JX_{\cg{H}_n}) = 0$.  Thus, 
\begin{align*}
\int_{\dd\Sigma_+}(1 + k\pi \sigma^2)\alpha(dv - X_{\cg{H}_n}\otimes dt) + &\rho(t)\left(h'_n(k\pi \sigma^2)- c_{\sigma}\right)dt  \\
&= \int_{\dd\Sigma_+} -(1 + k\pi \sigma^2)\alpha\circ J(dv - X_{\cg{H}_n}\otimes dt)\circ j + \rho(t)\left(h'_n(k\pi \sigma^2)- c_{\sigma}\right)dt  \\
&= \int_{\dd\Sigma_+} -\frac{1 + k\pi \sigma^2}{2k\pi\sigma}dr \circ dv \circ j + \rho(t)\left(h'_n(k\pi \sigma^2)- c_{\sigma}\right)dt  \\
&\leq \int_{\dd\Sigma_+}\rho(t)\left(h'_n(k\pi \sigma^2)- c_{\sigma}\right)dt, 
\end{align*}
where the last inequality follows because a vector $\zeta$ that is positively-oriented with respect to the boundary orientation satisfies $dr\circ j(\zeta) \geq 0$.

$\Sigma$ is a collection of bounded regions in $\CC^*$ on which $\rho(t)\left(h'_n(k\pi \sigma^2)- c_{\sigma}\right)dt $ is exact:
\[
\int_{\dd\Sigma_+}\rho(t)\left(h'_n(k\pi \sigma^2)- c_{\sigma}\right)dt = 0.
\]
Thus,
\[
E_J(v) \leq 0,
\]
implying that $v$ is constant.  But $\Sigma$ is non-empty and intersects $\mathfrak{S}$ transversally; and we reach a contradiction.

\end{proof}

\begin{lemma}
\label{lem:truncisusual}
If symplectic cohomology is non-vanishing, the cohomology theory $SH^*(\cg{H})$ is also non-vanishing:
\[
SH^*(\cg{H})\neq 0.
\]
\end{lemma}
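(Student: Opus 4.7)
The plan is to exhibit an isomorphism $SH^*(\cg{H}) \cong SH^*(H)$ of directed systems, from which non-vanishing transfers immediately. The key observation is that $\cg{H}_n = \rho(t)H_n$ with $\int_0^1 \rho(t)\,dt = 1$ is a time-reparametrization of $H_n$: setting $\beta(t) := \int_0^t \rho(s)\,ds$, every time-one orbit $x$ of $X_{H_n}$ pulls back to a time-one orbit $\tilde x(t) := x(\beta(t))$ of $X_{\cg{H}_n}$, since $\dot{\tilde x}(t) = \rho(t)X_{H_n}(\tilde x(t)) = X_{\cg{H}_n}(t,\tilde x(t))$. This gives a bijection $\cg{P}(H_n) \leftrightarrow \cg{P}(\cg{H}_n)$. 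With matched cappings the substitution $\sigma = \beta(t)$ identifies $\int_0^1 \cg{H}_n(t,\tilde x(t))\,dt = \int_0^1 H_n(x(\sigma))\,d\sigma$, so actions coincide; the linearized flows differ only by the same reparametrization, so Conley-Zehnder indices coincide as well.

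To lift this to a chain-level isomorphism $CF^*(\cg{H}_n) \cong CF^*(H_n)$, I would take the reparametrized almost complex structure $\tilde J(s,t) := J(s,\beta(t))$, whereupon a Floer cylinder $u(s,t)$ for $(H_n, J)$ pulls back to $\tilde u(s,t) := u(s,\beta(t))$, which formally satisfies Floer's equation for $(\cg{H}_n, \tilde J)$. Applying the same reparametrization to the $s$-dependent data defining the continuation maps $CF^*(H_n) \to CF^*(H_{n+1})$ makes the orbit bijection compatible with continuation, so the chain isomorphisms assemble into an isomorphism of directed systems. Taking direct limits and homology then yields $SH^*(\cg{H}) \cong SH^*(H)$, and the assumed non-vanishing of the right-hand side forces non-vanishing of the left.

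The main technical obstacle is that $\rho$ (and hence $\beta'$) vanishes on $[1/2, 1]$, so $\tilde J$ is not smooth and $\tilde u$ is $t$-constant on a positive-measure region, hence not a classical Floer trajectory. The cleanest remedy is to approximate $\rho$ by strictly positive smooth $\rho_\varepsilon$ with $\int_0^1 \rho_\varepsilon = 1$ and $\rho_\varepsilon \to \rho$ in $L^1$; then $\beta_\varepsilon$ is a genuine diffeomorphism of $[0,1]$, and the reparametrization argument above produces an honest chain isomorphism $CF^*(\rho_\varepsilon H_n) \cong CF^*(H_n)$. Because the time-one flow of $\rho_\varepsilon H_n$ is unchanged as $\varepsilon$ varies, the orbit sets and their actions and indices are $\varepsilon$-independent, and a standard compactness-and-continuation argument (together with the integrated maximum principle established in Lemma \ref{lem:maxprinc}) identifies the Floer complex of $\cg{H}_n$ with the $\varepsilon\to 0$ limit, completing the isomorphism at the chain level.
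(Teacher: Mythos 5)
Your strategy---exhibit an isomorphism $SH^*(\cg{H}) \cong SH^*(H)$ by time-reparametrization---is genuinely different from what the paper does. The paper never constructs an isomorphism: it builds a single continuation map $SC^*(\cg{H}) \to SC^*(\max_t\rho(t)\cdot H)$ from an interpolating family $\rho(s,t)H_n$, observes that this is a map of \emph{unital rings} on cohomology, and then wins because a unital ring map into a nonzero ring cannot have zero source. That argument sidesteps all of the reparametrization issues you are wrestling with, at the cost of only giving a one-way map.

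That said, your approach contains a genuine gap at the cylinder-reparametrization step, and it appears even after passing to a strictly positive $\rho_\varepsilon$. With $\beta' = \rho_\varepsilon$, $\tilde u(s,t) = u(s,\beta(t))$, and $\tilde J_t = J_{\beta(t)}$, a direct computation (using $\partial_t\tilde u = \rho_\varepsilon(t)\,(\partial_\tau u)(s,\beta(t))$ and the Floer equation for $u$) gives
\[
\partial_s\tilde u + \tilde J\bigl(\partial_t\tilde u - X_{\cg{H}_n}(\tilde u)\bigr) = \bigl(1-\rho_\varepsilon(t)\bigr)\,\partial_s\tilde u,
\]
which is \emph{not} zero unless $\rho_\varepsilon\equiv 1$. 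What $\tilde u$ actually solves is
\[
\rho_\varepsilon(t)\,\partial_s\tilde u + \tilde J\bigl(\partial_t\tilde u - X_{\cg{H}_n}(\tilde u)\bigr) = 0,
\]
i.e.\ a Floer-type equation with respect to a \emph{reparametrized domain complex structure} $j_\varepsilon$ on the cylinder (equivalently, the $(0,1)$-part is taken with $j_\varepsilon(\partial_s) = \rho_\varepsilon(t)^{-1}\partial_t$). So the claimed chain isomorphism $CF^*(\rho_\varepsilon H_n)\cong CF^*(H_n)$ does not follow from the naive pullback: you get an identification between the standard Floer complex of $H_n$ and a Floer complex of $\cg{H}_n$ built with a nonstandard domain structure, and you still owe an invariance argument (a continuation in $j$) to equate the latter with the standard $CF^*(\cg{H}_n)$. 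The mismatch is precisely the point where a proof really has to do some work. The conclusion you want is true---it is the usual invariance of Hamiltonian Floer theory under time-reparametrization of the flow---but the reason is a continuation-map or conjugation argument, not a pointwise pullback of cylinders, and your write-up as it stands asserts the false identity. The separate $\varepsilon\to 0$ limit step is also left vague, though it is not where the main difficulty lies.
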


\begin{proof}
Choose any function $\rho:\RR\times\RR\rightarrow \RR$ satisfying
\[
\rho(s, t) = \rho(t) \hspace{1cm} s >> 0
\]
and
\[
\rho(s, t) = \max_t\rho(t)  \hspace{1cm} s << 0 \text{ and } t\in[0, 1].
\]
For each $n\in\NN$, define a family of Hamiltonians $H_n^s$ by
\[
H_n^s = \rho(s, t)H_n.
\]
Then $\{H_n^s\}_{n\in\NN}$ induces a continuation map
\[
SC^*(\cg{H})\rightarrow SC^*(\max_t\rho(t) \cdot H),
\]
which is well-defined by the methods described in the proof of Lemma \ref{lem:maxprinc}.  Standard methods imply that this induces a map of unital rings on the level of homology
\[
SH^*(\mathcal{H})\rightarrow SH^*(\max_t\rho(t) \cdot H).
\]
As $SH^*(\max_t\rho(t) \cdot H)$ is non-zero, the unit in $SH^*(\max_t\rho(t) \cdot H)$ is non-zero as well.  It follows that $SH^*(\mathcal{H})$ has a non-zero unit, and so must be non-zero.

\end{proof}

\bibliography{biblio}{}
\bibliographystyle{plain}

\end{document}